\newtheorem{thm}{Theorem}[section]
\newtheorem{Theorem}[thm]{Theorem}
\newtheorem*{Theorem*}{Theorem}
\newtheorem{Lemma}[thm]{Lemma}
\newtheorem{Proposition}[thm]{Proposition}
\newtheorem{Setup}[thm]{Setup}
\newtheorem{Question}[thm]{Question}
\theoremstyle{definition}
\newtheorem{Definition}[thm]{Definition}
\newtheorem{Example}[thm]{Example}
\newtheorem{Remark}[thm]{Remark}
\newcommand{\Lie}{\operatorname{Lie}}
\newcommand{\wh}{\widehat}
\newcommand{\im}{\operatorname{im}}
\newcommand{\Hom}{\mathrm{Hom}}
\newcommand{\HOM}{\underline{\mathrm{Hom}}}
\newcommand{\Perf}{\mathrm{Perf}}
\newcommand{\Spa}{\operatorname{Spa}}
\newcommand{\Spd}{\operatorname{Spd}}
\newcommand{\N}{\mathbb{N}}
\newcommand{\G}{\mathbb{G}}
\newcommand{\GL}{\mathrm{GL}}
\renewcommand{\O}{\mathcal{O}}
\newcommand{\Q}{\mathbb{Q}}
\newcommand{\Z}{\mathbb{Z}}
\newcommand{\m}{\mathfrak{m}}
\newcommand{\et}{{\mathrm{\acute{e}t}}}
\newcommand{\proet}{{\mathrm{pro\acute{e}t}}}
\newcommand{\profet}{{\mathrm{prof\acute{e}t}}}
\newcommand{\Higgs}{\mathrm{Higgs}}
\newcommand{\Bun}{\mathrm{Bun}}
\newcommand{\CO}{\mathcal{O}}
\newcommand{\Spec}{\mathrm{Spec}}
\newcommand{\Homc}{\mathrm{Hom}_\mathrm{cts}}
\newcommand{\Hc}{H^1_\text{cts}}
\newcommand{\Hv}{H^1_\mathrm{v}}
\newcommand{\lie}{\mathfrak{g}}
\newcommand{\an}{\mathrm{an}}
\newcommand{\id}{\mathrm{id}}
\newcommand{\cts}{\mathrm{cts}}
\renewcommand{\tt}{\mathrm{tt}}
\newcommand{\dR}{\mathrm{dR}}
\newcommand{\HT}{\mathrm{HT}}
\newcommand{\HTlog}{\mathrm{HTlog}}
\newcommand{\LS}{\mathrm{LS}}
\newcommand*\isomarrow{% arrow with tilde as used for isomorphisms
	\xrightarrow{\raisebox{-0.35em}{\smash{\ensuremath{\sim}}}}
}  
\newcommand{\sm}{{\mathrm{sm}}}
\newcommand{\lsm}{{\mathrm{lsm}}}
\newcommand{\wt}{\widetilde}
\newcommand{\wtOm}{\widetilde{\Omega}}
\newcommand{\HiggsG}{\Higgs_G}
\title[$p$-adic Simpson correspondences for principal bundles in abelian settings]{
	$p$-adic Simpson correspondences for \\principal bundles in abelian settings
}
\begin{document}	
	\author{Ben Heuer, Annette Werner, Mingjia Zhang}

	\begin{abstract}
		We explore generalizations of the $p$-adic Simpson correspondence on smooth proper rigid spaces to principal bundles under rigid group varieties $G$. For commutative $G$, we prove that such a correspondence exists if and only if the Lie group logarithm is surjective. Second, we treat the case of general $G$ on ordinary abelian varieties, in which case we prove a generalisation of Faltings' ``small'' correspondence to general rigid groups. On abeloid varieties, we also prove an analog of the classical Corlette-Simpson correspondence for principal bundles under linear algebraic groups.

	\end{abstract}
	\maketitle
	%\centerline{\small \textbf{2020 MSC:} 14G22, 14G45, 14F20}
	%\setcounter{tocdepth}{1}
	%\tableofcontents
	
	\section{Introduction}
	Let $K$ be a complete algebraically closed non-archimedean field over $\Q_p$. Let $X$ be a connected smooth proper rigid space over $K$ and fix a point $x_0\in X(K)$. Let $\pi_1^\et(X,x_0)$ be the \'etale fundamental group of $X$, i.e.\ the symmetry group of all connected finite \'etale covers of $X$. The starting point of $p$-adic non-abelian Hodge theory is the question how to relate continuous representations
	\[\pi_1^\et(X,x_0)\to \GL_n(K)\]
	to Higgs bundles on $X$.
	The guiding question of this article is to what extent one can generalise this theory from $\GL_n$ to more general groups. We provide evidence that the correct generality is given by rigid group varieties, the $p$-adic analogs of complex Lie groups. The question we set out to investigate is the following:
	
	\begin{Question}\label{q3}
		For which rigid groups $G$ can we find a reasonable condition ``??'' on $G$-Higgs bundles so that there is a ``$p$-adic Corlette-Simpson correspondence with $G$-coefficients''
		\[\Big\{\begin{array}{@{}c@{}l}\text{continuous homomorphisms}\\
			\pi_1^{\et}(X,x_0) \rightarrow G(K) \end{array}\Big\}\isomarrow  \Big\{\begin{array}{@{}c@{}l}\text{$G$-Higgs bundles on $X$}\\\text{satisfying ??}
		\end{array}\Big\}?\]
	\end{Question}    
	For the definition of $G$-Higgs bundles in this setting,  see \Cref{def:Higgs}. 
	Based on known results for $G=\GL_n$, one can divide this problem into two steps, using Scholze's v-site $X_\mathrm{v}$:
	
	%: Let $\mathfrak g$ be the Lie algebra of $G$, regarded as a rigid vector group.
	%\begin{Definition}
	%	A $G$-Higgs bundle on $X$ is a pair $(E,\theta)$ consisting of  a $G$-torsor $E$ on $X_{\et}$ and a section $\theta\in H^0(X,\mathrm{ad}(E)\otimes \wtOm_X^1)$, where $\mathrm{ad}(E):=\mathfrak g\times^GE$ is the adjoint bundle, such that $\theta\wedge \theta=0$ with respect to the natural Lie bracket induced by $\mathfrak g$.
	%\end{Definition}
	%With this definition, the first question we set out to answer is the following:
	\begin{Question}\label{q1}
		For which rigid groups $G$ can we expect to have an equivalence of categories
		\begin{equation}\label{eq:CS-for-G}
			\big\{\text{$G$-torsors on $X_{\mathrm{v}}$}\big\}\isomarrow  \big\{\text{$G$-Higgs bundles on $X$}\big\}?
		\end{equation}
	\end{Question}
	% There is again the relation to continuous representations  $\rho:\pi_1^{\et}(X,x) \rightarrow G(K)$: 
	In a second step, the relation to the fundamental group is furnished by the pro-finite-\'etale universal cover $\wt X\to X$, a $\pi^\et_1(X,x_0)$-torsor in  $X_\mathrm{v}$. Indeed, forming the pushout of $\wt X\to X$ along continuous representations of $\pi^\et_1(X,x_0)$ defines a natural functor
	\[\Big\{\begin{array}{@{}c@{}l}\text{continuous homomorphisms}\\
		\pi_1^{\et}(X,x_0) \rightarrow G(K) \end{array}\Big\}\to {\big\{\text{$G$-torsors on $X_{\mathrm{v}}$}\big\}}.\]
	\begin{Question}\label{q2}
		For which rigid groups $G$ is this functor fully faithful?
	\end{Question}
	As many aspects of $p$-adic non-abelian Hodge theory are still conjectural (even for $G=\GL_n$), understanding specific cases gives important guidance for further investigations.
	In this article, we achieve progress in
	several directions with an abelian flavor: First, we answer all three questions for commutative $G$. Second, when $X$ is an abeloid variety, we answer \Cref{q3} for linear algebraic $G$  and  \Cref{q1} for ``small'' objects and general rigid $G$. We also show that \Cref{q1} has a positive answer for linear algebraic $G$ on any smooth proper $X$.
	
	This gives some evidence for what to expect for general $G$:
	We can deduce from our results conditions on the logarithm for $G$ which are necessary to understand the Betti side of the desired theory, as well as sufficient conditions that answer \Cref{q2} in general. This gives some concrete indications of how to formulate a conjectural $p$-adic Corlette-Simpson correspondence for principal bundles. 
	We now outline our results in more detail.

	\subsection{Previous results}
	Our study is motivated by the following previous results:
	\vspace{-0.3cm}
	\subsubsection{The case $G=\G_a$}
	In the case $G = \G_a$, the answers to these questions are given by  the Hodge--Tate short exact sequence of $p$-adic Hodge theory of Faltings and Scholze \cite[\S3]{Scholze2012Survey}
	\begin{equation}\label{eq:HT-intro}
		0\to H^1_{\et}(X,\G_a)\to \Hom_{\cts}(\pi_1^\et(X,x_0),\G_a(K))\to H^0(X,\Omega^1_X(-1))\to 0.
	\end{equation}
	It is easy to see that $\G_a$-Higgs bundles are classified by $H^1_{\et}(X,\G_a)\times H^0(X,\Omega^1_X(-1))$, so any splitting of this sequence induces an equivalence as in \Cref{q3}: Here for $G=\G_a$, there is no condition on the Higgs bundle side. Second, we have $ \Hom_{\cts}(\pi_1^\et(X,x_0),\G_a(K)) = H^1_\mathrm{v}(X, \G_a)$ in this case by the Primitive Comparison Theorem, answering \Cref{q1}. We therefore think of \Cref{q3} as generalising \eqref{eq:HT-intro} to more general coefficients $G$.  This explains the name ``$p$-adic non-abelian Hodge theory''.
	
	\vspace{-0.3cm}
	\subsubsection{The case $G=\GL_n$}For vector bundles, i.e.\ $G=\GL_n$, an answer to \Cref{q1} has recently been given in \cite{heuer-paSimpson}, where it is shown that  there is a $p$-adic Simpson correspondence
	\begin{equation}\label{eq:S-for-GLn}
		\big\{\text{vector bundles on $X_\mathrm{v}$}\big\}\isomarrow  \big\{\text{Higgs bundles on $X$}\big\}.
	\end{equation}
	Here a Higgs bundle is a pair $(E,\theta)$ of an analytic vector bundle $E$ on $X$ and an $\O_X$-linear homomorphism $\theta:E\to E\otimes \Omega_X^1(-1)$ such that $\theta\wedge\theta=0$. Towards \Cref{q3},
	following Faltings' influential work \cite{Faltings_SimpsonI}, one can embed continuous homomorphisms of $\pi^\et_1(X,x)$ into the category of ``generalised representations'', which in the language of Scholze's diamonds can be interpreted as the vector bundles on $X_\mathrm{v}$. The equivalence \eqref{eq:S-for-GLn} thus leads to a fully faithful functor
	\begin{equation}\label{eq:CS-for-GLn}
		\Big\{\begin{array}{@{}c@{}l}\text{continuous homomorphisms}\\
			\pi_1^{\et}(X,x) \rightarrow \GL_n(K) \end{array}\Big\}\hookrightarrow  \Big\{\begin{array}{@{}c@{}l}\text{Higgs bundles on $X$}
		\end{array}\Big\}.
	\end{equation} Conjecturally, the essential image should admit a description in terms of a $p$-adic replacement for the condition in the complex theory that the Higgs bundle is semi-stable with vanishing Chern classes, thus yielding a ``$p$-adic Corlette-Simpson correspondence''. Known instances include the case $n=1$ where $G=\G_m$ \cite[Theorem~1.1]{heuer-geometric-Simpson-Pic}, and the case when $X$ is an abeloid variety \cite[Theorem~1.1]{HMW-abeloid-Simpson},  %but the
	where the essential image is the category of pro-finite-\'etale Higgs bundles: Here, a $G$-bundle is called pro-finite-\'etale if it is trivialised by the cover $\wt X\to X$, and a $G$-Higgs bundle is pro-finite-\'etale if its underlying $G$-bundle is.
	In general, however, the problem of determining the essential image of \eqref{eq:CS-for-GLn}  currently seems wide open. % in general.

	One reason why it is fruitful to study \eqref{eq:CS-for-GLn} by passing to  v-vector bundles is that  both categories in \eqref{eq:S-for-GLn}  localise, so that it makes sense to analyse their relation for not necessarily proper $X$. This reinterprets $p$-adic non-abelian Hodge theory as the study of \mbox{v-topological} $G$-torsors on rigid spaces.

	\vspace{-0.3cm}
	\subsubsection{The case $G=\GL_n(\O^+)$}An interesting special case is the rigid group $\GL_n(\O^+)\subseteq \GL_n(\O)$. In this case, $G$-torsors on $X_\mathrm{v}$ are equivalent to locally finite free $\O^+$-modules, which are also called ``integral v-vector bundles''. Such objects have recently been studied by Min--Wang  \cite{min2023integral} to investigate integral structures in $p$-adic non-abelian Hodge theory.
	
	\vspace{-0.3cm}
	\subsubsection{General rigid $G$}
	In \cite[Theorem~1.2]{heuer-Moduli}, it is shown that a sheafified version of \eqref{eq:CS-for-G} holds for any rigid group $G$ on any smooth rigid space $X$. Moreover, \cite[Theorem~6.5]{heuer-Moduli} gives a local version of \eqref{eq:CS-for-G}. This produces some evidence that $p$-adic non-abelian Hodge theory for rigid groups is a promising new line of investigation, which was our initial motivation for studying Questions~\ref{q3}, \ref{q1}, and \ref{q2}.

	\subsubsection{Vanishing Higgs field}
	Beyond $G=\GL_n$, \Cref{q3} has previously only been studied in the case of Higgs field $\theta=0$ for reductive groups $G$ when $X$ is a curve: In this case, Hackstein has constructed a functor from certain $G$-bundles to continuous representations $\pi_1^{\et}(X,x) \rightarrow G(K)$ in \cite{hackstein2008principal}, based on the work of Deninger--Werner \cite{DeningerWerner_vb_p-adic_curves}.

	\subsection{The case of commutative $G$}
	It is clear that an equivalence as in \eqref{eq:CS-for-G} cannot exist for \textit{any} rigid group $G$: For example, for the open subgroup $\G_a^+\subseteq \G_a$ given by the open unit disc, it is known that the ``integral Hodge--Tate sequence''
	\[
	0\to H^1_{\et}(X,\G_a^+)\to \Hom_{\cts}(\pi_1^\et(X,x),\G^+_a(K))\to H^0(X,\Omega^1_X(-1))\]
	is only left-exact, and that describing the image on the right is in general difficult. With this in mind, our first main result is  a sufficient and necessary condition for commutative $G$ under which we can answer \Cref{q1} and \Cref{q2}: We call a commutative rigid group $G$ over $K$ \textit{locally $p$-divisible} if it admits an open subgroup $U\subseteq G$ such that $[p]:U\to U$ is surjective (see \Cref{def:loc-pdiv}).
	For example, $G$ is locally $p$-divisible whenever  $[p]:G\to G$ is surjective.
	In particular, any commutative connected algebraic group is locally $p$-divisible.
	%(\Cref{ex:alg-G-loc-p-div}).
	\begin{Theorem}[\Cref{rslt:commutativecase}] \label{rslt:commutative-intro} Let $X$ be a connected  smooth  proper rigid variety over $K$. Let  $G$ be a commutative rigid group over $K$ that is locally $p$-divisible. 
		%(see \Cref{def:loc-pdiv}). 
		
		\begin{enumerate}[(i)]
			\item Choices of an exponential map %$\exp$
			for $K$ and a flat $B_{\dR}^+/\xi^2$-lift of $X$  induce an equivalence
			\begin{align*}
				\big\{\text{$G$-torsors on $X_\mathrm{v}$} \big\}\isomarrow  \big\{
				\text{$G$-Higgs bundles on $X$}\big\}.
			\end{align*}
			\item The above functor restricts to an essentially surjective functor
			\begin{align*}
				\Big\{\begin{array}{@{}c@{}l}\text{continuous homomorphisms  }\\
					\pi_1^{\et}(X,x) \rightarrow G(K) \end{array}\Big\}\to  \Big\{\begin{array}{@{}c@{}l}\text{ pro-finite-\'etale}\\
					\text{$G$-Higgs bundles on $X$}\end{array}\Big\}.
			\end{align*}
			This is an equivalence if moreover $G$ admits an injective morphism into $\GL_n$. 
		\end{enumerate}
	\end{Theorem}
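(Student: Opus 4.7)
My plan is to package both sides as Picard groupoids and match them via a ``Hodge--Tate for $G$'' short exact sequence generalising \eqref{eq:HT-intro}. Since $G$ is commutative, the groupoid of $G$-torsors on $X_\tau$ is classified by $H^1_\tau(X,G)$ on objects and $H^0_\tau(X,G)$ on automorphisms; and a $G$-Higgs bundle decomposes as an analytic $G$-bundle on $X$ together with a Higgs section in $H^0(X,\lie_G\otimes\Omega^1_X(-1))$ (using that the adjoint representation of a commutative $G$ on $\lie_G$ is trivial, so the twisted bundle is canonically constant). The core task is therefore to construct a short exact sequence
\begin{equation*}
0\to H^1_\et(X,G)\to H^1_\mathrm{v}(X,G)\to H^0(X,\lie_G\otimes\Omega^1_X(-1))\to 0,
\end{equation*}
together with a splitting induced by the choices of exponential for $K$ and flat $B_\dR^+/\xi^2$-lift of $X$. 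Left exactness follows from the Leray spectral sequence for $X_\mathrm{v}\to X_\et$ with $G$-coefficients. The right-hand map would come from the local equivalence \cite[Theorem~6.5]{heuer-Moduli}, which identifies $G$-torsors on $X_\mathrm{v}$ locally with Higgs data; a \v{C}ech argument globalises this.

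\textbf{Main obstacle.} The delicate step is surjectivity on the right. Given a global Higgs section $\theta$, one must integrate it to a v-torsor via an exponential-type construction, and the obstruction to doing so lives in a cohomology group controlled by the cokernel of $[p]$ on $G$. Local $p$-divisibility, i.e.\ surjectivity of $[p]$ on an open subgroup $U\subseteq G$, kills this obstruction after local refinement, exactly as for $\G_a$ where $[p]$ is surjective. Without this hypothesis (as in the example of $\G_a^+$ from the introduction) the surjectivity genuinely fails and the image on the right becomes difficult to describe. Once the sequence is in place, the splitting defined via the exponential for $K$ and the $B_\dR^+/\xi^2$-lift produces the asserted equivalence of Picard groupoids; the exponential controls the multiplicative-versus-additive comparison, while the lift controls the first-order deformation that rigidifies the choice.

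\textbf{Strategy for (ii).} The functor $\rho\mapsto \wt X\times^{\pi_1^\et}_\rho G$ from continuous homomorphisms to v-torsors factors through the pro-finite-\'etale locus by construction. For essential surjectivity onto pro-finite-\'etale $G$-Higgs bundles I would apply (i) and then check that pro-finite-\'etaleness on the Higgs side matches pro-finite-\'etaleness on the v-side; the key input is compatibility of the splitting with pullback along $\wt X\to X$, on which the Higgs piece becomes $\pi_1^\et$-equivariant data matching a continuous homomorphism. For full faithfulness under the embedding $G\hookrightarrow\GL_n$ I would reduce to the known $\GL_n$-case: both continuous homomorphisms into $G(K)$ and pro-finite-\'etale $G$-Higgs bundles are detected by their composites with $\GL_n$, so the fully faithful correspondence for $\GL_n$ on pro-finite-\'etale objects, as in \cite{HMW-abeloid-Simpson} and \cite{heuer-Moduli}, transfers to $G$ via the closed immersion.
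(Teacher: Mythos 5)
Your skeleton for (i) -- a short exact sequence $0\to H^1_\et(X,G)\to H^1_\mathrm{v}(X,G)\to H^0(X,\wtOm)\otimes\Lie G\to 0$ split by the two choices, with left-exactness from Leray -- matches the paper. But the step you flag as the ``main obstacle'', surjectivity of the right-hand map, is exactly where your argument is not a proof. You propose to build the map by gluing the local correspondence of \cite[Theorem~6.5]{heuer-Moduli} and to kill an obstruction ``controlled by the cokernel of $[p]$'' by local refinement; neither step is carried out, and the local correspondence only applies to small objects relative to toric charts, so globalising it is precisely the delicate issue the paper's Section 4 treats separately. The paper instead gets the map directly from the Leray sequence and proves surjectivity by \emph{producing} classes: every continuous $\rho:\pi_1^\et(X,x_0)\to G(K)$ lands in $G^\tt(K)$, and the square comparing $\log_G\colon G^\tt\to\mathfrak g$ with the Hodge--Tate map shows $\HTlog$ is split surjective once one has (a) Guo's splitting of the $\G_a$-Hodge--Tate sequence from the $B_\dR^+/\xi^2$-lift and (b) a group-theoretic exponential $\exp_G:\Lie G\to\wh G(K)$ splitting $\log_G$. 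Point (b) is where local $p$-divisibility actually enters: it is equivalent to surjectivity of $\log_G$ (\Cref{p:log-surj-if-loc-pdiv}), and the splitting on $K$-points is constructed from the chosen $\exp$ for $K$ via Fargues' Cartesian diagram for analytic $p$-divisible groups (\Cref{t:exp-for-loc-p-divisible-grp}). You never construct any map $\Lie G\to G(K)$, so the splitting -- hence the equivalence -- is not established.

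For (ii), essential surjectivity does not reduce to ``checking that pro-finite-\'etaleness matches on both sides''. After applying (i), what remains is to show that every pro-finite-\'etale v-$G$-torsor is the pushout of $\wt X\to X$ along some continuous homomorphism, i.e.\ that $\Hom_\cts(\pi,G(K))\to H^1_\cts(\pi,G(\wt X))$ is surjective. This is a substantial theorem (the content of Section 5 of the paper): it needs a Rigidity Lemma for $\wt X$, the fact that pointed maps from proper rigid spaces into $G$ factor through a maximal abeloid subgroup, and the vanishing of $H^1_\cts(\pi,G(\wt X)^\circ)$. Your proposal treats this as automatic. Your full-faithfulness reduction to $\GL_n$ is also not quite the paper's argument (which shows directly that $G(\wt X)=G(K)$ when $G$ is linear analytic, since $\O(\wt X)=K$), but it is salvageable; note however that the paper exhibits a counterexample to full faithfulness for general commutative $G$, so any argument here must genuinely use the embedding into $\GL_n$ to force all maps $\wt X\to G$ to be constant.
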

	The assumption on $G$ to be locally $p$-divisible is also necessary.  The crucial point is that the morphism $\log_G:\wh{G}\to \Lie G$ has to be surjective.
	
	To our surprise, the functor in (ii) fails to be fully faithful in general: There is an unexpected relation to the phenomenon of pro-finite-\'etale uniformisation of abeloids, see \Cref{r:rmk-fully-faihful-new}. This shows that already for commutative $G$, \Cref{q2} can be very subtle.
	
	As an important tool in the proof of \Cref{rslt:commutative-intro}, we prove a rigidity lemma for the universal cover $\wt X$ (\Cref{l:Rigidity}), and we show that  any pointed map from a connected smooth proper rigid space to a rigid group $G$ factors through a maximal abeloid subvariety of $G$ (\Cref{t:morph-proper-to-grp}).
	
	The choices in \Cref{rslt:commutative-intro} (1) are the same as in \cite{heuer-paSimpson}. The choice of a flat $B_{\dR}^+/\xi^2$-lift can be thought of as a choice of splitting of \eqref{eq:HT-intro}. There is a canonical such choice if $X$ comes with a model over a local field. The exponential map is a choice of a continuous group homomorphism $\exp:K\to 1+\m_K$ splitting the $p$-adic logarithm, see \Cref{d:exp}. As we show in 
	%the proof,
	\Cref{t:exp-for-loc-p-divisible-grp}, this induces compatible splittings of $\log_G$ for all locally $p$-divisible commutative $G$, generalising Faltings' result that this holds in the linear algebraic case.
	%\begin{Remark}
	
	%	\end{Remark}

\subsection{The Tannakian approach}
The obvious problem in answering \Cref{q3} is that currently, little is known about the condition ``??'' even in the case of $G=\GL_n$. However, one case which is understood is when $X$ is an abeloid variety:
%(constituting the other instance of an ``abelian'' case alluded to in the title)
By \cite[Theorem 6.1]{HMW-abeloid-Simpson}, there is an equivalence of categories between finite-dimensional continuous representations of the Tate module of $X$ and pro-finite-\'etale Higgs bundles on $X$, depending on similar choices as in \Cref{rslt:commutative-intro}.  The first natural strategy to go beyond  $\GL_n$ in this case is to leverage the Tannakian formalism, 
following Simpson's approach in complex non-abelian Hodge theory \cite[\S6]{Simpson-local-system}: In fact, Simpson shows that one can deduce from the Corlette--Simpson correspondence for $\GL_n$ a generalisation for linear algebraic groups $G$ by using the Tannakian formalism.
We show that a Tannakian argument also works in $p$-adic geometry: For any linear algebraic group $G$ over $K$, there is an equivalence of categories
\[
\Big\{
\text{$G$-Higgs bundles on $X_{\et}$}\Big\}\cong  \Big\{\begin{array}{@{}c@{}l}\text{exact tensor functors}\\
	\mathrm{Rep}_K(G)\to \mathrm{Higgs}(X)\end{array}\Big\}.
\]
where $\mathrm{Rep}_K(G)$ is the category of algebraic representations of $G$ and $\Higgs(X)$ is the category of Higgs bundles on $X$. We show that in the $p$-adic case, one needs to impose some further conditions on $X$ to ensure that the notion of ``pro-finite-\'etale'' bundles can be captured by a Tannakian perspective, but these are satisfied for abeloid varieties.

We use this to deduce from \cite[Theorem~1.1]{HMW-abeloid-Simpson} the following result:
\begin{Theorem}[\Cref{t:tannakian}]
	Let $X$ be an abeloid variety and let $G$ be a linear algebraic group over $K$, considered as a rigid group. Then choices of an exponential map
	and a flat $B_\dR^+/\xi^2$-lift of $X$ induce an equivalence of categories
	\begin{align*}
		\Big\{\begin{array}{@{}c@{}l}\text{continuous homomorphisms  }\\
			\pi_1^{\et}(X,0) \rightarrow G(K) \end{array}\Big\}\cong  \Big\{\begin{array}{@{}c@{}l}\text{ pro-finite-\'etale}\\
			\text{$G$-Higgs bundles on $X$}\end{array}\Big\}.
	\end{align*}
\end{Theorem}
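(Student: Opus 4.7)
The plan is to bootstrap from the $\GL_n$-case \cite[Theorem~1.1]{HMW-abeloid-Simpson} via Tannakian formalism, paralleling Simpson's argument in \cite[\S6]{Simpson-local-system}. The idea is to identify both sides of the desired equivalence with categories of exact $K$-linear tensor functors out of $\mathrm{Rep}_K(G)$. On the Higgs side, since $G$ is linear algebraic, the Tannakian reinterpretation of $G$-Higgs bundles discussed above identifies them with exact tensor functors $\mathrm{Rep}_K(G)\to\Higgs(X)$. The full subcategory $\Higgs^{\profet}(X)\subseteq\Higgs(X)$ of pro-finite-\'etale Higgs bundles is a tensor subcategory closed under tensor products, duals, kernels and cokernels, since being trivialised along the universal cover $\wt X\to X$ is preserved by these operations. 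Using a faithful representation $G\hookrightarrow\GL_n$, one checks that a $G$-Higgs bundle is pro-finite-\'etale if and only if the associated tensor functor factors through $\Higgs^{\profet}(X)$. On the Betti side, classical Tannakian duality for the affine group scheme $G$ identifies continuous homomorphisms $\pi_1^\et(X,0)\to G(K)$ with exact tensor functors $\mathrm{Rep}_K(G)\to\mathrm{Rep}_K^{\cts}(\pi_1^\et(X,0))$ that respect the natural forgetful fibre functors to $\mathrm{Vec}_K$.

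Next, for the chosen exponential map and flat $B_\dR^+/\xi^2$-lift, \cite[Theorem~1.1]{HMW-abeloid-Simpson} yields an equivalence
\begin{equation*}
\mathrm{Rep}_K^{\cts}(\pi_1^\et(X,0))\isomarrow\Higgs^{\profet}(X).
\end{equation*}
The crucial step is to verify that this equivalence is compatible with the tensor structures on both sides and with the fibre functors to $\mathrm{Vec}_K$ (induced by the fibre at $0\in X(K)$ on the Higgs side and by forgetting the action on the Betti side). Granting this, post-composition induces an equivalence between the two categories of exact tensor functors out of $\mathrm{Rep}_K(G)$ appearing in the previous paragraph, and the Tannakian identifications then give the desired equivalence. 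Pro-finite-\'etaleness is preserved by construction, since the equivalence above already restricts to pro-finite-\'etale Higgs bundles on the right.

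The main technical obstacle is verifying tensor compatibility of the $\GL_n$-Simpson correspondence of \cite{HMW-abeloid-Simpson}, together with coherence of the associated natural isomorphisms for tensor products, duals and unit objects. This should follow from the explicit construction of the equivalence via evaluation along $\wt X\to X$ combined with the chosen splitting of the Hodge--Tate sequence, each of which is a tensor-natural operation; but the bookkeeping required to check the coherence conditions requires care. Once that is in hand, the remainder of the proof is formal Tannakian manipulation, and the same framework simultaneously handles morphisms, giving full faithfulness for free.
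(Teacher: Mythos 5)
Your proposal follows essentially the same route as the paper: Tannakian descriptions of both sides (\Cref{p:pirep}, \Cref{p:Tannaka-HB}, \Cref{l:Tannaka}) combined with the fact that the rank-$n$ correspondence of Heuer--Mann--Werner is an exact tensor equivalence; the fibre-functor bookkeeping you worry about is sidestepped in the paper by working with tensor functors without a fibre-functor datum and absorbing the resulting inner forms of $G$. The one substantive point you gloss over is the identification of pro-finite-\'etale $G$-Higgs bundles with tensor functors landing in $\Higgs^{\profet}(X)$: a single faithful representation $G\hookrightarrow \GL_n$ does not obviously suffice (a $G$-reduction of the trivialised $\GL_n$-bundle on $\wt X$ is a section $\wt X\to \GL_n/G$, which need not be constant since $\GL_n/G$ is only quasi-projective), and the paper instead runs the Tannakian correspondence over the sousperfectoid space $\wt X$ itself and uses $\O(\wt X)=K$ — which is precisely why the hypothesis that $\wt X$ be sousperfectoid appears in \Cref{l:Tannaka} and is checked for abeloids.
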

The same strategy will also work e.g.\ for curves once the correct condition ?? in \Cref{q3} is worked out in this case. Recently, Xu has made progress in this direction 
\cite{xu2022parallel}.

\subsection{The $p$-adic Simpson correspondence for small bundles}
When translated to the analytic setting, the first step  in Faltings' %construction of the 
$p$-adic 
Simpson correspondence for $\GL_n$ on curves \cite[\S2]{Faltings_SimpsonI} would be to consider the local case that $X$ is an affinoid smooth rigid space with a toric chart. In this situation, he constructs a ``local $p$-adic Simpson correspondence'', which, translated to our v-topological setup, can be interpreted as an equivalence of categories
\begin{equation}\label{eq:LSC-for-GLn}\big\{\text{small v-vector bundles on $X$} \big\}\isomarrow  \big\{
	\text{small Higgs bundles on $X$} \big\}.
\end{equation}
Here, on either side, smallness is a technical condition which roughly means that the objects are $p$-adically close to trivial. This correspondence does not extend beyond ``small'' objects.

In a ``globalization'' step, Faltings then chooses a flat lift of a semi-stable model of $X$ to $A_{\inf}/\xi^2$ and reinterprets the local correspondence in terms of this lift. This allows him to glue the correspondences on a small covering under certain conditions.

This ``small correspondence'' in terms of a lift has since been the subject of extensive studies: At least in the case of semi-stable reduction, it can be regarded as being completely understood following the work of Abbes--Gros and Tsuji \cite{AGT-p-adic-Simpson}\cite{Tsuji-localSimpson}, and more recently Liu--Zhu \cite{LiuZhu_RiemannHilbert} and Wang \cite{Wang-Simpson} in the rigid analytic setting.

The local correspondence in terms of toric charts admits a generalisation to $G$-torsors  for general rigid groups $G$ \cite[Theorem~6.5]{heuer-Moduli}. In order to produce evidence that this can be globalised, we also prove a special case of the ``small correspondence'' for general $G$: 	
\begin{Theorem}
	Let $X$ be an ordinary abeloid variety over $K$. Then any choice of anti-canonical $p$-divisible subgroup $D\subseteq X[p^\infty]$  induces an equivalence of categories
	\[\big\{\text{small $G$-torsors on $X_\mathrm{v}$} \big\}\isomarrow  \big\{
	\text{small $G$-Higgs bundles on $X$} \big\}.\]
\end{Theorem}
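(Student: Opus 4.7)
The strategy is to use the anticanonical perfectoid tower associated with $D$. Since $X$ is ordinary and $D$ is anti-canonical, the inverse limit $X_\infty := \varprojlim_n X/D[p^n]$ is an affinoid perfectoid space and $X_\infty \to X$ is a pro-finite-\'etale Galois cover with Galois group $\TT := T_p D$, a free $\Z_p$-module of rank $g = \dim X$. This cover plays the role of a single global ``chart'' on which the local $p$-adic Simpson correspondence of \cite[Theorem~6.5]{heuer-Moduli} can be applied uniformly; the proof then consists in running that local correspondence $\TT$-equivariantly and descending along $X_\infty \to X$.

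The first step is to trivialise both sides on $X_\infty$. The smallness condition together with the fact that $X_\infty$ is perfectoid forces the pullback of a small $G$-torsor on $X_\mathrm{v}$ to be trivial, so Galois descent identifies the category of small v-$G$-torsors with that of continuous $1$-cocycles $c \colon \TT \to G(X_\infty)$ with values in a fixed small open neighbourhood of the identity, modulo coboundaries. In parallel, any small $G$-Higgs bundle has trivialisable underlying torsor, and so reduces to a small Higgs field $\theta \in \Lie G \otimes_K H^0(X, \Omega^1_X(-1))$ satisfying $[\theta,\theta]=0$.

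The equivalence is then constructed by means of the logarithm and the exponential. The bridge between the two descriptions is the canonical Hodge--Tate-type identification arising from the anti-canonical decomposition, which links $\TT$ (up to Tate twists) with the cotangent data $\Omega^1_X(-1)$: a small cocycle $c$ is sent to the Higgs field obtained by forming $\log c$, which is a continuous $\Lie G$-valued map on $\TT$ over $X_\infty$, and then using the above identification to convert it into a global section of $\Lie G \otimes \Omega^1_X(-1)$ on $X$. Conversely, a small Higgs field integrates via $\exp$ to a small cocycle. The smallness condition is precisely what is required for $\log$ and $\exp$ to be mutually inverse bijections between the corresponding open neighbourhoods of the identity of $G$ and of $\Lie G$.

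The main obstacle is the non-abelian case. The cocycle condition $c(\gamma\gamma') = c(\gamma)\cdot {}^\gamma c(\gamma')$ must translate under $\log$ into the Higgs integrability $[\theta,\theta]=0$ through a Maurer--Cartan-type identity, and verifying this depends crucially on the commutativity of $\TT$ and on choosing a formal model of $G$ whose smallness thresholds are compatible with pullback to $X_\infty$ and with descent. Setting up these bounds carefully, and checking functoriality in $G$ together with independence from auxiliary choices, is the technical heart of the proof; once in place, passing from the single Galois chart $X_\infty$ back to $X$ is formal.
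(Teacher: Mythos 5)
There is a genuine gap at the very first step. You claim that ``the smallness condition together with the fact that $X_\infty$ is perfectoid forces the pullback of a small $G$-torsor on $X_\mathrm{v}$ to be trivial,'' and you then work with a single global cocycle $\TT\to G(X_\infty)$. This is false: $X_\infty=\varprojlim_n X/D_n$ is perfectoid but not \emph{affinoid} perfectoid, and the almost-vanishing of higher $\O^+$-cohomology that drives the local correspondence is only available affinoid-locally. Concretely, $H^1_{\et}(X_\infty,\O)=\varinjlim_n H^1_{\et}(X/D_n,\O)$ with transition maps given by pullback along the canonical isogenies (Frobenius on the special fibre), which act invertibly on the unit-root part since $X$ is ordinary; so this group is nonzero, and a small v-$G$-torsor on $X$ retains a nontrivial ``\'etale component'' after pullback to $X_\infty$. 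Relatedly, your reduction forces every small $G$-Higgs bundle to have trivial underlying torsor, whereas the correct target category must contain all pairs $(V,0)$ with $V$ an arbitrary \'etale $G$-torsor (these are small, and are fixed points of the correspondence). Your construction can never produce them.

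The paper's proof is structured precisely to avoid this. Smallness is defined \'etale-locally (Definition 4.19): one chooses an affinoid \'etale cover $h:\wt X\to X$ such that $\wt X$ and its double and triple fibre products over $X$ satisfy the local setup — in particular $X_\infty\times_X\wt X$ is affinoid perfectoid and the Cartan--Leray map $\Hom_{\cts}(\Delta,\O(U))\to H^1_\mathrm{v}(U,\O)$ is an isomorphism there (this is what Lemma 4.6 verifies for the anticanonical tower, using ordinarity and the factorisation of the Hodge--Tate map through the canonical subgroup). On each such affinoid one applies the local correspondence of [heuer-Moduli, Theorem 6.5] (Proposition 4.13), and the global equivalence is obtained by descent along $\wt X\to X$. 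This descent is not formal: it requires the naturality of the local correspondence in Galois-equivariant maps of covers and its compatibility with composition (Proposition 4.15), and then a verification that the induced isomorphisms satisfy the cocycle condition on the triple fibre product (Theorem 4.17). The uniform control of the smallness constant $c$ across $\wt X$, $\wt X\times_X\wt X$ and $\wt X\times_X\wt X\times_X\wt X$ — which you gesture at — is built into the definition of ``locally small'' rather than being an afterthought. Your log/exp mechanism and the Maurer--Cartan translation of the cocycle condition into $[\theta,\theta]=0$ are indeed the content of the local correspondence, but they live on the affinoid pieces, not on $X_\infty\to X$ globally.
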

For this we use that the datum of $D$ induces a particularly well-behaved kind of global perfectoid Galois cover $X_\infty\to X$ which locally plays the same role as Faltings' toric chart. 

\medskip

This provides evidence that a ``small correspondence'' exists more generally for any rigid space $X$ and any rigid group variety $G$. In general, this seems difficult to construct with current methods:
For example, in \cite{AGT-p-adic-Simpson}\cite{LiuZhu_RiemannHilbert}\cite{Wang-Simpson}, the correspondence is constructed in terms of period sheaves, and  we do not know how to adapt these to  general $G$. That being said, when $G$ has good reduction, the approaches of \cite{MinWang22}\cite{anschutz2023hodgetate} might offer a geometric strategy to construct such a functor, by considering $G$-bundles on Bhatt--Lurie's Hodge--Tate stack.

\subsection*{Notations and conventions}
\begin{itemize}
	\item $p$: a fixed prime number.
	\item $K$: a complete algebraically closed non-archimedean field extension of $\Q_p$. We write $\mathcal{O}_K$ for its ring of integers and $\mathfrak{m}_K$ for the maximal ideal of $\mathcal{O}_K$.
	\item $\Spa(K)$: the adic space $\Spa(K,\O_K)$. Similarly, for an adic space $\Spa(R,R^+)$, we suppress the integral subring $R^+$ from notation if it equals $R^\circ$, the subring of power bounded elements in $R$.  
	\item $\mathbb B$: the closed unit disc over $K$, i.e., $\Spa(K\langle X\rangle,\O_K\langle X\rangle)$. We write $\mathbb D$ for the open unit disc $\mathbb B(|X|<1)$ over $K$.
	\item $\Perf_{K,\tau}$: the category of perfectoid spaces over $\Spd K$, with the $\tau$-topology, where $\tau$ can be \'et, pro\'et or v. Let $\Perf_{K,\tau}^{\mathrm{aff}}$ be the subcategory of affinoid perfectoid spaces.
	\item By a rigid (analytic) space over $K$, we mean an adic space locally of topologically finite type over $\Spa(K)$. By \cite[(1.1.11)]{huber-Etale-Adic}, there is an equivalence of categories $r$ from quasi-separated (resp.\ separated) rigid spaces in the sense of Tate (which we shall call ``classical rigid spaces'') to quasi-separated (resp.\ separated) adic spaces in the sense of Huber. As separatedness will never be an issue in our considerations, this usage of the terminology of ``rigid spaces'' will be harmless.
	\item Let $f:X\to Y$ be a morphism of classical rigid spaces in the sense of Kiehl. If $f$ is proper in the sense of Kiehl \cite[\S9.6.2]{BGR}, then $r(f):r(X)\to r(Y)$ is a proper morphism of adic spaces in the sense of Huber \cite[Definition 1.3.2]{huber-Etale-Adic}, see \cite[Remark 1.3.19.(iii)]{huber-Etale-Adic}. 
	Since we wish to use some results from the classical theory, we shall for simplicity define a ``proper morphism of rigid spaces'' to be one that is locally on the base of the form $r(f)$ for a (Kiehl-)proper morphism of classical rigid spaces. 
	
	This convention is harmless, because one can show that $r(f)$ is proper if and only if $f$ is proper. This direction is more difficult, and as we do not know a complete reference for it in the literature, we sketch the argument (cf  \cite[footnote 3 on p.\ 12]{Scholze-torsion}, \cite[Remarks 1.3.18-19]{huber-Etale-Adic}): Without loss of generality, $f$ admits a formal model $F$. Suppose that $r(f)$ is proper, then the arguments in \cite[Bemerkung 3.11.17, Lemma 3.11.18, Proposition 3.12.5]{Huber-Bewertungsspektrum} do not require the Noetherian assumption and show that $F$ is proper. By \cite[Corollaries~4.4, 4.5]{Temkin_local-properties}, 
	it follows that $f$ is proper. That being said, we will not use the fact that ``$r(f)$ proper implies $f$ proper'' in this article, and only mention it to justify our convention.
	\item Scholze's diamond functor $(\cdot)^\Diamond$ of \cite[Section 10.2]{berkeley} attaches to any analytic adic space over $K$ a v-sheaf on $\Perf_{K,\mathrm v}$, and this assignment is fully faithful on seminormal rigid spaces over $\Spa K$, c.f. \cite[Proposition 10.2.3]{berkeley}. We will therefore freely switch back and forth between smooth rigid spaces and their associated diamonds. In particular, we will often drop the diamond symbol from notation for simplicity, unless confusion might arise.
	\item $X_\mathrm{v}$ ($X$ being an adic space over $K$): the v-site of the diamond $X^\Diamond$, or equivalently the slice category $\Perf_{K,\mathrm{v}|X^\Diamond}$. We denote by $\nu$ the natural map $X_\mathrm{v}\rightarrow X_\text{\'et}$.
	\item All $G$-torsors are equipped with left $G$-actions.
	\item We use $\Homc$ to denote 
	the set of continuous group homomorphisms and correspondingly we use $H^*_\mathrm{cts}$ to denote continuous group cohomology sets.
	\item For a commutative rigid group $G$ over $K$, we use $G[p^n]$ to denote the sub-rigid group of $p^n$-torsion elements, and we denote by $G[p^\infty]$ the v-sheaf $\varinjlim_nG[p^n]$.
\end{itemize}

\subsection*{Acknowledgements}
This project was funded by the Deutsche Forschungsgemeinschaft (DFG, German Research Foundation),  TRR 326 \textit{Geometry and Arithmetic of Uniformized Structures}, project number 444845124.
It originated from discussions the second and the third author had during the workshop {\it Women in Numbers Europe 4} (Utrecht 2022). We thank the organizers and sponsors of this meeting. Special thanks go to Wies{\l}awa Nizio{\l} who coorganized with the second author the research group on "Relative $p$-adic Hodge Theory", and to the participants of this research group. The third named author is funded by Hausdorff Center for Mathematics and is jointly hosted by the Max-Planck Institute for Mathematics. We would like to thank Lucas Gerth and the anonymous referee for helpful comments on an earlier version. Moreover, we thank Johannes Anschütz, Ian Gleason, \mbox{Arthur-C\'esar} \mbox{Le Bras} and Daxin Xu for helpful conversations.

\section{Rigid group theory}
We start with structural results on rigid group varieties that we will need throughout. These are of some independent interest.  We begin by recalling some technical background:
\subsection{Local structure, Lie algebra and logarithm}\label{s:local-struct-G}
\begin{Definition}
	A rigid group $G$ over $K$ is a group object in the category of adic spaces of locally topologically finite type over $\Spa(K)$.
\end{Definition}
We refer to \cite[\S1]{Fargues-groupes-analytiques} and \cite[\S3]{heuer-G-Torsor} for some background on rigid group varieties. One relevant fact is that since $K$ has characteristic $0$, any rigid group variety is automatically smooth \cite[Proposition 1]{Fargues-groupes-analytiques}. For this reason, it is harmless in the following to switch back and forth between the setting of adic spaces over $K$ and that of locally spatial diamonds over $K$.
\begin{Example}
	\begin{enumerate}
		\item For any algebraic group $G_0$ over $K$, the adic analytification $G_0^{an}$ is a rigid group over $K$. In particular, we have the rigid groups $\G_a^{an}$, $\G_m^{an}$ and $\GL_n^{an}$ for any $n$. As we will work exclusively in the analytic category, we will drop the $-^{an}$ from notation throughout when this is clear from context.
		\item For any smooth admissible formal group scheme $\mathfrak G$ over $\mathcal O_K$, the adic generic fibre $G$ is a rigid group. We say that $G$ has good reduction if it is of this form. For example, the adic generic fibre of the affine formal additive group is an open subgroup $\mathbb G_a^+\subseteq \mathbb G_a$ that can be described as the closed unit disc $\mathbb B$ with additive structure.
		\item For any finite dimensional $K$-vector space $V$, we have a rigid group $V\otimes \mathbb G_a$ defined as the rigid analytification of  $\Spec(K[V^\vee])$. This defines a fully faithful functor
		\[-\otimes_K\mathbb G_a:\{\text{finite dimensional $K$-vector spaces}\}\to \{\text{rigid groups over $K$}\}.\]
		We call a rigid group a rigid vector group if it is in the essential image.
		\item A connected proper rigid group is called an abeloid variety. These are the $p$-adic analogs of complex tori. There is a good structure theory of abeloid varieties due to Lütkebohmert \cite{Lutkebohmert_structure_of_bounded}, generalising Raynaud's rigid analytic structure theory of abelian varieties.
	\end{enumerate}
\end{Example}

\begin{Definition}
	For any rigid group $G$ over $K$, we define its Lie algebra to be
	\[\Lie G:=\mathrm{ker}(G(K[X]/X^2)\rightarrow G(K))\]
	This is a finite dimensional $K$-vector space which inherits the structure of a Lie algebra of dimension $\mathrm{dim}_K\Lie G=\mathrm{dim} G$. We will denote the associated rigid vector group by
	\[ \mathfrak g:=\Lie G\otimes_K\mathbb G_a.\]
	This represents the v-sheaf on $\Perf_K$ defined by $Y\mapsto \Lie G\otimes_K\CO(Y)$.
\end{Definition} 

For any $r\in |K|$, let us denote by $\mathbb B(r)$ the closed rigid disc of radius $r$. Then we have the following result about the local structure of rigid groups:
\begin{Proposition}[{\cite[Corollary 3.8.]{heuer-G-Torsor}}]\label{p:local-structure-rigid-grps}
	Let $G$ be any rigid group over $K$. Then $G$ has a neighbourhood basis of the identity $(G_k)_{k\in \mathbb N}$ of open subgroups $G_k\subseteq G$ of good reduction whose underlying inverse system of rigid spaces is isomorphic to $(\mathbb B(|p^{k}|)^d)_{k\in \mathbb N}$ for $d=\dim G$.
\end{Proposition}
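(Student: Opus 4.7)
The plan is to construct the $G_k$ from a single local chart at the identity $e\in G$, in which the multiplication becomes a convergent power series of a very constrained shape. Since $K$ has characteristic zero, $G$ is smooth of pure dimension $d=\dim G$ \cite[Proposition~1]{Fargues-groupes-analytiques}, so I will pick an open affinoid neighbourhood $U\subseteq G$ of $e$ together with an isomorphism $\phi\colon U\xrightarrow{\sim}\mathbb B(\rho_0)^d$ of rigid spaces, for some $\rho_0\in|K^\times|$, sending $e$ to the origin.

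Pulled back via $\phi$, the multiplication $m$, restricted to a sufficiently small bidisc around $(e,e)$, will be given by $d$ power series $F_i(x,y)\in K\langle x,y\rangle$ of the form $F_i(x,y)=x_i+y_i+\sum_{|\alpha|+|\beta|\geq 2}c_{i,\alpha,\beta}\,x^\alpha y^\beta$, where the linear part is forced by $m(e,y)=y$ and $m(x,e)=x$. The inversion $\iota$ will admit an analogous expansion whose linear part is $-x$. Since $F$ and $\iota$ converge on some polydisc of positive radius and carry no constant term, a standard non-archimedean Lipschitz estimate will produce an integer $k_0$ such that for every $k\geq k_0$, the sub-polydisc $\mathbb B(|p^k|)^d\subseteq\mathbb B(\rho_0)^d$ is preserved by both $F$ and $\iota$: a Gauss-norm bound on the $c_{i,\alpha,\beta}$ implies that on a small enough polydisc, the quadratic and higher-order terms are dominated by the linear part $x+y$.

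For $k\geq k_0$, I define $G_k\subseteq U$ to be $\phi^{-1}(\mathbb B(|p^k|)^d)$. By construction $G_k$ is an open subgroup of $G$, the chart $\phi$ identifies $G_k$ with $\mathbb B(|p^k|)^d$ compatibly in $k$ (hence also at the level of the inverse system), and $(G_k)_k$ forms a neighbourhood basis of $e$ in $G$. For the good-reduction assertion, I observe that the same coefficient bounds force, after rescaling $x_i\mapsto p^kx_i$, all coefficients of $F_i$ and $\iota_i$ to lie in $\O_K$, so that $\Spf\O_K\langle y_1,\dots,y_d\rangle$ together with the rescaled group law is a smooth affine formal group scheme over $\O_K$ whose adic generic fibre recovers $G_k$.

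The main technical obstacle will be the quantitative part: I need to extract an explicit $k_0$ from the convergence of $F$ and $\iota$ which ensures that the estimates on individual coefficients really do propagate to the full composite group operations, so that the polydiscs $\mathbb B(|p^k|)^d$ are genuinely closed under both multiplication and inversion on the nose (rather than merely up to some modification of the chart). Once this non-archimedean Lipschitz analysis is in place, the good-reduction statement drops out automatically from the smoothness of the formal polydisc $\Spf\O_K\langle y_1,\dots,y_d\rangle$ over $\O_K$.
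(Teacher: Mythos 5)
The paper itself does not prove this Proposition: it is quoted verbatim from \cite[Corollary 3.8]{heuer-G-Torsor}, and the proof there is essentially the coordinate/power-series argument you propose (choose a chart at the identity, write the group law as $x+y+{}$higher-order terms, bound coefficients so that small polydiscs are stable under multiplication and inversion, and rescale to get an integral formal group law). So your approach is the intended one and is correct. The ``main technical obstacle'' you flag is in fact a short Gauss-norm estimate rather than a genuine difficulty: if $\|F_i\|_{\mathbb B(\rho)^{2d}}\leq M$ then $|c_{i,\alpha,\beta}|\leq M\rho^{-(|\alpha|+|\beta|)}$, so on $\mathbb B(r)^{2d}$ every term of total degree $n\geq 2$ has absolute value at most $M(r/\rho)^{n}\leq Mr^{2}/\rho^{2}\leq r$ as soon as $r\leq\rho^{2}/M$; the same inequality shows that the rescaled coefficients $p^{k(n-1)}c_{i,\alpha,\beta}$ lie in $\O_K$ and tend to $0$, so the rescaled law is an element of $\O_K\langle x,y\rangle$ and not merely of $\O_K[[x,y]]$, which is what the good-reduction claim needs. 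Two points you should make explicit in a final write-up: the chart $\phi$ exists because $G$ is smooth and an \'etale map to $\mathbb B^{d}$ restricts to an isomorphism on a sufficiently small polydisc around a $K$-point (rigid inverse function theorem); and the assertion that $(\mathbb B(|p^{k}|)^{d})_{k}$ is a neighbourhood basis of the origin in the adic polydisc requires the (standard but not entirely tautological) observation that any rational subset containing $0$ contains $\mathbb B(|p^{k}|)^{d}$ for $k\gg 0$.
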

We can describe the system $G_k$ more explicitly using the Lie algebra exponential and logarithm. We now summarise the construction,
for a more detailed discussion, see \cite[Section 3.2]{heuer-G-Torsor}. Let $G$ be a rigid group over $K$, and let $G_0$ be any open rigid subgroup of good reduction. Then the Lie algebra of the formal model of $G_0$ induces an integral subgroup $\mathfrak g^+\subseteq \mathfrak g$ of the Lie algebra of $G$ whose underlying rigid space is isomorphic to $\mathbb B(1)^d$. 

For any $k\in \Q_{>0}$, let $\mathfrak g_k:=p^k\mathfrak g^+$, then by \cite[Proposition 3.5]{heuer-G-Torsor} there is $\alpha>0$ such that for any $k> \alpha$ we have the Lie algebra exponential
\[ \exp:\mathfrak g_k\to G\]
of rigid groups. We denote by
\[G_k:= \mathrm{exp}(\mathfrak{g}_k)\]
the image of $\mathfrak g_k$, this is an open subgroup of $G$. Then:
\begin{Lemma}[{\cite[Lemma~4.20]{heuer-G-Torsor}}]\label{l:exp-isom-on-nbhd-basis}
	$(G_k)_{k\in \mathbb{Q}^+_{>\alpha}}$ is a neighbourhood basis of the unit section, consisting of open sub-rigid groups for which $\exp$ has an inverse mapping
	\[\log: G_k\xrightarrow{\sim} \lie_k.\]
\end{Lemma}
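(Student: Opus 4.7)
The plan is to reduce to the case of a formal group law and then invoke the classical convergence estimates for the exponential and logarithm series, together with the Baker--Campbell--Hausdorff (BCH) formula, all adapted to the rigid analytic setting.

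First, since $G_0 \subseteq G$ is open, it suffices to work inside $G_0$, and without loss of generality we replace $G$ by $G_0$ so that $G$ has good reduction with formal model $\mathfrak{G}_0$. The completion of $\mathfrak{G}_0$ at the identity is a formal Lie group in $d = \dim G$ variables; choosing coordinates $x_1,\dots,x_d$ vanishing at the identity, multiplication is given by a formal group law $F(x,y) \in \O_K[[x,y]]^d$, and $\mathfrak{g}^+$ is identified with $\O_K^d$ via the tangent space. The construction of \cite[Section 3.2]{heuer-G-Torsor} recalled before the lemma then supplies the exponential and logarithm as formal power series with coefficients in $K$ associated to $F$, whose valuation is controlled in the usual Lie-theoretic way: denominators in $\exp$ are (up to $\O_K$-units) products of factorials, hence bounded by $|p|^{-n/(p-1)}$ in degree $n$, and similarly for $\log$.

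Second, I would choose $\alpha$ large enough (enlarging the $\alpha$ from \cite[Proposition~3.5]{heuer-G-Torsor} if necessary) so that for every $k > \alpha$ three things simultaneously hold: the series $\exp$ converges on $\mathfrak{g}_k$, the series $\log$ converges on the rigid open set $\{|x| \leq |p|^k\} \subseteq G_0$, and the BCH series $X*Y = X+Y+\tfrac12[X,Y]+\cdots$ converges on $\mathfrak{g}_k \times \mathfrak{g}_k$ and maps into $\mathfrak{g}_k$. The last point uses that each BCH term of degree $n$ is a rational combination of $n$-fold Lie brackets in $X,Y$, whose $p$-adic denominators are bounded by $|p|^{-n/(p-1)}$ while the valuation of an $n$-fold bracket of elements of $\mathfrak{g}_k$ is at least $nk$; for $k > \alpha$ with $\alpha$ sufficiently large (depending on $p$ and on the structure constants bounding $[\mathfrak{g}^+,\mathfrak{g}^+]$ inside $\mathfrak{g}^+$) the resulting valuation is at least $k$.

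Third, from the formal identity $\log \circ \exp = \id$ and $\exp \circ \log = \id$ as formal power series, together with the convergence in the second step, one obtains that $\exp: \mathfrak{g}_k \to G$ is a locally closed immersion of rigid spaces with image $G_k$ and inverse $\log: G_k \isomarrow \mathfrak{g}_k$. This gives parts (b) and shows each $G_k$ is an open rigid subspace of $G_0$. The subgroup property follows from step two: the pullback under $\log$ of the group law on $G$ is precisely the BCH operation $*$ on $\mathfrak{g}_k$, which by construction lands in $\mathfrak{g}_k$; hence $G_k$ is closed under multiplication. Closure under inversion is the analogous (easier) statement that $-X \in \mathfrak{g}_k$ when $X \in \mathfrak{g}_k$.

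Finally, the neighbourhood basis claim is immediate: $(\mathfrak{g}_k)_{k>\alpha}$ is manifestly a neighbourhood basis of $0$ in the rigid vector group $\mathfrak{g}$, and transporting via the rigid isomorphisms $\exp: \mathfrak{g}_k \isomarrow G_k$ (which are compatible with inclusions for varying $k$) yields the desired neighbourhood basis of the identity in $G$. The main technical point is the BCH estimate in step two, so the bulk of the work lies in choosing $\alpha$ large enough to guarantee that the BCH series stays inside $\mathfrak{g}_k$; once this is secured, the rest is formal.
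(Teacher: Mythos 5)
Your argument is essentially correct, but note that the paper itself contains no proof of this statement: it is quoted verbatim from \cite[Lemma~4.20]{heuer-G-Torsor}, so there is nothing internal to compare against. Judging from the surrounding material that the paper does import (in particular \Cref{p:ado}, which identifies the $G_k$ with the preimages of $1+p^kM_n(\O^+)$ under a locally closed immersion $G_0\hookrightarrow \GL_n$ furnished by Ado's theorem), the reference's route runs through the matrix exponential and logarithm on $\GL_n$, where the convergence estimates are classical. You instead argue intrinsically: formal group law of the formal model, factorial bounds on the denominators of the formal $\exp$ and $\log$, and the BCH series to get the subgroup property. Both routes work; the $\GL_n$ reduction outsources the convergence analysis to the linear case at the cost of invoking Ado, while your version makes the dependence of $\alpha$ on the structure constants of $\mathfrak g^+$ explicit and avoids any embedding. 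Two small points to tighten if you write this up: (i) you should state explicitly that $\exp(\mathfrak g_k)=\{|x|\le |p|^k\}$ in the chosen coordinates (both inclusions follow from your estimates, since the higher-order terms of $\exp$ and $\log$ are strictly dominated by the linear term for $k>\alpha$), as this is what makes $G_k$ open and makes the two formal identities usable on matching domains; and (ii) ``locally closed immersion'' should just be ``open immersion'' once that is established.
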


Recall that a linear algebraic group is an algebraic group that admits a faithful algebraic representation, i.e.\ a Zariski-closed homomorphism into $\GL_n$ for some $n$. We have the following analog in rigid geometry:
\begin{Definition}
	We call a rigid group $H$ linear analytic if there is an injective homomorphism of rigid groups $H\to \GL_n$ for some $n$.
\end{Definition}
Using Ado's Theorem, one can show that every rigid group is locally linear analytic:
\begin{Proposition}[{\cite[Corollary~3.9]{heuer-G-Torsor}}]\label{p:ado}
	Let $G$ be any rigid group. Then there is a rigid open subgroup $G_0\subseteq G$ of good reduction that is linear analytic. More precisely, we can find a homomorphism of rigid groups $G_0\to \mathrm{GL}_n$ that is a locally closed immersion and
	that identifies the subgroups $(G_k)_{k>\alpha}$ from \Cref{l:exp-isom-on-nbhd-basis} with the preimages of $1+p^{k}M_n(\O^+)$. 
\end{Proposition}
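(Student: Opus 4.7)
The plan is to bootstrap the statement from Ado's theorem for finite dimensional Lie algebras over $K$, using the Lie algebra exponential and logarithm recalled in \Cref{l:exp-isom-on-nbhd-basis} to transport a Lie algebra embedding into a homomorphism of rigid groups on a sufficiently small open subgroup of the identity.

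First I would apply Ado's theorem to $\mathfrak{g}=\Lie G$ to obtain an injective Lie algebra homomorphism $\iota:\mathfrak{g}\hookrightarrow \mathfrak{gl}_n=M_n(K)$ for some $n$. Since $\mathfrak{g}^+\subseteq\mathfrak{g}$ is a finitely generated $\O_K$-lattice, after rescaling $\iota$ by a suitable power of $p$ (equivalently, shifting the index $k$) we may arrange that $\iota(\mathfrak{g}^+)\subseteq M_n(\O^+)$, so that for every $k>\alpha$ the map $\iota$ restricts to a closed immersion of rigid spaces
\[
\iota_k:\mathfrak{g}_k=p^k\mathfrak{g}^+\hookrightarrow p^kM_n(\O^+).
\]
On the $\mathrm{GL}_n$-side, the classical matrix exponential converges on $p^kM_n(\O^+)$ for $k>1/(p-1)$ and produces an isomorphism of rigid spaces $\exp_{\mathrm{GL}_n}:p^kM_n(\O^+)\xrightarrow{\sim} 1+p^kM_n(\O^+)$, whose inverse is the matrix logarithm, so the subgroups $1+p^kM_n(\O^+)\subseteq\mathrm{GL}_n$ are a system of open subgroups of good reduction for $\mathrm{GL}_n$ playing the same role as the $G_k$ for $G$.

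Next I would define, for $k$ sufficiently large, the candidate homomorphism
\[
\varphi_k:G_k\xrightarrow{\log_G}\mathfrak{g}_k\xrightarrow{\iota_k}p^kM_n(\O^+)\xrightarrow{\exp_{\mathrm{GL}_n}}1+p^kM_n(\O^+)\hookrightarrow\mathrm{GL}_n.
\]
The key point is that $\varphi_k$ is a group homomorphism of rigid groups. This is where I expect the main technical work: one needs the Baker--Campbell--Hausdorff formula in the rigid analytic setting, i.e.\ that after possibly enlarging $\alpha$, multiplication on $G_k$ (respectively on $1+p^kM_n(\O^+)$) is transported via $\log_G$ (resp.\ $\log_{\mathrm{GL}_n}$) to the operation $X\star Y$ given by the BCH series, which converges on $\mathfrak{g}_k$ (resp.\ on $p^kM_n(\O^+)$) for $k$ large enough. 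This adaptation of BCH to rigid analytic Lie groups is classical and is the content of the local structure theory underlying \Cref{p:local-structure-rigid-grps} and \Cref{l:exp-isom-on-nbhd-basis}; since BCH is universal in the Lie algebra and $\iota_k$ is a Lie algebra map, it automatically intertwines the two BCH structures, so $\varphi_k$ is a homomorphism.

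Finally I would check that $\varphi_k$ is a locally closed immersion and that the preimage condition holds. As a map of rigid spaces, $\varphi_k=\exp_{\mathrm{GL}_n}\circ\iota_k\circ\log_G$ is the composition of two isomorphisms with the closed immersion $\iota_k$, so it is a closed immersion into $1+p^kM_n(\O^+)$, hence a locally closed immersion into $\mathrm{GL}_n$. Choosing $G_0:=G_k$ for any such $k$ gives the required open subgroup of good reduction. For the final assertion, since both exponentials are bijective on the relevant rigid open sets and $\iota$ is injective, we have $\varphi_k^{-1}\bigl(1+p^{k'}M_n(\O^+)\bigr)=\exp\bigl(\iota^{-1}(p^{k'}M_n(\O^+))\bigr)$, and since $\iota(\mathfrak{g}^+)\subseteq M_n(\O^+)$ combined with the injectivity of $\iota$ yields $\iota^{-1}(p^{k'}M_n(\O^+))=\mathfrak{g}_{k'}$ for all $k'>\alpha$, this preimage equals $\exp(\mathfrak{g}_{k'})=G_{k'}$, as required.
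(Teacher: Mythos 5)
The paper does not prove this proposition itself: it is quoted verbatim from \cite[Corollary~3.9]{heuer-G-Torsor}, and the surrounding text only indicates that the proof goes ``using Ado's Theorem''. Your strategy --- Ado, then transport of a faithful Lie algebra representation through $\log_G$ and $\exp_{\GL_n}$, with Baker--Campbell--Hausdorff guaranteeing that the resulting map is a homomorphism --- is exactly the expected argument, and the middle of your proof (existence of $\varphi_k$ as a homomorphism and locally closed immersion for $k\gg 0$) is sound, granted the rigid-analytic BCH input which is indeed part of the local structure theory you cite.

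There are, however, two genuine problems in how you handle the integral structures. First, ``rescaling $\iota$ by a power of $p$'' is not an available move: if $\iota$ is a Lie algebra homomorphism then $c\iota$ is not one unless $\mathfrak g$ is abelian, since $[c\iota(x),c\iota(y)]=c^2\iota([x,y])$. What you can do is observe that $\iota(\mathfrak g^+)$ is bounded, so $\iota(\mathfrak g_k)\subseteq p^{k-N}M_n(\O^+)$ for some fixed $N$; but this shifts the index and already breaks the exact matching $G_k\leftrightarrow 1+p^kM_n(\O^+)$ claimed in the statement. Second, and more seriously, your final deduction ``$\iota(\mathfrak g^+)\subseteq M_n(\O^+)$ together with injectivity of $\iota$ yields $\iota^{-1}(p^{k'}M_n(\O^+))=\mathfrak g_{k'}$'' is false: containment plus injectivity only gives $\mathfrak g_{k'}\subseteq \iota^{-1}(p^{k'}M_n(\O^+))$. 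For the reverse inclusion you need the lattice $\iota(\mathfrak g^+)$ to be \emph{saturated}, i.e.\ $\iota(\mathfrak g^+)=\iota(\mathfrak g)\cap M_n(\O^+)$; the one-dimensional example $\iota(x)=px$, $\mathfrak g^+=\O_K$ already violates your claimed equality. The correct repair is to reverse the order of the choices, which is how the cited statement is meant to be read: first construct the embedding $\varphi$ on a small enough open subgroup, then \emph{define} $\mathfrak g^+:=\iota^{-1}(M_n(\O^+))$ (an $\O_K$-lattice in $\mathfrak g$ since $\iota$ is an injective linear map of finite-dimensional spaces), so that $G_0$ and the filtration $(G_k)_{k>\alpha}$ of \Cref{l:exp-isom-on-nbhd-basis} are the ones attached to this integral structure; with that normalisation the preimage identity becomes tautological. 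As written, your argument fixes $\mathfrak g^+$ in advance and the final step does not go through.
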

This sometimes allows us to reduce local statements about $G$ to the case of $\GL_n$.
\subsection{The center}

As for algebraic groups, there is a good notion of a central subgroup in rigid group theory.
\begin{Definition}\label{d:center}
	For any connected rigid group $G$ over $K$, the center $Z(G)$ is defined as the kernel of the adjoint morphism (see \cite[\S3.1]{heuer-G-Torsor})
	\[\mathrm{ad}\colon G\to \mathrm{Aut}(\mathfrak g)\]
	By definition, $Z(G)\subseteq G$ is a Zariski-closed normal subgroup of $G$.
\end{Definition}
\begin{Lemma}\label{l:center}
	Let $G$ be a connected rigid group.
	\begin{enumerate}
		\item $Z(G)$ is commutative and $Z(G)(K)$ is the center of $G(K)$.
		\item There exists a linear analytic group $H$ for which there is a left-exact sequence
		\[ 1\to Z(G)\to G\to H.\]
	\end{enumerate}
\end{Lemma}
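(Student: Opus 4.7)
For part (1), the key input is a rigidity principle: two morphisms of rigid spaces out of a connected smooth rigid space that agree on a non-empty open subset must be equal, because a connected smooth rigid space over $K$ is irreducible. If $g$ is central in $G(K)$, then the conjugation morphism $c_g\colon G\to G$ equals $\mathrm{id}$, so its derivative $\mathrm{ad}(g)\in\mathrm{Aut}(\mathfrak g)(K)$ is trivial and $g\in Z(G)(K)$. Conversely, suppose $g\in Z(G)(K)$. Using the system of neighborhoods $(G_k)_{k>\alpha}$ from \Cref{l:exp-isom-on-nbhd-basis}, on which $\exp\colon\mathfrak g_k\isomarrow G_k$ is an isomorphism, the naturality identity
\[
c_g\circ \exp \;=\; \exp\circ \mathrm{Ad}(g) \;=\; \exp
\]
forces $c_g|_{G_k}=\mathrm{id}|_{G_k}$. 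Since $G$ is connected and smooth, hence irreducible, the identity theorem yields $c_g=\mathrm{id}$ on all of $G$, so $g$ centralizes $G(K)$.

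This shows $Z(G)(K)$ is abelian. To upgrade commutativity to the level of the rigid group $Z(G)$, I would use that $Z(G)$, being a rigid group in characteristic $0$, is itself smooth by \cite[Proposition~1]{Fargues-groupes-analytiques}. Hence its classical points are Zariski dense (as $K$ is algebraically closed). The two morphisms $\mu,\mu^{\mathrm{op}}\colon Z(G)\times Z(G)\to Z(G)$ given by multiplication and the flipped multiplication agree on $K$-points by the previous paragraph, so they agree as morphisms into the separated space $Z(G)$, yielding the claimed commutativity.

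For part (2), the natural candidate is $H:=\mathrm{Aut}(\mathfrak g)=\GL(\Lie G)$, the rigid-analytic general linear group on the finite-dimensional $K$-vector space $\Lie G$. This is the analytification of an algebraic group, in particular linear analytic in the sense of the previous subsection. The adjoint morphism $\mathrm{ad}\colon G\to H$ recalled in \Cref{d:center} is a morphism of rigid groups whose kernel is, tautologically, $Z(G)$. This provides the required left-exact sequence $1\to Z(G)\to G\to H$.

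The only subtle step is the identity-theorem argument in part (1): one has to know that a connected smooth rigid space over $K$ is irreducible so that two morphisms agreeing on a non-empty open subset are equal. Everything else reduces to formal manipulations with $\exp$, $\mathrm{ad}$ and the definition of $Z(G)$.
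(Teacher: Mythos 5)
Your proof is correct and follows essentially the same route as the paper: both parts (1) and (2) are handled by the same arguments (conjugation morphisms, connectedness plus the identity principle, density of $K$-points for commutativity of $Z(G)$, and $H=\mathrm{Aut}(\mathfrak g)\cong\GL_n$ for (2)). The only cosmetic difference is that where the paper cites \cite[Theorem~3.4]{heuer-G-Torsor} for ``$c_g=\id$ iff $\Lie(c_g)=\id$'', you reprove this step directly via the naturality $c_g\circ\exp=\exp\circ\mathrm{Ad}(g)$ on the subgroups $G_k$ of \Cref{l:exp-isom-on-nbhd-basis}, which is a perfectly valid substitute.
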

\begin{proof}
	(2) is immediate from the definition, since $\mathrm{Aut}(\mathfrak g)\cong \GL_n$. For (1), let $g\in G(K)$ and consider the homomorphism \[c_g:G\to G, \quad h\mapsto ghg^{-1}.\] This is the identity if and only if $g$ is in the center of $G(K)$. Since $G$ is connected, $c_g$ is the identity if and only  if it is the identity in a neighbourhood of $1\in G$. By \cite[Theorem~3.4]{heuer-G-Torsor}, it follows that $c_g=\id_G$ if and only if the induced morphism on tangent spaces $\mathrm{ad}(g)=\Lie(c_g):\Lie G \to \Lie G$ is the identity, i.e.\ if $g\in Z(G)(K)$. 
	
	To see that $Z(G)$ is commutative, it suffices to show that the following morphism of rigid spaces is constant:
	\[Z(G)\times Z(G)\to Z(G),\quad g,h\mapsto ghg^{-1}h^{-1}.\]
	This we can check on $K$-points, where it follows from the first part.  
\end{proof}

\begin{Remark}
	The connectedness assumption is necessary: For  example, if $G$ is any \'etale rigid group, not necessarily commutative, then $\mathrm{ad}$ is trivial.
\end{Remark}

\subsection{Rigid groups as v-sheaves of topological groups}
Let $G$ be any rigid group. Then its associated v-sheaf inherits the structure of a v-sheaf of groups on $\Perf_K$, and thus more generally on the ``big'' category of small v-sheaves. In this subsection, we show that one can in fact endow this with the structure of a sheaf of topological groups in a natural way:

Let $G_0\subseteq G$ be an open subgroup of good reduction with an embedding $G_0\hookrightarrow \GL_n$ as in \Cref{p:ado}, inducing a cofinal system of open neighbourhoods $(G_k)_{k>\alpha}$. Without loss of generality, by replacing $G_0$ with $G_k$, we can assume that we already have an embedding
\[\rho:G_0\hookrightarrow \GL_n(\O^+).\]

For any quasi-compact small v-sheaf $T$, we can define a topology on $G(T)$ as follows: for each $m\in \N$, endow $\GL_n((\O^+/p^m)(T))$ with the discrete topology and \[\textstyle\GL_n(\O^+(T))=\varprojlim_{m\in\N}\GL_n((\O^+/p^m)(T))\] with the inverse limit topology. The subspace topology on $G_0(T)$ inherited from $\GL_n(\O^+(T))$ extends to a topological group structure on $G(T)$ in a natural way:	
\begin{Proposition}\label{p:can-grp-struct}
	Let $T$ be a quasi-compact small v-sheaf.	There is a unique structure of a topological group on $G(T)$ such that $G_0(T)$ is an open subgroup. 
	This topological structure makes $G$ into a v-sheaf of topological groups on $\Perf_{K,v}^{\mathrm{aff}}$.
\end{Proposition}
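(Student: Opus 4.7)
The plan is to first verify that the prescribed topology on $G_0(T)$ is a topological group structure, then extend it uniquely to $G(T)$ by a coset construction. The hard part will be to verify that conjugation by an arbitrary $T$-point of $G$ is continuous at the identity of $G_0(T)$, which controls both multiplication and inversion; I would handle this via the Lie-algebra exponential and the adjoint action.

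\emph{Topology on $G_0(T)$ and uniqueness.} First I would observe that for quasi-compact $T$, the ring $\mathcal{O}^+(T) = \varprojlim_m (\mathcal{O}^+/p^m)(T)$ is $p$-adically complete, and $\GL_n(\mathcal{O}^+(T))$ inherits the structure of a topological group as the projective limit of the discrete topological groups $\GL_n((\mathcal{O}^+/p^m)(T))$. The embedding $\rho$ then identifies $G_0(T)$ with a subgroup, so the subspace topology makes it a topological group; by \Cref{p:ado} the subgroups $G_k(T)$ for $k > \alpha$ form a neighbourhood basis of the identity by open subgroups. Any group topology on $G(T)$ in which $G_0(T)$ is open with this topology must have $\{gU : g \in G(T),\, U \subseteq G_0(T)\text{ open neighbourhood of }1\}$ as a neighbourhood basis at $g$, which gives uniqueness. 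For existence I would declare a subset $V \subseteq G(T)$ open iff for every $g \in V$ there is an open $U \ni 1$ in $G_0(T)$ with $gU \subseteq V$. This defines a topology for which left translations are continuous and $G_0(T)$ retains its topology, and the continuity of multiplication and inversion on $G(T)$ reduces by translation to the corresponding statements at the identity of $G_0(T)$ together with the claim that for every $h \in G(T)$ and every $k > \alpha$ there exists $k' > \alpha$ with $c_h(G_{k'}(T)) \subseteq G_k(T)$, where $c_h$ denotes conjugation by $h$.

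\emph{Conjugation continuity.} This is the central point. I would exploit the naturality identity $c_h \circ \exp = \exp \circ \mathrm{Ad}(h)$ on $\mathfrak{g}_k$ for $k > \alpha$, valid as an identity of morphisms of v-sheaves over $T$. Since by \Cref{l:exp-isom-on-nbhd-basis} the map $\exp : \mathfrak{g}_k \to G_k$ is an isomorphism, it suffices to find, for every $h \in G(T)$ and every $k > \alpha$, some $k' > \alpha$ with $\mathrm{Ad}(h)(\mathfrak{g}_{k'}(T)) \subseteq \mathfrak{g}_k(T)$. A v-cover reduces this to the case $T = \Spa(R,R^+)$ affinoid perfectoid, in which $R^+$ is a bounded open subring of the Banach algebra $R = \mathcal{O}(T)$. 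The adjoint element $\mathrm{Ad}(h) \in \mathrm{Aut}(\mathfrak{g})(T) \cong \GL_n(R)$ is an $R$-linear automorphism of $\mathfrak{g}(T) = \Lie G \otimes_K R$; its matrix has only finitely many entries in $R$, all of them bounded, so $\mathrm{Ad}(h) \in p^{-m} M_n(R^+)$ for some $m \geq 0$, whence $\mathrm{Ad}(h)(\mathfrak{g}_{k+m}(T)) \subseteq \mathfrak{g}_k(T)$, which is what we need.

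\emph{The v-sheaf structure.} Finally, for any morphism $T \to T'$ of quasi-compact small v-sheaves, the restriction $G_0(T') \to G_0(T)$ is continuous as the restriction of the manifestly continuous map $\GL_n(\mathcal{O}^+(T')) \to \GL_n(\mathcal{O}^+(T))$; its coset extension to $G(T') \to G(T)$ is then a continuous group homomorphism. Sheafiness of the topological group structure on $\Perf_{K,v}^{\mathrm{aff}}$ reduces via cosets to the corresponding statement on $G_0$, which in turn follows from the sheaf property of each $\mathcal{O}^+/p^m$ endowed with its discrete topology.
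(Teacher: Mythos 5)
Your proposal is correct and follows essentially the same route as the paper: the central point in both is the conjugation estimate (the paper's Lemma~\ref{l:tech-lemma-top-grp-str}), proved identically via the identity $c_h\circ\exp=\exp\circ\mathrm{ad}(h)$ on $\mathfrak g_k$ together with boundedness of the $\O(T)$-linear map $\mathrm{ad}(h)$ after covering $T$ by finitely many affinoid perfectoids, and the sheaf property is likewise reduced through $G_0$ to $\GL_n$. The only differences are cosmetic (your explicit coset-based uniqueness argument, which the paper leaves implicit).
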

We note that this group structure is independent of the choice of $\rho$: This follows from the fact that $\rho$ is a locally closed immersion. But we will not need this.

\begin{Remark}
	The quasi-compactness assumption is necessary to get the correct topology: For example, for $G=\G_a$ and $T=\G_a$, the natural topology on $\G_a(\G_a)=\O(\G_a)$ is not the linear one induced by $\G_a^+(\G_a)=\G_a^+(K)=\O_K$, which would be too fine.
\end{Remark}
%\begin{proof}

%To define the topological group structure, we use the following:

The proof of \Cref{p:can-grp-struct} relies on the following Lemma. 
\begin{Lemma}\label{l:tech-lemma-top-grp-str}
	The system of open subgroups $(G_k)_{k\in \N}$ of $G$ satisfies the following: For any $k>\alpha$, any quasi-compact v-sheaf $T$ and any $s\in G(T)$, there exists some $j\in \N$ such that
	\[G_j(T)\subseteq s G_k(T)s^{-1}.\]
\end{Lemma}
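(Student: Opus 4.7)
The plan is to decompose conjugation by $s$ into two more tractable operations: conjugation by $K$-points of $G$, which we control through abstract continuity of inner automorphisms as morphisms of rigid groups, and conjugation by sections of $G_0$, which we control by direct matrix computation via the embedding $G_0\hookrightarrow\GL_n(\O^+)$ from \Cref{p:ado}. To combine the two, I would cover $T$ by finitely many v-opens on each of which $s$ factors as a product of a $K$-point of $G$ with a section of $G_0$.

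Here are the two ingredients I would establish. First, for any $g\in G(K)$ the inner automorphism $c_g\colon G\to G$, $h\mapsto ghg^{-1}$, is a morphism of rigid groups sending $1$ to $1$, so $c_g^{-1}(G_k)$ is an open sub-v-sheaf of $G$ containing the identity. By \Cref{l:exp-isom-on-nbhd-basis} it therefore contains some $G_{j(g)}$ with $j(g)\in \N$, giving the inclusion $g\,G_{j(g)}\,g^{-1}\subseteq G_k$ as sub-v-sheaves of $G$. Second, via $\rho\colon G_0\hookrightarrow \GL_n(\O^+)$ the subgroup $G_k$ is identified with $\rho^{-1}(1+p^kM_n(\O^+))$; writing $\rho(h)=1+p^k A$ for $h\in G_k(T')$ and any $t\in G_0(T')$ on a quasi-compact $T'$ then yields
\[
\rho(tht^{-1})=1+p^k\,\rho(t)\,A\,\rho(t)^{-1}\in 1+p^kM_n(\O^+(T')),
\]
since $\rho(t),\rho(t)^{-1}\in\GL_n(\O^+(T'))$. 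Hence $t\,G_k(T')\,t^{-1}\subseteq G_k(T')$, and applying the same argument to $t^{-1}$ gives equality.

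For the covering step, I would use that $G_0\subseteq G$ is open, so each coset $gG_0$ is an open sub-v-sheaf of $G$ and, because $G$ is smooth over the algebraically closed field $K$, contains a $K$-point. Consequently $\{gG_0\}_{g\in G(K)}$ forms an open cover of $G$; pulling back along $s\colon T\to G$ and invoking quasi-compactness of $T$ yields a finite v-open cover $T=U_1\cup\dots\cup U_r$ together with factorisations $s|_{U_i}=g_i\,t_i$ with $g_i\in G(K)$ and $t_i\in G_0(U_i)$.

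Finally, I would set $j:=\max_i j(g_i^{-1})$ with $j(g_i^{-1})$ as produced by the first ingredient (applied to $G_k$). For any $h\in G_j(T)$ the identity
\[
s|_{U_i}^{-1}\,h|_{U_i}\,s|_{U_i}=t_i^{-1}\bigl(g_i^{-1}\,h|_{U_i}\,g_i\bigr)\,t_i
\]
combined with the two ingredients gives $s^{-1}hs|_{U_i}\in G_k(U_i)$ for each $i$. Since $G_k\subseteq G$ is a sub-v-sheaf and $\{U_i\}$ is a v-cover of $T$, these local assertions glue to $s^{-1}hs\in G_k(T)$, i.e.\ $h\in s\,G_k(T)\,s^{-1}$, as desired. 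I expect the main obstacle to be the covering step: one needs the translates $gG_0$ with $g\in G(K)$ to genuinely cover $G$, which reduces to each coset of $G_0$ containing a $K$-point. This is where both algebraic closedness of $K$ and smoothness of $G$ enter in an essential way.
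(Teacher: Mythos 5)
Your two conjugation ingredients are fine, but the covering step has a genuine gap, and it sits exactly where you suspected. The translates $\{gG_0\}_{g\in G(K)}$ do \emph{not} in general cover $G$. Already for $G=\G_a$ and $G_0=\G_a^+$ the closed unit disc, the union $\bigcup_{a\in K}(a+\G_a^+)$ is the locus of points $y$ with $|T-a|_y\le 1$ for some $a\in K$, and this misses for instance the Gauss point of the disc of radius $r>1$, where $|T-a|_y=\max(r,|a|)>1$ for every $a\in K$. At the level of sections: let $C$ be the completed algebraic closure of $K(T)$ with respect to that Gauss norm; then $T'=\Spa(C,\O_C)$ is a quasi-compact (affinoid perfectoid) point with no nontrivial open cover, and the section $s\in\G_a(C)=C$ given by the image of $T$ satisfies $|s-a|>1$ for all $a\in K$, so no factorisation $s=g\cdot t$ with $g\in G(K)$ and $t\in G_0(T')$ exists. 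The reason your justification does not apply is that smoothness plus algebraic closedness only guarantee a $K$-point in every nonempty \emph{open subspace} of $G$ (in particular in every connected component), whereas the cosets of $G_0$ are in general strictly finer than the connected components, and the coset through a non-classical point need not meet $G(K)$ at all. (For $\G_a$ the lemma itself is trivially true by commutativity; the point is that your decomposition is already unavailable there, and for noncommutative $G$, say $\GL_n$ with $G_0=\GL_n(\O^+)$, the same failure occurs where the conclusion is no longer trivial.)

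The paper sidesteps the decomposition of $s$ entirely by linearising: it first shows that $\mathrm{ad}(s)(\mathfrak g_j(T))\subseteq\mathfrak g_k(T)$ implies $sG_j(T)s^{-1}\subseteq G_k(T)$, using that conjugation intertwines with $\exp$ on the subgroups $G_j$, and then produces such a $j$ from the fact that $\mathrm{ad}(s)$ lies in $\mathrm{Aut}(\mathfrak g)(T)\cong\GL_d(\O(T))$: for quasi-compact $T$ every entry of this matrix (and of its inverse) is bounded by some $p^{-m}$, so $j=k+m$ works. Quasi-compactness of $T$ thus enters to bound $\mathrm{ad}(s)$ rather than $s$ itself, and this is the correct substitute for your (unavailable) reduction to $K$-points times $G_0$-sections. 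If you wanted to rescue your route you would have to allow $g$ to be a $C$-point for varying perfectoid $C$ and rerun your first ingredient after base change uniformly over a cover, which in effect amounts to redoing the adjoint-action bound.
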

\begin{proof}
	The statement is true for $T=\Spa(K)$ since then $c_s:G\to G$, $h\mapsto sgs^{-1}$ is a homomorphism of rigid groups. By \cite[Lemma~3.10.3]{heuer-G-Torsor}, we more precisely know that $c_s$ sends $G_j$ into $G_k$ if $\mathrm{ad}(s)$ sends $\mathfrak g_j$ into $\mathfrak g_k$. This is because  $\mathrm{ad}(s)(\mathfrak g_j)\subseteq \mathfrak g_k$ implies that the following diagram commutes:
	\[
	\begin{tikzcd}
		\mathfrak g_j \arrow[d,"\exp"] \arrow[r, "\mathrm{ad}(s)"] & \mathfrak g_k \arrow[d,"\exp"] \\
		G_j \arrow[r, "c_s"]                        & G.                      
	\end{tikzcd}\]
	It follows that also for more general affinoid perfectoid $T$, the inclusion 	$\mathrm{ad}(s)(\mathfrak g_j(T))\subseteq \mathfrak g_k(T)$ implies that $sG_j(T)s^{-1}\subseteq G_k(T)$, since this is a statement that we can check on $K$-points. The same statement for quasi-compact small v-sheaves follows  by covering $T$ by finitely many affinoid perfectoids.        
	The Lemma now follows from the fact that any choice of $\O_K$-basis of $\mathfrak g_0(K)\cong \O_K^d$ induces an isomorphism $\mathrm{Aut}(\mathfrak g)(T)\cong \GL_d(\O(T))$, and any $\O(T)$-linear map $f:\O(T)^d\to \O(T)^d$ is continuous, hence $p^j\O^+(T)^d\subseteq f^{-1}(p^k\O^+(T)^d)$ for some $j$.
\end{proof}

\begin{proof}[Proof of \Cref{p:can-grp-struct}] We endow each $G(T)$ with the topology for which a system of open neighbourhoods of the identity is given by $(G_k(T))_{k\in \N}$. 
	To see that this defines the structure of a topological group, we need to see that the preimage $W$ of $g\cdot G_k(T)$ under $m:G(T)\times G(T)\to G(T)$ is open. For any $(r,s)\in W$, let $j$ be as in \Cref{l:tech-lemma-top-grp-str} for the section $s$, 
	then 
	\[ rG_j(T)\cdot sG_k(T)\subseteq rsG_k(T)G_k(T)\subseteq gG_k(T).\]
	Thus $W$ contains the open neighbourhood $rG_j(T)\times  sG_k(T)$ of $(r,s)\in W$. 
	
	The continuity of the inverse follows similarly from \Cref{l:tech-lemma-top-grp-str} by
	\[ (sG_k(T))^{-1}=G_k(T)s^{-1}\supseteq s^{-1}G_j(T).\]
	Hence $G(T)$ is a topological group. It is then immediate  from the construction that  $G$, regarded as a diamond, is a presheaf of topological groups which is a sheaf on the level of groups. It therefore suffices to see that for any {quasi-compact small v-sheaf} and any v-cover $\{\wt T\rightarrow T\}$ by {quasi-compact small v-sheaves}, the short exact sequence of groups
	\[ \textstyle0\to G(T)\to G(\wt T)\to G(\wt T\times_T\wt T)\]
	is an equalizer in the category of topological groups, i.e.\ that $G(T)$ carries the subspace topology of $ G(\wt T)$. This subspace topology is defined by the open subgroup of $G(T)$ obtained by pullback of $G_0(\wt T)\subseteq G(\wt T)$. But this is precisely $G_0(T)$ by the sheaf property of $G_0$. It thus suffices to verify the statement for $G=G_0$, for which we can reduce to $\GL_n$, where the statement is clear.
\end{proof}
\begin{Lemma}\label{l:top-on-G(T)-test-by-Hausd-sp}
	For any compact Hausdorff space $S$ and any quasi-compact v-sheaf $T$, we have
	\[ \mathrm{Map}(\underline{S}\times T, G)= \mathrm{Map}_{\cts}(S,G(T))\]
	where $G(T)$ is endowed with  the topology from \Cref{p:can-grp-struct}.
\end{Lemma}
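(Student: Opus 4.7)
The plan is to construct a bijection in both directions using the neighborhood basis $(G_k)_{k>\alpha}$ of open subgroups of $G$ from \Cref{l:exp-isom-on-nbhd-basis}.

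For the forward direction, given $f:\underline{S}\times T\to G$, define $\phi:S\to G(T)$ by $\phi(s):=f\circ(s,\id_T)$, where $(s,\id_T):T\to \underline{S}\times T$ pairs the constant-$s$ section with the identity. To verify continuity at $s_0\in S$ for the topology of \Cref{p:can-grp-struct}, it suffices to show that the $\phi$-preimage of each open coset $\phi(s_0)G_k(T)$ is open in $S$. Consider the open sub-v-sheaf $W:=f^{-1}(\phi(s_0)\cdot G_k)\subseteq \underline{S}\times T$, where $\phi(s_0)\cdot G_k$ is the left-translate of $G_k\subseteq G$ by the $T$-point $\phi(s_0)$. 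By construction $\{s_0\}\times T\subseteq W$, and on underlying topological spaces $|\underline{S}\times T|=S\times |T|$ with $|W|$ open and containing $\{s_0\}\times |T|$. Since $T$ is quasi-compact, so is $|T|$, and the tube lemma provides an open $U\ni s_0$ in $S$ with $U\times|T|\subseteq|W|$. For $s\in U$, the section $(s,\id_T)$ then factors through $W$ by the correspondence between open sub-v-sheaves and open subsets of the underlying topological space, giving $\phi(s)\in\phi(s_0)G_k(T)$.

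For the reverse direction, let $\psi:S\to G(T)$ be continuous and fix any $k>\alpha$. Since $\psi(S)$ is compact and the cosets of the open subgroup $G_k(T)$ are clopen in $G(T)$, the image meets only finitely many cosets $g_1G_k(T),\ldots,g_NG_k(T)$, inducing a clopen partition $S=\bigsqcup_{j=1}^N S_j$. Any clopen partition of a compact Hausdorff space yields $\underline{S}=\bigsqcup_j \underline{S_j}$ as v-sheaves; left-translating $\psi|_{S_j}$ by $g_j^{-1}$ then reduces to the case $\psi:S\to G_k(T)$. Composing with the isomorphism $\log:G_k\isomarrow\lie_k$ of \Cref{l:exp-isom-on-nbhd-basis} reduces the problem to constructing a v-sheaf map $\underline{S}\times T\to \lie_k$ from a continuous map $S\to \lie_k(T)$, and a choice of $K$-basis of $\Lie G$ identifies this with $d=\dim G$ continuous maps $S\to p^k\O^+(T)$. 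So it remains to treat the base case $G=\O^+$.

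For the base case we claim $\O^+(\underline{S}\times T)=\mathrm{Map}_{\cts}(S,\O^+(T))$ for the $p$-adic topology on the target. By $p$-adic completeness, $\O^+(\underline{S}\times T)=\varprojlim_m(\O^+/p^m)(\underline{S}\times T)$. For profinite $S=\varprojlim_i S_i$ (a cofiltered limit of finite discrete sets), $\underline{S}\times T$ is a cofiltered limit of finite disjoint unions of copies of $T$, and one reads off $(\O^+/p^m)(\underline{S}\times T)=\mathrm{Map}_{\cts}(S,\O^+(T)/p^m)$ with discrete target. For general compact Hausdorff $S$, choose an extremally disconnected profinite cover $\wt S\twoheadrightarrow S$ and apply v-descent: the equalizer of
\[\mathrm{Map}_{\cts}(\wt S,\O^+(T)/p^m)\rightrightarrows \mathrm{Map}_{\cts}(\wt S\times_S \wt S,\O^+(T)/p^m)\]
is $\mathrm{Map}_{\cts}(S,\O^+(T)/p^m)$. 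Taking $\varprojlim_m$ and commuting the limit with $\mathrm{Map}_{\cts}(S,-)$ yields the claim. The main obstacle is this base case together with the topological identifications $|\underline{S}\times T|=S\times|T|$ and the fact that open sub-v-sheaves correspond to open subsets of the underlying topological space, which is the key input to the tube-lemma step; all other reductions are straightforward from the rigid-group structure of \Cref{s:local-struct-G}.
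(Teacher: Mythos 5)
Your overall strategy coincides with the paper's: both directions come from the evaluation map, continuity of the evaluated map is extracted from quasi-compactness of $T$ by a tube-type argument, and the converse direction is reduced through the (automatically finite) clopen coset decomposition for an open polydisc subgroup to the single computation $\O^+(\underline{S}\times T)=\mathrm{Map}_{\cts}(S,\O^+(T))$. The place where your argument has a genuine gap is this base case. The chain of identities
\[\O^+(\underline{S}\times T)=\varprojlim_m(\O^+/p^m)(\underline{S}\times T),\qquad (\O^+/p^m)(\underline{S}\times T)=\mathrm{Map}_{\cts}(S,\O^+(T)/p^m)\]
is only valid up to almost mathematics: the v-sheaf $\O^+/p^m$ is the sheafification of the presheaf quotient, so $(\O^+/p^m)(X)$ agrees with $\O^+(X)/p^m$ only up to almost-zero error (controlled by $H^1_{\mathrm v}(X,\O^+)[p^m]$), and likewise $\O^+\to\varprojlim_m\O^+/p^m$ is only an almost isomorphism on sections of a quasi-compact object; passing to $\varprojlim_m$ does not kill these discrepancies. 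The paper confronts exactly this issue by a different route: it proves the almost K\"unneth formula $\O^+(\underline{S}\times T)\stackrel{a}{=}\O^+(\underline S)\hat{\otimes}_{\O_K}\O^+(T)=\mathrm{Map}_{\cts}(S,\O^+(T))$ and then upgrades it to an honest equality using that $f\in\O(\underline{S}\times T)$ lies in $\O^+$ if and only if $|f|\leq 1$ at every point, together with $|\underline{S}\times T|=|\underline S|\times|T|$. Your proof needs an analogous correction step; as written the displayed equalities are not literally true.

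Two further points. Your tube-lemma step presupposes that $|\underline{S}\times T|$ is $S\times|T|$ \emph{with the product topology} for an arbitrary compact Hausdorff $S$; this is true but requires an argument (e.g.\ properness of $\wt S\times|T|\to S\times|T|$ for a profinite cover $\wt S\to S$), and the paper sidesteps it by first reducing continuity-checking to profinite $S$ (a profinite surjection onto $S$ is a quotient map) and then replacing the tube lemma by the observation that $\{s\}\times T=\bigcap_V\underline V\times T$ over clopen $V\ni s$, combined with quasi-compactness of the v-sheaf $\underline S\times T$. Finally, you construct maps in both directions but never verify that they are mutually inverse; this requires the injectivity of evaluation, i.e.\ that a morphism $\underline S\times T\to G$ is determined by its restrictions to the slices $\{s\}\times T$, which the paper establishes at the end via the injection $\O_{\underline{S}\times T}\to h_*\O_{\underline{S}'\times T}$ for $S'$ the set $S$ with the discrete topology. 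This last point also guarantees that your converse construction is independent of the chosen clopen partition.
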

\begin{proof}
	Evaluation defines a map
	\[\mathrm{ev}:\mathrm{Map}(\underline{S}\times T,G)\to \mathrm{Map}(S,G(T)).\]
	For any map $\psi:\underline{S}\times T\to {G}$, to see that its image $\phi:S\to G(T)$ is continuous, it suffices to check this for profinite $S$ since any compact Hausdorff space admits a surjection by a profinite set (automatically a quotient map). Note that the preimage $\phi^{-1}(G_k(T))$ is given by those $s\in S$ such that the restriction of $\psi$ to $\{s\}\times T$ factors through $G_k$. Assume $s$ is such a point, then the preimage $\psi^{-1}(G_k)$ is an open sub-v-sheaf $U\subseteq \underline{S}\times T$ containing $\{s\}\times T$. We need to see that there is an open neighbourhood $s\in V\subseteq S$ such that $\underline{V}\times T\subseteq U$.
	
	For this we use that $\{s\}\times T=\bigcap_V \underline{V}\times T$, for $V$ running through open and closed neighbourhoods of $s$ in $S$. Then \[\textstyle(\bigcap_V \underline{V}\times T )\cap (\underline{S}\times T\setminus U)=\emptyset.\] 
	Since $\underline{S}\times T$ is quasi-compact, there is some $V$ as above such that $\underline{V}\times T\subseteq U$. This shows that $\phi^{-1}(G_k(T))$ is open. We deduce the case of $\phi^{-1}(gG_k(T))$ for any $g\in G(T)$ using instead the morphism $\underline{S}\times T\to G$ defined by $\psi\cdot g^{-1}$. Hence $\phi$ is continuous.
	
	We now construct an inverse of  $\mathrm{ev}$ on the set of continuous maps $\phi:S\to G(T)$. Write $G(T)=\sqcup g_iG_0(T)$ for a set of coset representatives $g_i$ of $G(T)/G_0(T)$, then $\phi$ is the disjoint union of maps $S_i\to g_iG_0(T)$ where $S_i:=\phi^{-1}(g_iG_0(T))$ is such that $S=\sqcup S_i$. By translation, we can again reduce to the case of maps $S\to G_0(T)$. Here the result follows from $G_0\cong \mathbb B^{d}$
	by the following Lemma.
\end{proof}
\begin{Lemma}
	The following evaluation map is bijective:
	\[ \textstyle\mathrm{ev}:\mathrm{Map}(\underline{S}\times T,\mathbb B)\to \mathrm{Map}_{\cts}(S,\O^+(T))=\varprojlim_{n}\mathrm{Map}_{\cts}(S,\O^+/p^n(T))\]
\end{Lemma}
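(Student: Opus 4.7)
The plan is to interpret $\mathrm{Map}(\underline S\times T,\mathbb B)=\O^+(\underline S\times T)$ via representability of $\mathbb B$, and then establish the bijection in two reductions: first from general quasi-compact v-sheaves $T$ to affinoid perfectoid $T$, and second from compact Hausdorff $S$ to profinite $S$.

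First I would verify the statement directly when $S$ is profinite and $T=\Spa(R,R^+)$ is affinoid perfectoid. In this case the v-sheaf $\underline S\times T$ is itself representable by the affinoid perfectoid space $\Spa(C(S,R),C(S,R^+))$ of continuous functions, so the left hand side evaluates tautologically to $C(S,R^+)=\mathrm{Map}_\cts(S,R^+)=\mathrm{Map}_\cts(S,\O^+(T))$. To extend this to a general quasi-compact v-sheaf $T$ while keeping $S$ profinite, I would choose an affinoid perfectoid v-cover $T'\to T$ with fibre product $T''=T'\times_TT'$, and use v-descent for $\O^+$ to write
\[\O^+(\underline S\times T)=\mathrm{eq}\bigl(\O^+(\underline S\times T')\double{\to}\O^+(\underline S\times T'')\bigr).\]
Since $\mathrm{Map}_\cts(S,-)$ preserves equalizers of topological groups (as a right adjoint in the relevant sense), applying the representable case to $T'$ and to an affinoid perfectoid cover of $T''$ identifies this equalizer with $\mathrm{Map}_\cts(S,\O^+(T))$, which settles the profinite case in full.

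For general compact Hausdorff $S$, the plan is to choose a profinite surjection $\pi\colon\wt S\twoheadrightarrow S$. Since $\pi$ is automatically a quotient map of compact Hausdorff spaces, $\mathrm{Map}_\cts(S,\O^+(T))$ equals the equalizer of $\mathrm{Map}_\cts(\wt S,\O^+(T))\double{\to}\mathrm{Map}_\cts(\wt S\times_S\wt S,\O^+(T))$. On the geometric side, the induced map $\underline{\wt S}\times T\to\underline S\times T$ is a v-cover (being the base change of a surjection of v-sheaves onto a quasi-compact target), so $\O^+(\underline S\times T)$ has the matching equalizer presentation by v-descent. Since $\wt S$ and $\wt S\times_S\wt S$ are both profinite, the previous step completes the reduction.

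The main obstacle I anticipate is justifying the representability $\underline S\times T\cong\Spa(C(S,R),C(S,R^+))$ for profinite $S$ and affinoid perfectoid $T$, together with the claim that $\underline{\wt S}\to\underline S$ is a v-cover; both are standard within Scholze's diamond formalism, but need to be stated carefully so that the two equalizer descriptions are genuinely intertwined by the evaluation map $\mathrm{ev}$.
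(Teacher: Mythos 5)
Your proposal is correct in outline, but it takes a genuinely different route from the paper. The paper reduces (without detail) to affinoid perfectoid $T$, identifies $\mathrm{Map}(\underline{S}\times T,\mathbb B)=\O^+(\underline{S}\times T)$ via the universal property of $\mathbb B$, computes $\O^+(\underline{S}\times T)\stackrel{a}{=}\O^+(\underline{S})\hat\otimes_{\O_K}\O^+(T)=\mathrm{Map}_\cts(S,\O^+(T))$ as an \emph{almost} isomorphism, upgrades it to an honest one using the pointwise characterisation of $\O^+$ together with $|\underline{S}\times T|=|\underline{S}|\times|T|$ (this works directly for compact Hausdorff $S$, with no reduction to profinite $S$), and then separately checks that the two maps are mutually inverse via the injection $G(\underline{S}\times T)\hookrightarrow\prod_{s\in S}G(\{s\}\times T)$. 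You instead invoke the representability $\underline{S}\times T\cong\Spa(C(S,R),C(S,R^+))$ for profinite $S$ and affinoid perfectoid $T$, and then run two layers of descent, in $T$ and in $S$. What your route buys: since every identification is induced by $\mathrm{ev}$ itself, bijectivity falls out without a separate "mutually inverse" verification, and the argument is modular. What it costs: the entire almost-mathematics subtlety of the paper's proof is hidden inside your representability claim --- the delicate point is precisely that the ring of integral elements of $\underline{S}\times\Spa(R,R^+)$ is exactly $C(S,R^+)$ and not merely almost so (this is standard, e.g.\ it follows from the integral closedness and openness of $C(S,R^+)$ in $C(S,R)$, but it is the crux and must be cited or proved, not just asserted). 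Two further points to nail down: in the descent in $T$, the fibre product $T''=T'\times_TT'$ need not be affinoid perfectoid, so as you indicate you only get injectivity of $\mathrm{ev}_{T''}$ from a further cover, which is enough to compare the equalizers; and in the descent in $S$ you need that $\underline{\wt S}\to\underline{S}$ is a surjection of v-sheaves for a profinite presentation $\wt S\twoheadrightarrow S$ of a compact Hausdorff space, which is true but depends on how $\underline{S}$ is defined for non-profinite $S$ and deserves a reference. With those references supplied, your proof is complete.
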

\begin{proof}
	It is clear from universal properties that the last map is bijective. For the first, it suffices to prove this for affinoid perfectoid $T$.
	Using first the universal property of $\mathbb B$ in adic spaces, and then the explicit description of fibre products in perfectoid spaces, we have
	\[\mathrm{Map}(\underline{S}\times T,\mathbb B)=\O^+(\underline{S}\times T)\stackrel{a}{=}\O^+(\underline S)\hat{\otimes}_{\O_K}\O^+(T)=\mathrm{Map}_{\cts}(S,\O^+(T)).\]
	To see that this almost isomorphism is an honest isomorphism, recall that a function $f$ in $\O(\underline{S}\times T)$ is in $\O^+(\underline{S}\times T)$ if and only if it is $\leq 1$ at every point of $\underline{S}\times T$. Since we have $|\underline{S}\times T|=|\underline{S}|\times |T|$, this is equivalent to asking that for every $s\in S$, the specialisation $f(s)\in \O(T)$ is in $\O^+(T)$, or in other words that $f$ lies in  $\mathrm{Map}_{\cts}(S,\O^+(T))$.
	
	It remains to see that the mappings are inverse to each other. %This we can do by specialising to points $s\in S$, for which the statement is clear.
	This is because two maps in $\mathrm{Map}(\underline{S}\times T, G)$ 
	% or $\mathrm{Map}_{\cts}(S,G(T))$ 
	agree if and only if they agree at every $s\in S$, or in other words, there is an injection 
	$G(\underline{S}\times T)\rightarrow \prod_{s\in S}G(s\times T).$
	Indeed, let $S'$ be the set $S$ endowed with the discrete topology. Then the morphism of adic spaces $h:\underline{S}'\times T\to \underline{S}\times T$ induces an injection $\O_{\underline{S}\times T}\to h_{\ast}\O_{\underline{S}'\times T}$.
\end{proof}

\subsection{$G$-torsors and $G$-Higgs bundles}
Throughout this section, let $G$ be a rigid group over $\Spa K$ as before. Since $G$ is smooth, it is harmless to identify $G$ with its associated locally spatial diamond. We may therefore also regard $G$ as a (small) sheaf on the v-site of locally spatial diamonds over $K$.

\begin{Definition}
	\begin{enumerate}
		\item   Let $X$ be any locally spatial diamond, and let $\tau=v$ or $\tau=\et$.  A $G$-torsor on $X_{\tau}$ is a sheaf $E$ on $X_{\tau}$ with a
		left action $m:G \times  E \to  E$ of the group $G$ such that locally on $X_\tau$, there is a $G$-equivariant isomorphism 
		\[G\to E\] 
		where $G$ acts via left-translation on itself. The morphisms of $G$-bundles are the
		$G$-equivariant morphisms of sheaves on $X_\tau$. We also call $E$ a $G$-bundle on $X_\tau$.
		\item
		We denote the category of $G$-bundles on $X_\tau$ by $\Bun_{G}(X_\tau)$.
	\end{enumerate}
\end{Definition}

There is also a geometric perspective of $G$-bundles, which is equivalent. The set of $G$-bundles on $X_\tau$ up to isomorphism is naturally isomorphic to $H^1_{\tau}(X,G)$. We refer to \cite[\S3.3, Proposition~3.6]{heuer-G-Torsor} for more background on the notion of $G$-torsors on diamonds.

In order to go from $G$-bundles to Higgs bundles, we moreover need  a good notion of differentials. We therefore specialise to smooth rigid spaces:
\begin{Definition}
	For any smooth rigid space $X$ over $K$, we set $\wtOm:=\Omega^1_{X|K}(-1)$, where $(-1)$ is a Tate twist, considered as a sheaf on $X_{\et}$. The reason to include this Tate twist is that there is then a canonical identification
	$\wtOm=R^1\nu_\ast\CO$ for $\nu:X_\mathrm{v}\to X_{\et}$.
\end{Definition}

\begin{Definition}\label{def:Higgs}
	Let $X$ be a smooth rigid space and $G$ a rigid group over $\Spa K$. A $G$-Higgs bundle on $X$ is a pair $(E,\theta)$ consisting of 
	\begin{itemize}
		\item a
		$G$-bundle $E$ on $X_\text{\'et}$, and
		\item an element $\theta\in H^0(X,\wtOm\otimes_\CO \mathrm{ad}(E))$ such that $\theta\wedge \theta=0$,
	\end{itemize}
	where $\mathrm{ad}(E):=\mathfrak{g}\times^{G}E$ is the adjoint bundle of $E$, with $G$ acting on its Lie algebra $\mathfrak{g}$ via the adjoint representation. The condition $\theta\wedge\theta=0$ can be understood as saying that in terms of any local choice of basis of $\wtOm$, the coefficients of $\theta$ commute with each other with respect to the Lie bracket on $\mathrm{ad}(E)$ induced by $\mathfrak g$.
	
	A morphism of $G$-Higgs bundles $(E_1,\theta_1)\to (E_1,\theta_1)$ is a morphism of the underlying $G$-torsors $\varphi:E_1\to E_2$ such that the induced map $\wtOm\otimes \mathrm{ad}(E_1)\to\wtOm\otimes \mathrm{ad}(E_2)$ sends $\theta_1$ to $\theta_2$.
\end{Definition}
\begin{Remark}
	\begin{enumerate}
		\item 
		Note that since any morphism between $G$-bundles is an isomorphism, it follows that any morphism of $G$-Higgs bundles is an isomorphism.
		\item In the case of $G=\GL_n$, the category of $\GL_n$-Higgs bundles has the same objects as the category of Higgs bundles of rank $n$, but the morphisms are given only by the isomorphisms of Higgs bundles.
		\item For the reader who would like  more motivation for the definition of $G$-Higgs bundles, we refer to the proof of \Cref{p:Tannaka-HB}.
	\end{enumerate}
\end{Remark}

If the rigid group is commutative, the notion of $G$-Higgs bundles simplifies substantially, because the underlying torsor and the Higgs field can be disentangled:
\begin{Lemma}\label{l:Higgs-for-comm-G}
	Let $G$ be a commutative rigid group. Let $X$ be any smooth rigid space. Then:
	
	\begin{enumerate}
		\item A $G$-Higgs bundle on $X$ is a pair $(E,\theta)$ consisting of a $G$-torsor $E$ on $X_\et$ and a section $\theta\in H^0(X,\mathfrak g\otimes \wtOm)$.
		\item  
		The set of morphisms between two $G$-Higgs bundles  $(E_1,\theta_1)$ and $(E_2,\theta_2)$ is empty unless $\theta_1= \theta_2$, when it is the set of morphisms between the $G$-torsors $E_1$ and $E_2$.
		\item For any $G$-Higgs bundle $(E,\theta)$, we have $\underline{\mathrm{Aut}}(E,\theta)=G$. 
	\end{enumerate}
\end{Lemma}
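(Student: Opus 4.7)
The key observation underlying all three parts is that for commutative $G$, the adjoint representation $\mathrm{Ad}\colon G\to \mathrm{Aut}(\mathfrak g)$ is trivial: for every $g\in G(K)$, conjugation $c_g\colon G\to G$ is the identity by commutativity, and hence $\mathrm{Ad}(g)=\Lie(c_g)=\id$. This has two consequences I would use. First, the associated bundle $\mathrm{ad}(E)=\mathfrak g\times^{G} E$ is canonically isomorphic to the constant bundle $\mathfrak g_X$, via the map induced by projection $\mathfrak g\times E\to \mathfrak g\times (E/G)=\mathfrak g\times X$ (using that $E/G=X$ for a torsor). Second, since the Lie bracket on $\mathfrak g$ is the derivative of the commutator, it vanishes; hence the constraint $\theta\wedge\theta=0$ is vacuous, whatever local basis of $\wtOm$ one expands $\theta$ in.

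Combining these two facts immediately yields (1): the Higgs field $\theta\in H^0(X,\wtOm\otimes\mathrm{ad}(E))=H^0(X,\wtOm\otimes\mathfrak g)$ is unconstrained, and the torsor datum $E$ is decoupled from the tensor datum $\theta$. For (2), I would argue that any morphism of $G$-torsors $\varphi\colon E_1\to E_2$ induces the \emph{identity} under the canonical identifications $\mathrm{ad}(E_i)\cong \mathfrak g_X$. This may be verified locally on $X_\et$ where both torsors are trivial; there $\varphi$ is given by right-translation by some section $g$ of $G$, and right-translation acts on $\mathfrak g$ via $\mathrm{Ad}(g^{-1})=\id$ by the above. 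Hence the condition on $\varphi$ that it sends $\theta_1$ to $\theta_2$ reduces to $\theta_1=\theta_2$.

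For (3), I would combine (2) with the identification of the automorphism sheaf of a $G$-torsor: $\underline{\mathrm{Aut}}(E)=E\times^G G^{\mathrm{op}}$, where $G$ acts on itself by conjugation. For commutative $G$ this action is trivial, so $\underline{\mathrm{Aut}}(E)=G$ canonically. By (2), every such automorphism automatically preserves $\theta$, so $\underline{\mathrm{Aut}}(E,\theta)=\underline{\mathrm{Aut}}(E)=G$. The only step requiring a little care is the compatibility of the trivialisation $\mathrm{ad}(E)\cong \mathfrak g_X$ with morphisms, but this is a formal consequence of the triviality of $\mathrm{Ad}$ and poses no real obstacle; the lemma is essentially a bookkeeping exercise once triviality of $\mathrm{Ad}$ and of the bracket are noted.
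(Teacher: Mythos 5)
Your proof is correct and follows essentially the same route as the paper: triviality of the adjoint representation for commutative $G$ gives the canonical identification $\mathrm{ad}(E)\cong\mathfrak g$, from which (2) and (3) follow formally. The one point you make explicit that the paper leaves implicit is that the Lie bracket on $\mathfrak g$ vanishes, so the constraint $\theta\wedge\theta=0$ is vacuous; this is a worthwhile clarification but does not change the argument.
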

\begin{proof}
	\begin{enumerate}
		\item Since $G$ is commutative, the adjoint representation $G\to \GL(\mathfrak g)$ is trivial, hence
		\[ \mathfrak g\times^GE=G\setminus(\mathfrak g\times E )= \mathfrak g\times (G\setminus E)=\mathfrak g.\]
		%where the action by $G$ on the second term is via the antidiagonal action.
		\item This follows from part (1) since the induced morphism $\mathfrak g\times^GE_1\to \mathfrak g\times^GE_2$ is identified with the identity on $\mathfrak g$.
		\item This follows from (2) which implies that $\underline{\mathrm{Aut}}(E,\theta)=\underline{\mathrm{Aut}}(E)=G$. Here the last identification uses again that $G$ is commutative.\qedhere
	\end{enumerate}
\end{proof}

For the rest of this subsection, let us assume that $G$ is commutative. Then for any locally spatial diamond $X$ and $\tau=\et$ or v, the set $H^1_\tau(X,G)$ of $G$-torsors on $X_{\tau}$ up to isomorphism has a natural group structure. This comes from the following functorial construction:
\begin{Definition}\label{l:otime^G}
	Let $E_1$ and $E_2$ be $G$-bundles on $X_\tau$. Following  \cite[Proposition~7]{Serre-fibres}, we consider the product $E_1\times E_2$. This carries the natural structure of a $G\times G$-torsor. We let
	\[E_1\otimes^GE_2 := G\times^{G\times G}(E_1\times E_2)\]
	be the pushout of $E_1\times E_2$ along the multiplication map $m:G\times G\to G$.
	This defines a natural symmetric tensor bifunctor, 
	\[-\otimes^G-:\Bun_G(X_\tau)\times \Bun_G(X_\tau)\to \Bun_G(X_\tau),\]
	natural in $G$ and in $X$.  For any $G$-torsor $E$ let $E^{-1}$ be the pushout of $E$ along the inverse map $-^{-1}:G\to G$, then there is a natural isomorphism $E\otimes^GE^{-1}=G$.
\end{Definition}

\subsection{The pro-finite-\'etale universal cover}\label{s:profet-univ-cover}
The reason why we have set up the formalism of $G$-torsors not only for smooth rigid spaces but for more general locally spatial diamonds is that we naturally encounter examples of the latter when dealing with \Cref{q3}. Namely, we will need the $p$-adic analogue of the complex universal cover which we now recall from \cite[Definition~4.6]{heuer-Line-Bundle}:
\begin{Definition}\label{d:univ-cover}
	\begin{enumerate}
		\item 
		Let $X$ be a connected smooth proper rigid space over $K$ and fix a base point $x_0\in X(K)$. The pro-finite-\'etale universal cover of $X$ is the diamond
		\[ \wt X:=\varprojlim_{X'\to X}X'\]
		where the projective limit is indexed over the category of connected finite \'etale covers $X'\to X$ together with a lift $x'\in X'(K)$ of $x_0$. Then $\wt X\to X$ is a pro-\'etale torsor under $\pi^{\et}_1(X,x_0)$, the \'etale fundamental group of $X$. By definition, the base point $x_0$ has a distinguished lift to $\wt X(K)$ that we also denote by $x_0$.
		\item We call a $G$-bundle $E$ on $X_\et$ or $X_\mathrm{v}$ pro-finite-\'etale if the pullback of $E$ along $\wt X\to X$ is a trivial $G$-bundle. We call a $G$-Higgs bundle $(E,\theta)$ pro-finite-\'etale if the underlying $G$-bundle $E$ is pro-finite-\'etale.
	\end{enumerate}
\end{Definition}
\begin{Example}\label{ex:abeloid-univ-cover}
	If $X=A$ is an abeloid variety, then $\wt{A}=\varprojlim_{[N],N\in \N}A$ is represented by a perfectoid space  \cite[Theorem~1]{AWS-perfectoid}. The statement about torsors means in this case that in the category of diamonds, we have $A = \wt{A} / TA$, where $TA = \pi^\et_1(A,0)$ is the adelic Tate module.
\end{Example}
\begin{Example}
	By \cite[Theorem~1.1]{heuer-geometric-Simpson-Pic}, an analytic line bundle on an abeloid variety $A$ is pro-finite-\'etale if and only if it is translation invariant and the associated point $x\in A^\vee(K)$ in the dual abeloid variety $A^\vee$ is topologically torsion. This means that there is $N\in \N$ such that $x^{Np^n}\to 1$ for $n\to \infty$ for the natural non-archimedean topology on $A^\vee(K)$.
\end{Example}

\section{The Tannakian approach for linear algebraic groups}

In complex geometry, the non-abelian Hodge correspondence of Corlette and Simpson can be extended from $\GL_n$ to any reductive group. In fact,  for any linear algebraic group $G$, the equivalence of categories between $G$-representations of the topological fundamental group and semi-stable $G$-Higgs bundles with vanishing Chern classes  can be deduced from the statement for $\GL_n$ by the Tannakian formalism \cite[\S6]{Simpson-local-system}.

The goal of this section is to show that the Tannakian formalism can also be leveraged to deduce various instances of the $p$-adic non-abelian Hodge correspondence for linear algebraic $G$ from the (in part still conjectural) case of $\GL_n$. In particular, we obtain a $p$-adic Corlette-Simpson correspondence for $G$ whenever the correspondence for $\GL_n$ is known. We note that this approach only stands a chance to work for linear algebraic groups, and does not generalise to more general algebraic or rigid analytic groups (for example, any homomorphism from an abelian variety to $\GL_n$ is clearly trivial).

Our main result in this section is the following partial answer to \Cref{q3} for abeloid varieties, which we will deduce using the Tannakian formalism from the case of $\mathrm{GL}_n$ treated in \cite{HMW-abeloid-Simpson}:
\begin{Theorem}[Corlette-Simpson correspondence for $G$-bundles]\label{t:tannakian}
	Let $X$ be an abeloid variety. Let $G$ be the analytification of a linear algebraic group $G^\mathrm{alg}$ over $K$. Then choices of a flat $B_{\dR}^+/\xi^2$-lift $\mathbb X$ of $X$ and of an exponential (see \Cref{d:exp})  induce an equivalence of categories
	\begin{align*}
		\Big\{\begin{array}{@{}c@{}l}\text{continuous homomorphisms  }\\
			\pi_1^{\et}(X,0) \rightarrow G(K) \end{array}\Big\}\cong  \Big\{\begin{array}{@{}c@{}l}\text{ pro-finite-\'etale}\\
			\text{$G$-Higgs bundles on $X$}\end{array}\Big\},
	\end{align*}
	where $G(K)$ is equipped with the topology induced from that on $K$.
\end{Theorem}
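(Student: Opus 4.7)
The plan is to reduce to the case $G = \GL_n$ via the Tannakian formalism, as is done for complex non-abelian Hodge theory in \cite{Simpson-local-system}, using that $G^{\mathrm{alg}}$ is determined by its category $\mathrm{Rep}_K(G^{\mathrm{alg}})$ of algebraic finite-dimensional representations together with its fiber functor. The input from $\GL_n$ is \cite[Theorem~1.1]{HMW-abeloid-Simpson}, which provides the equivalence on each individual representation.

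First, I would establish Tannakian descriptions of all three categories in play. On the Higgs side, a $G$-Higgs bundle on $X$ is equivalent to an exact $K$-linear tensor functor $\mathrm{Rep}_K(G^{\mathrm{alg}}) \to \Higgs(X)$ (this is the content of the announced \Cref{p:Tannaka-HB}), and I would upgrade this to show that the pro-finite-\'etale condition corresponds precisely to tensor functors landing in the full subcategory $\Higgs^{\mathrm{pf\acute{e}t}}(X) \subseteq \Higgs(X)$ of pro-finite-\'etale Higgs bundles. For this, the key point is that $\Higgs^{\mathrm{pf\acute{e}t}}(X)$ is stable under tensor products, duals, direct sums, and subquotients inside $\Higgs(X)$: stability under tensor products, duals and sums is immediate from the defining property $\nu^\ast E \cong \O^r$ on $\wt X$, while stability under subquotients uses that $\wt X$ is represented by a perfectoid space (\Cref{ex:abeloid-univ-cover}) so that pullback to $\wt X$ is exact. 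On the Betti side, a continuous homomorphism $\pi_1^\et(X,0) \to G(K)$ is equivalent to an exact $K$-linear tensor functor from $\mathrm{Rep}_K(G^{\mathrm{alg}})$ to the category $\mathrm{Rep}_K^\cts(\pi_1^\et(X,0))$ of continuous finite-dimensional $K$-representations of $\pi_1^\et(X,0)$; here one uses \Cref{p:ado} or the linearity of $G^{\mathrm{alg}}$ to see that continuity with respect to the topology on $G(K)$ reduces to continuity in each $\GL_n(K)$-valued representation.

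Next, the heart of the argument is to promote the equivalence of \cite[Theorem~1.1]{HMW-abeloid-Simpson} to an equivalence of neutral Tannakian categories
\[
\mathrm{Rep}_K^\cts(\pi_1^\et(X,0)) \cong \Higgs^{\mathrm{pf\acute{e}t}}(X)
\]
as $K$-linear symmetric monoidal categories, functorially in the choices of exponential and $B_\dR^+/\xi^2$-lift $\mathbb X$. On objects this is already known, so the main task is to trace through the construction and verify it commutes with $\otimes$ and duals — this ultimately follows because the functor is built from pushout along $\wt X \to X$, which is naturally symmetric monoidal. Once this is in hand, postcomposing exact tensor functors $\mathrm{Rep}_K(G^{\mathrm{alg}}) \to \mathrm{Rep}_K^\cts(\pi_1^\et(X,0))$ with the equivalence yields the claimed bijection on the level of isomorphism classes, and the naturality gives the equivalence of categories.

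The main obstacle I expect is the second step: upgrading \cite[Theorem~1.1]{HMW-abeloid-Simpson} from an equivalence of categories to a symmetric monoidal one, and checking that tensor functors into $\Higgs^{\mathrm{pf\acute{e}t}}(X)$ correspond exactly to pro-finite-\'etale $G$-Higgs bundles (rather than some strictly larger class). A subtler point is that the Higgs field on the pushout $\mathrm{Rep}_K(G^{\mathrm{alg}}) \to \Higgs(X)$ must reproduce the abstract Higgs field $\theta \in H^0(X, \wtOm \otimes \mathrm{ad}(E))$ with $\theta \wedge \theta = 0$; this compatibility is what makes the definition of $G$-Higgs bundle in \Cref{def:Higgs} the correct Tannakian one, and verifying it is essentially the content of \Cref{p:Tannaka-HB}. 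Once these compatibilities are in place, the theorem follows formally from the Tannakian reconstruction of $G^{\mathrm{alg}}$.
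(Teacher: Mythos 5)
Your proposal follows essentially the same route as the paper: Tannakian descriptions of continuous representations (\Cref{p:pirep}), of $G$-Higgs bundles (\Cref{p:Tannaka-HB}), and of the pro-finite-\'etale condition (\Cref{l:Tannaka}), combined with the exact tensor equivalence $\mathrm{Rep}_K(\pi_1^{\et}(X,0))\cong\Higgs^{\profet}(X)$, which the paper simply cites as part of \cite[Theorem~1.1]{HMW-abeloid-Simpson} --- so the ``main obstacle'' you anticipate, upgrading that equivalence to a symmetric monoidal one, is already supplied by the reference. The one point where your justification is weaker than the paper's is the identification of pro-finite-\'etale $G$-Higgs bundles with tensor functors into $\Higgs^{\profet}(X)$: closure of that subcategory under tensor operations and subquotients does not by itself show that a $G$-torsor all of whose associated vector bundles trivialize on $\wt X$ is itself trivial on $\wt X$; the paper's \Cref{l:Tannaka} obtains this from the naturality of the Tannakian description of $G$-bundles applied to the sousperfectoid space $\wt X$, which is the ingredient you would still need to add.
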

For the proof, we begin by setting up the Tannakian formalism in this context.

\begin{Definition}\label{d:alg-rep}
	\begin{enumerate}
		\item For any rigid group $G$ which is the analytification of a linear algebraic group $G^\text{alg}$, we denote by $\mathrm{Rep}_K(G)$ the tensor category of finite dimensional algebraic representations of $G^\text{alg}$.
		\item For any topological group $\pi$, let  $\mathrm{Rep}_K(\pi)$ be the tensor category of continuous representations of $\pi$
		on finite dimensional $K$-vector spaces. 
	\end{enumerate}
\end{Definition}    
\begin{Proposition}\label{p:tannakian-g-torsors-on-sousperf}
	Let $X$ be any sousperfectoid space in the sense of \cite[\S 6.3.1]{berkeley}, or any rigid space. Then for any $G$ as in \Cref{d:alg-rep}.(1), there is an equivalence of categories
	\[
	\Big\{
	\text{$G$-bundles on $X_{\et}$}\Big\}\isomarrow  \Big\{\begin{array}{@{}c@{}l}\text{exact tensor functors}\\
		\mathrm{Rep}_K(G)\to \mathrm{Bun}(X_\et)\end{array}\Big\},
	\]
	natural in $X$, where $\mathrm{Bun}(X_\et)$ is the tensor category of vector bundles on $X_\et$. 
\end{Proposition}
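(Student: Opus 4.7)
The strategy is the classical Tannakian reconstruction of Nori and Saavedra--Rivano, transported to the analytic setting by reducing to coordinate rings on affinoid opens.

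\textbf{Forward direction.} To each $G$-bundle $E\in \Bun_G(X_\et)$ we associate the functor $\omega_E\colon V\mapsto E\times^G V$, where $V$ is regarded as a sheaf on $X_\et$ via its analytification as the rigid vector group $V\otimes_K \G_a$. Because $G^{\mathrm{alg}}$ is linear algebraic, hence smooth, $E$ trivialises over some \'etale cover $U\to X$, and over $U$ the functor $\omega_E$ becomes the standard functor $\omega^{\mathrm{std}}\colon V\mapsto V\otimes_K\O_U$. This shows both that $\omega_E(V)$ is a vector bundle and that $\omega_E$ is exact, since exactness can be checked after an \'etale surjection and $\omega^{\mathrm{std}}$ is manifestly exact. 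Tensor compatibility is immediate from the isomorphism $E\times^G(V_1\otimes V_2)\cong(E\times^G V_1)\otimes_{\O_X}(E\times^G V_2)$.

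\textbf{Inverse direction.} Given an exact tensor functor $\omega$, define a sheaf $E_\omega$ on $X_\et$ by
\[
E_\omega(U):=\mathrm{Isom}^{\otimes}\bigl(\omega^{\mathrm{std}}|_U,\,\omega|_U\bigr),
\]
with right $G$-action by precomposition via the canonical $G$-action on $\omega^{\mathrm{std}}$. The main task is to show that $E_\omega$ is a $G$-torsor, i.e.\ that it is \'etale-locally nonempty. We argue affinoid-locally: assume $X=\Spa(R)$. For rigid or sousperfectoid $X$, the category $\Bun(X_\et)$ is equivalent to the category $\mathrm{Proj}(R)$ of finite projective $R$-modules, so composing $\omega$ with global sections produces an exact $K$-linear tensor functor $\omega_R\colon\mathrm{Rep}_K(G)\to\mathrm{Proj}(R)$. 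By the classical Tannakian theorem for linear algebraic groups (Nori; Saavedra--Rivano; Deligne), such functors are classified by fpqc $G^{\mathrm{alg}}$-torsors on $\Spec R$. Since $G^{\mathrm{alg}}$ is smooth, such a torsor is \'etale-locally trivial; we thus obtain an \'etale ring map $R\to R'$ trivialising $\omega_R$, and the associated \'etale map $\Spa(R')\to X$ trivialises $E_\omega$.

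\textbf{Quasi-inverses and main obstacle.} The two constructions are mutually quasi-inverse, as can be checked after trivialisation. On a cover on which $E$ is trivial, the sheaf $E_{\omega_E}$ recovers the $G$-action on $G$ itself, giving a canonical isomorphism $E\cong E_{\omega_E}$; conversely, on a cover on which $\omega$ is trivial, both $\omega$ and $\omega_{E_\omega}$ specialise to $\omega^{\mathrm{std}}$, yielding a canonical natural isomorphism $\omega\cong\omega_{E_\omega}$. Naturality in $X$ is evident from the constructions. The principal technical obstacle is the passage between the analytic \'etale topology on $X$ and the fpqc topology on $\Spec R$; this rests on the equivalence $\Bun(X_\et)\simeq\mathrm{Proj}(R)$ for affinoid sousperfectoid or rigid $X$, together with the smoothness of $G^{\mathrm{alg}}$, which ensures that fpqc-trivial torsors are already \'etale-trivial and that \'etale covers of $\Spec R$ analytify to \'etale covers of $X$.
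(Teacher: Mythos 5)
Your proof is correct and is essentially the argument the paper relies on: the paper simply cites \cite[Theorem~19.5.2]{berkeley}, whose proof is exactly your reduction to the affinoid case, the identification $\mathrm{Bun}(X_\et)\simeq \mathrm{Proj}(R)$, the classical Tannakian classification of exact tensor functors into finite projective modules by fpqc-torsors on $\Spec R$, and smoothness of $G^{\mathrm{alg}}$ in characteristic zero to upgrade fpqc-local to \'etale-local triviality (plus the standard comparison of algebraic and analytic \'etale covers of $\Spa(R)$). The paper then only remarks that sousperfectoidness enters solely through sheafiness of the structure sheaf, so the same argument applies verbatim to rigid spaces, matching your treatment.
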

\begin{proof}
	In the sousperfectoid case, this is \cite[Theorem~19.5.2]{berkeley}. In fact, the proof in  \textit{loc.\ cit.\ } only uses the sousperfectoid assumption to guarantee sheafiness of the structure sheaf on the \'etale site, hence it still works in the Noetherian setting of rigid spaces.
\end{proof}

\begin{Proposition}\label{p:pirep}
	Let $\pi$ be any topological group.  Then there is an equivalence of categories
	\begin{align*}
		\Big\{\begin{array}{@{}c@{}l}\text{continuous homomorphisms  }\\
			\pi \rightarrow G(K) \end{array}\Big\}\cong  \Big\{\begin{array}{@{}c@{}l}\text{exact tensor functors}\\
			\mathrm{Rep}_K(G)\to \mathrm{Rep}_K(\pi)\end{array}\Big\}.
	\end{align*}
	Here on the left hand side, the set of morphisms $\rho_1\to \rho_2$ between any two continuous representations is given by the set of $g\in G(K)$ such that $g\rho_1g^{-1}=\rho_2$.
\end{Proposition}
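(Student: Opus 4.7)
The plan is to construct explicit functors in both directions and verify they are mutually inverse using Tannakian reconstruction. Going from left to right, a continuous homomorphism $\rho:\pi\to G(K)$ yields an exact tensor functor $F_\rho:\mathrm{Rep}_K(G)\to \mathrm{Rep}_K(\pi)$ by sending $(V,r)$ to $V$ equipped with the continuous representation $r_K\circ \rho:\pi\to \GL_K(V)$; continuity of the composition follows because $r$ is a morphism of algebraic groups, hence $r_K$ is continuous for the $K$-topology. Exactness and compatibility with tensor products and with the unit are inherited from the functoriality of $G^{\mathrm{alg}}\mapsto \GL_K(V)$, and the assignment is manifestly functorial in the morphism structure described on the left-hand side.

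For the inverse direction, given an exact tensor functor $F:\mathrm{Rep}_K(G)\to \mathrm{Rep}_K(\pi)$ I would first compose with the forgetful fiber functor $\omega_\pi:\mathrm{Rep}_K(\pi)\to \mathrm{Vec}_K$ to obtain a fiber functor $\omega_F:=\omega_\pi\circ F$ on $\mathrm{Rep}_K(G)$. Since $G^{\mathrm{alg}}$ is a linear algebraic group over the algebraically closed field $K$ of characteristic $0$, the $G$-torsor $\underline{\mathrm{Isom}}^\otimes(\omega_G,\omega_F)$ (with $\omega_G$ the standard fiber functor) is automatically trivial, so one may fix a tensor natural isomorphism $\alpha:\omega_F\isomarrow \omega_G$. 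For each $\gamma\in\pi$ the action of $\gamma$ on every object of $\mathrm{Rep}_K(\pi)$ defines a tensor natural automorphism of $\omega_\pi$, which pulls back to one of $\omega_F$, and which $\alpha$ transports to a tensor natural automorphism of $\omega_G$. By Tannakian reconstruction, $\mathrm{Aut}^\otimes(\omega_G)=G(K)$, so this procedure defines a map $\rho_F:\pi\to G(K)$; multiplicativity in $\gamma$ is immediate from the functoriality of the construction.

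The main obstacle is verifying continuity of $\rho_F$, which is also the step that genuinely uses the linear algebraic hypothesis on $G$. I would choose a faithful algebraic embedding $\iota:G^{\mathrm{alg}}\hookrightarrow \GL_{n,K}$; then the induced map $\iota_K:G(K)\hookrightarrow \GL_n(K)$ is a closed immersion for the topologies coming from $K$, so that the topology on $G(K)$ is the subspace topology. By construction, $\iota_K\circ \rho_F$ is, up to the identification of $F(\iota)$ with $K^n$ via $\alpha$, precisely the continuous $\pi$-representation underlying $F(\iota)\in \mathrm{Rep}_K(\pi)$, which proves continuity.

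To upgrade the two constructions to a genuine equivalence of categories, I would match morphisms. A morphism of exact tensor functors $F_1\Rightarrow F_2$ is a tensor natural isomorphism; composing with $\omega_\pi$ and transporting via fixed trivialisations $\alpha_i:\omega_{F_i}\isomarrow \omega_G$ yields a single tensor automorphism of $\omega_G$, hence an element $g\in G(K)$. The requirement that the natural transformation intertwine the $\pi$-actions on both sides translates under Tannakian duality into $g\rho_1 g^{-1}=\rho_2$, matching the morphism structure specified in the statement. Different choices of trivialisation $\alpha$ differ by an element of $G(K)$ and produce conjugate homomorphisms, so the construction is canonical on isomorphism classes, and the categorical structure on the left-hand side encodes exactly this ambiguity; the remaining checks that $\Phi$ and $\Psi$ are quasi-inverse are straightforward diagram chases using the fully faithful statement of Tannakian reconstruction.
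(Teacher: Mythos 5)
Your proposal is correct and follows essentially the same route as the paper: the forward functor by composition with $\rho^{\mathrm{an}}(K)$, the inverse via the fiber functor over $\Spa(K)$ together with triviality of the resulting $G$-torsor (the paper phrases this through its Tannakian description of $G$-bundles and the inner form $\mathrm{Aut}_G(E)$, you through $\underline{\mathrm{Isom}}^\otimes(\omega_G,\omega_F)$ and a chosen trivialisation, which is the same thing), and continuity of $\rho_F$ extracted from a closed embedding $G\hookrightarrow\GL_n$ exhibiting $G(K)$ as a closed subspace of $\GL_n(K)$. The treatment of morphisms via conjugation and the ambiguity in the trivialisation also matches the paper's handling of inner forms, so there is nothing substantive to add.
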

\begin{proof}
	Given a continuous homomorphism $f:\pi\to G(K)$, we obtain an exact tensor functor  on the right hand side by sending any representation $(V,\rho: G^\text{alg}\rightarrow \mathrm{GL}(V))$ to 
	\[(V, \pi\xrightarrow{f} G(K)\xrightarrow{\rho^\text{an}(K)}\mathrm{GL}(V)(K))\]
	where $G(K)$ and $\mathrm{GL}(V)(K)$ are endowed with their topologies from \Cref{p:can-grp-struct}.
	
	Conversely, denote by $K$-Vect the category of finite dimensional topological $K$-vector spaces.  Then for any exact tensor functor $\eta$ in the set on the right hand side, its composition with the forgetful functor to $K$-Vect gives an exact tensor functor
	\[F_G: \mathrm{Rep}_K(G)\xrightarrow{\eta} \mathrm{Rep}_K(\pi)\xrightarrow{For} K\text{-Vect}\cong \mathrm{Bun}(\Spa(K)_\et).\]
	By \Cref{p:tannakian-g-torsors-on-sousperf}, this amounts to a $G$-bundle $E$ on $\Spa(K)$.
	
	Since $E$ is necessarily trivial, any element of $\pi$ gives an automorphism of this tensor functor and hence an automorphism of $E$.
	
	This defines a map of sets
	\[f: \pi\rightarrow \mathrm{Aut}_G(E).\]
	Since the group structure on $\pi$ is compatible with compositions of such automorphisms, $f$ is in fact a group homomorphism.
	Since $E$ is trivial, we have $\mathrm{Aut}_G(E)\cong G$, but there is no canonical such isomorphism. Instead, we can regard  $G':=\mathrm{Aut}_G(E)$ as an inner form of $G$. But note that we have a natural equivalence of groupoids
	\[ \Big\{\text{homomorphisms $\pi\to G{(K)}$}\Big\}\isomarrow \Big\{\begin{array}{@{}c@{}l}\text{homomorphisms $\pi\to G'{(K)}$}\\\text{into inner forms $G'$ of $G$}\end{array}\Big\}\]
	as the natural functor is clearly fully faithful and essentially surjective. 
	All in all, we have thus defined functors
	\[\Big\{\begin{array}{@{}c@{}l}\text{cts.\ homomorphisms}\\ \pi\to G{(K)}\end{array}\Big\}\to \Big\{\begin{array}{@{}c@{}l}\text{exact tensor functors from}\\
		\mathrm{Rep}_K(G)\text{ to } \mathrm{Rep}_K(\pi)\end{array}\Big\}\xrightarrow{F_G} \Big\{\text{homomorphisms $\pi\to G{(K)}$}\Big\}.\]
	The construction is clearly natural in $G$. We can therefore now choose any closed embedding $\rho:G^\text{alg}\hookrightarrow \GL_n$ to compare ${F_G}$ with $F_{\GL_n}$. For $\GL_n$, it is clear that the first arrow is an equivalence, with quasi-inverse given by evaluating at the identity $\GL_n\to \GL_n$. Thus the essential image of $F_{\GL_n}$ is given by the continuous homomorphisms. Since $\rho$ is a closed immersion, its $K$-points $\rho(K):G(K)\hookrightarrow\GL_n(K)$ exhibit $G(K)$ as a closed subspace of $\GL_n(K)$, and it follows that a homomorphism $\pi\to G(K)$ is continuous if and only if the composition $\pi\to G(K)\hookrightarrow \GL_n(K)$ is. By naturality in $G$, this shows that the essential image of $F_G$ is given by the continuous homomorphisms. 
	
	That the other composition of the above functors is also equivalent to the identity easily follows from the observation that $\eta$ can be reconstructed from the data of $F_G$ and $f$.
\end{proof}

We have the following rigid analytic analog of the discussion in \cite[\S6, p.68]{Simpson-local-system}:
\begin{Proposition}\label{p:Tannaka-HB}
	Let $X$ be a smooth rigid space. Then there is an equivalence of categories
	\[
	\Big\{
	\text{$G$-Higgs bundles on $X_{\et}$}\Big\}\cong  \Big\{\begin{array}{@{}c@{}l}\text{exact tensor functors}\\
		\mathrm{Rep}_K(G)\to \mathrm{Higgs}(X)\end{array}\Big\}.
	\]
	where $\mathrm{Higgs}(X)$ is the tensor category of Higgs bundle on $X$.
\end{Proposition}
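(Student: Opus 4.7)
The plan is to bootstrap from the Tannakian equivalence for $G$-bundles (\Cref{p:tannakian-g-torsors-on-sousperf}) by treating the Higgs field as additional tensor-compatible data. On the level of underlying $G$-bundles and vector bundles, the correspondence is already in place via $V\mapsto F_G(V):=V\times^G E$; what remains is to interpret $\theta$ as a tensor-natural family of Higgs fields $\theta_V$ on $F_G(V)$, and vice versa.

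For the forward direction, given a $G$-Higgs bundle $(E,\theta)$, for each algebraic representation $\rho:G^{\mathrm{alg}}\to \GL(V)$ the differential $d\rho:\mathfrak g\to \mathrm{End}(V)$ is $G$-equivariant, so it induces a morphism of associated bundles $\mathrm{ad}(E)\to \underline{\mathrm{End}}(F_G(V))$. Tensoring with $\wtOm$ and evaluating at $\theta$ produces a Higgs field $\theta_V\in H^0(X,\wtOm\otimes\underline{\mathrm{End}}(F_G(V)))$; the condition $\theta\wedge\theta=0$ implies $\theta_V\wedge\theta_V=0$ because $d\rho$ intertwines the Lie bracket on $\mathfrak g$ with the commutator bracket on $\mathrm{End}(V)$. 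Functoriality and the relations $d(\rho_1\otimes\rho_2)=d\rho_1\otimes\id+\id\otimes d\rho_2$, $d(\rho^\vee)=-d\rho^\vee$ make the family $(\theta_V)_V$ compatible with tensor products and duals, i.e.\ define an exact tensor functor $\eta:\mathrm{Rep}_K(G)\to\Higgs(X)$ whose composition with the forgetful functor is $F_G$.

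Conversely, given $\eta:\mathrm{Rep}_K(G)\to\Higgs(X)$, the composition with the forgetful functor $\Higgs(X)\to \mathrm{Bun}(X_\et)$ yields an exact tensor functor corresponding by \Cref{p:tannakian-g-torsors-on-sousperf} to a $G$-bundle $E$ with $F_G(V)=\eta(V)^{\sharp}$. The extra datum is then a tensor- and functor-compatible family $(\theta_V)_V$, which I would recognise as a $\wtOm$-valued tensor derivation of the fibre functor $F_G$. Working \'etale-locally, where $E$ trivialises and $F_G$ becomes the functor $V\mapsto V\otimes_K \O_X$, such tensor derivations are precisely the $\wtOm$-valued Lie-algebra elements, i.e.\ sections of $\wtOm\otimes\mathfrak g$; gluing these local identifications along the $G$-equivariance forced by the trivialisations produces a unique global section $\theta\in H^0(X,\wtOm\otimes\mathrm{ad}(E))$ with $\theta_V=d\rho(\theta)$ for every $\rho$. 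Choosing any faithful representation $V_0$ (which exists since $G$ is linear algebraic), the condition $\theta_{V_0}\wedge\theta_{V_0}=0$ forces $\theta\wedge\theta=0$, so $(E,\theta)$ is a $G$-Higgs bundle. The two constructions are inverse by design, and morphisms match on both sides because a morphism of $G$-Higgs bundles is the same as an $\mathrm{ad}(E)$-compatible isomorphism of underlying $G$-bundles, which via $d\rho$ is exactly a natural transformation of tensor functors landing in $\Higgs(X)$.

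The main obstacle is the Tannakian recognition of $H^0(X,\wtOm\otimes\mathrm{ad}(E))$ as tensor-derivations of the fibre functor with values in $\wtOm$. This is the analog of the classical identification of the Lie algebra with derivations of the tensor unit, but here twisted by $\wtOm$, and it must be verified \'etale-locally on $X$ where $E$ trivialises, after which it reduces to the standard statement that derivations of $\mathrm{Rep}_K(G)\to K\text{-Vect}$ are given by $\mathfrak g$. The wedge-compatibility and the matching of morphisms are then routine calculations of Lie-theoretic identities.
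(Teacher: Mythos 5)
Your proposal is correct and follows essentially the same route as the paper: push out along $d\rho$ in one direction, and in the other recover the underlying $G$-bundle from \Cref{p:tannakian-g-torsors-on-sousperf} and identify the tensor-natural, mutually commuting family $(\theta_V)_V$ with a section of $\wtOm\otimes\mathrm{ad}(E)$. The only cosmetic difference is in how the ``main obstacle'' you name is resolved: the paper does it without trivialising $E$, by viewing each component of the Higgs field as an automorphism of the fibre functor over $R[\varepsilon]/\varepsilon^2$ and hence as an element of $\Lie(\mathrm{Aut}_G(E))=\mathrm{ad}(E)$, whereas you trivialise \'etale-locally and glue --- both are the same Tannakian identification of tensor derivations with the Lie algebra.
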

\begin{proof}
	There is clearly a functor from left to right defined by pushout of any given $G$-Higgs bundle along representations $G\to \GL_n$. To construct an inverse, since both sides satisfy \'etale descent, we may localise and assume that $X=\Spa(R)$ is affinoid  and $\Omega_X^1$ is finite free, with a chosen basis $\delta_1,\dots,\delta_d$.
	Let $\mathcal E$ be an object on the right hand side and consider the composition \[\mathcal F:\mathrm{Rep}(G)\xrightarrow{\mathcal E}\mathrm{Higgs}(X)\to \mathrm{Bun}(X_\et)\]
	with the forgetful functor. By \Cref{p:tannakian-g-torsors-on-sousperf}, this defines a $G$-bundle $F$ on $X_{\et}$. 
	
	The datum of the Higgs field given by $\mathcal E$
	amounts to assigning to each $\delta_i$ an endomorphism $\theta_{i,V}:\mathcal F(V)\to \mathcal F(V)$ for any representation $G\to \GL(V)$, functorial in $V$ and compatible with tensor products, such that $\theta_{i,V}$ and $\theta_{j,V}$ commute for all $i,j$ and all $V$. This is the same as the datum of  commutative endomorphisms  $\theta_i:\mathcal F\to \mathcal F$ of the tensor functor $\mathcal F$. We can identify these endomorphisms with automorphisms $1+\theta_i\varepsilon$ over $R[\varepsilon]/\varepsilon^2$. Using that automorphisms of $\mathcal F$ correspond to elements of $G':=\mathrm{Aut}_G(F)$, we see that such data correspond to elements in the kernel of the reduction map $G'(R[\varepsilon]/\varepsilon^2)\to G'(R)$, hence to elements of the Lie algebra of $G'=\mathrm{Aut}_G(F)$. One easily  checks that this Lie algebra is $\Lie(G')= F\times^G \mathfrak g=\mathrm{ad}(F)$. This shows that we may regard $\theta_1,\dots,\theta_d$ as commutative elements of $\wtOm\otimes \mathrm{ad}(F)$, which is the same as a $G$-Higgs field on $F$.
\end{proof}

\begin{Definition}
	We denote by  $\Higgs^{\profet}(X)$  the category of pro-finite-\'etale Higgs bundles on $X$, and by $\Higgs^{\profet}_G(X)$  the category of pro-finite-\'etale $G$-Higgs bundles on $X_{\et}$.
\end{Definition} 

The following is then the rigid analytic analog of Simpson's Tannakian description of the category of semi-stable Higgs bundles with vanishing Chern classes:

\begin{Lemma}\label{l:Tannaka}
	Let $X$ be any connected smooth proper rigid space for which  $\widetilde{X}$ from \Cref{d:univ-cover} is sousperfectoid, e.g.\ $X$ could be an abeloid variety. Then there is an equivalence of categories
	\begin{align*}
		\Higgs^{\profet}_G(X)\to \Big\{\begin{array}{@{}c@{}l}\text{exact tensor functors}\\
			\mathrm{Rep}_K(G)\to \Higgs^{\profet}(X) \end{array}\Big\}.
	\end{align*}
\end{Lemma}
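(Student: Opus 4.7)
The plan is to leverage the unrestricted Tannakian equivalence of \Cref{p:Tannaka-HB} and show that the pro-finite-\'etale condition can be read off on either side. Concretely, \Cref{p:Tannaka-HB} gives an equivalence of categories between $G$-Higgs bundles on $X_{\et}$ and exact tensor functors $\mathrm{Rep}_K(G)\to \Higgs(X)$. Once we verify that a $G$-Higgs bundle $(F,\theta)$ is pro-finite-\'etale if and only if the corresponding exact tensor functor $\eta$ factors through the full subcategory $\Higgs^{\profet}(X)\subseteq \Higgs(X)$, restricting both sides to the appropriate full subcategories yields the desired equivalence. Thus the whole proof reduces to checking this characterization, in two directions.

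For the forward implication, I would observe that if $F|_{\widetilde{X}}$ is a trivial $G$-torsor, then for every $V\in \mathrm{Rep}_K(G)$ the associated vector bundle $V\times^G F$ satisfies $(V\times^G F)|_{\widetilde{X}}\cong V\otimes_K \O_{\widetilde{X}}$, which is trivial. Hence $\eta(V)=(V\times^G F,\theta_V)$ has a pro-finite-\'etale underlying bundle, so $\eta$ factors through $\Higgs^{\profet}(X)$. This direction is formal and uses only functoriality of pushout along representations.

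For the converse, which is the main point, suppose $\eta$ factors through $\Higgs^{\profet}(X)$. Composing with the forgetful functor to vector bundles gives an exact tensor functor $\bar\eta\colon \mathrm{Rep}_K(G)\to \mathrm{Bun}^{\profet}(X)$. Here I would invoke (from \cite{HMW-abeloid-Simpson} for abeloid $X$, and more generally using that $\widetilde{X}$ being sousperfectoid guarantees $\O(\widetilde{X})=K$ so that $E\mapsto \Gamma(\widetilde{X},E)$ is a tensor equivalence) the equivalence of tensor categories
\[
\mathrm{Bun}^{\profet}(X)\isomarrow \mathrm{Rep}_K^{\cts}\bigl(\pi_1^{\et}(X,x_0)\bigr).
\]
Thus $\bar\eta$ becomes an exact tensor functor $\mathrm{Rep}_K(G)\to \mathrm{Rep}_K^{\cts}(\pi_1^{\et}(X,x_0))$. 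Applying \Cref{p:pirep} produces a continuous homomorphism $\rho\colon \pi_1^{\et}(X,x_0)\to G(K)$, whose associated pushout torsor $F':=\widetilde{X}\times^{\pi_1^{\et}(X,x_0),\rho}G$ is by construction pro-finite-\'etale.

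To conclude, I would identify $F$ with $F'$ using \Cref{p:tannakian-g-torsors-on-sousperf} applied to the rigid space $X$: both $F$ and $F'$ are $G$-bundles whose associated exact tensor functor $\mathrm{Rep}_K(G)\to \mathrm{Bun}(X_{\et})$ equals $\bar\eta$, so the Tannakian equivalence gives $F\cong F'$. Hence $F$ is pro-finite-\'etale, completing the characterization and therefore the lemma. I expect the main obstacle to be verifying the tensor equivalence $\mathrm{Bun}^{\profet}(X)\simeq \mathrm{Rep}_K^{\cts}(\pi_1^{\et}(X,x_0))$ in the required generality; the sousperfectoid hypothesis on $\widetilde{X}$ enters precisely here, ensuring that global sections of trivial bundles over $\widetilde{X}$ behave well and yield the sought-for fiber functor.
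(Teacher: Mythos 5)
Your overall reduction is the same as the paper's: both proofs reduce \Cref{l:Tannaka} to showing that, under the equivalence of \Cref{p:Tannaka-HB}, a $G$-Higgs bundle is pro-finite-\'etale exactly when its tensor functor lands in $\Higgs^{\profet}(X)$, and your forward implication is the same formal pushout argument. Where you diverge is the converse. The paper handles it in one stroke: since $\widetilde{X}$ is sousperfectoid, \Cref{p:tannakian-g-torsors-on-sousperf} applies to $\widetilde{X}$ as well, and naturality of that equivalence along $\widetilde{X}\to X$ gives a commutative square of pullback functors; taking essential preimages of the trivial objects over $\widetilde{X}$ then identifies pro-finite-\'etale $G$-bundles with tensor functors into pro-finite-\'etale vector bundles. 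That is precisely where the sousperfectoid hypothesis enters. Your route instead detours through continuous representations of $\pi_1^{\et}(X,x_0)$, \Cref{p:pirep} and the pushout torsor $F'$ --- in effect re-proving a special case of \Cref{rslt:vbundles} --- which is workable but longer, and it obscures the role of the hypothesis: note that $\O(\widetilde{X})=K$ holds for the universal cover of any connected smooth proper rigid space by \cite[Proposition~3.9]{heuer-geometric-Simpson-Pic}, independently of sousperfectoidness.

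The one genuine problem is the asserted tensor equivalence $\mathrm{Bun}^{\profet}(X)\isomarrow\mathrm{Rep}_K^{\cts}(\pi_1^{\et}(X,x_0))$. For \emph{\'etale} vector bundles (which is what underlies a $G$-Higgs bundle here) the functor $E\mapsto\Gamma(\widetilde X,E|_{\widetilde X})$ is fully faithful but not essentially surjective: a continuous representation pushes out to a pro-finite-\'etale torsor on $X_{\mathrm{v}}$ which in general is not \'etale-locally trivial --- the discrepancy is measured by the Higgs field, which is the whole point of \cite{HMW-abeloid-Simpson} (Theorem~1.1 there matches representations with pro-finite-\'etale \emph{Higgs} bundles, not with pro-finite-\'etale \'etale vector bundles). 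Your argument survives because you only use full faithfulness together with the fact that pushout is quasi-inverse on the essential image, but you should claim only that. You also need to be careful with the final comparison $F\cong F'$: the pushout $F'$ is a priori only a v-torsor, so the identification should be carried out in $\mathrm{Bun}_G(X_{\mathrm{v}})$ via \Cref{p:v-bundles-Tannakian} (using that \'etale $G$-torsors embed fully faithfully into v-$G$-torsors), or one must first verify that $F'$ is \'etale-locally trivial.
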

\begin{proof}  
	It suffices to prove that the pro-finite-\'etale assumptions on both sides are identified under the equivalence in  \Cref{p:Tannaka-HB}. It thus suffices to prove the statement for $G$-Higgs bundles replaced by $G$-bundles.
	For this we use \Cref{p:tannakian-g-torsors-on-sousperf} for both $X$ and $\widetilde{X}$. By functoriality, this says that the morphism $\widetilde{X}\to X$ induces a commutative diagram:
	\[\begin{tikzcd}[row sep=0.2cm]
		{\Big\{\text{$G$-bundles on $X_{\et}$}\Big\}} \arrow[r,"\sim"] \arrow[d] & {\Big\{\begin{array}{@{}c@{}l}\text{exact tensor functors}
				\\\mathrm{Rep}_K(G)\to \mathrm{Bun}(X_{\et})\end{array}\Big\}}\arrow[d] \\
		{\Big\{\text{$G$-bundles on $\widetilde X_{\et}$}\Big\}} \arrow[r,"\sim"]           & {\Big\{\begin{array}{@{}c@{}l}\text{exact tensor functors}
				\\\mathrm{Rep}_K(G)\to \mathrm{Bun}(\widetilde X_{\et})\end{array}\Big\}}          
	\end{tikzcd}\]
	Taking essential preimages of the identity objects in the bottom row, we deduce that the top row identifies pro-finite-\'etale $G$-bundles with tensor functors into pro-finite-\'etale objects in $\mathrm{Bun}(X_{\et})$, as we wanted to see.
	
\end{proof}

We now have everything in place to prove \Cref{t:tannakian}:
\begin{proof}[Proof of \Cref{t:tannakian}]
When $G^\text{alg}=\mathrm{GL}_n$ for some $n\in \N$, the theorem holds by \cite[Theorem~1.1]{HMW-abeloid-Simpson}, which moreover says that we have an exact tensor equivalence 
\[ \mathrm{Rep}_{K}(\pi_1^{\et}(X,0))=\Higgs^{\profet}(X).\]
The general case now follows from the combination of \Cref{p:pirep} and \Cref{l:Tannaka}.
\end{proof}

Finally, let us mention that the Tannakian formalism also applies to $G$-torsors on $X_\mathrm{v}$:
\begin{Proposition}\label{p:v-bundles-Tannakian}
Let $X$ be any diamond over $K$, then there is an equivalence of categories
\[
\Big\{
\text{$G$-bundles on $X_{\mathrm{v}}$}\Big\}\cong  \Big\{\begin{array}{@{}c@{}l}\text{exact tensor functors}\\
	\mathrm{Rep}_K(G)\to \mathrm{Bun}(X_\mathrm{v})\end{array}\Big\}.
\]
where $\mathrm{Bun}(X_\mathrm{v})$ is the category of v-vector bundles on $X$.
\end{Proposition}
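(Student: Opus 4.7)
The plan is to adapt the proof of \Cref{p:tannakian-g-torsors-on-sousperf} to the v-topology. The crucial observation is that \cite[Theorem~19.5.2]{berkeley} is a Tannakian reconstruction argument whose only geometric input is the sheafiness of the structure sheaf on the relevant site. On the v-site of any diamond, $\CO$ is automatically a sheaf, so no sousperfectoid assumption is needed and essentially the same argument goes through.

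I would first construct the forward functor by sending a $G$-torsor $E$ on $X_\mathrm{v}$ to the exact tensor functor $\Phi_E\colon V \mapsto V \times^G E$, which pushes $E$ out along a representation $\rho\colon G^{\mathrm{alg}} \to \GL(V)$ to obtain a v-vector bundle of rank $\dim V$. Exactness and tensoriality can be checked v-locally, where $E$ is trivial and $\Phi_E$ reduces to the fibre functor $V \mapsto V \otimes_K \CO_X$.

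For the inverse, given an exact tensor functor $F\colon \mathrm{Rep}_K(G) \to \mathrm{Bun}(X_\mathrm{v})$, I would associate the v-sheaf
\[ E_F(T) := \mathrm{Isom}^{\otimes}(\omega_T,\, F|_T), \qquad \omega_T(V) := V \otimes_K \CO_T, \]
for $T \to X$ a test object. The Tannakian reconstruction identity $\mathrm{Aut}^{\otimes}(\omega_T) = G(T)$ equips $E_F$ with a natural left $G$-action. To see that $E_F$ is v-locally trivial, I would fix a closed embedding $G \hookrightarrow \GL_n$, available since $G^{\mathrm{alg}}$ is linear algebraic. The v-vector bundle $F(K^n)$ is trivial on some v-cover $Y \to X$; since every $V \in \mathrm{Rep}_K(G)$ is a subquotient of tensor products of $K^n$ and its dual, this trivialisation upgrades to an equivalence of tensor functors $F|_Y \simeq \omega_Y$ by standard Tannakian arguments, providing a section in $E_F(Y)$.

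The main obstacle I anticipate is verifying the reconstruction formula $\mathrm{Aut}^{\otimes}(\omega_T) = G(T)$ at this level of generality, together with the mutual inverseness of $E \mapsto \Phi_E$ and $F \mapsto E_F$. Both follow the formal Tannakian pattern once one knows that $\CO$ is a v-sheaf on $X$; the translation between torsors and tensor functors is then largely a bookkeeping exercise parallel to \Cref{p:tannakian-g-torsors-on-sousperf}, and the naturality statements for morphisms reduce to comparing isomorphisms of tensor functors with isomorphisms of the corresponding torsors via the same reconstruction identity.
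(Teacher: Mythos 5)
Your forward functor agrees with the paper's, but your construction of the inverse takes a genuinely different route: the paper does not redo the Tannakian reconstruction on the v-site. It instead chooses a v-cover $\pi\colon Y\to X$ with $Y$ and $Y\times_XY$ perfectoid, uses $\mathrm{Bun}(Y_\mathrm{v})=\mathrm{Bun}(Y_\et)$ (Kedlaya--Liu) to apply \Cref{p:tannakian-g-torsors-on-sousperf} over $Y$, and then descends the resulting $G$-bundle along $Y\to X$ using the naturality of that proposition. This reduction is exactly what protects the paper from the point where your argument breaks down.

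The gap is your claim that a trivialisation of the single v-vector bundle $F(K^n)$ on a v-cover $Y\to X$ ``upgrades to an equivalence of tensor functors $F|_Y\simeq\omega_Y$'' because every representation is a subquotient of tensor powers of $K^n$ and its dual. This is false as stated: for $G=\G_m\hookrightarrow\GL_2$, $t\mapsto\mathrm{diag}(t,t^{-1})$, an exact tensor functor $F$ amounts to a v-line bundle $L$, and $F(K^2)=L\oplus L^{-1}$ can be trivial without $L$ being trivial, so no trivialisation of $F(K^2)$ can trivialise $F$. What a trivialisation of $F(K^n)$ actually produces is a reduction of the trivial $\GL_n$-torsor to $G$ (a section of an Isom-sheaf modelled on $\GL_n/G$), and one must still show that this $G$-orbit has sections v-locally; that is the substance of the theorem and does not follow from sheafiness of $\CO$ alone. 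The clean repair is precisely the paper's move: establish v-local triviality of $\mathrm{Isom}^\otimes(\omega,F)$ by passing to a perfectoid cover and quoting the sousperfectoid case. Granting that, the remaining items in your outline --- the $G$-action via $\mathrm{Aut}^\otimes(\omega_T)=G(T)$ and the mutual inverseness of the two constructions --- are indeed the formal bookkeeping you describe.
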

\begin{proof}
It is clear that we have a functor from left to right defined by pushout. To see that this is an equivalence, we use that by definition, any diamond admits a v-cover $\pi:Y\to X$ such that $Y\times_XY$ is again perfectoid. 

Let $F:\mathrm{Rep}_K(G)\to \mathrm{Bun}(X_\mathrm{v})$ be an exact tensor functor. Then 
\[ \pi^{\ast}F:\mathrm{Rep}_K(G)\to \mathrm{Bun}(X_\mathrm{v})\to \mathrm{Bun}(Y_\mathrm{v})\]
is still an exact tensor functor. Since $Y$ is perfectoid, we have $\mathrm{Bun}(Y_\mathrm{v})=\mathrm{Bun}(Y_\et)$ by \cite[Theorem 3.5.8]{KedlayaLiu-II} (see also \cite[Lemma 17.1.8]{berkeley}). We can therefore invoke \Cref{p:tannakian-g-torsors-on-sousperf} to see that $\pi^\ast F$ corresponds to a $G$-bundle $\mathcal E$ on $Y$. By functoriality of \Cref{p:tannakian-g-torsors-on-sousperf} and naturality in the sousperfectoid space, the canonical isomorphism over $Y\times_XY$
\[\psi:p_1^{\ast}\pi^{\ast}F\isomarrow p_2^{\ast}\pi^{\ast}F\]
corresponds to an isomorphism $\psi_{\mathcal E}:p_1^{\ast}\mathcal E\isomarrow p_2^{\ast}\mathcal E$. Similarly, the cocycle condition for $\psi$ ensures the cocycle condition for $\psi_{\mathcal E}$ on $Y\times_XY\times_XY$. Hence $\psi_{\mathcal E}$ defines a descent datum for a $G$-torsor along the v-cover $Y\to X$. This defines the desired $G$-bundle on $X_\mathrm{v}$.
\end{proof}

As an application, this allows us to answer \Cref{q1} for linear algebraic groups: Namely, it  shows that Faltings' $p$-adic Simpson correspondence for proper smooth curves \cite{Faltings_SimpsonI} generalises to linear algebraic groups. More generally, the same is true for the $p$-adic Simpson correspondence of smooth proper rigid spaces of \cite{heuer-paSimpson}:
\begin{Theorem}
Let $X$ be a smooth proper rigid space over $K$. Let $G$ be any linear algebraic group over $K$.
Then choices of a flat $B_{\dR}^+/\xi^2$-lift $\mathbb X$ of $X$ and of an exponential for $K$ induce an exact tensor equivalence
\[
\Big\{
\text{$G$-bundles on $X_{v}$}\Big\}\isomarrow  \Big\{
\text{$G$-Higgs bundles on $X_{\et}$}\Big\}.
\]
\end{Theorem}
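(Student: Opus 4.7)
The plan is to deduce the statement from the rank-$n$ $p$-adic Simpson correspondence of \cite{heuer-paSimpson} together with the Tannakian dictionary developed above.

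First I would combine \Cref{p:v-bundles-Tannakian} and \Cref{p:Tannaka-HB} to reinterpret both sides as categories of exact tensor functors out of $\mathrm{Rep}_K(G)$: the left hand side becomes exact tensor functors $\mathrm{Rep}_K(G)\to \mathrm{Bun}(X_\mathrm{v})$, and the right hand side becomes exact tensor functors $\mathrm{Rep}_K(G)\to \mathrm{Higgs}(X)$. These identifications are natural in $G$ and do not depend on any choice. Consequently, it suffices to construct from the data of $\mathbb X$ and $\exp$ an exact tensor equivalence
\[
\mathrm{Bun}(X_\mathrm{v})\isomarrow \mathrm{Higgs}(X)
\]
of symmetric monoidal categories, and then to post-compose with it to transport exact tensor functors.

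Second, the existence of this equivalence on the level of objects and morphisms is exactly the main result of \cite{heuer-paSimpson} (applied rank by rank). The additional point to verify is that it respects the natural symmetric monoidal structures, i.e.\ intertwines $\otimes_{\CO}$ on the v-bundle side with the tensor product of Higgs bundles $(E_1,\theta_1)\otimes(E_2,\theta_2)=(E_1\otimes E_2,\theta_1\otimes\id+\id\otimes\theta_2)$ on the Higgs side, and likewise preserves duals and exact sequences. This should be a formal consequence of the construction of the correspondence: the Higgs field is extracted from a Sen-type operator attached to the $B_{\dR}^+/\xi^2$-lift $\mathbb X$, and such operators satisfy the Leibniz rule, so the induced Higgs field on $E_1\otimes E_2$ is automatically the expected one. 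Compatibility with duals and exactness can be checked on a fine enough v-cover trivialising the bundles, where both sides reduce to $\CO$-linear algebra.

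Finally, given the exact tensor equivalence $\mathrm{Bun}(X_\mathrm{v})\isomarrow \mathrm{Higgs}(X)$, post-composition sends exact tensor functors out of $\mathrm{Rep}_K(G)$ to exact tensor functors out of $\mathrm{Rep}_K(G)$ and is itself an equivalence. Combined with the two Tannakian equivalences from the first step, this yields the desired exact tensor equivalence between $G$-bundles on $X_\mathrm{v}$ and $G$-Higgs bundles on $X_\et$. The main obstacle is the tensor compatibility of the $\GL_n$-correspondence; once this is in hand, the passage from $\GL_n$ to arbitrary linear algebraic $G$ is entirely formal, exactly as in \Cref{t:tannakian}.
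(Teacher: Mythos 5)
Your proposal follows essentially the same route as the paper: reinterpret both sides via \Cref{p:v-bundles-Tannakian} and \Cref{p:Tannaka-HB} as exact tensor functors out of $\mathrm{Rep}_K(G)$ and transport along the $\GL_n$-correspondence of \cite{heuer-paSimpson}. The tensor compatibility you flag as the main obstacle requires no extra work, since \cite[Theorem~5.1]{heuer-paSimpson} already asserts that the equivalence $\mathrm{Bun}(X_\mathrm{v})\isomarrow\mathrm{Higgs}(X)$ is an exact tensor equivalence, which is exactly how the paper handles it.
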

\begin{proof}
The category of $G$-torsors  on $X_v$ is equivalent to the category of $G$-torsors on $X_{\proet}$ by \cite[Corollary~1.2]{heuer-G-Torsor}. 
For $G=\GL_n$, the result therefore holds by \cite[Theorem~5.1]{heuer-paSimpson}, which moreover says that in this case, the correspondence is an exact tensor functor. The case of general $G$ follows by the Tannakian formalism using \Cref{p:v-bundles-Tannakian} and \Cref{p:Tannaka-HB}.
\end{proof}
We have thus answered \Cref{q1} for linear algebraic $G$, and \Cref{q3} for linear algebraic $G$ on abeloid varieties.

\section{The small correspondence}
In this section, the aspect of the $p$-adic Simpson correspondence for which we explore generalizations to general rigid groups $G$ is the ``small" correspondence in the ``global'' setup. That is, we allow $X$ to be any smooth adic space, not necessarily proper or toric. 

The starting point of our discussion is the local $p$-adic Simpson correspondence for small $G$-bundles constructed in \cite[Theorem~6.5]{heuer-Moduli}. We first explain in what sense this is functorial with respect to perfectoid abelian Galois covers (c.f. \Cref{d: equivariant map} below). While it is in general impossible to find such perfectoid covers globally, we explain at the example of abelian varieties how one can use this to contruct instances of a small correspondence for $G$-bundles in the global setting.

In order to explain the setup, we first recall that in Faltings' setting \cite{Faltings_SimpsonI}, the local $p$-adic Simpson correspondence relies on the following notion:

\begin{Definition}
Let $U$ be an affinoid smooth rigid space. A toric chart of $U$ is an \'etale morphism $h:U\to \mathbb T^d$ to some affinoid torus $\mathbb{T}^d = \Spa(K\langle T^{\pm 1}_1,\dots, T^{\pm 1}_d\rangle)$ which is a composition of rational opens and finite \'etale maps. We call $U$ toric if a toric chart exists. 
\end{Definition}

The setup of our small correspondence can be divided into a local and a global one:
\begin{Setup}[Local Setup]\label{a:local-cover}
Let $U$ be a toric smooth rigid space. We assume that
\[f:U_\infty\to U\] is a pro-\'etale Galois cover in Scholze's pro-\'etale site $U_{\mathrm{pro}\et}$ of \cite[\S3]{Scholze-padic-Hodge} satisfying the following:
\begin{enumerate}
	\item $U_\infty$ is affinoid perfectoid.
	\item  The covering group $\Delta_U$ of $f$ is isomorphic as a profinite group to $\Z_p^d$ for $d:=\dim U$.
	\item The following natural Cartan--Leray map associated to $f$ is an isomorphism:
	\[ \Hom_{\cts}(\Delta_U,\O(U))\to H^1_{\cts}(\Delta_U,\O(U_\infty))\to H^1_\mathrm{v}(U,\O).\]
\end{enumerate}
\end{Setup} 

Here the first morphism is induced by $\Hom_{\cts}(\Delta_U,\O(U))=H^1_{\cts}(\Delta_U,\O(U))$, and the second map  is always an isomorphism due to the assumption that $U_\infty$ is affinoid perfectoid.
\begin{Example} Given a toric chart $h:U\to \mathbb T^d$, pullback along $h$ of the affinoid perfectoid torus $\mathbb T^d_\infty\to \mathbb T^d$  induces a pro-\'etale cover $U_\infty \to U$. This satisfies \Cref{a:local-cover} by \cite[Lemmas~4.5, 5.5]{Scholze-padic-Hodge}. That being said, we note that we only assume in \Cref{a:local-cover} that $U$ is toric, but we do not assume that $f$ is related to any chart.
\end{Example}

\begin{Setup}[Global Setup]\label{a:global-cover}
Let $X$ be a smooth rigid space. We assume that  \[f:X_\infty\to X\]
is  a perfectoid pro-\'etale Galois cover in $X_\proet$ with covering group $\Delta_X$ for which there is a basis of $X_{\et}$ given by affinoids $U$ on which $U_\infty:=X_\infty\times_{X}U\to U$ satisfies \Cref{a:local-cover}. We call such morphisms $f$ quasi-toric.
\end{Setup}
\begin{Example}
If $X$ is an abeloid variety of dimension $d$, then the cover $\widetilde X=\varprojlim_{[p]} X\to X$ is a perfectoid $T_pX$-torsor \cite[Theorem~1]{AWS-perfectoid}. While $T_pX\cong \Z_p^{2d}$ does not have the correct rank to satisfy \Cref{a:global-cover}, we have the following result in case of ordinary reduction, i.e.\ when the \'etale part $H^{\et}$ of the $p$-divisible group $H$ of the formal Raynaud extension associated to $X$ (the formal group scheme whose generic fibre is the maximal semi-abelian subgroup of $X$ of good reduction) is $\cong(\mathbb{Q}_p / \mathbb{Z}_p)^d$.  In this case we call any subgroup of $X[p^\infty]$ anticanonical if it is isomorphic to its image in the generic fibre of $H^\et$. 
\end{Example}

\begin{Lemma}\label{l:abeloid-assumption}
Let $X$ be an ordinary abeloid variety. Fix any anticanonical	
$p$-divisible subgroup $D\subseteq X[p^\infty]$ of rank $d$. Its Tate module $T_pD\subseteq T_pX$ is a submodule which corresponds to a $\Delta_X:=T_pX/T_pD$-torsor $X_\infty \to X$. This satisfies the conditions of \Cref{a:global-cover}.
\end{Lemma}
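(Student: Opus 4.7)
The plan is to exhibit $X_\infty\to X$ explicitly as an anticanonical tower, and then to verify the conditions of \Cref{a:local-cover} on a basis of affinoid opens using the ordinary structure of a formal model of $X$.

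Since $D$ is anticanonical, $T_pD\subseteq T_pX$ is saturated of rank $d$ (the sequence $0\to T_pD\to T_pX\to T_p(X[p^\infty]/D)\to 0$ is short exact with torsion-free cokernel), so $\Delta_X=T_pX/T_pD$ is free of $\Z_p$-rank $d$, hence $\cong\Z_p^d$ as required. Each $D[p^n]$ is étale, so the isogeny $X\to X/D[p^n]$ is finite étale of degree $p^{nd}$. Under the Galois correspondence for pro-étale covers of $X$, the closed subgroup $T_pD$ corresponds to the $\Delta_X$-torsor
\[X_\infty=\varprojlim_n X/D[p^n]\to X,\]
with transition maps the étale isogenies of kernel $D[p^{n+1}]/D[p^n]\cong(\Z/p)^d$. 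This is the global cover to be used in \Cref{a:global-cover}.

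To build the affinoid basis, I would use that $X$ is ordinary. Via formal Raynaud uniformization, one reduces to the good ordinary case where there is a formal abelian model $\mathfrak X$. Its $p$-divisible group $H$ has connected-étale sequence $0\to H^\circ\to H\to H^\et\to 0$ with $H^\et\cong (\Q_p/\Z_p)^d$ étale-locally, and $D$ maps isomorphically onto $H^\et$ generically. After an étale refinement of $\mathfrak X$, each $D[p^n]$ becomes the constant group scheme $(\Z/p^n)^d$, and by general theory of smooth rigid spaces we may further refine so that the resulting rigid opens $U$ admit toric charts and form a basis of $X_\et$. On such $U$, modulo $p$ the isogeny $U/D[p^{n+1}]\to U/D[p^n]$ becomes a Verschiebung-type étale isogeny in $d$ coordinates adapted to the split étale part of $H$, so $\O^+(U_\infty)/p$ is the perfection of $\O^+(U)/p$ obtained by extracting $p$-power roots in these coordinates. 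Standard lifting, cf.\ \cite[Lemma~4.10]{Scholze-padic-Hodge}, then shows that $U_\infty$ is affinoid perfectoid, giving (1) of \Cref{a:local-cover}. Condition (2) is immediate because $U_\infty$ is the pullback of a $\Delta_X$-torsor, and the Cartan--Leray isomorphism (3) follows from the explicit Kummer-type $\Delta_X$-action on $\O(U_\infty)$ by an argument parallel to \cite[Lemmas~4.5,~5.5]{Scholze-padic-Hodge}.

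The main obstacle is making this local analysis precise. Identifying the anticanonical isogeny with (a twist of) Verschiebung modulo $p$ requires a careful choice of étale-local splitting of the connected-étale sequence, which splits only on the special fibre and not on the formal model itself; tracking the resulting adapted coordinates and verifying that they behave like toric coordinates up to units is the technical heart of the argument. In the semi-abelian reduction case one must in addition check that this perfectoid structure descends along the Raynaud lattice quotient, using that the lattice acts discretely. This is in the spirit of Scholze's construction of the perfectoid anticanonical locus in ordinary Shimura varieties, and that argument adapts to the present setting.
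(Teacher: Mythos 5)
Your description of the tower is where the argument breaks. The transition maps of $X_\infty=\varprojlim_n X/D_n$ are the maps $X/D_{n+1}\to X/D_n$ descended from $[p]$; their kernel is the image of the \emph{canonical} subgroup $C_1=X[p]/D_1$, not $D[p^{n+1}]/D[p^n]$ (the natural quotient with that kernel goes the other way, $X/D_n\to X/D_{n+1}$). You appear to be conflating the finite-level covering maps $X\to X/D_n$, which are indeed quotients by \'etale (Verschiebung-type) subgroups, with the transition maps of the inverse system. On formal models the transition maps are quotients by the connected, multiplicative-type canonical subgroup and hence reduce to relative Frobenius mod $p$ --- this, and not an ``\'etale Verschiebung-type isogeny'', is the entire reason the limit is perfectoid: a map that is \'etale mod $p$ does not extract $p$-power roots, and a tower of such maps would not make Frobenius surjective on the special fibre. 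Your sentence asserting both that the transition map is \'etale mod $p$ and that it realises $\O^+(U_\infty)/p$ as a perfection is internally inconsistent. The paper's perfectoidness proof is the corrected version of your sketch: the transition maps on formal models reduce to Frobenius, so Frobenius is surjective on the mod-$p$ fibre of $\varprojlim_n\mathfrak X_n$; the semi-abelian case then follows by Raynaud uniformisation as in \cite{AWS-perfectoid}.

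For condition (3) of \Cref{a:local-cover} you defer the entire verification to an unexecuted local computation which you yourself label ``the technical heart'' (adapted coordinates, \'etale-local splittings of the connected--\'etale sequence, a Kummer-type description of the $\Delta_X$-action). The paper avoids all of this with a short global argument: for ordinary $X$ the Hodge--Tate map $\Hom(T_pX,K)\to H^0(X,\Omega^1(-1))$ factors as restriction to $\Hom(T_pC,K)$ followed by an isomorphism, where $C$ is the canonical $p$-divisible subgroup; since $D$ is anticanonical, $T_pC\to T_pX\to\Delta_X$ is an isomorphism, so $\Hom(\Delta_X,K)\to H^0(X,\Omega^1(-1))$ is an isomorphism, and tensoring with $\O(U)$ (using that $\Omega^1_X$ is free on an abeloid) gives the required Cartan--Leray isomorphism for every affinoid $U$ with $U_\infty$ affinoid perfectoid. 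No toric-chart-adapted coordinates are needed. As written, your proposal neither establishes perfectoidness by a correct mechanism nor completes the verification of condition (3).
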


\begin{proof}
We first explain why $X_\infty$ is perfectoid: If $X$ has good reduction, this can be seen as in \cite[Lemma 2.13]{AWS-perfectoid}: Let $\mathfrak X_n$ be the formal model of $X/D_n$. The transition map $\mathfrak X_n\to \mathfrak X_{n-1}$ is the quotient by the canonical subgroup, which reduces to the relative Frobenius mod $p$. Thus Frobenius is surjective on the mod $p$ fibre of $\varprojlim_n  \mathfrak X_n$, hence $X_\infty$ is perfectoid.

The general case follows from the one of good reduction via Raynaud uniformisation by the same argument as in \cite[Proposition 4.3]{AWS-perfectoid}, replacing $[p]$ in \cite[diagram (5)]{AWS-perfectoid} with the canonical isogeny $X/D_n\to X/D_{n-1}$. This shows that $X_\infty=\varprojlim_n X/D_n$ is perfectoid.

Since $X$ is ordinary, the Hodge--Tate map admits a factorisation 
\[ \begin{tikzcd}[column sep = 0.1cm,row sep =0.2cm]
	\Hom(T_pX,K)\arrow[rr,"\HT"]\arrow[rd]&& H^0(X,\Omega^1(-1))\\
	& \Hom(T_pC,K)\arrow[ru,dotted]&
\end{tikzcd}\]
where $T_pC\subseteq T_pX$ is the Tate module of the canonical $p$-divisible subgroup, the map on the left is the restriction, and the dotted map is an isomorphism. Since $D$ is anticanonical, the composition $T_pC\to T_pX\to \Delta_X$ is an isomorphism, so it follows that the map
\[ \Hom(\Delta_X,K)\to 	\Hom(T_pX,K)\to H^0(X,\Omega^1(-1))\] 
is an isomorphism. Let now $U\subseteq X$ be any affinoid subspace such that $U_\infty=X_\infty\times_XU$ is affinoid perfectoid. Using that $\Omega_X^1$ is a trivial vector bundle in the case of abelian varieties, it then follows by applying $-\otimes_K\O(U)$ that also the following map is an isomorphism
\[ \Hom(\Delta_X,\O(U))\to 	\Hom(T_pX,\O(U))\to H^0(U,\Omega^1(-1))=H^1_\mathrm{v}(U,\O).\qedhere\]
\end{proof}

\begin{Remark}
With more work, one can in fact show that there exists a cover satisfying \Cref{a:global-cover} for any abelian variety $X$, not necessarily ordinary: After replacing $X$ by an isogenous abelian variety,  we can reduce to the case that $X$ is principally polarized. It then follows from \cite[Corollary III.3.12]{Scholze-torsion} that $X$ is $p$-power isogeneous to an abelian variety $X'$ that admits a canonical subgroup (see \cite[Definition III.2.7]{Scholze-torsion} for the notion of canonical subgroup in this case). For any choice of anti-canonical $p$-divisible subgroup $(D_n\subseteq X'[p^n])_{n\in \N}$, the tower of canonical isogenies $\varprojlim_{n\in\N} X'/D_n$  then has the desired form. But we will not use this fact in the present paper. It is plausible that a cover satisfying \Cref{a:global-cover} exists more generally for abeloid varieties.
\end{Remark}

In order to construct the small $p$-adic Simpson correspondence in the setup of \Cref{a:global-cover}, we now first need to deal with the local situation.

\begin{Lemma}\label{Lemma2.14}
Let $f:U_\infty\to U$ be as in \Cref{a:local-cover}. Then for any $n\geq 1$, there is $\gamma>0$ such that for any $s\in \N$, the kernel and cokernel of the natural map
\[ \iota^n_s:H^n_{\cts}(\Delta_X,\O^+(U)/p^s)\to H^n_{\cts}(\Delta_X,\O^+(U_\infty)/p^s)\]
are $p^\gamma$-torsion. The same is true for $\iota^n:H^n_{\cts}(\Delta_X,\O^+(U))\to H^n_{\cts}(\Delta_X,\O^+(U_\infty))$.
\end{Lemma}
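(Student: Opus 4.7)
The plan is to prove the stronger statement that $\iota^n_s$ is an almost isomorphism with respect to $\m_K$, with bounds uniform in $s$; since any $\m_K$-torsion $\O_K$-module is killed by $p^\gamma$ for every $\gamma > 0$, this implies the lemma. First, since $U_\infty$ is affinoid perfectoid, Scholze's almost acyclicity yields that $H^q(U_\infty, \O^+/p^s)$ is almost zero for all $q \geq 1$, with vanishing uniform in $s$. Hence the Cartan--Leray spectral sequence for the pro-\'etale $\Delta_U$-cover $f$ almost degenerates, giving natural uniform almost isomorphisms $H^n_{\cts}(\Delta_U, \O^+(U_\infty)/p^s) \cong H^n_{\mathrm v}(U, \O^+/p^s)$ in every degree.

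On the other hand, since $\Delta_U \cong \Z_p^d$ acts trivially on $\O^+(U)/p^s$, the continuous Koszul complex has zero differentials, giving a canonical cup-product-compatible identification $H^n_{\cts}(\Delta_U, \O^+(U)/p^s) \cong (\O^+(U)/p^s)^{\binom{d}{n}}$. Writing $\chi_1, \ldots, \chi_d \in H^1_{\cts}(\Delta_U, \O^+(U_\infty)/p^s)$ for the images under $\iota^1_s$ of the canonical generators of the source, $\iota^n_s$ is then identified up to almost zero error with the exterior-algebra map $(a_I)_I \mapsto \sum_I a_I\,\chi_{i_1}\cup\cdots\cup\chi_{i_n}$. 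Via the long exact sequence associated to $0 \to \O^+(U)/p^s \to \O^+(U_\infty)/p^s \to Q/p^s \to 0$ for $Q := \O^+(U_\infty)/\O^+(U)$, the almost-isomorphism claim reduces to showing that the continuous group cohomology $H^\ast_{\cts}(\Delta_U, Q/p^s)$ is almost zero uniformly in $s$.

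This vanishing is the main obstacle. In the model case of the standard perfectoid torus cover $\TT^d_\infty \to \TT^d$ it is essentially Scholze's explicit computation in \cite[\S 4--5]{Scholze-padic-Hodge}: one uses the decomposition of $\O^+(\TT^d_\infty)$ as a completed direct sum of rank-one $\Z_p^d$-eigenlines $\O_K\cdot T^\alpha$ indexed by $\alpha \in \Z[1/p]^d$, with $\gamma_i \cdot T^\alpha = \zeta^{\alpha_i} T^\alpha$; the Koszul cohomology of each non-integer eigenline is annihilated by $\zeta^{\alpha_i}-1 \in \m_K$, and a uniform estimate handles the completed sum. For the general cover of \Cref{a:local-cover}, one transfers this computation by noting that, in view of the first step, the target of $\iota^n_s$ is almost independent of the specific $\Delta_U$-cover; hypothesis (3) then identifies $\iota^1_s$ with the model-case version up to almost isomorphism, and the cup product structure extends this identification to every degree. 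Finally, passing to the inverse limit in $s$ and using the uniform almost vanishing together with the $p$-adic completeness of the modules involved yields the corresponding statement for $\iota^n$ with $\O^+$ coefficients.
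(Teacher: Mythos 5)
The central claim of your proposal---that $\iota^n_s$ is an almost isomorphism, equivalently that $H^\ast_{\cts}(\Delta_U,Q/p^s)$ is almost zero---is false, already in the model case of the toric cover $\TT^d_\infty\to\TT^d$. Your own description of that case shows why: the Koszul cohomology of a non-integral eigenline indexed by $\alpha$ consists of copies of $\O_K/(\zeta^{\alpha_i}-1)$, and for $\alpha_i$ of denominator exactly $p$ this is $\O_K/(\zeta_p-1)$, which is \emph{not} almost zero (its unit class is not annihilated by any element of valuation $<\tfrac{1}{p-1}$). What the computation in \cite[\S 5]{Scholze-padic-Hodge} actually gives is annihilation of the cokernel by the fixed element $\zeta_p-1$, i.e.\ by $p^\gamma$ for one specific $\gamma$---which is exactly the (weaker) statement of the lemma, and exactly why the lemma is phrased as ``there is $\gamma>0$'' rather than ``for all $\gamma>0$''. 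So your opening reduction (``almost zero, hence killed by $p^\gamma$ for every $\gamma$'') aims at a statement that cannot be proved because it is not true; the skeleton of your argument survives only if every occurrence of ``almost'' after the Cartan--Leray step is replaced by ``up to a uniform power of $p$'', at which point the bookkeeping of constants becomes the actual content of the proof.

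The second, related gap is the transfer to a general cover as in \Cref{a:local-cover}. Condition (3) there is purely rational, so it identifies $\iota^1$ for your cover with the model one only after inverting $p$; extracting a \emph{uniform} integral bound from this (let alone one valid mod $p^s$ for all $s$ simultaneously) needs a further argument that you do not supply. The paper's proof provides it: $H^n_{\cts}(\Delta_U,\O^+(U))\cong\wedge^n\O^+(U)^d$ is finite free, the toric chart yields an injection $\wedge^n\O^+(U)^d\to H^n_{\mathrm v}(U,\O^+)$ with $p^\beta$-torsion cokernel, hence $p^\beta\iota^n$ factors through an endomorphism of a finite free $\O^+(U)$-module that is an isomorphism after inverting $p$ and therefore has bounded torsion cokernel. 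Note also that the paper proves the $\O^+$-statement first and deduces the mod-$p^s$ statements by long exact sequences, the reverse of your order; this is the natural direction precisely because hypothesis (3) lives at the integral/rational level, where one can invert $p$, and not mod $p^s$.
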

\begin{proof}
The Cartan--Leray sequence for $U_\infty\to U$ induces a commutative diagram
\[\begin{tikzcd}
	H^n_{\cts}(\Delta_X,\O(U))\arrow[r]& H^n_{\cts}(\Delta_X,\O(U_\infty))\arrow[r]&H^n_\mathrm{v}(U,\O)\\
	H^n_{\cts}(\Delta_X,\O^+(U))\arrow[r]\arrow[u]& H^n_{\cts}(\Delta_X,\O^+(U_\infty))\arrow[r]\arrow[u]&H^n_\mathrm{v}(U,\O^+)\arrow[u]
\end{tikzcd}\]
The horizontal maps on the right are (almost) isomorphisms because $R\Gamma_v(U_\infty,\O^+)\stackrel{a}{=} \O^+(U_\infty)$ due to the assumption that $U_\infty$ is affinoid perfectoid. For $n=1$, the composition of the top row is an isomorphism by assumption. It follows that the top map is an isomorphism for general $n$: This is because Cartan--Leray is compatible with cup products in group cohomology, respectively sheaf cohomology, and the cup products induce isomorphisms
\[ 	H^n_{\cts}(\Delta_X,\O(U))=	\wedge^nH^1_{\cts}(\Delta_X,\O(U)),\quad H^n_\mathrm{v}(U,\O)=\wedge^nH^1_\mathrm{v}(U,\O),\]
where the last equality follows from \cite[Lemma~5.5]{Scholze-padic-Hodge} using that $U$ is toric.

To see that the bottom composition is an isomorphism up to bounded $p$-torsion  cokernel, we use again that $U$ admits a toric chart: Considering the Cartan--Leray sequence of the associated toric affinoid cover, this shows that there is an injective map \[\wedge^n\O^+(U)^d\to H^n_\mathrm{v}(U,\O^+)\] with $p^\beta$-torsion cokernel for some $\beta>0$. This shows that $p^{\beta}\cdot \iota^n$ factors through a map $j:	H^n_{\cts}(\Delta_X,\O^+(U))\to \wedge^n\O^+(U)^d$. As $H^n_{\cts}(\Delta_X,\O^+(U))=\wedge^n\O^+(U)$, the map $j$ is a morphism between finite free $\O^+(U)$-modules which is an isomorphism after inverting $p$. Hence it is injective with $p^{\delta}$-torsion cokernel for some $\delta>0$. Setting $\gamma:=\beta+\delta$, we get the desired result for $\iota^n$.
The statement for $\iota^n_s$ follows from this by long exact sequences.
\end{proof}

Let now $G$ be any rigid group over $K$. Following the notation in \S\ref{s:local-struct-G}, we fix an open subgroup $G_0$ of $G$ of good reduction and let $k >0$ be such that that the Lie algebra exponential $\exp:\mathfrak g_k\to G_k$ converges and is an isomorphism. In this setting, we can now generalise the technical notion of smallness from \cite[\S6]{heuer-Moduli}:
\begin{Definition}\label{d:smallness}\label{d: small-G-Higgs}
Let $U_\infty \to U$ be as in \Cref{a:local-cover} and let $\gamma>0$ be as in \Cref{Lemma2.14}. Set $c:= 10\cdot \max(\gamma,k)$.
\begin{enumerate}
	\item We call a continuous representation $\rho:\Delta_U\to G(U)$ an $f$-small cocycle if it factors through $G_c(U)$.
	We denote the category of $f$-small $1$-cocycles by $\mathcal Z^{1,\sm}_\mathrm{cts}(\Delta_U, G(U))$, where the morphisms $\rho_1\to \rho_2$ are given by $\{g\in G(U)|g \rho_1g^{-1}=\rho_2\}$.
	\item A $G$-bundle on $U_\mathrm{v}$ is said to be $f$-small if it admits a reduction of structure group to $G_c$. We denote the category of $f$-small $G$-bundles on $U_\mathrm{v}$ by $\Bun_G^{\sm}(U_\mathrm{v})$.
	\item A $G$-Higgs bundle $(E,\theta)$ on $U_\text{\'et}$ is said to be $f$-small if $E$ is trivial and there is a trivialisation $E\cong G$ with respect to which $\theta$ is in the image of the chain of maps
	\begin{align*}
		\HTlog\colon&\Homc(\Delta_U, G_{c}(U)) \xrightarrow{\mathrm{log}}
		\Homc(\Delta_U, \lie_c(U))
		\isomarrow \Hc(\Delta_U, \lie_c(U))\\
		&\rightarrow \Hc(\Delta_U,\lie_c(U_\infty)) \to \Hv(U, \lie_c)\xrightarrow{\HT} R^1\nu_\ast\lie_c(U)\isomarrow H^0(U,\lie_c\otimes \widetilde \Omega).
	\end{align*}
	We denote the category of $f$-small $G$-Higgs bundles on $U$ by $\HiggsG^{\sm}(U)$. 
\end{enumerate}
\end{Definition}
\begin{Remark}
\begin{enumerate}
	\item   We emphasize that one way in which these notions depend on $f$ is that $c$ depends on $f$.
	Note that we have not characterised the $\gamma$ in \Cref{Lemma2.14} uniquely. This is on purpose, as we need some leeway to increase $c$ later on. It would be more precise to call the above notions ``$f$-$c$-small'', but we shall drop this for simplicity.
	\item
	The reason for the factor 10 in the definition of $c$ ultimately lies in the appearance of the same factor in \cite[Remark~6.8]{heuer-Moduli}: The constant $5c$ appears in \cite[Proposition~5.5]{heuer-Moduli}, and an additional factor of 2 is picked up in the globalisation step as explained in detail in \cite[Remark~6.8]{heuer-Moduli}. That being said, we are not aiming for an optimal value of $c$. For most purposes, the important point is simply that $c$ only depends on $\gamma$.
\end{enumerate}
\end{Remark}
The three notions of \Cref{d:smallness} are related as follows:
\begin{Lemma}\label{l:LS-functors}
\begin{enumerate}
	\item Pushout of the $\Delta_U$-torsor $f:U_\infty\to U$ along any $f$-small $1$-cocycle defines a functor
	\[ \mathcal Z^{1,\sm}_\mathrm{cts}(\Delta_U, G(U))\to \Bun_G^{\sm}(U_\mathrm{v}).\]
	\item Sending an $f$-small $1$-cocycle $\rho$ to $(G,\HTlog(\rho))$ defines a functor
	\[ \mathcal Z^{1,\sm}_\mathrm{cts}(\Delta_U, G(U))\to \Higgs_G^{\sm}(U). \]
\end{enumerate}
\end{Lemma}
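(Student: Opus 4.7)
For part (1), the plan is to use the standard twisting construction. Given an $f$-small cocycle $\rho:\Delta_U \to G_c(U) \subseteq G(U)$, form the $\Delta_U$-equivariant product $U_\infty \times G$ (with $\Delta_U$ acting diagonally through the given action on $U_\infty$ and through $\rho$ on $G$) and take the quotient $E_\rho := U_\infty \times^{\Delta_U,\rho} G$ in $U_\mathrm v$. Since $U_\infty \to U$ is a v-cover trivialising $E_\rho$, this is a $G$-torsor on $U_\mathrm v$. Because $\rho$ factors through $G_c(U)$, the same construction yields a $G_c$-torsor $E_\rho^c := U_\infty \times^{\Delta_U,\rho} G_c$ whose pushout along $G_c \hookrightarrow G$ recovers $E_\rho$; this exhibits the required reduction of structure group, so $E_\rho \in \Bun_G^{\sm}(U_\mathrm v)$. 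Functoriality: given $g \in G(U)$ with $g\rho_1 g^{-1} = \rho_2$, the morphism $(u,h) \mapsto (u,hg^{-1})$ of $U_\infty \times G$ descends to a well-defined $G$-equivariant map $E_{\rho_1}\to E_{\rho_2}$ (the intertwining relation is exactly what is needed to make the quotients match), and composition is respected in an obvious way.

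For part (2), the first task is to verify that the displayed chain $\HTlog$ is well-defined on $f$-small cocycles. Since $\Delta_U$ is abelian, its image under $\rho$ consists of pairwise commuting elements of $G_c(U)$; by $\log$--$\exp$ injectivity on $G_c \leftrightarrow \lie_c$ (\Cref{l:exp-isom-on-nbhd-basis}) together with Baker--Campbell--Hausdorff in this convergence range, $\log\circ\rho$ is a continuous group homomorphism $\Delta_U \to \lie_c(U)$, i.e.\ lies in $\Homc(\Delta_U,\lie_c(U))$; the identification with $H^1_\cts$ follows from the triviality of the $\Delta_U$-action. The remaining arrows are the pullback along $f$ (which lands in v-cohomology by Cartan--Leray), followed by the edge map of the Leray spectral sequence for $\nu\colon U_\mathrm v\to U_\et$, which produces a section of $\lie_c\otimes\widetilde\Omega$ on $U$. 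By construction, the assignment $\rho \mapsto (G,\HTlog(\rho))$ lands in $\Higgs_G^{\sm}(U)$.

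The one subtlety is that $\theta := \HTlog(\rho)$ must satisfy $\theta\wedge\theta=0$ in order to qualify as a Higgs field. The plan is to use that, since the image of $\log\circ\rho$ consists of commuting elements of $\lie_c(U)$, the pairs of coefficients appearing when one expands $\theta$ with respect to any local basis of $\widetilde\Omega$ automatically commute under the Lie bracket, which forces $\theta\wedge\theta=0$; more conceptually, one can embed $G_0\hookrightarrow \GL_n$ via \Cref{p:ado} and reduce the verification to the known case of $\GL_n$ treated in \cite{heuer-paSimpson}, using that $\HTlog$ is natural in $G$. This is the step I expect to require the most care, but naturality in $G$ together with the abelian nature of $\Delta_U$ makes it routine.

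Finally, functoriality of the assignment $\rho\mapsto(G,\HTlog(\rho))$ is handled as follows: a morphism $g_0\in G(U)$ with $g_0\rho_1 g_0^{-1}=\rho_2$ induces on the trivial $G$-torsor the $G$-equivariant automorphism given by right translation by $g_0^{-1}$, whose induced action on the adjoint bundle $\mathrm{ad}(G)=\lie$ is $\mathrm{Ad}(g_0)$. Since $\log$ intertwines conjugation in $G_c$ with the adjoint representation on $\lie_c$ and all subsequent arrows in $\HTlog$ are functorial in the coefficients, one obtains $\mathrm{Ad}(g_0)\cdot\HTlog(\rho_1)=\HTlog(\rho_2)$, which is exactly the compatibility required for $g_0$ to define a morphism of $G$-Higgs bundles $(G,\HTlog(\rho_1))\to(G,\HTlog(\rho_2))$. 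Functoriality under composition is then immediate.
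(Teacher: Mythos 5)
Your proposal is correct and follows essentially the same route as the paper: part (1) is the standard pushout/twisting construction with the reduction of structure group to $G_c$ coming from smallness, and part (2) hinges on the commutativity of $\Delta_U$ forcing the logarithms to land in the bracket-commuting part of $\Homc(\Delta_U,\lie_c(U))$, whence $\theta\wedge\theta=0$. The paper simply cites \cite[Lemma~3.10]{heuer-G-Torsor} for the step you justify via Baker--Campbell--Hausdorff; the extra details you supply (the explicit descent map for functoriality, the adjoint-action compatibility in (2)) are consistent with, and merely expand on, the paper's terser argument.
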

\begin{proof}
Part (1) is clear from the definitions: Note that any morphism between $f$-small \mbox{$1$-cocycles} induces an isomorphism between the attached v-$G$-bundles. For part (2), we just need to verify that $\theta:=\HTlog(\rho)$ is a Higgs field:
Since $\Delta_U$ is commutative, it follows from \cite[Lemma~3.10]{heuer-G-Torsor} that the image of $\rho$ under $\log$ in the above sequence lies in $\Homc(\Delta_U, \lie_c(U))^{[-,-]=0}$. Going through the almost isomorphisms, it is clear from this that its image $\theta$ in $H^0(U,\lie_c\otimes \widetilde \Omega)$ is a $G$-Higgs field.
\end{proof}

\begin{Proposition}\label{p: small-correspondence}
Let $U_\infty\to U$ be as in \Cref{a:local-cover}. Let $G$ be a rigid group with setup as in \Cref{d:smallness}. Then the functors of \Cref{l:LS-functors} define equivalences of categories
\[\Bun^{\sm}_G(U_\mathrm{v}) {\,\xleftarrow{\sim}\,} 
\mathcal Z^{1,\sm}_\mathrm{cts}(\Delta_U, G(U))
{\,\xrightarrow{\sim}\,}
\HiggsG^\sm(U)
\]
\end{Proposition}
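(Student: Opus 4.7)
The plan is to prove this by following the strategy of \cite[Theorem~6.5]{heuer-Moduli}, where the analogous statement is established in the special case that $f$ is the pro-\'etale cover $U_\infty \to U$ associated to a toric chart on $U$. The only input from the toric chart in that argument is a bounded-$p$-torsion comparison between continuous group cohomology on $\Delta_U$ and v-cohomology on $U$ with coefficients in $\O^+$. In our more general setup, \Cref{Lemma2.14} supplies precisely this input, and the constant $\gamma$ entering the definition of $c$ has been chosen so that the numerical bounds governing the arguments of \textit{loc.\ cit.\ } apply verbatim in our setting.

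For the left arrow $\mathcal{Z}^{1,\sm}_\cts(\Delta_U, G(U)) \to \Bun_G^{\sm}(U_\mathrm{v})$, fully faithfulness reduces via descent along $f$ to the claim that for any two $f$-small cocycles $\rho_1, \rho_2$, an element $g \in G(U_\infty)$ satisfying $g\rho_1(\gamma)\gamma(g)^{-1} = \rho_2(\gamma)$ for all $\gamma \in \Delta_U$ automatically lies in $G(U)$. This is handled by passing to the Lie algebra through $\log$ and invoking \Cref{Lemma2.14}. Essential surjectivity is obtained by trivializing a small $G$-bundle on $U_\mathrm{v}$ after pullback to the affinoid perfectoid $U_\infty$: using $\exp: \mathfrak{g}_c \isomarrow G_c$ and the almost-vanishing of v-cohomology of $\mathfrak{g}_c$ on $U_\infty$, one produces an approximate trivialization and then iteratively improves it. The resulting descent datum yields the desired $f$-small cocycle.

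For the right arrow $\mathcal{Z}^{1,\sm}_\cts(\Delta_U, G(U)) \to \HiggsG^\sm(U)$, one analyses the six maps composing $\HTlog$ one by one: the Lie algebra logarithm is bijective on $G_c$ by \Cref{l:exp-isom-on-nbhd-basis}; the identification $\Homc(\Delta_U, \mathfrak{g}_c(U)) = H^1_\cts(\Delta_U, \mathfrak{g}_c(U))$ follows from $\Delta_U \simeq \Z_p^d$ and commutativity of $\mathfrak{g}_c$; the two comparison maps between continuous group cohomology and v-cohomology are almost isomorphisms by \Cref{Lemma2.14}; and the Hodge--Tate map induces a bijection onto $H^0(U, \mathfrak{g}_c \otimes \widetilde{\Omega})$. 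An inverse functor is then obtained by tracing a small Higgs field back through this chain and composing with $\exp$ to land in $G_c(U)$.

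The main technical obstacle is the bookkeeping of denominators: at each step of the $\HTlog$ chain and of the fully-faithful and essentially-surjective arguments, one picks up factors of $p^\gamma$ whose total loss must be absorbed by the chosen smallness bound $c$. The factor $10$ in $c = 10\max(\gamma, k)$ has been built in precisely for this purpose, ensuring that all intermediate expressions remain in $\mathfrak{g}_k$, the region on which $\exp$ converges and is a bijection onto $G_k$. Once these numerical bounds are verified as in \cite[Remark~6.8]{heuer-Moduli}, the proofs of both equivalences go through as in \cite[\S6]{heuer-Moduli}.
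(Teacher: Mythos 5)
Your proposal is correct and follows essentially the same route as the paper: the paper's proof simply observes that the axiomatic conditions of \cite[Remark~6.8]{heuer-Moduli} are verified in \Cref{a:local-cover} by \Cref{Lemma2.14}, so that \cite[Theorem~6.5]{heuer-Moduli} applies. Your additional unpacking of the fully-faithfulness, essential-surjectivity, and $\HTlog$ arguments is a faithful account of what happens inside the cited proof rather than a different strategy.
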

\begin{proof}
This is part of \cite[Theorem~6.5, Remark 6.8]{heuer-Moduli}: More precisely, the axiomatic conditions detailed in  \cite[Remark 6.8]{heuer-Moduli} are satisfied in \Cref{a:local-cover} by \Cref{Lemma2.14}.
\end{proof}
\begin{Definition}
We call the composition \[
\mathrm{LS}_{f,G}:\Bun^{\sm}_G(U_\mathrm{v})\isomarrow \HiggsG^{sm}(U)\]
the small $p$-adic Simpson correspondence on $U$ attached to $f$.
\end{Definition}

Next, we explain the naturality of $\LS_f$ in $f$. For this we need morphisms of Galois covers:
\begin{Definition}
\label{d: equivariant map}
Let $f:X_\infty\rightarrow X$ and $g: Y_\infty\rightarrow Y$ be quasi-toric coverings with Galois groups $\Delta_X$ and $\Delta_Y$. A \textit{Galois-equivariant map} $\beta:f\Rightarrow g$ is a triple $\beta=(\varphi, u, \tilde{u})$ consisting of a group homomorphism $\varphi: \Delta_X\rightarrow \Delta_Y$ and a commutative diagram
\[\begin{tikzcd}
	X_\infty \ar[r,"\tilde{u}"] \ar[d,"f"] & Y_\infty \ar[d,"g"]\\
	X \ar[r,"u"]&  Y,
\end{tikzcd}\]
such that $\tilde{u}$ is $\Delta_X$-equivariant with respect to the $\Delta_X$-action on $Y_\infty$ via $\varphi$.
\end{Definition}
\begin{Proposition}\label{p: functorial}
Let $f:X_\infty\rightarrow X$, $g: Y_\infty\rightarrow Y$ and $h: Z_\infty\rightarrow Z$ be quasi-toric covers  and let $\beta=(\varphi, u, \tilde{u}):f\Rightarrow g$ and $\beta'=(\varphi', u', \tilde{u}'):g\Rightarrow h$ be Galois equivariant morphisms. Let $G$ be a rigid group and let $\gamma>0$ be large enough such that \Cref{Lemma2.14} holds for $f$, $g$ and $h$.
\begin{enumerate}
	\item
	The equivalence $\LS_{f,G}$ is natural in $f$: There is a natural isomorphism of functors $\alpha_\beta:\LS_f\circ u^\ast \Rightarrow  u^\ast\circ \LS_g$ making the following diagram 2-commutative:
	\[
	\begin{tikzcd}\Bun^{\sm}_G(X_\mathrm{v}) \arrow[r,"\LS_f"] \arrow[dr,"\alpha_{\beta}",Rightarrow,shorten >=3ex,shorten <=3.0ex] & \HiggsG^{\sm}(X)      \\
		\Bun^{\sm}_G(Y_\mathrm{v}) \arrow[u, "u^\ast"] \arrow[r,"\LS_g"]  & \HiggsG^{\sm}(Y)  \arrow[u, "u^\ast"]. \end{tikzcd}\]
	\item $\alpha$ is compatible with composition: For the Galois-equivariant map $\beta'\circ \beta$, we have
	\[\alpha_{\beta'\circ \beta}= u^\ast\alpha_{\beta'} \circ  \alpha_{\beta}.\]
\end{enumerate}
\end{Proposition}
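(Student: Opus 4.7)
The plan is to exploit the factorisation of $\LS_{f,G}$, $\LS_{g,G}$ and $\LS_{h,G}$ through the intermediate category of small $1$-cocycles supplied by \Cref{p: small-correspondence}. First I would construct a pullback functor
\[\beta^{\ast}\colon \mathcal Z^{1,\sm}_\cts(\Delta_Y,G(Y))\to \mathcal Z^{1,\sm}_\cts(\Delta_X,G(X))\]
by setting $\beta^{\ast}\rho:=G(u)\circ \rho\circ \varphi$, where $G(u)\colon G(Y)\to G(X)$ is the map induced by $u$. Since the subgroups $G_c\subseteq G$ are intrinsic, this preserves smallness, and strict compatibility $(\beta'\circ\beta)^{\ast}=\beta^{\ast}\circ(\beta')^{\ast}$ holds on the nose at the level of cocycles.

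The next step would be to verify that $\beta^{\ast}$ intertwines the two legs of the correspondence with $u^{\ast}$. For the $v$-bundle leg, the Galois-equivariance of $\tilde u$ yields a canonical isomorphism of $G$-torsors
\[u^{\ast}(Y_\infty\times^{\Delta_Y,\rho}G)\isomarrow X_\infty\times^{\Delta_X,\beta^\ast\rho}G,\]
which is general nonsense for associated bundles. For the Higgs leg I would trace through each arrow of the chain defining $\HTlog$ in \Cref{d: small-G-Higgs}.(3): the logarithm, the identification $\Homc(\Delta,\lie_c(-))=H^1_\cts(\Delta,\lie_c(-))$ via cup product, the Cartan--Leray edge map to $\Hv$, and Scholze's Hodge--Tate isomorphism $R^1\nu_{\ast}\lie_c\isomarrow \lie_c\otimes \wtOm$. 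Each of these is natural in morphisms of quasi-toric covers: naturality of the edge map follows from functoriality of the Cartan--Leray spectral sequence, and naturality of $\HT$ from its construction as $R^1\nu_{\ast}$ of a map of sheaves. Pasting these together gives a canonical identification $u^{\ast}\HTlog(\rho)=\HTlog(\beta^{\ast}\rho)$.

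Combining these two naturality data along the quasi-inverses provided by \Cref{p: small-correspondence} defines the desired $\alpha_{\beta}$ and establishes part~(1). For part~(2), the composition compatibility $\alpha_{\beta'\circ\beta}=u^{\ast}\alpha_{\beta'}\circ \alpha_\beta$ follows because each constituent naturality isomorphism constructed above is itself strictly compatible with composition of Galois-equivariant morphisms: this reduces to the analogous strict compatibilities for pullback of associated bundles and for the Cartan--Leray and Hodge--Tate maps.

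The main obstacle I anticipate is the careful verification of the Higgs-side naturality, which requires unravelling how the triple $\beta=(\varphi,u,\tilde u)$ acts at every stage of the chain defining $\HTlog$, and in particular confirming that the Cartan--Leray step $H^1_\cts(\Delta,\lie_c(-_\infty))\to \Hv(-,\lie_c)$ is strictly functorial under composition of such $\beta$. This is essentially bookkeeping with no new geometric input beyond the functoriality statements already built into the construction of \Cref{p: small-correspondence}; once it is organised, both (1) and (2) follow formally.
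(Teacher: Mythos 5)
Your proposal is correct and follows essentially the same route as the paper: factor both $\LS_f$ and $\LS_g$ through the category of small $1$-cocycles, define the pullback $u^\ast\varphi^\ast$ there, identify the pushout torsors via the universal property of associated bundles for the left square, and use functoriality of $\log$ and of the Cartan--Leray map for the $\HTlog$ square, with part~(2) reducing to strict compatibility of each constituent identification. No gaps.
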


\begin{proof} For (1), we need to produce naturals isomorphisms making the diagram
\[
\begin{tikzcd}
	\LS_f:\Bun^{\sm}_G(X_\mathrm{v})&  {\mathcal Z^{1,\sm}_\cts(\Delta_X, G(X))} \arrow[l] \arrow[r] & \HiggsG^{\sm}(X)      \\
	\LS_g:\Bun^{\sm}_G(Y_\mathrm{v}) \arrow[u, "u^\ast"]        & {\mathcal Z^{1,\sm}_\cts(\Delta_Y, G(Y))}\arrow[u, "u^\ast \varphi^\ast"]\arrow[l]\arrow[r] & \HiggsG^{\sm}(Y)  \arrow[u, "u^\ast"] 
\end{tikzcd}
\]
2-commutative.
For the commutativity of the left square, starting from a $1$-cocycle $\rho: \Delta_Y\rightarrow G(Y)$, we need to find a natural isomorphism between the two $G$-bundles on $X_\mathrm{v}$
\[\quad X_\infty \times^{\Delta_X} G\quad \text{and} \quad X\times_Y (Y_\infty \times^{\Delta_Y} G)\]
where to form the second contracted product we push out $g: Y_\infty\rightarrow Y$ using $\rho$, and for the first, we push out $f: X_\infty\rightarrow X$ along the composition
\[\Delta_X\xrightarrow{\varphi} \Delta_Y\xrightarrow{\rho} G(Y)\xrightarrow{u} G(X).\]
This is the functoriality of the Cartan--Leray construction: Presenting each of these two $G$-bundles as quotients, we have by equivariance and universal properties a natural morphism
\[(X_\infty\times G)/\Delta_X\to  (X\times_YY_\infty\times G)/\Delta_Y\isomarrow X\times_Y(Y_\infty\times G)/\Delta_Y\]
of $G$-torsors, which is necessarily an isomorphism.
%\[
%\begin{tikzcd}
%X_\infty\times G\times \underline{\Delta}_X \arrow[r, "p"' swap,  ,yshift=0.5ex] \arrow[r, "a"', yshift=-0.5ex] \arrow[d, %"{(f\times\tilde{u})\times id \times \beta}"'] & X_\infty\times G \arrow[d, "{(f\times\tilde{u})\times id}"'] \arrow[r, '] %& X_\infty\times^{\Delta_X} G \arrow[d, ', dashed]\\
%X\times_Y Y_\infty\times G \times \underline{\Delta}_Y \arrow[r,  "p"' swap, yshift=0.5ex] \arrow[r, "a"',yshift=-0.5ex] & %X\times_Y Y_\infty\times G \arrow[r, '] & X\times_Y (Y_\infty \times^{\Delta_Y} G).
%\end{tikzcd}
%\]
%where $p$ is the projection map and $a$ is the diagonal action of $\underline{\Delta}_X$ on $X_\infty\times G$, resp. that of $\underline{\Delta}_Y$ on $X\times_YY_\infty\times G$. This induces the dashed map of $G$-torsors over $X$, which is necessarily an isomorphism. The naturality is clear.

For commutativity of the right square, it suffices to check commutativity of the diagram
\[
\begin{tikzcd}
	\Homc(\Delta_Y, G_k(Y)) \ar[r,"\HTlog"]\ar[d,"u^\ast"] & H^0(Y,\lie_c\otimes \widetilde \Omega)\ar[d,"u^\ast\varphi^\ast"]\\
	\Homc(\Delta_X, G_k(X)) \ar[r,"\HTlog"] & H^0(X,\lie_c\otimes \widetilde \Omega)
\end{tikzcd}
\]
where  $\HTlog$ was defined in \Cref{d: small-G-Higgs}. This is immediate from functoriality of $\log$ and the functoriality of the Cartan--Leray map applied to $\mathfrak g_c$. This concludes part (1).

For part (2), it is clear that $(\beta'\circ \beta, u'\circ u, \tilde{u}'\circ \tilde{u})$ is a Galois-equivariant map, and it suffices to prove the composition property for the left and right squares in the above diagram. For the right hand side, this is clear. For the left, by construction, the natural transformation $\alpha_{\beta'\circ\beta}$ comes from 
\[(X_\infty\times G)/\Delta_X\isomarrow X\times_Z(Z_\infty\times G)/\Delta_Z\]
%induced by the commutative diagram
%\[
%\begin{tikzcd}
%X_\infty\times G\times \underline{\Delta}_X \arrow[r, "p"' swap,yshift=0.5ex] \arrow[r, "a"',yshift=-0.5ex] \arrow[d, "{(f\times\tilde{v}\circ\tilde{u})\times id \times \beta_2\circ\beta_1}"'] & X_\infty\times G \arrow[d, "{(f\times\tilde{v}\circ\tilde{u})\times id}"'] \arrow[r, '] & X_\infty\times^{\Delta_X} G \arrow[d, ', dashed]\\
%X\times_Z Z_\infty\times G \times \underline{\Delta}_Z \arrow[r,  "p"' swap,yshift=0.5ex] \arrow[r, "a"',yshift=-0.5ex] & X\times_Z Z _\infty\times G \arrow[r, '] & X\times_Z (Z _\infty \times^{\Delta_Z } G),
%\end{tikzcd}
%\]
whereas $\alpha_{\beta'}\circ\alpha_\beta$ comes from the composition
\[(X_\infty\times G)/\Delta_X\isomarrow X\times_Y(Y_\infty\times G)/\Delta_Y\isomarrow X\times_Z(Z_\infty\times G)/\Delta_Z,\]
%obtained by the composition of diagrams 
%\[
%\begin{tikzcd}[column sep=2cm]
%X_\infty\times G\times \underline{\Delta}_X \ar[d, shift right,"p" swap]\ar[d, shift left,"a"]
%\ar[r,"{(f\times\tilde{u})\times id \times \beta_1}"] 
%& X\times_Y Y_\infty\times G \times \underline{\Delta}_Y  \ar[d, shift right, "p" swap] \ar[d, shift left,"a"] \ar[r,"id%\times \tilde{v}\times id\times \beta_2"]
%& X\times_Z Z_\infty\times G \times \underline{\Delta}_Z  \ar[d, shift right, "p" swap] \ar[d, shift left,"a"]\\
%X_\infty\times G \ar[r,"{(f\times\tilde{u})\times id}"] & X\times_Y  Y_\infty\times G	\ar[r,"id\times v\times id"]& X\times_Z  Z_\infty\times G.
%\end{tikzcd}
%\] 
It is clear from the universal properties  that these two morphisms agree.
\end{proof}

Given this naturality, we can now glue the local small $p$-adic Simpson correspondences:

\begin{Definition}\label{d:loc-small}
Let $X$ be a quasi-compact smooth rigid space. Let $f:X_\infty\to X$ be a quasi-toric cover as in \Cref{a:global-cover}.
\begin{enumerate}
	\item A $G$-bundle $V$ on $X_\mathrm{v}$ is called (\'etale) locally small with respect to $f$ if there is an \'etale cover  $h:\wt X\rightarrow X$ by an affinoid $\wt X$ as in \Cref{a:local-cover} such that $h_{2}:\wt X\times_X \wt X\rightarrow X$ and $h_{3}:\wt X\times_X \wt X\times_X \wt X\rightarrow X$ are also as in \Cref{a:local-cover}, and such that $h^\ast V$ is small with respect to $\wt f: X_\infty\times_X \wt X\rightarrow \wt X$, and similarly $h_{2}^\ast V$ and $h_{3}^\ast V$ are also small. We denote the category of locally small $G$-bundles on $X_\mathrm{v}$ by $\Bun_G^{\lsm}(X_\mathrm{v})$.
	\item A $G$-Higgs bundle $(E, \theta)$ on $X_\et$ is called (\'etale) locally small with respect to $f$ if there is an \'etale cover  $\wt X\to X$ by an affinoid as in (1), such that $h^{\ast}(E, \theta)$ is small with respect to $\wt f$ and $h_{2}^\ast (E, \theta)$ and $h_{3}^\ast (E, \theta)$ are also small in this sense. We denote the category of locally small $G$-Higgs bundles on $X$ by $\Higgs_G^{\lsm}(X)$. \end{enumerate}
	\end{Definition}
	
	\begin{Theorem}\label{p: Global-small-correspondence}
Let $X$ be a smooth rigid space over $K$ with a Galois cover $f$ as in \Cref{a:global-cover}. Then the local equivalences with respect to $f$ glue to an equivalence of categories
\[\mathrm{S}^{\lsm}_f:\Bun_G^{\lsm}(X_\mathrm{v})\to \Higgs_G^{\lsm}(X).\]
In particular, we obtain such a correspondence for ordinary abeloid varieties as in \Cref{l:abeloid-assumption}.
\end{Theorem}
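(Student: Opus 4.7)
The plan is to construct $\mathrm{S}^{\lsm}_f$ by étale descent from the local correspondences $\mathrm{LS}_{\wt f}$ of \Cref{p: small-correspondence}, with the functoriality of \Cref{p: functorial} providing the gluing data. I will focus on the forward functor; the inverse is constructed symmetrically by applying $\mathrm{LS}_{\wt f}^{-1}$ in place of $\mathrm{LS}_{\wt f}$ in the argument below.

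Start with a locally small $G$-bundle $V$ on $X_\mathrm{v}$. By \Cref{d:loc-small} we may pick an étale cover $h:\wt X \to X$ such that $h^{\ast}V$, $h_2^{\ast}V$ and $h_3^{\ast}V$ are small with respect to the pulled-back covers $\wt f$, $\wt f_2$, $\wt f_3$ of $f$ along $h$, $h_2$, $h_3$. Apply $\mathrm{LS}_{\wt f}$ to produce a small Higgs bundle $(E,\theta):=\mathrm{LS}_{\wt f}(h^{\ast}V)$ on $\wt X$. The two projections $p_1,p_2:\wt X\times_X \wt X\to \wt X$ lift to Galois-equivariant maps between $\wt f_2$ and $\wt f$ in the sense of \Cref{d: equivariant map}, with $\varphi=\mathrm{id}_{\Delta_X}$ on Galois groups (both covers are pullbacks of $f$, hence share the Galois group $\Delta_X$). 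Hence \Cref{p: functorial}.(1) gives natural isomorphisms $\alpha_i:p_i^{\ast}\circ \mathrm{LS}_{\wt f}\Rightarrow \mathrm{LS}_{\wt f_2}\circ p_i^{\ast}$ for $i=1,2$. Composing these with the v-descent datum $\psi:p_1^{\ast}h^{\ast}V\isomarrow p_2^{\ast}h^{\ast}V$ for $V$ along $h$ yields a candidate $\tilde\psi:p_1^{\ast}(E,\theta)\isomarrow p_2^{\ast}(E,\theta)$ of Higgs bundles on $\wt X\times_X\wt X$.

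The heart of the proof is to verify that $\tilde\psi$ satisfies the cocycle condition on $\wt X\times_X\wt X\times_X\wt X$. This is where \Cref{p: functorial}.(2) becomes essential: each of the three projections to $\wt X\times_X\wt X$ composed with one of the $p_i$ is itself one of the three projections to $\wt X$, and each such composition fits into a composition of Galois-equivariant maps covering $\varphi=\mathrm{id}$. The identity $\alpha_{\beta'\circ \beta}=u^{\ast}\alpha_{\beta'}\circ \alpha_\beta$ then translates the cocycle identity for $\psi$ on the v-side into the cocycle identity for $\tilde\psi$ on the Higgs side. Since Higgs bundles on the étale site satisfy étale descent (being pairs of an étale $G$-bundle and an étale-local section of the adjoint bundle tensored with $\wtOm$), the pair $(E,\theta)$ together with $\tilde\psi$ descends to a Higgs bundle on $X$, which we define to be $\mathrm{S}^{\lsm}_f(V)$. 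It is locally small by construction. Independence of the choice of $h$ is checked by taking a common refinement of two such covers and applying the same naturality argument; morphisms of locally small $G$-bundles are transported analogously and functorially, using that $\alpha_\beta$ is natural in its argument.

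The main obstacle is the bookkeeping of the cocycle identity across the triple fibre product, but no genuinely new input is needed beyond what \Cref{p: functorial}.(2) already supplies. To see that $\mathrm{S}^{\lsm}_f$ is an equivalence, apply the symmetric construction with the inverse $\mathrm{LS}_{\wt f}^{-1}$ to obtain a functor in the opposite direction; both round-trip compositions with $\mathrm{S}^{\lsm}_f$ equal the identity after absorbing the natural isomorphisms $\alpha_\beta$ into the descent identifications, because on each $\wt X$ the functors $\mathrm{LS}_{\wt f}$ and $\mathrm{LS}_{\wt f}^{-1}$ are mutually inverse by \Cref{p: small-correspondence}. The final assertion for ordinary abeloid varieties is then immediate from \Cref{l:abeloid-assumption}, which exhibits the relevant cover as quasi-toric.
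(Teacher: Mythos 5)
Your proposal is correct and follows essentially the same route as the paper: pull back along the chosen étale cover, apply the local equivalence of \Cref{p: small-correspondence}, transport the descent datum via the natural isomorphisms of \Cref{p: functorial}.(1), and verify the cocycle condition on the triple fibre product using the composition compatibility of \Cref{p: functorial}.(2). The paper's proof is just a more explicit diagram chase of the same argument, likewise concluding by running the construction symmetrically for the inverse functor.
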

\begin{proof}
This is a  formal consequence of \Cref{p: small-correspondence} and \Cref{p: functorial}. As descent arguments in the context of the $p$-adic Simpson correspondences are subtle in general, we now verify this in detail: Let $V$ be a locally small $G$-bundle on $X_\mathrm{v}$ and let $h:\wt X\to X$ be a cover as in \Cref{d:loc-small}.(1). Then by \Cref{p: small-correspondence} we have an equivalence
\[\Psi:= \LS_{\wt f}: \Bun_G^{\sm}(\widetilde{X}_\mathrm{v})\xrightarrow{\sim} \HiggsG^{\sm}(\widetilde{X}).\]
Let $\wt f_2$ be the pullback $\wt X\times_X\wt X\times_XX_\infty\to \wt X\times_X\wt X$ and similarly $\wt f_3$ the pullback to $\widetilde{X}\times_X \widetilde{X}\times_X \widetilde{X}$, then by assumption we also get
$\Psi_2:= \LS_{\wt f_2}$ on $\wt X\times_X\wt X$ and $\Psi_3:= \LS_{\wt f_3}$.

Consider the \v{C}ech nerve of the cover $\widetilde{X} \rightarrow X$. We write the low degree projections as
\[ \widetilde{X}\times_X\widetilde{X} \times_X\widetilde{X}\xrightarrow{p_{ij}} \widetilde{X}\times_X\widetilde{X}\xrightarrow{p_{k}} \wt X\]
and write $\pi_i$ for the $i$-th projection $\widetilde{X}\times_X\widetilde{X}\times_X\widetilde{X}\to \wt X$. Hence $\pi_1=p_1\circ p_{12}$ and so on. The maps $\wt f$, $\wt f_2$ and $\wt f_3$ induce Galois equivariant maps of quasi-toric covers over these. We denote the resulting isomorphisms $\alpha$ from \Cref{p: functorial} induced by $p_i$, $p_{ij}$, $\pi_i$ by  the corresponding subscripts, i.e. $\alpha_{p_1}$ etc.

The pullback $\wt V:=h^{\ast}V$ carries a natural descent datum $\varphi: p_1^\ast \widetilde{V}\isomarrow p_2^\ast \widetilde{V}$, satisfying the cocycle condition $ p_{23}^\ast \varphi \circ p_{12}^\ast \varphi = p_{13}^\ast \varphi$. We need to show that the isomorphism
\[\psi: p_1^\ast \Psi(\widetilde{V})\xrightarrow{\overset{\alpha_{p_1}^{-1}}{\sim}}  \Psi_2(p_1^\ast \widetilde{V}) \xrightarrow{\overset{\Psi_2(\varphi)}{\sim}}  \Psi_2(p_2^\ast \widetilde{V}) \xrightarrow{\overset{\alpha_{p_2}}{\sim}}  p_2^\ast \Psi(\widetilde{V}) \]
satisfies the cocycle condition
$p_{23}^\ast \psi \circ p_{12}^\ast \psi = p_{13}^\ast \psi$
and hence provides a descent datum for $\Psi(\widetilde{V})$ along $\widetilde{X}\rightarrow X$.
For this we first claim that the following diagram is commutative
\[\begin{tikzcd}
	\Psi_3(\pi_1^\ast\widetilde{V})\ar[rrr,"\Psi_3(p_{12}^\ast\varphi)"]\ar[d,"\alpha_{\pi_1}"]  &&& \Psi_3(\pi_2^\ast\widetilde{V})\ar[d, "\alpha_{\pi_2}"]\\
	\pi_1^\ast \Psi(\widetilde{V})\ar[r,"p_{12}^\ast\alpha_{p_1}^{-1}"]& p_{12}^\ast\Psi_2(p_1^\ast \widetilde{V}) \ar[r,"p_{12}^\ast \Psi_2(\varphi)"]  &p_{12}^\ast\Psi_2(p_2^\ast \widetilde{V}) \ar[r,"p_{12}^\ast \alpha_{p_2}"]  
	&\pi_2^\ast \Psi(\widetilde{V})
\end{tikzcd}\]
and analogously for $p_{12}$ replaced by $p_{23}$ or $p_{13}$. If this is the case, note that the bottom map is $p_{12}^\ast \psi$, so this implies that
\[p_{23}^\ast \psi \circ p_{12}^\ast \psi     
= \alpha_{\pi_3}\circ \Psi_3(p_{23}^\ast\varphi)\circ \alpha_{\pi_2}^{-1} \circ \alpha_{\pi_2}\circ \Psi_3(p_{12}^\ast\varphi)\circ \alpha_{\pi_1}^{-1}     
=\alpha_{\pi_3}\circ \Psi_3(p_{13}^\ast\varphi)\circ \alpha_{\pi_1}^{-1}     
= p_{13}^\ast \psi\]
as desired. To see the claim, we use that by \Cref{p: functorial}.(2), the relation $\pi_1=p_{1}\circ p_{12}$ implies that we have $\alpha_{\pi_1}=p_{12}^\ast\alpha_{p_{1}}\circ \alpha_{p_{12}}$. Hence the above diagram reduces to
\[\begin{tikzcd}
	\Psi_3(\pi_1^\ast\widetilde{V})\ar[r,"\Psi_3(p_{12}^\ast\varphi)"]  \ar[d,"\alpha_{p_{12}}"]& \Psi_3(\pi_2^\ast\widetilde{V})\ar[d, "\alpha_{p_{12}}"]\\
	p_{12}^\ast\Psi_2(p_1^\ast \widetilde{V}) \ar[r,"p_{12}^\ast \Psi_2(\varphi)"] & p_{12}^\ast\Psi_2(p_2^\ast \widetilde{V}) .
\end{tikzcd}\]
This commutes by \Cref{p: functorial}.(1) and hence we are done.

The same argument also works in the other direction for $\Psi^{-1}$. 
\end{proof}
\begin{Example}
Let $X_\infty\to X$ be an anti-canonical cover of an ordinary abeloid variety as in \Cref{l:abeloid-assumption}. While it is in general difficult to make the implicit constant $c$ in \Cref{d:smallness} and hence it \Cref{d:loc-small} more explicit, we can always find interesting examples of locally small $G$-bundles on $X_\mathrm{v}$, illustrating the power of \Cref{p: Global-small-correspondence}:
\begin{itemize}
	\item Any \'etale-locally free $G$-bundle $V$ on $X_\mathrm{v}$ is clearly locally small. Unravelling the construction, we have $\mathrm{S}_f^{\lsm}(V)=(V,0)$. This example is the reason why we use \'etale localisation in \Cref{d:loc-small} rather than analytic localisation.
	\item The proof of \Cref{l:abeloid-assumption} shows that one can always find an affinoid cover $\wt X\to X$ such that $\wt X$ and the double and triple fibre products $\wt X_2$ and $\wt X_3$ satisfy  \Cref{a:local-cover}. Taking $\gamma>0$ such that \Cref{Lemma2.14} holds for $\wt X$, $\wt X_2$ and $\wt X_3$, we see that for $c\gg 0$, any continuous representation $\rho:\Delta_X\to G_c(K)$ defines a locally small $G$-bundle $V_\rho$ on $X$ by pushout of $X_\infty\to X$ along $\rho$. Describing the underlying $G$-bundle of the associated Higgs bundle $S_f^{\lsm}(V_\rho)$ is a non-trivial task already for $G=\G_m$.
\end{itemize}
\end{Example}

\section{Pro-finite-\'etale torsors} 
In this section, we move on to Questions~\ref{q3} and \ref{q2}: We therefore study $G$-torsors for the v-topology which become trivial on the pro-finite-\'etale cover of \Cref{s:profet-univ-cover}:

Let $X$ be a connected smooth proper rigid space over $K$ with a fixed point  $x_0\in X(K)$, and denote by $\wt X \to X$ the diamantine pro-finite-\'etale universal cover as in \Cref{d:univ-cover}, which is a  $\pi:=\pi_1^\et(X,x_0)$-torsor. The main goal of this section is to prove:
\begin{Theorem}\label{rslt:vbundles}
Let $X$ be a connected smooth proper rigid space over $K$ with a fixed base-point ${x_0 } \in X(K)$. Let $G$ be a rigid group variety. We consider the natural functor of groupoids
\begin{alignat*}{3}
	\Big\{\begin{array}{@{}c@{}l}\text{continuous homomorphisms }\\
		\pi_1^\et(X,{x_0}) \rightarrow G(K) \end{array}\Big\}&\to\,\,&&  \Big\{\begin{array}{@{}c@{}l}\text{ pro-finite-\'etale}\\
		\text{$G$-torsors on $X_\mathrm{v}$}\end{array}\Big\}.\\
	\rho\quad &\mapsto &&\wt X \times^{\pi_1^\et(X,{x_0}),\rho}G
\end{alignat*}
sending a continuous homomorphisms  $\rho$ to the pushout of $\wt X\to X$ along $\rho$.
\begin{enumerate}
	\item If $G$ is connected or commutative, then this functor is essentially surjective. 
	\item It is fully faithful if there are no non-constant morphisms $\wt X\to G$. For example this happens if $G$ is linear analytic, i.e.\ admits an embedding into $\mathrm{GL}_n$ for some $n$.
\end{enumerate}
\end{Theorem}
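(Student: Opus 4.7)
The strategy is to analyze the natural functor through descent along the pro-\'etale $\pi$-torsor $\wt X \to X$ in $X_\mathrm{v}$, where I write $\pi := \pi_1^\et(X, x_0)$. A $G$-torsor $V$ on $X_\mathrm{v}$ is pro-finite-\'etale precisely when $V|_{\wt X}$ is trivial, and a choice of trivialisation then encodes $V$ by a continuous $1$-cocycle $c : \pi \to G(\wt X)$; two cocycles define isomorphic torsors iff they differ by a $G(\wt X)$-coboundary. The functor of the theorem corresponds to the natural map on cocycles induced by the inclusion $G(K) \hookrightarrow G(\wt X)$.

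For full faithfulness (2), descent identifies morphisms $V_{\rho_1} \to V_{\rho_2}$ between pushouts with $\pi$-equivariant morphisms of trivial $G$-torsors on $\wt X$, parameterised by maps $\phi : \wt X \to G$ with a twisted $\pi$-equivariance determined by $\rho_1$ and $\rho_2$. Under the hypothesis that every map $\wt X \to G$ is constant, $\phi$ must be an element $g \in G(K)$ satisfying $g \rho_1 g^{-1} = \rho_2$, matching the morphism set of continuous homomorphisms given in \Cref{p:pirep}. For the linear analytic case, I would fix an embedding $G \hookrightarrow \GL_n$ and use that $\wt X = \varprojlim_{X'} X'$ is a cofiltered limit over connected finite \'etale covers $X'/X$. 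Since each such $X'$ is connected smooth proper, $\O(X') = K$, so $\O(\wt X) = \varinjlim \O(X') = K$; hence maps $\wt X \to \GL_n \subset \mathbb{A}^{n^2}$ (and \textit{a fortiori} $\wt X \to G$) are constant.

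For essential surjectivity (1), given a pro-finite-\'etale $V$, I would pick a trivialisation $\sigma$ of $V|_{\wt X}$ normalised using the distinguished lift $\tilde x_0 \in \wt X(K)$ so that the resulting cocycle $c : \pi \to G(\wt X)$ consists of pointed maps $c(\gamma) : \wt X \to G$. The main task is to show that, possibly after modifying $\sigma$ by a coboundary in $G(\wt X)$, each $c(\gamma)$ lies in the subgroup of constant maps $G(K) \subset G(\wt X)$. The key inputs are the rigidity lemma \Cref{l:Rigidity} for $\wt X$, together with the structural result \Cref{t:morph-proper-to-grp} that pointed maps from a connected smooth proper rigid space into $G$ factor through the maximal abeloid subvariety. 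In the commutative case, $c$ is a continuous homomorphism $\pi \to G(\wt X)$ and the abelian structure simplifies the coboundary adjustment; in the connected case, the abeloid factorisation combined with rigidity yields the required constancy.

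The main obstacle is controlling $G(\wt X)$ itself. For linear analytic $G$ the equality $G(\wt X) = G(K)$ holds by the argument above, but when $G$ contains abeloid factors, genuinely non-constant maps $\wt X \to G$ can occur --- this is precisely the phenomenon reflecting pro-finite-\'etale uniformisation that the authors flag in \Cref{r:rmk-fully-faihful-new} as the reason the functor fails to be fully faithful in general. The delicate point is to use rigidity and the abeloid factorisation to reduce each cocycle to a constant one up to coboundary, yielding essential surjectivity, while allowing for additional identifications beyond those arising from inner automorphisms of $\rho$.
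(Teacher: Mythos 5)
Your treatment of part (2) is essentially the paper's: descent along $\wt X\to X$ identifies morphisms $V_{\rho_1}\to V_{\rho_2}$ with elements of $G(\wt X)$ satisfying the twisted equivariance, and $G(\wt X)=G(K)$ collapses these to conjugating elements $g$ with $g\rho_1g^{-1}=\rho_2$. (The paper obtains $\O(\wt X)=K$ by citing \cite[Proposition~3.9]{heuer-geometric-Simpson-Pic} rather than via $\varinjlim\O(X')$; commuting global sections with the inverse limit of diamonds is itself not formal, so you should cite or prove that step, but it is a known fact.)

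For part (1), you correctly reduce to showing that every continuous cocycle $c:\pi\to G(\wt X)$ is cohomologous to one valued in the constant maps $G(K)$, and you correctly name \Cref{l:Rigidity} and \Cref{t:morph-proper-to-grp} as the inputs, but the argument that is supposed to accomplish this is missing, and one assertion is wrong: even for commutative $G$ the cocycle $c$ is a crossed homomorphism for the nontrivial $\pi$-action on $G(\wt X)$ (precomposition with the action on $\wt X$), not a homomorphism. The paper instead splits $G(\wt X)\cong G(K)\times G(\wt X)^\circ$ via evaluation at $x_0$ and must prove $H^1_{\cts}(\pi,G(\wt X)^\circ)=0$; this vanishing is the heart of the proof and your sketch contains no mechanism for it. The actual chain is: (a) $G(\wt X)^\circ$ is discrete; (b) by the rigidity lemma every element has open stabilizer, so every continuous cocycle factors through a finite quotient and comes from some finite level $X'\to X$; (c) by the abeloid factorization $G(X')^\circ=A(X')^\circ$ for the maximal abeloid $A\subseteq G$, and $\varinjlim_{X'}A(X')^\circ$ is uniquely divisible (the obstruction in $H^1_{\et}(X',A[N])$ dies in the colimit over finite \'etale covers), hence a $\Q$-vector space whose continuous $H^1$ under a profinite group vanishes. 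Without some such divisibility argument, ``rigidity plus the abeloid factorisation yields constancy'' is not a proof. Moreover, the connected non-commutative case is not addressed at all: the paper does not make each $c(\gamma)$ constant directly, but passes to the central extension $1\to Z(G)\to G\to H$ with $H\subseteq\mathrm{Aut}(\mathfrak g)\cong\GL_n$ linear analytic (so $H(\wt X)=H(K)$), and then runs a five-lemma-type argument in non-abelian continuous cohomology, using that $Z(G)(\wt X)$ is central so that the fibres of $H^1_{\cts}(\pi,G(\wt X))\to H^1_{\cts}(\pi,H(\wt X))$ are torsors under $H^1_{\cts}(\pi,Z(G)(\wt X))$. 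You would need to supply this reduction (or a substitute) to handle connected $G$.
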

Here as in \Cref{p:pirep}, the morphisms $\rho_1\to \rho_2$ between two continuous representations on the left hand side are defined as the set of elements $g\in G(K)$ such that
$g\cdot\rho_1(\gamma)\cdot g^{-1}=\rho_2(\gamma)$ for all $\gamma\in \pi_1^\et(X,{x_0})$.
In the case that $G=\GL_n$, this means that the morphisms are the isomorphisms between representations. 
\begin{Remark}
While we do not know if (1) holds for any $G$, the fully faithfulness in (2) can fail in general. We refer to \Cref{r:rmk-fully-faihful-new} below for a concrete example.
\end{Remark}
\begin{Remark}
Recall that $\pi_1^\et(X,{x_0})$ is defined in terms of connected finite \'etale covers of $X$. In rigid analytic geometry, there is also a second fundamental group, namely the de Jong fundamental group $\pi_1^{\mathrm{dJ}}(X,x_0)$, defined in terms of a different kind of \'etale covers which are not necessarily finite. While one can show that it is still possible to attach $G$-torsors on $X_v$ to continuous representations of $\pi_1^{\mathrm{dJ}}(X,x_0)$, this will typically not result in a fully faithful functor, because the analogue of the universal cover for $\pi_1^{\mathrm{dJ}}(X,x_0)$ has many global sections.
\end{Remark}
\begin{proof}[Proof of \Cref{rslt:vbundles}.(2)]
It is clear that the displayed functor is well-defined. If there are no non-constant maps $\wt X\to G$, this means that $G(\wt X)=G(K)$ and hence
\[H^1_{\cts}(\pi,G(\wt X))=H^1_{\cts}(\pi,G(K))=\Hom_{\cts}(\pi,G(K))\]
as the $\pi$-action on $G(K)$ is trivial. In the case $G$ is linear analytic, since by \cite[Proposition~3.9]{heuer-geometric-Simpson-Pic}  we have $\GL_n(\wt X)\subseteq M_n(\wt X)\cong \O(\wt X)^{n\times n}=K^{n\times n}$, every map $\wt X\to \GL_n$ is constant. It follows that $G(\wt X)=G(K)$ as well.
\end{proof}
\Cref{rslt:vbundles}.(1) is more difficult and will take us the rest of this section.
\subsection{Cocycles}%\subsection{Morphisms from universal covers into rigid groups}
\label{sec:morphisms}

In order to describe the $G$-torsors on $X$ that are trivialised by  $\wt X$, we will study the \v{C}ech nerve of the $\pi$-torsor $\wt X\to X$. As a first step, we now reduce this to a computation of group cohomology.
In doing so, to simplify notation, let us drop the underline in the notation of the diamond $\underline{\pi}$, i.e.\ we freely move back and forth between profinite sets and their associated diamonds. 

\begin{Lemma}
Let $G$ be any rigid group, and let $n\in \N$ if $G$ is commutative, and $n \in  \{0,1\}$ otherwise. Then there  is a natural isomorphism \[\check{H}^n(\wt X\to X,G)=H^n_{\cts}(\pi,G(\wt X)),\]
where $G(\wt X)$ is endowed with its natural topology via \Cref{p:can-grp-struct}.
\end{Lemma}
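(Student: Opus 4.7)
The strategy is to identify the \v Cech nerve of $\wt X \to X$ with the bar resolution computing continuous group cohomology. Since $X$ is qcqs and the connected finite étale covers of $X$ are qcqs, the pro-finite-étale universal cover $\wt X$ is a cofiltered limit of qcqs diamonds, hence itself a qcqs (spatial) diamond. Moreover $\pi = \pi_1^\et(X,x_0)$ is a profinite group since $X$ is connected and the defining system consists of finite étale covers.

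The first step is to use that $\wt X \to X$ is a $\pi$-torsor in $X_\mathrm{v}$ to identify the iterated fibre products: there is a canonical isomorphism
\[
\wt X \times_X \wt X \times_X \cdots \times_X \wt X \;\cong\; \wt X \times \underline{\pi}^{n}
\]
of $(n+1)$-fold fibre products (with $\underline{\pi}$ the profinite set $\pi$ viewed as a diamond), via $(x_0, x_1, \dots, x_n) \mapsto (x_0, \gamma_1, \dots, \gamma_n)$ where $\gamma_i \in \pi$ is the unique element sending $x_0$ to $x_i$.

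The second step is to evaluate $G$ on these products. Applying \Cref{l:top-on-G(T)-test-by-Hausd-sp} with the compact Hausdorff space $S = \pi^n$ and the quasi-compact v-sheaf $T = \wt X$ gives
\[
G\bigl(\wt X \times \underline{\pi}^n\bigr) \;=\; \mathrm{Map}_{\cts}\bigl(\pi^n,\, G(\wt X)\bigr),
\]
where $G(\wt X)$ carries the topology of \Cref{p:can-grp-struct}. The continuity of the natural $\pi$-action on $G(\wt X)$ follows from the analogous evaluation applied to the action map $\wt X \times \underline{\pi} \to \wt X$. Under these identifications, the face maps of the \v Cech nerve translate directly into the face maps of the inhomogeneous bar resolution of $\pi$ with values in $G(\wt X)$.

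The final step is to conclude. For $n=0$, taking equalizers gives $G(\wt X)^{\pi} = H^0_{\cts}(\pi, G(\wt X))$. For $n=1$ in the non-abelian case, the cocycle condition $c_{12}c_{01} = c_{02}$ on a map $c\colon \wt X\times_X \wt X \to G$ corresponds, under the identifications above, to the usual continuous $1$-cocycle condition on a continuous map $\pi \to G(\wt X)$, and the coboundary relation matches as well, yielding a bijection between $\check H^1$ and $H^1_{\cts}(\pi, G(\wt X))$. For commutative $G$ and arbitrary $n$, the identification of the \v Cech complex with the bar complex immediately gives the result for all degrees.

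The main thing to check is the second step, namely the applicability of \Cref{l:top-on-G(T)-test-by-Hausd-sp} and the matching of topologies; everything else is bookkeeping of face maps and cocycle conditions.
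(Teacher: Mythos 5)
Your proposal is correct and follows essentially the same route as the paper: identify the $(n+1)$-fold fibre products with $\underline{\pi}^n\times\wt X$ via the torsor structure, apply \Cref{l:top-on-G(T)-test-by-Hausd-sp} to convert morphisms of diamonds $\underline{\pi}^n\times\wt X\to G$ into continuous maps $\pi^n\to G(\wt X)$, and then match the \v{C}ech cocycle and coboundary relations with those of continuous group cohomology (the paper spells out this last bookkeeping explicitly for $n=1$ in the non-commutative case, using a slightly different but equivalent parametrization of the fibre products).
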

\begin{Example}\label{ex:geom-cocycle}
Let us clarify our conventions for non-abelian cocycles:  Since $\pi$ acts on $\wt X$ from the left, the induced action on $G(\wt X)$ is a right-action. For $n=1$, the 1-cocycles defining $H^1_\cts(\pi,G(\wt X))$ are therefore concretely the morphisms 
\[\rho:\pi\times \wt X\to G\]
for which the natural diagram expressing the relation
\begin{equation}\label{eq:non-abelian-Z1}
	\rho(\gamma_1\cdot \gamma_2,x)=\rho(\gamma_1,\gamma_2x)\cdot \rho(\gamma_2,x)
\end{equation}
commutes. The equivalence relation defining $\check{H}^1$ is then defined by declaring that two such cocycles $\rho_1$ and $\rho_2$ are equivalent if and only if there is a morphism $\varphi\in G(\wt X)$ such that 
\begin{equation}\label{eq:non-abelian-H1}\rho_1(\gamma, x)=\varphi(\gamma x)\rho_2(\gamma,x)\varphi(x)^{-1}.\end{equation}
\end{Example}
\begin{proof}
Since $\wt X\to X$ is a $\pi$-torsor, the $(n+1)$-fold fibre product of $\wt X$ with itself over $X$ is isomorphic to ${\pi}^n\times \wt X$.
Hence cocycles in the \v{C}ech cohomology set with $G$-coefficients $\check{H}^n(\wt X\to X,G)$  are given by morphisms of diamonds 
\[\rho:{\pi}^n\times \wt X\to G.\]
Via  \Cref{l:top-on-G(T)-test-by-Hausd-sp}, such morphisms $\rho$ correspond to continuous morphisms ${\pi}^n\to G(\wt X)$. By a standard computation, the \v{C}ech cocycle relation is identified for Galois covers like $\wt X\to X$ with the cocycle condition of continuous group cohomology, and the equivalence relations defining the cohomology on both sides agree via this identification.

Let us give some more details of this identification for $n=1$ in the non-commutative case, for which we do not know a reference in the literature: We use the isomorphisms
\begin{equation}\label{eq:proof-of-cocycle-ide-first-isom}
	\pi\times \wt X\isomarrow \wt X\times_X\wt X,\quad (\gamma,x)\mapsto (\gamma x,x)
\end{equation}
\[ \pi\times \pi\times \wt X\isomarrow \wt X\times_X\wt X\times_X\wt X,\quad (\gamma_1,\gamma_2,x)\mapsto (\gamma_1\gamma_2 x,\gamma_2x,x).\]
It is a straightforward calculation that this identifies the projections $p_{ij}:\wt X\times_X\wt X\times_X\wt X\to \wt X\times_X\wt X$ with the maps $p_{ij}:\pi\times \pi\times \wt X\to \pi\times \wt X$ defined as follows
\[ p_{12}(\gamma_1,\gamma_2,x)=(\gamma_1,\gamma_2x),\quad p_{23}(\gamma_1,\gamma_2,x)=(\gamma_2,x),\quad p_{13}(\gamma_1,\gamma_2,x)=(\gamma_1\gamma_2,x).\]
By definition, 1-cocycles in $\check{H}^1(\wt X\to X,G)$ are now given by sections $\rho\in G(\wt X\times_X\wt X)=G(\pi\times \wt X)=\mathrm{Map}_{\cts}(\pi,G(\wt X))$ satisfying $p_{12}^\ast(\rho)\cdot p_{23}^\ast(\rho)=p_{13}^\ast(\rho)$. Via the above identifications, this translates to
\[ \rho(\gamma_1\cdot \gamma_2,x)=\rho(\gamma_1,\gamma_2x)\rho(\gamma_2,x) \]
for all $\gamma_1,\gamma_2\in \pi$, $x\in \wt X$,
which is precisely the same as in \Cref{eq:non-abelian-Z1}. 

Finally, the equivalence relation defining $\check{H}^1(\wt X\to X,G)$ is given by saying that $\rho_1\sim\rho_2$ if there is $\varphi\in G(\wt X)$ such that $\rho_1=p_1^{\ast}(\varphi)\rho_2p_2^{\ast}(\varphi)^{-1}$ where $p_1,p_2:\wt X\times_X\wt X\to \wt X$ are the projections. When we identify these projections with maps $\pi\times \wt X\to \wt X$ via \eqref{eq:proof-of-cocycle-ide-first-isom}, this equivalence relation gets identified with \eqref{eq:non-abelian-H1}.
\end{proof}

Consider now the map $G(K)\to G(\wt X)$. Since the $\pi$-action on $G(K)$ is trivial, this induces a natural map
\[ \Homc(\pi,G(K))\to {H}^1_{\cts}(\pi,G(\wt X))\]
defined in terms of geometric 1-cocycles by sending 
\[\rho\mapsto \big(\pi\times \wt X\to G,\quad \gamma,x\mapsto \rho(\gamma)\big).\]

The key result that we need for the proof of \Cref{rslt:vbundles}.(1) is the following:
\begin{Theorem}\label{p:Hom-to-H^1-surj}
Let $G$ be any connected rigid group, $X$ any connected smooth proper rigid space. Then the following natural map is surjective:
\[ \Hom_{\cts}(\pi,G(K))\to H^1_{\cts}(\pi,G(\wt X)).\]
\end{Theorem}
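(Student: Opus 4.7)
The plan is to reduce to the commutative case \Cref{rslt:commutativecase}(ii) via the central extension of rigid groups $1 \to Z \to G \to \bar{G}$ from \Cref{l:center}, where $Z := Z(G)$ is commutative and $\bar{G}$ embeds into the linear analytic group $\mathrm{Aut}(\mathfrak g)$. As used in the proof of part (2), the pro-finite-\'etale universal cover satisfies $\O(\wt X) = K$, so $H(\wt X) = H(K)$ for every linear analytic rigid group $H$; in particular $\bar{G}(\wt X) = \bar{G}(K)$. Given a cocycle $\rho \colon \pi \to G(\wt X)$, its projection $\bar{\rho} \colon \pi \to \bar{G}(K)$ is then automatically a continuous group homomorphism, since the $\pi$-action on $\bar{G}(K)$ is trivial.

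Next I would analyze the fiber of $H^1_\cts(\pi, G(\wt X)) \to H^1_\cts(\pi, \bar{G}(\wt X))$ over $[\bar{\rho}]$ using the non-abelian long exact sequence attached to the central extension. Since $Z$ is central in $G$, this fiber is a torsor under $H^1_\cts(\pi, Z(\wt X))$ acting by pointwise multiplication, and \Cref{rslt:commutativecase}(ii) applied to the commutative group $Z$ yields that every class in $H^1_\cts(\pi, Z(\wt X))$ is represented by a continuous homomorphism $\pi \to Z(K)$. If one can exhibit a continuous homomorphism $\rho_0 \colon \pi \to G(K)$ lifting $\bar{\rho}$, then both $[\rho]$ and $[\rho_0]$ lie in the fiber over $[\bar{\rho}]$, so they differ by a class $[\zeta] \in H^1_\cts(\pi, Z(\wt X))$; choosing a homomorphism $\zeta_0 \colon \pi \to Z(K)$ representing $[\zeta]$, the pointwise product $\gamma \mapsto \rho_0(\gamma)\zeta_0(\gamma)$ is a well-defined continuous homomorphism $\pi \to G(K)$ (by centrality of $Z$) whose image in $H^1_\cts(\pi, G(\wt X))$ is $[\rho]$. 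A direct check shows that if $\zeta$ and $\zeta_0$ are cohomologous via a coboundary $g \in Z(\wt X) \subseteq G(\wt X)$, then the same $g$ realizes the equality $[\zeta \cdot \rho_0] = [\zeta_0 \cdot \rho_0]$ in non-abelian cohomology, again because $Z$ is central.

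The main obstacle is therefore producing the lift $\rho_0 \colon \pi \to G(K)$ of $\bar{\rho}$. Choosing any continuous set-theoretic section of $G(K) \to \bar{G}(K)$, which exists locally via the Lie algebra exponential and globally by profiniteness of $\pi$, the deviation from being a homomorphism gives an obstruction class $c \in H^2_\cts(\pi, Z(K))$. The existence of $\rho$ forces the image of $c$ in $H^2_\cts(\pi, Z(\wt X))$ to vanish; concretely, setting $r(\gamma) := \rho(\gamma, x_0) \in G(K)$, the obstruction is represented by the explicit 2-cocycle $(\gamma_1, \gamma_2) \mapsto \rho(\gamma_1, x_0)\rho(\gamma_1, \gamma_2 x_0)^{-1}$, which lies in $Z(K)$ precisely because $\bar{\rho}$ is a homomorphism. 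To conclude $c$ itself vanishes in $H^2_\cts(\pi, Z(K))$, I would invoke the structural results \Cref{t:morph-proper-to-grp} and \Cref{l:Rigidity}: the pointed morphism $x \mapsto \rho(\gamma_1, x)\rho(\gamma_1, x_0)^{-1}$ from $\wt X$ to $Z$ factors through the maximal abeloid subvariety $A \subseteq Z$, and the rigidity of $\wt X$ together with the commutative case for abeloids splits $A(\wt X)$ into a ``character'' part coming from $\Hom_\cts(\pi, A(K))$ and a part coming from $A(K)$, allowing one to exhibit $c$ as a coboundary of an explicit continuous function $\pi \to Z(K)$. This last step is the crux of the argument and is precisely where the structural theory of rigid groups and the fine properties of the pro-finite-\'etale universal cover enter essentially.
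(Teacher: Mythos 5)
Your strategy coincides with the paper's in its essential structure: both use the central extension $1\to Z(G)\to G\to H$ with $H\subseteq \mathrm{Aut}(\mathfrak g)$ linear analytic (so that $H(\wt X)=H(K)$), both reduce the $H^1$-part to the commutative case of the statement, and both ultimately rest on the injectivity of $H^2_{\cts}(\pi,Z(K))\to H^2_{\cts}(\pi,Z(\wt X))$; the paper packages this as a five-lemma-style chase on Serre's exact sequences of pointed sets, whereas you unpack it into an explicit lifting-and-twisting argument. One citation must be corrected: the result giving that every class in $H^1_{\cts}(\pi,Z(\wt X))$ is represented by a continuous homomorphism $\pi\to Z(K)$ is \Cref{t:H1cts-pi-G-for-commutative-G}, not \Cref{rslt:commutativecase}(ii). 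The latter is proved \emph{after} (and using) the present theorem, so invoking it here is circular, and it also requires $Z(G)$ to be locally $p$-divisible, a hypothesis not available for the center of a general connected rigid group; \Cref{t:H1cts-pi-G-for-commutative-G} holds for arbitrary commutative rigid groups and is exactly what is needed.

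The genuine gap is the step you yourself call the crux: showing that the obstruction class $c\in H^2_{\cts}(\pi,Z(K))$ vanishes, equivalently that $H^2_{\cts}(\pi,Z(K))\to H^2_{\cts}(\pi,Z(\wt X))$ is injective. Your sketch for this---factoring pointed maps $\wt X\to Z$ through the maximal abeloid subvariety and splitting $A(\wt X)$ into a ``character part'' and an $A(K)$-part so as to exhibit $c$ as an explicit coboundary---is not carried out and is not obviously salvageable as stated. The clean way to close it, which is what the paper does, is to use the short exact sequence \eqref{eq:ses-G(wtX)^0} of topological $\pi$-modules $0\to Z(K)\to Z(\wt X)\to Z(\wt X)^\circ\to 0$: its long exact sequence identifies the kernel of $H^2_{\cts}(\pi,Z(K))\to H^2_{\cts}(\pi,Z(\wt X))$ with a quotient of $H^1_{\cts}(\pi,Z(\wt X)^\circ)$, and this group vanishes by \Cref{rslt:vanishingcoh}. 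That vanishing is indeed where \Cref{l:Rigidity} and \Cref{t:morph-proper-to-grp} enter (via discreteness of $Z(\wt X)^\circ$, factorization of cocycles through finite quotients, and divisibility of $\varinjlim A(X')^\circ$), so your instinct about the required structural inputs is correct, but the reduction to those inputs is missing. A minor further point: your claim that a continuous set-theoretic lift of $\bar\rho$ to $G(K)$ exists ``globally by profiniteness of $\pi$'' also needs justification; the paper avoids this issue entirely by never choosing explicit lifts.
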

We currently do not know if the connectedness assumption on $G$ is necessary. However, we will see in \Cref{t:H1cts-pi-G-for-commutative-G} that it can be removed in the commutative case.

For the proof of \Cref{p:Hom-to-H^1-surj}, we first need some preparations. We start by discussing a variant of the classical Rigidity Lemma from algebraic geometry for $\wt X$.

\subsection{A Rigidity Lemma for universal covers}
The classical Rigidity Lemma in algebraic geometry says that any morphism between  $K$-schemes $f:X\times Y\to Z$ where $X$ is irreducible and proper with a $K$-point $x_0$, $Y$ is irreducible with a $K$-point $y_0$ and $Z$ is any scheme, and such that $f(X\times \{y_0\})$ is a single point,  factors through the projection $X\times Y\to Y$. The analogous statement also holds in the rigid analytic setting \cite[Lemma~7.1.2]{Luetkebohmert}.

We need an analog of this statement for $\wt X$. This will be weaker because $\wt X$ is not in any sense ``irreducible'': For example, in the case of the Tate curve $X$, we easily verify from the explicit description of \cite[Theorem 4.6.3]{AWS-perfectoid} that any open neighbourhood of the identity in $\wt X$ contains an open subspace of the form $\Z_p\times \mathbb B^{1,\mathrm{perf}}$, which is manifestly not irreducible as it has many connected components. 

We therefore need to prove an analog of a slightly weaker version:
If the irreducibility assumptions in the algebraic Rigidity Lemma are dropped, one can still deduce that $f$ factors through the projection on $X\times U$ where $U\subseteq Y$ is an open neighborhood of $y_0$. 

\begin{Lemma}[Rigidity Lemma]\label{l:Rigidity}		Let $Y$ be a diamond and let $Z$ be any adic space over $K$. Assume that there is a $K$-point $y_0\in Y(K)$ and let
\[ f:\wt X\times Y\to Z\]
be any morphism of diamonds such that $f(\wt X\times y_0)=z_0:=f(x_0,y_0)$ is a single point. 

Then there is an open neighbourhood $y_0\in U\subseteq Y$ such that $f|_{\wt X\times U}$ factors through the projection to $U$ composed with a morphism of diamonds $U\to Z$.
\end{Lemma}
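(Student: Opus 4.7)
The plan is to reduce the lemma to the fact that $\wt X$ is ``connected and proper'' in the sense that $\mathcal{O}(\wt X) = K$ and $\mathcal{O}^+(\wt X) = \mathcal{O}_K$. This reflects that each pointed connected finite \'etale cover $X' \to X$ appearing in the pro-system $\wt X = \varprojlim_{X'} X'$ is a connected smooth proper rigid space with $\mathcal{O}(X') = K$ and $\mathcal{O}^+(X') = \mathcal{O}_K$.

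First, I would shrink $Z$ to an affinoid open neighborhood $V$ of $z_0$. Since each $X'$ is qcqs (as $X$ is proper) and the transition maps in $\wt X = \varprojlim_{X'} X'$ are finite hence qcqs, $\wt X$ is itself a qcqs spatial diamond. The preimage $f^{-1}(V) \subseteq \wt X \times Y$ is then an open sub-diamond containing $\wt X \times \{y_0\}$ by hypothesis, and the tube lemma for the product of a qcqs spatial diamond with an arbitrary diamond would produce an open sub-diamond $U \subseteq Y$ with $y_0 \in U$ and $f(\wt X \times U) \subseteq V$.

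I would then compute the global sections of $\wt X \times U$. Proper base change for the structure sheaf along the proper morphism $X' \to \Spa K$ gives $\mathcal{O}(X' \times U) = \mathcal{O}(U)$ and $\mathcal{O}^+(X' \times U) = \mathcal{O}^+(U)$ for each $X'$ in the pro-system. These identifications intertwine with the identity on $\mathcal{O}(U)$ along the transition maps $X'' \to X'$, so passing to the colimit one obtains
\[ \mathcal{O}(\wt X \times U) = \mathcal{O}(U), \qquad \mathcal{O}^+(\wt X \times U) = \mathcal{O}^+(U). \]
Since $V$ is affinoid, a morphism $\wt X \times U \to V$ of diamonds is the same datum as a morphism of Huber pairs $(\mathcal{O}(V), \mathcal{O}^+(V)) \to (\mathcal{O}(\wt X \times U), \mathcal{O}^+(\wt X \times U))$, which by the displayed identification is the same as a morphism $h : U \to V \hookrightarrow Z$. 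The naturality of this correspondence forces $f|_{\wt X \times U} = h \circ \mathrm{pr}_U$, yielding the desired factorization.

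The main technical inputs are the tube lemma for qcqs spatial diamonds and the compatibility of proper base change for the structure sheaf with cofiltered limits of qcqs diamonds. Both should follow formally from the classical statements for the rigid spaces $X'$, but formulating them cleanly in the diamond setting, together with checking that $\mathcal{O}^+$ behaves as expected under the limit, is the step requiring the most care.
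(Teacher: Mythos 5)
Your proposal follows essentially the same route as the paper: shrink $Z$ to an affinoid neighbourhood $V$ of $z_0$, find an open $U\ni y_0$ with $f(\wt X\times U)\subseteq V$, and then conclude from $\O(\wt X\times U)=\O(U)$ (the paper cites this as Proposition~3.9 of the rank-one paper, after first reducing to $Y$ affinoid perfectoid by quasi-pro-\'etale localisation) that the restriction factors through the projection.

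The one step where your justification is insufficient as stated is the ``tube lemma''. You cannot run the classical product-topology argument here, because for diamonds (and already for adic spaces) the underlying topological space $|\wt X\times Y|$ is \emph{not} the product $|\wt X|\times|Y|$, so opens in $\wt X\times Y$ are not generated by products of opens and quasi-compactness of the fibre $\wt X$ alone does not give you the tube. The paper's mechanism is different and is the correct one: since $\wt X\to X$ is a proper morphism of diamonds in the sense of Scholze, $\wt X\to\Spa(K)$ is proper, hence the projection $q:\wt X\times Y\to Y$ is proper and in particular closed; one then takes $W:=(\wt X\times Y)\setminus f^{-1}(V)$, observes that $q(W)$ is closed, and sets $U:=Y\setminus q(W)$. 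So the input you actually need is properness (universal closedness) of $\wt X$ over the base, not just quasi-compactness. With that substitution, and with the caveat that $\O(\wt X\times U)=\varinjlim\O(X'\times U)$ requires a completion argument (which is why the paper quotes the computation of $\O(\wt X\times U)$ as a black box rather than deducing it termwise from the covers $X'$), your argument matches the paper's.
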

\begin{proof}
We first observe that the statement is quasi-pro-\'etale-local on $Y$ (note that we may lift the base-point $y_0\in Y(K)$ to quasi-pro-\'etale covers since $K$ is algebraically closed), so we may replace $Y$ by an affinoid perfectoid space.

Let now $z_0\in V\subseteq Z$ be an affinoid open neighbourhood of $z_0=f(x_0,y_0)$.
Passing to complements, let $W:=(\wt X\times Y)\backslash f^{-1}(V)$. Since $\wt X\to X$ is a proper morphism of diamonds in the sense of \cite[ Definition 18.1]{etale-cohomology-of-diamonds}, the morphism $\wt X\to \Spa(K)$ is proper, thus so is the projection $q:\wt X\times Y\to Y$. It follows that $q(W)\subseteq Y$ is closed. Let $U$ be the complement. By assumption, we have $y_0\in U$, and it follows from the definition that $f$ restricts to a map
\[ \widetilde{X}\times U\to V.\]
Since $V$ is affinoid, this is determined by its global sections. But by \cite[Proposition~3.9]{heuer-geometric-Simpson-Pic},
$\O(\wt X\times U)=\O(U)$.
Hence this restriction factors through the projection to $U$.
\end{proof}

\subsection{Morphism from proper rigid spaces to rigid groups}
The goal 
of this section is to prove the following structural result for rigid groups: 
%\begin{Theorem}\label{t:morph-proper-to-grp}
%     Let $G$ be a commutative rigid group variety. 
%	Let $\varphi:X\to G$ be a morphism from any connected smooth proper rigid space $X$ to any commutative rigid group variety $G$. Assume that $0\in \im(\varphi)$. Then $\varphi$ factors through a closed subgroup $A\subseteq G$ which is an abeloid variety.
%\end{Theorem}

\begin{Theorem}\label{t:morph-proper-to-grp}
Any rigid group variety $G$ admits a maximal abeloid closed  subgroup $A\subseteq G$. This satisfies the following universal property:
Any morphism $\varphi:X\to G$ from an irreducible proper rigid space $X$ with $0\in \im(\varphi)$ factors through $A\subseteq G$. Moreover, $A\subseteq Z(G^\circ)$.
\end{Theorem}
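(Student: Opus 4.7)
The plan is to construct $A$ as the maximal abeloid closed subgroup of $Z(G^\circ)$, where $G^\circ$ denotes the identity component of $G$ and $Z(G^\circ)$ is a commutative rigid group by \Cref{l:center}. The universal property will then follow from two steps: first, any morphism $\varphi:X\to G$ as in the statement lands in $Z(G^\circ)$; second, the closed subgroup of $Z(G^\circ)$ generated by $\varphi(X)$ is an abeloid subgroup. Maximality is then achieved by taking sums of abeloid subgroups.

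For the centrality step, I would apply the classical rigid-analytic Rigidity Lemma \cite[Lemma~7.1.2]{Luetkebohmert} to the commutator morphism
\[\Psi\colon X\times G^\circ\to G^\circ,\qquad (x,g)\mapsto g\varphi(x)g^{-1}\varphi(x)^{-1},\]
where we choose $x_0\in X(K)$ with $\varphi(x_0)=e$. Since $X$ is irreducible and proper and $\Psi(\{x_0\}\times G^\circ)=\{e\}$, the lemma yields an open neighbourhood $U$ of $e$ in $G^\circ$ such that $\Psi|_{X\times U}$ factors through $U$; the identity $\Psi(x,e)=e$ then forces this factorization to be the constant $e$. For each $x\in X$, the centralizer $C_x\subseteq G^\circ$ of $\varphi(x)$ is thus a closed subgroup containing an open neighbourhood of $e$, hence (by the standard fact that a closed subgroup of a connected rigid group containing an open neighbourhood of the identity is all of it, since all of its cosets by elements of itself are open) equals $G^\circ$. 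This shows $\varphi(X)\subseteq Z(G^\circ)$.

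For the abeloid-generation step, inside the commutative rigid group $Z(G^\circ)$ I would consider the ascending chain $H_1\subseteq H_2\subseteq\cdots$ of closed irreducible subspaces defined by
\[H_n:=\mathrm{im}\bigl(X^{2n}\to Z(G^\circ),\ (x_1,\ldots,x_n,y_1,\ldots,y_n)\mapsto \varphi(x_1)\cdots\varphi(x_n)\varphi(y_1)^{-1}\cdots\varphi(y_n)^{-1}\bigr).\]
Each $H_n$ is closed by the rigid-analytic proper mapping theorem, irreducible, and contains $e$; since $\dim H_n\leq\dim G$, the chain stabilizes at some $H_N$. Then $H_N\cdot H_N\subseteq H_{2N}=H_N$ shows that $H_N$ is closed under the group operation and hence a closed, connected, proper sub-rigid-group of $Z(G^\circ)$, i.e.\ an abeloid. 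To obtain the maximal abeloid, observe that the image of the multiplication map $A_1\times A_2\to Z(G^\circ)$ of two abeloid subgroups is again an abeloid subgroup by the same reasoning; the poset of abeloid subgroups of $Z(G^\circ)$ is therefore directed under inclusion and, bounded by $\dim G$, admits a maximum $A$. The universal property is then immediate: the abeloid $H_N$ attached to $\varphi$ lies inside $A$ by maximality, so $\varphi(X)\subseteq A$, and $\varphi$ factors through the closed immersion $A\hookrightarrow G$ by reducedness.

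I expect the main obstacle to be the centrality step: the Rigidity Lemma only produces a local factorization through a neighbourhood $U$ of $e$ in $G^\circ$, so one must carefully upgrade this to a global commutation statement at the level of rigid analytic spaces rather than only $K$-points. A secondary technical point is verifying that an ascending chain of closed irreducible proper sub-rigid-spaces of bounded dimension in $Z(G^\circ)$ must stabilize; for this the rigid-analytic proper mapping theorem together with the fact that an irreducible closed subspace of another irreducible closed subspace of the same dimension coincides with it does the work.
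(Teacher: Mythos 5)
Your proposal is correct and follows essentially the same route as the paper: reduce to the commutative case by applying the rigid Rigidity Lemma to the commutator map to land in $Z(G^\circ)$, build an abeloid subgroup as the stable member of the ascending chain of irreducible proper images of $X^n$ under alternating products (stabilizing for dimension reasons via Conrad's results), and obtain the maximal $A$ from directedness of the poset of abeloid subgroups under the multiplication map. The only slip is in the Rigidity Lemma application, where you cite $\Psi(\{x_0\}\times G^\circ)=\{e\}$ as the lemma's hypothesis and $\Psi(x,e)=e$ to force constancy, whereas the hypothesis is that $\Psi(X\times\{e\})=\{e\}$ and constancy of the factorization follows from $\Psi(x_0,g)=e$ for all $g$; both identities hold for this $\Psi$, so the argument stands after swapping the two roles.
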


The most important step in the proof is the following Lemma. 

\begin{Lemma}\label{cl:maximal-abeloid}
Any morphism $\varphi:X\to G$ from an irreducible proper rigid space $X$ with $0\in \im(\varphi)$  factors through \textit{some} closed abeloid subgroup $A\subseteq G$.
\end{Lemma}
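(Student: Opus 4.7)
My plan is to adapt the standard algebraic-geometry argument: consider images of word maps built out of $\varphi$, show that they form an ascending chain whose dimensions stabilize, and conclude that the stable limit is a closed abeloid subgroup through which $\varphi$ factors.

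By hypothesis there is $x_0 \in X$ with $\varphi(x_0) = e_G$. For each $n \geq 1$, consider the morphism
\[ \psi_n \colon X^{2n} \to G, \quad (x_1, y_1, \ldots, x_n, y_n) \mapsto \varphi(x_1)\varphi(y_1)^{-1} \cdots \varphi(x_n)\varphi(y_n)^{-1}, \]
and let $Z_n \subseteq G$ be its image equipped with the reduced closed rigid subspace structure. Since $X$ is proper and irreducible and $K$ is algebraically closed, $X^{2n}$ is also proper and irreducible, so $\psi_n$ is proper and $Z_n$ is a proper, irreducible, reduced closed rigid subspace of $G$. Substituting $x_n = y_n = x_0$ shows $Z_{n-1} \subseteq Z_n$, producing an ascending chain of such subspaces.

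Because $\dim Z_n \leq \dim G$ for all $n$, the dimensions stabilize at some $n_0$. For irreducible reduced closed rigid subspaces of $G$, an inclusion with equal dimension is an equality, so $A := Z_{n_0} = Z_n$ for all $n \geq n_0$. Multiplication sends $|A| \times |A| \subseteq |Z_{2n_0}| = |A|$, and since $A$ is closed and reduced in $G$ this induces a morphism $A \times A \to A$. The identity
\[ \bigl( \varphi(x_1)\varphi(y_1)^{-1} \cdots \varphi(x_n)\varphi(y_n)^{-1} \bigr)^{-1} = \varphi(y_n)\varphi(x_n)^{-1} \cdots \varphi(y_1)\varphi(x_1)^{-1} \]
shows that $A$ is stable under inversion, and $e_G = \psi_1(x_0, x_0) \in Z_1 \subseteq A$. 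Hence $A$ is a closed rigid subgroup of $G$. It is proper (as the image of a proper map) and connected (as the image of the connected $X^{2n_0}$), so $A$ is an abeloid subvariety of $G$. Finally $\varphi(X) \subseteq Z_1 \subseteq A$, since $\varphi(x) = \psi_1(x, x_0)$, so $\varphi$ factors through $A$ as desired.

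The main obstacle is to confirm the foundational inputs from rigid analytic geometry that make this argument go through: that the image of a proper morphism of rigid spaces carries a natural reduced closed rigid subspace structure, that products of irreducible rigid spaces over algebraically closed $K$ remain irreducible (Conrad's theory of irreducible components), and that a morphism from a reduced rigid space whose set-theoretic image lies in a reduced closed subspace factors as a morphism of rigid spaces through that subspace. Once these ingredients are in place, the stabilization argument above is purely formal.
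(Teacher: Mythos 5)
Your proof is correct, but it takes a genuinely different route from the paper's. The paper first reduces to the case of \emph{commutative} $G$: it replaces $G$ by $G^\circ$ and then uses the rigid-analytic Rigidity Lemma to show that $\varphi$ factors through the center $Z(G)$, after which it runs the stabilization argument with the alternating-sign maps $(x_1,\dots,x_n)\mapsto\sum(-1)^i\varphi(x_i)$. You instead run the classical non-commutative word-map argument directly, with $\psi_n(x_1,y_1,\dots)=\varphi(x_1)\varphi(y_1)^{-1}\cdots$, which handles stability under inversion by reversing the word and needs no commutativity. Both arguments rest on the same foundational inputs, all of which the paper establishes beforehand (Kiehl's proper mapping theorem so that $Z_n$ is Zariski-closed, irreducibility of products and images over algebraically closed $K$, and Conrad's result that an equidimensional Zariski-closed subspace of an irreducible rigid space of the same dimension is everything), so your "main obstacle" is already cleared in the text. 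Your route is shorter for the Lemma as stated; what the paper's detour through the center buys is the additional assertion $A\subseteq Z(G^\circ)$ in Theorem \ref{t:morph-proper-to-grp}, which your construction does not yield directly. One point to phrase more carefully: $|A\times A|$ is not $|A|\times|A|$ for adic spaces, so to see that multiplication restricts to $A$ you should argue as the paper does, namely that $X^{2n_0}\times X^{2n_0}\to G\times G\xrightarrow{m}G$ coincides with $\psi_{2n_0}$ up to reordering coordinates and hence lands in $Z_{2n_0}=A$, and that $X^{2n_0}\times X^{2n_0}\to A\times A$ is surjective with reduced source, so the restriction of $m$ to the reduced space $A\times A$ factors through the reduced Zariski-closed subspace $A$. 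This is a matter of wording, not a gap.
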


The proof  relies crucially on the notion of ``analytic subsets'' of a rigid space $Y$ as introduced in \cite[\S9.5]{BGR}, that we now recall: To avoid confusion with the ``analytic topology'', we are instead going to call these ``Zariski-closed'' subspaces, namely $Z\subseteq Y$ is called Zariski-closed if for any affinoid open $U\subseteq Y$, the subspace $Z\cap U\subseteq U$ is the zero-locus of a set of elements $f_1,\dots,f_d\in \O(U)$. Equivalently, $Z$ is the image of a closed immersion defined by a coherent sheaf of ideals in $\O_Y$ \cite[\S9.5 Corollary~7]{BGR}. Given a Zariski-closed subspace $Z\subseteq Y$, it is always possible to endow it with the induced-reduced structure by replacing any coherent ideal defining $Z$ with its radical \cite[\S9.5.3, Proposition~4]{BGR}.

The notion of Zariski-closed subspaces has good localisation properties, i.e.\ whether a subspace is Zariski-closed can be checked analytic-locally, and the pullback of a Zariski-closed subset under a morphism $X\to Y$ of rigid spaces is again Zariski-closed \cite[\S9.6.3, before Lemma~4]{BGR}. 
Moreover, we have the following result due to Kiehl:
\begin{Proposition}[{\cite[\S9.6.3, Proposition~3]{BGR}}]\label{p:Kiehl-proper-mapping}
Let $f:X\to Y$ be a proper morphism of rigid spaces. Then the image $f(M)$ of any Zariski-closed subset $M\subseteq X$ is again Zariski-closed.
\end{Proposition}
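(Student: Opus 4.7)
The plan is to deduce this from Kiehl's direct image theorem for rigid analytic spaces (the analog of Grauert's theorem): for any proper morphism $g\colon M\to Y$ of rigid spaces and any coherent $\O_M$-module $\mathcal F$, the direct image $g_\ast\mathcal F$ is a coherent $\O_Y$-module. Since being Zariski-closed is an admissible-local condition on the base, I may reduce to the case that $Y$ is affinoid.

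First, endow $M$ with its induced reduced structure, so that the inclusion $\iota\colon M\hookrightarrow X$ is a genuine closed immersion of rigid spaces. The composite $g:=f\circ\iota\colon M\to Y$ is proper (closed immersions are proper, and proper morphisms are closed under composition), and the set-theoretic image $f(M)=g(M)$ is unchanged. It therefore suffices to show that for any proper morphism $g\colon M\to Y$ of rigid spaces the set-theoretic image $g(M)$ is Zariski-closed in $Y$.

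Apply Kiehl's theorem with $\mathcal F=\O_M$ to obtain the coherent $\O_Y$-module $g_\ast\O_M$, and set $\mathcal I:=\ker(\O_Y\to g_\ast\O_M)$, a coherent ideal sheaf (the kernel of a map between coherent sheaves is coherent). Let $Z:=V(\mathcal I)\subseteq Y$ be its zero locus, which is Zariski-closed by construction. The claim is that $Z=g(M)$ as subsets of $|Y|$. For $g(M)\subseteq Z$: if $y=g(x)$, the composition $\O_{Y,y}\to (g_\ast\O_M)_y\to \O_{M,x}/\mathfrak m_x$ is non-zero, so $\mathcal I_y\subseteq \mathfrak m_y$, hence $y\in Z$. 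Conversely, if $y\notin g(M)$, then since proper morphisms of rigid spaces are topologically closed, there is an affinoid open neighbourhood $y\in U\subseteq Y$ with $g^{-1}(U)=\emptyset$; then $(g_\ast\O_M)|_U=0$, so $\mathcal I|_U=\O_Y|_U$ and $y\notin Z$.

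The main obstacle is really Kiehl's direct image theorem itself, which is a deep result of roughly the difficulty of Grauert's theorem in complex geometry; once that input is granted, the remaining argument is a routine manipulation of coherent ideal sheaves and their supports combined with the standard fact that proper morphisms are topologically closed. One should also be a little careful with the reduction to affinoid $Y$: since Zariski-closedness is defined via an admissible affinoid cover and coherent sheaves glue, the local $Z$'s produced above automatically patch to a global Zariski-closed subspace of $Y$.
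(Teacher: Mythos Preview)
The paper does not give its own proof here; the proposition is quoted directly from \cite[\S9.6.3, Proposition~3]{BGR}. Your argument is essentially the standard one, and indeed the route taken in BGR: reduce to affinoid $Y$, give $M$ its reduced closed subspace structure so that $g=f\circ\iota$ is proper, apply Kiehl's direct image theorem to obtain the coherent sheaf $g_\ast\O_M$, and identify $g(M)$ with the Zariski-closed support $V(\ker(\O_Y\to g_\ast\O_M))$.

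The one step that deserves a word of caution is your appeal to ``proper morphisms of rigid spaces are topologically closed'' in order to find an affinoid neighbourhood $U\ni y$ with $g^{-1}(U)=\emptyset$ when $y\notin g(M)$. In Tate's classical $G$-topology this is not obviously an independent input---it is very close to the statement being proved. In the adic framework the paper adopts, however, Kiehl-properness implies Huber-properness (as the paper's conventions record), and Huber-proper morphisms are universally closed by definition; so the complement of $|g|(|M|)$ is genuinely open in $|Y|$ and contains an affinoid neighbourhood of $y$, whence $(g_\ast\O_M)|_U=0$ as you say. With that justification in place, your proof is correct and matches the cited reference.
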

\begin{Proposition}\label{p:im-is-proper}
Let $X$, $Y$ be rigid spaces over $K$. Assume that $X$ is proper and $Y$ is separated over $K$, and let $f:X\to Y$ be a morphism of rigid spaces. Then $f$ is proper and $f(X)$ endowed with its induced reduced structure is a proper rigid space. 
\end{Proposition}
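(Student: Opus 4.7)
My plan is to prove the two assertions in order, using \Cref{p:Kiehl-proper-mapping} as the link between them.

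For the first claim, I would factor $f$ as $X \xrightarrow{\Gamma_f} X \times_K Y \xrightarrow{p_2} Y$. The graph $\Gamma_f$ is a closed immersion because $Y/K$ is separated, so the diagonal $\Delta_Y: Y \to Y \times_K Y$ is a closed immersion, and $\Gamma_f$ is the pullback of $\Delta_Y$ along $f \times \id: X \times_K Y \to Y \times_K Y$. The projection $p_2$ is the base change of the proper morphism $X \to \Spa K$ along $Y \to \Spa K$, hence itself proper. The composition $f = p_2 \circ \Gamma_f$ is therefore proper.

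Having shown $f$ proper, \Cref{p:Kiehl-proper-mapping} implies that $f(X) \subseteq Y$ is Zariski-closed. Endowing it with the induced reduced structure produces a closed immersion $i: Z \hookrightarrow Y$ of rigid spaces. Since $X^\mathrm{red}$ and $Z$ are both reduced and the image of $X^\mathrm{red} \to Y$ lies set-theoretically in $|Z|$, the universal property of the reduced closed subspace (locally: any section of the defining ideal of $Z$ pulls back to a nilpotent, hence zero, function on $X^\mathrm{red}$) furnishes a factorization $X^\mathrm{red} \xrightarrow{g} Z \xhookrightarrow{i} Y$. The morphism $g$ is surjective on points, as $g(X^\mathrm{red}) = f(X) = |Z|$, and it is proper: since $i$ is separated (being a closed immersion) and $i \circ g$ is the composite of the closed immersion $X^\mathrm{red} \hookrightarrow X$ with the proper morphism $f$, the standard cancellation property for proper morphisms (via the graph factorization) yields the claim.

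The final step, which I expect to be the main technical obstacle, is to deduce from these data that $Z \to \Spa K$ itself is proper. Separatedness of $Z/K$ follows from separatedness of $Y/K$, and quasi-compactness of $Z$ follows from that of $X^\mathrm{red}$ combined with surjectivity of $g$. For universal closedness, after any base change $T \to \Spa K$ the morphism $g_T: X^\mathrm{red}_T \to Z_T$ remains proper and surjective, so for any closed subset $F \subseteq Z_T$ its image in $T$ coincides with the image of $g_T^{-1}(F)$, which is closed by properness of $X^\mathrm{red} \to \Spa K$. Within Kiehl's framework one can either verify this directly via a formal model of $g$, or invoke the equivalence between the two notions of properness noted in the paper's conventions and reason via Huber's valuative criterion on the adic side.
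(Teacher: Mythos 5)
Your argument is correct in substance, but it takes a genuinely different route from the paper's, and the difference is instructive. For properness of $f$, the paper does not use the graph factorization at all: it chooses a formal model $\mathfrak f:\mathfrak X\to\mathfrak Y$ and invokes Temkin's theorem that properness of a morphism of formal schemes, of its rigid generic fibre, and of its special fibre are all equivalent, deducing properness of $\mathfrak f$ from the scheme-theoretic cancellation property applied mod $p^n$. Your graph argument is more elementary and works verbatim in Kiehl's framework, since it only needs the easy stability facts (closed immersions are proper, base change, and "a Zariski-closed subspace of a space proper over the base is proper over the base"), and in particular it avoids the historically delicate general composition theorem. For the second assertion, both proofs reduce to the statement that a proper surjection onto a separated space of finite type forces the target to be proper; the paper again passes to formal models, uses surjectivity of the reduction map on classical points to see that $\mathfrak f_0$ is surjective on special fibres, quotes the scheme-theoretic result \cite[03GN]{Stacks}, and transfers back via Temkin. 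The one caveat about your version of this step: the universal-closedness argument you sketch lives naturally on the adic (Huber) side, whereas "proper" in this paper means (locally on the base) Kiehl-proper --- this is what is needed to apply \Cref{p:Kiehl-proper-mapping} to $Z$ later on --- and the implication "Huber-proper $\Rightarrow$ Kiehl-proper" is precisely the direction the paper's conventions flag as difficult and explicitly refrain from using. So you should either carry out the formal-model variant you mention as an alternative (which is essentially the paper's proof) or accept that comparison as an input; as written, your primary argument quietly relies on it.
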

\begin{proof}
Since $X$ is quasi-compact, we can without loss of generality assume that $Y$ is quasi-compact. Then by \cite[Corollary~5.10.(a)]{BLR-II}, $f$ has a formal model $\mathfrak f:\mathfrak X\to \mathfrak Y$. Since $Y$ is separated, $\mathfrak Y$ is separated (see e.g.\ \cite[Remark 1.3.18.(i) and Remark 1.3.19.(ii)]{huber-Etale-Adic}). We now use that by \cite[Corollaries~4.4, 4.5]{Temkin_local-properties}, the following are equivalent for a morphism $g$ of formal schemes:
\begin{enumerate}
\item $g$ is proper,
\item the rigid generic fibre of $g$ is proper,
\item the special fibre $g_0$ of $g$ over the residue field $k$ is proper.
\end{enumerate}
This first shows that $\mathfrak X$ is proper. Arguing mod $p^n$ for every $n$, it now follows from \cite[01W6]{Stacks} that $\mathfrak f$ is proper, hence in a second step we deduce that $f$ is proper.

To see that $f(X)$ is proper, we may replace $Y$ with $f(X)$ and assume that $f$ is surjective. We can now use the functorial reduction map $\mathrm{red}:Y(K)\to \mathfrak Y(k)$ induced by $\mathfrak Y$, which is surjective by \cite[\S 7.1.5 Theorem 4]{BGR}, to see that surjectivity of $f$ implies surjectivity of the special fibre $\mathfrak f_0:\mathfrak X_0\to \mathfrak Y_0$. Then by \cite[03GN]{Stacks}, $\mathfrak Y_0$ is proper, hence $Y$ is proper.
\end{proof}

As studied in \cite{Conrad_irreducible_components_of_rigid_spaces}, there is a good notion of irreducible subspaces of rigid spaces: A non-empty subspace $Z\subseteq X$ of a rigid spaces is called irreducible if it cannot be written as the union $Z=V_1\cup V_2$ of two Zariski closed subspaces $V_{1},V_2\subsetneq Z$, see \cite[Lemma~2.2.3]{Conrad_irreducible_components_of_rigid_spaces}. One checks as usual (c.f. {\cite[0379]{Stacks}}):
\begin{Lemma}\label{l:image-of-reduced}
	\begin{enumerate}
\item Let $X$ and $Y$ be irreducible rigid spaces over the algebraically closed field $K$. Then the fibre product $X\times Y$ is irreducible.
\item Let $f:X\to Y$ be any morphism of rigid spaces. Then the image of any irreducible subspace is irreducible.
\end{enumerate}
\end{Lemma}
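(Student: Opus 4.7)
The plan is to address (2) first and then bootstrap (1) from it via normalisations.

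For (2), I would invoke Conrad's criterion \cite[Lemma 2.2.3]{Conrad_irreducible_components_of_rigid_spaces} that a rigid space is irreducible if and only if its underlying topological space is irreducible. Since continuous maps of topological spaces carry irreducible subsets to irreducible subsets, the set-theoretic image $f(Z) \subseteq Y$ is topologically irreducible. Whenever $f(Z)$ carries a rigid subspace structure with the same underlying point set --- for instance the induced-reduced structure on a Zariski-closed image, which is the case in the intended applications via \Cref{p:Kiehl-proper-mapping} --- it is then irreducible as a rigid space.

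For (1), my plan is to reduce to the case of connected normal rigid spaces and exploit that $K$ is algebraically closed. Let $\tilde X \to X$ and $\tilde Y \to Y$ denote the normalisations (cf.\ \cite{Conrad_irreducible_components_of_rigid_spaces}). These are finite surjective, and irreducibility of $X$ and $Y$ translates to connectedness of the normal rigid spaces $\tilde X$ and $\tilde Y$. The induced morphism $\tilde X \times \tilde Y \to X \times Y$ is surjective, so by (2) it suffices to show that $\tilde X \times \tilde Y$ is irreducible. For a normal rigid space, irreducibility is equivalent to connectedness (its irreducible components are its connected components, since the space equals its own normalisation), so I further reduce to showing that $\tilde X \times \tilde Y$ is both normal and connected.

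Normality can be verified affinoid-locally: if $A$ and $B$ are normal $K$-affinoid algebras, then the completed tensor product $A \, \widehat\otimes_K B$ is normal, using that $K$ is algebraically closed of characteristic zero. The main obstacle is connectedness of $\tilde X \times \tilde Y$. Here my plan is to exploit once more that $K$ is algebraically closed: each fibre of the projection $p_2 : \tilde X \times \tilde Y \to \tilde Y$ over a $K$-point is isomorphic to $\tilde X$, hence connected. An idempotent $e \in \O(\tilde X \times \tilde Y)$ must therefore be constant on each such fibre, and hence induces a well-defined function on $\tilde Y(K)$. By Zariski-density of $K$-points in the normal rigid space $\tilde Y$ (which uses that $K$ is algebraically closed) together with connectedness of $\tilde Y$, this function must be constant, forcing $e \in \{0,1\}$. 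This gives connectedness of $\tilde X \times \tilde Y$ and completes the argument.
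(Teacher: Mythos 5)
Your part (2) is essentially the paper's argument in different words: the whole content is that preimages of Zariski-closed (analytic) subsets under a morphism of rigid spaces are again Zariski-closed, so $f$ is continuous for the Zariski topologies and hence carries irreducible subsets to irreducible subsets. One caution: ``underlying topological space'' must refer to the Zariski topology whose closed sets are the analytic subsets (this is the content of Conrad's Lemma~2.2.3); for the canonical adic topology the criterion is false --- already the closed unit disc contains disjoint non-empty opens such as $\{|T|\le |p|\}$ and $\{|T|=1\}$, so it is not topologically irreducible in that sense.

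For part (1) you take a genuinely different route. The paper argues directly with Conrad's criterion that $X$ is irreducible iff the Zariski closure of every non-empty admissible open is all of $X$: a non-empty open $U\subseteq X\times Y$ contains a $K$-point, hence a box $V\times W$, and a two-step ``vanishing on slices'' computation shows $\overline{V\times W}=\overline{V}\times\overline{W}=X\times Y$. You instead pass to normalisations, reduce to showing $\widetilde{X}\times\widetilde{Y}$ is normal and connected, and get connectedness by restricting an idempotent to fibres. Both work. Your route leans on two non-trivial external inputs that should be referenced and are doing real work: that normalisations are finite surjective and detect irreducibility via connectedness (Conrad \S2.2), and that $A\mathbin{\hat\otimes}_K B$ is normal for normal affinoid $K$-algebras with $K$ algebraically closed of characteristic zero (via flatness of $X\times Y\to Y$ and geometric normality of the fibres); the paper's argument needs only Lemma~2.2.3 and density of classical points. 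In exchange, your idempotent argument is arguably cleaner than the closure computation and makes transparent where algebraic closedness enters. Two small points to tighten: the function $\widetilde{Y}(K)\to\{0,1\}$ should be identified with the restriction to $K$-points of the idempotent $e|_{\{x_0\}\times\widetilde{Y}}$ for a chosen $x_0\in\widetilde{X}(K)$, so that connectedness of $\widetilde{Y}$ applies to an actual idempotent rather than to a bare function on points; and the surjectivity of $\widetilde{X}\times\widetilde{Y}\to X\times Y$ deserves a word (its image is Zariski-closed, being the image of a finite map, and contains all classical points).
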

\begin{proof}
	Part (2) follows from the fact that pullbacks of Zariski-closed sets are Zariski-closed.
	
	Part (1) can be deduced from \cite[Lemma~2.2.3]{Conrad_irreducible_components_of_rigid_spaces}, which says that $X$ is irreducible if and only if the Zariski-closure of any non-empty open subspace is all of $X$: Let $U\subseteq X\times Y$ be any non-empty open subspace. Since $K$ is algebraically closed, this contains a $K$-point $(x,y)\in X(K)\times Y(K)$. It follows that $U$ contains an open of the form $V\times W$ where $V\subseteq X$ and $W\subseteq Y$ are non-empty opens. Since $\overline{V}=X$ and $\overline{W}=Y$, it now suffices to see that the Zariski-closure of $V\times W$ is $\overline{V}\times \overline{W}$. But this can be seen as in algebraic geometry: Any local function $f$ that vanishes on $v\times W$ for each $v\in V(K)$ vanishes on $v\times \overline{W}$, hence on $V\times \overline{W}$. Then it also vanishes on $V\times w$ for each $w\in \overline{W}(K)$, hence on $\overline{V}\times w$, hence on $\overline{V}\times \overline{W}$.
\end{proof}

Let now $X$ be any irreducible proper rigid space, $G$ a commutative rigid group, and let us consider a morphism $\varphi: X \to G$ containing $0$
in the image. Then for any $n\in \N$, we define a morphism 
\[\textstyle \varphi_n:X^n\to G,\quad (x_1,\dots,x_n)\mapsto \sum_{i=1}^n(-1)^i\varphi(x_i).\]
Let $Z_n\subseteq G$ be the image of $\varphi_n$. The fibre product $X^n$ is still proper, and since $K$ is algebraically closed, it is irreducible by \Cref{l:image-of-reduced}.2. Since $G$ is separated, $\varphi_n$ is proper by \Cref{p:im-is-proper}, hence $Z_n$ is Zariski-closed by \Cref{p:Kiehl-proper-mapping}. We can endow it with its induced-reduced structure, so $Z_n$ is a reduced rigid space. The key technical result is now:
\begin{Lemma} \label{l:ascending-chain-stablize}
Let  $\varphi: X \to G$ be a morphism  from an irreducible proper rigid space to a commutative rigid group containing  $0$
in the image, and let $Z_n$ be as above. 

Then each $Z_n$ is proper.
The $Z_n$'s form an ascending chain
\[ Z_0\subseteq Z_1\subseteq Z_2\subseteq...\]
of equidimensional irreducible Zariski-closed subspaces of $G$. This chain stabilises.
\end{Lemma}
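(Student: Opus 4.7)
The plan is to verify each of the four assertions in turn, with the content-bearing step being the stabilisation, which will follow from the boundedness of $\dim Z_n$ by $\dim G$.

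For properness of $Z_n$: the product $X^n$ is proper as an $n$-fold product of proper rigid spaces, and is irreducible by iteration of \Cref{l:image-of-reduced}.(1), which applies because $K$ is algebraically closed. Since $G$ is separated, \Cref{p:im-is-proper} applied to $\varphi_n:X^n\to G$ then shows that $\varphi_n$ is a proper morphism and that its image $Z_n$, endowed with the induced reduced structure, is a proper rigid space. Irreducibility of $Z_n$ is an immediate consequence of \Cref{l:image-of-reduced}.(2) applied to $\varphi_n$, and equidimensionality of $Z_n$ then follows from the fact that every irreducible reduced rigid space is equidimensional, which is part of the theory of irreducible components developed in \cite{Conrad_irreducible_components_of_rigid_spaces}.

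For the chain property $Z_n\subseteq Z_{n+1}$: pick $x_0\in X(K)$ with $\varphi(x_0)=0$, which exists by the hypothesis $0\in\im(\varphi)$. Appending $x_0$ as last coordinate defines a closed immersion $\iota_n:X^n\hookrightarrow X^{n+1}$, $(x_1,\ldots,x_n)\mapsto (x_1,\ldots,x_n,x_0)$, and a direct calculation gives $\varphi_{n+1}\circ\iota_n=\varphi_n$ since the correction term $(-1)^{n+1}\varphi(x_0)$ vanishes. Hence $Z_n=\varphi_n(X^n)\subseteq\varphi_{n+1}(X^{n+1})=Z_{n+1}$.

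The main step is stabilisation. The sequence $(\dim Z_n)$ is non-decreasing in $n$ and bounded above by $\dim G$, so it stabilises from some index $N$ on. For $n\geq N$ the inclusion $Z_n\subseteq Z_{n+1}$ is then between irreducible reduced Zariski-closed subspaces of $G$ of the same dimension, and I will invoke the rigid-analytic ``dimension-drop'' principle: a proper Zariski-closed subspace of an irreducible equidimensional rigid space has strictly smaller dimension. This forces $Z_n=Z_{n+1}$ for all $n\geq N$, concluding the proof. The main obstacle is the dimension-drop statement itself, which is not entirely formal in the rigid setting but which I intend to extract from the results on irreducible components and local dimension in \cite{Conrad_irreducible_components_of_rigid_spaces}.
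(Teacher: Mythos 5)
Your proposal is correct and follows essentially the same route as the paper: properness and irreducibility of $Z_n$ via \Cref{p:im-is-proper} and \Cref{l:image-of-reduced}, the chain property by appending a preimage of $0$, and stabilisation by bounding the (equi)dimension by $\dim G$. The ``dimension-drop'' principle you defer to the literature is precisely \cite[Corollary~2.2.7]{Conrad_irreducible_components_of_rigid_spaces}, which is the citation the paper uses at the same point, so no gap remains.
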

\begin{proof}
That the $Z_n$ is proper follows from \Cref{p:im-is-proper}.
Each $X^n$ is irreducible by \Cref{l:image-of-reduced}.2, and hence $Z_n$ is irreducible by \Cref{l:image-of-reduced}.1. That the $Z_n$'s form an ascending chain follows from the assumption that $0\in \im(\varphi)$: Let $e\in X(K)$ be any preimage of $0$ under $\varphi$, then the diagram
\[\begin{tikzcd}
X^n \arrow[d, "\mathrm{id} \times \{e\}"] \arrow[r, "\varphi_n"] & G \arrow[d, equal] \\
X^{n+1} \arrow[r, "\varphi_{n+1}"]                       & G
\end{tikzcd}\]
commutes, and thus $Z_n\subseteq Z_{n+1}$.

By \cite[paragraph before Lemma~2.2.3]{Conrad_irreducible_components_of_rigid_spaces}, any irreducible rigid space is equidimensional, meaning that for fixed $n$, the Krull dimension $\dim \mathcal O_{Z_{n},z}$ of the stalks is constant for all $z\in Z_n(K)$. Since $G$ is a rigid space, we know that $\dim \mathcal O_{Z_{n},z}\leq \dim \O_{G,z}<\infty$, from which it follows that there is $n\in \N$ such that the dimension of $Z_m$ is the same for all $m\geq n$. Then $Z_n\subseteq Z_m$ is an analytic subspace of the irreducible rigid space $Z_m$ which is equidimensional of the same dimension as $Z_m$. By \cite[Corollary 2.2.7]{Conrad_irreducible_components_of_rigid_spaces}, it follows that $Z_n=Z_m$.  %This can be checked locally on $G$, so we may assume for the moment that $G=\Spa(A)$ is affinoid and all $Z_m\subseteq G$ are affinoid. Then the $Z_m$'s, being irreducible and reduced, correspond to a descending chain of prime ideals $\mathfrak p_m$ of $A$, and $\dim Z_m=\dim(A/\mathfrak p_m)$. Since $G$ is smooth, $A$ is regular \cite[\S1.2]{Conrad_irreducible_components_of_rigid_spaces} hence $A$ is catenary. It thus has a dimension function in the sense of \cite[0ECF]{Stacks}, which means that for $\mathfrak p_m\subseteq \mathfrak p_n$, the fact that $\dim(A/\mathfrak p_n)=\dim(A/\mathfrak p_m)$ implies that $\mathfrak p_n=\mathfrak p_m$. Hence $Z_n=Z_m$.
\end{proof}

\begin{proof}[Proof of \Cref{cl:maximal-abeloid}:]
Since $X$ is irreducible, it is in particular connected, so we can without loss of generality assume that $G$ is connected by replacing $G$ with $G^\circ$.
We can further reduce to the case that $G$ is commutative by the rigid version of a standard argument in classical algebraic geometry: We consider the morphism of rigid spaces
\[ f:X\times G\to G, \quad x,g\mapsto \varphi(x)g\varphi(x)^{-1}g^{-1}\]
This satisfies $f(x,1)=1$ for all $x \in X$. Hence the rigid analytic version of the Rigidity Lemma \cite[Lemma~7.1.2]{Luetkebohmert} gives an open neighbourhood $1\in U\subseteq G$ on which $f|_{X\times U}$ factors through $U$. Since $f(x_0, g) = 1 $ for all $g \in G$ by assumption, we find that $f|_{X\times U}=1$.
It follows that for any $K$-point  $z\in X(K)$, the induced morphism of rigid spaces \[\psi:G\to G,\quad g\mapsto \varphi(z)g\varphi(z)^{-1}g^{-1}\] 
restricts to $\psi_{|U}=1$ on the non-empty open subspace $U$. As $G$ is irreducible, $\psi$ is uniquely determined by $\psi|_U$. This shows that $\psi=1$ on all of $G$. Thus $\varphi$ factors through $Z(G)$.

Hence we may assume that $\varphi: X \rightarrow G$ is a morphism to a commutative rigid group containing $0$ in the image. By \Cref{l:ascending-chain-stablize},  it follows from this that there is a proper rigid space $Z=\cup_{m\in\N} Z_m=Z_n$ for $n\gg 0$ with the property that $\varphi$ factors through a closed immersion $X\to Z\to G$. Moreover, by construction, $Z$ contains $0$ and the diagram
\[
\begin{tikzcd}
Z\times Z \arrow[r] \arrow[d] & Z \arrow[d] \\
G\times G \arrow[r, "m"]     & G   
\end{tikzcd}\]
commutes,
where the top line is the map induced by the natural morphism $Z_{2n}\times Z_{2n}\to Z_{4n}$.

Due to the alternating sign, $Z$ is also closed under inverses: Namely, the morphisms $Z_n\to Z_{n+1}$ induced by the map $X^n\to X^{n+1}$, $(x_1,\dots,x_n)\mapsto (e,x_1,\dots,x_n)$ induce a morphism $Z\to Z$ over $[-1]:G\to G$.

All in all, it follows that $Z\subseteq G$ is a rigid subgroup of $G$ which is proper and irreducible, in particular connected. Hence it is an abeloid subvariety of $G$.
\end{proof}

\begin{proof}[Proof of \Cref{t:morph-proper-to-grp}]

Let $A,B\subseteq G$ be any two closed abeloid subgroups. Then $A\times B$ is still smooth proper and connected. By \Cref{cl:maximal-abeloid} we find that the map
\[ A\times B\to G\times G\xrightarrow{m} G,\]
factors through
an abeloid subvariety $C\subseteq G$ that contains $A$ and $B$. It follows that the union of all abeloid subvarieties of $G$ is an abeloid subvariety. Now the Theorem follows from \Cref{cl:maximal-abeloid}.
\end{proof}    
\subsection{Cocycles from homomorphisms: The commutative case}
With these preparations, we can now prove \Cref{p:Hom-to-H^1-surj} for commutative $G$ (not necessarily connected), or more precisely:
\begin{Theorem}\label{t:H1cts-pi-G-for-commutative-G}
Let $G$ be any commutative rigid group, $X$ any connected smooth proper rigid space. Then the following natural map is surjective:
\[ \Hom_{\cts}(\pi,G(K))\to H^1_{\cts}(\pi,G(\wt X)).\]
\end{Theorem}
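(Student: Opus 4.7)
The plan is to reduce to the connected case \Cref{p:Hom-to-H^1-surj} via the decomposition $G(\wt X) = G(K) + A(\wt X)$ obtained from \Cref{t:morph-proper-to-grp}, where $A \subseteq G$ is the maximal abeloid subvariety.

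First I would establish this decomposition: using that $\wt X = \varprojlim X'$ over connected (hence irreducible, since $K$ is algebraically closed) finite étale covers $X'$, and that every morphism $\wt X \to G$ factors through some such $X'$ (by quasi-compactness of $\wt X$ and local finite-typeness of $G$), I apply \Cref{t:morph-proper-to-grp} to the pointed morphism $x \mapsto f(x) - f(x_0)$ to conclude that it factors through $A$. This yields the identification $G(\wt X)/G(K) \cong A(\wt X)/A(K)$ as $\pi$-modules. Invoking \Cref{p:Hom-to-H^1-surj} for the connected group $A$, every class in $H^1_\cts(\pi, A(\wt X))$ is represented by a constant $A(K)$-valued homomorphism, which projects to zero in $A(\wt X)/A(K)$; hence the map $H^1_\cts(\pi, A(\wt X)) \to H^1_\cts(\pi, A(\wt X)/A(K))$ is the zero map.

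From the long exact sequence associated to $0 \to G(K) \to G(\wt X) \to G(\wt X)/G(K) \to 0$, the surjectivity in the theorem is equivalent to showing that for every $[\rho] \in H^1_\cts(\pi, G(\wt X))$, its image $[\bar\rho] \in H^1_\cts(\pi, A(\wt X)/A(K))$ vanishes. Decomposing $\rho(\gamma, x) = c(\gamma) + \alpha(\gamma, x)$ with $c(\gamma) := \rho(\gamma, x_0) \in G(K)$ and $\alpha(\gamma, \cdot) \in A(\wt X)$ vanishing at $x_0$ (by the decomposition above), the cocycle relation produces the obstruction 2-cocycle
\[
\Psi(\gamma_1, \gamma_2) \;:=\; c(\gamma_1\gamma_2) - c(\gamma_1) - c(\gamma_2) \;=\; \alpha(\gamma_1, \gamma_2 x_0) \;\in\; A(K),
\]
and $[\Psi] = 0$ in $H^2_\cts(\pi, A(K))$ is equivalent to $[\bar\rho] = 0$ via the connecting map in the long exact sequence for $0 \to A(K) \to A(\wt X) \to A(\wt X)/A(K) \to 0$, which is injective by the previous step.

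The main obstacle is showing $[\Psi] = 0$. Equivalently, via the long exact sequence for $0 \to A(K) \to G(K) \to (G/A)(K) \to 0$ (using $(G/A)(\wt X) = (G/A)(K)$ by maximality of $A$ inside $G/A$ and another application of \Cref{t:morph-proper-to-grp}), one must lift the continuous homomorphism $\bar c : \pi \to (G/A)(K)$ induced by $c$ to a continuous homomorphism $\pi \to G(K)$. My plan for this lift is to apply the Rigidity Lemma \Cref{l:Rigidity} to $\rho : \wt X \times \pi \to G$ at the identity $e \in \pi$ (where $\rho$ vanishes by the cocycle identity), producing an open normal subgroup $N \subseteq \pi$ on which $\rho(\gamma,x) = \rho_0(\gamma)$ is $x$-independent and yielding a continuous homomorphism $\rho_0 : N \to G(K)$ that lifts $\bar c|_N$; this lift is then extended across the finite quotient $\pi/N$ by combining the finiteness with the divisibility properties of $G^\circ(K)$ in a neighborhood of the identity afforded by the Lie algebra exponential of \Cref{l:exp-isom-on-nbhd-basis}.
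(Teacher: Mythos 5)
There are several genuine gaps here, and the overall strategy cannot be repaired without importing the paper's key idea. First, the reduction is circular: you invoke \Cref{p:Hom-to-H^1-surj} for the abeloid subgroup $A$, but in the paper that theorem is \emph{deduced from} \Cref{t:H1cts-pi-G-for-commutative-G} (for a connected commutative group such as $A$ one has $Z(A)=A$, so \Cref{p:Hom-to-H^1-surj} for $A$ \emph{is} the statement you are trying to prove). Since the paper's own proof reduces general commutative $G$ to the abeloid part anyway (via $G(X')^\circ=A(X')^\circ$), your reduction leaves exactly the hard case unaddressed. Second, the claim that every morphism $\wt X\to G$ factors through a finite \'etale cover $X'$ is false: this is precisely the phenomenon behind \Cref{r:rmk-fully-faihful-new}, where $G(\wt X)^\circ=\Hom(\wt A,\wt B)\neq 0$ even though $\varinjlim_{A'\to A} B(A')^\circ=0$ because $\Hom(A',B)=0$ for all isogenous $A'$. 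So $\wt X$ is not an irreducible proper rigid space and \Cref{t:morph-proper-to-grp} cannot be applied to it directly; the identification $G(\wt X)/G(K)\cong A(\wt X)/A(K)$ via factorization through finite levels is unjustified. (The paper instead reduces continuous \emph{cocycles} to finite level, using that $G(\wt X)^\circ$ is discrete and that the $\pi$-action on any element factors through a finite quotient by the Rigidity Lemma -- see \Cref{l:G(wtX)^0-discrete-top}, \Cref{l:pi-action-on-G(wt X)^circ} and \Cref{thm:factorization}.)

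Third, and most importantly, the final step -- extending the homomorphism $\rho_0\colon N\to G(K)$ across the finite quotient $\pi/N$ ``by divisibility'' -- does not work. The obstruction to killing the class after restriction to $N$ lives in $H^2_{\cts}(\pi,A(K))$ and is annihilated by the index $[\pi:N]$ via corestriction, but $A(K)$ is divisible without being \emph{uniquely} divisible: the exact sequence $0\to A[N]\to A(K)\xrightarrow{N} A(K)\to 0$ shows that the $N$-torsion of $H^2_{\cts}(\pi,A(K))$ is a quotient of $H^2_{\cts}(\pi,A[N](K))$, which has no reason to vanish. The paper's essential mechanism, which your proposal is missing, is to pass to the colimit over \emph{all} connected finite \'etale covers $X'\to X$: the obstruction terms $H^1_{\et}(X',A[N])$ classify finite \'etale torsors and are therefore killed by further finite \'etale covers, so $\varinjlim_{X'\to X}A(X')^\circ$ becomes a $\Q$-vector space and $H^1_{\cts}(\pi,-)$ of it vanishes by \cite[Proposition 1.6.2.c]{NeuSchWin} (see \Cref{rslt:vanishingcoh}). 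Working at a single finite level $N\subseteq\pi$, as you propose, cannot access this cancellation.
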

\begin{proof}
Consider the evaluation morphism 
\[ G(\wt X)\xrightarrow{ev_{x_0}}G(K)\]
at the fixed base point $x_0\in \wt X$.
This is a surjective group homomorphism, which is split by sending $g\in G(K)$ to the constant map $\wt X\to \Spa(K)\xrightarrow{g}G(K)$. Denote by $G(\wt X)^\circ$ the kernel of this homomorphism $ev_{x_0}$, this is the subgroup of those morphism $\wt X\to G$ that send $x_0$ to $0$. Since $G$ is commutative, the splitting of $\mathrm{ev}_{x_0}$ now induces a short exact sequence
\begin{equation}\label{eq:ses-G(wtX)^0}
0\to G(K)\to G(\wt X)\xrightarrow{\phi} G(\wt X)^\circ\to 0,
\end{equation}
where  $\phi$ sends $\varphi:\wt X\to G$ to the map $\varphi^\circ$ making the following diagram commute:
\[
\begin{tikzcd}
\wt X \arrow[r,"\varphi"]                                      & G                                \\
\wt X \arrow[r, "\varphi^\circ"] \arrow[u, equal] & G. \arrow[u, "\cdot \varphi(x_0)"'.]
\end{tikzcd}\]
We note that $\phi$ is a group homomorphism since $G$ is commutative.
Observe now that the first morphism $G(K)\to G(\wt X)$ in the exact sequence is clearly \mbox{$\pi$-equivariant}. Consequently, $G(\wt X)^\circ$ inherits a natural quotient action (note that the action of  $\pi$ does not preserve  $G(\wt X)^\circ$ as a subspace of $G(\wt X)$).
Explicitly, the quotient action of $\pi$ on $G(\wt X)^\circ$ is defined by letting $\gamma\in \pi$ send $\varphi$ to the morphism $\varphi^\gamma$ defined by making the following diagram commutative:
\begin{equation}\label{eq:quotient-action-on-G(wtX)^circ}
\begin{tikzcd}
	\wt X \arrow[r, "\varphi"]                                      & G                                \\
	\wt X \arrow[r, "\varphi^\gamma"] \arrow[u, "\cdot \gamma"] & G. \arrow[u, "\cdot \varphi(\gamma x_0)"']
\end{tikzcd}
\end{equation}
Recall now from \Cref{p:can-grp-struct} that $G(\wt X)$ has a natural topology defined by a system of open neighbourhoods $G_k(\wt X)$ where $(G_k)_{k\in \N}$ is a system of subgroups of $G$. When we endow $G(\wt X)^\circ\subseteq G(\wt X)$ with the natural subspace topology, then the section $\phi:G(\wt X)\to G(\wt X)^\circ$ is continuous: Indeed, it is immediate from the definition that $\phi^{-1}(G_k(\wt X)^\circ)\supseteq G_k(\wt X)$.

In summary, this shows that \eqref{eq:ses-G(wtX)^0} is a short exact sequence of topological $\pi$-modules. We deduce that there is an  exact sequence
\begin{equation}\label{eqn:cohomology}
\Hom_{\cts}(\pi,G(K))\to H^1_{\cts}(\pi,G(\wt X))\to H^1_{\cts}(\pi,G(\wt X)^\circ).
\end{equation}

More explicitly, this can be interpreted as follows: any geometric $1$-cocycle $\pi\times \wt X\to G$ can be written as a product of a continuous map $\pi\to G(K)$ obtained by specialising at $x_0$ (not necessarily a homomorphism) and a morphism $\rho: \pi\times \wt X\to G$ such that $\rho(\gamma,x_0)=0$. 

We wish to see that the last term in the exact sequence vanishes.
The crucial point is:
\begin{Lemma}\label{l:G(wtX)^0-discrete-top}
The subspace topology of $G(\wt X)^\circ\subseteq G(\wt X)$ is the discrete one.
\end{Lemma}
\begin{proof}
Let $G_k\subseteq G$ be any affinoid open subgroup isomorphic to $\mathbb B^d$ as in \Cref{p:can-grp-struct}, then the topology of $G(\wt X)$ is defined by decreeing that $G_k(\wt X)$ is open. But $G_k(\wt X)\cong \mathbb B^d(\wt X)=\O_K^d$. Hence $G_k(\wt X)\cap G(\wt X)^\circ=\{0\}$ is open, and  $G(\wt X)^\circ$ is discrete.
\end{proof}

\begin{Lemma}\label{l:pi-action-on-G(wt X)^circ}
Let $\varphi:\wt X\to G$ be a morphism in $G(\wt X)^\circ$, then there is an open subgroup $U\subseteq \pi$ such that the action of $U$ as defined in \Cref{eq:quotient-action-on-G(wtX)^circ} fixes $\varphi$.
\end{Lemma}
\begin{proof}
We apply the Rigidity Lemma \ref{l:Rigidity} to the morphism
\[ \wt X\times \underline{\pi}\to G,\quad (y,\gamma) \mapsto \varphi(\gamma x_0) \varphi(\gamma y)^{-1} \varphi(y)\]
and find an open subgroup $U$ of $\pi$ such that the restriction to $\wt X \times {U}$ vanishes.
By the diagram defining the action on $G(\wt X)^\circ$, this implies that $\varphi^\gamma=\varphi$ for all $\gamma \in U$.
\end{proof}

\begin{Lemma}\label{thm:factorization}
For any continuous $1$-cocycle $\rho:\pi\to G(\wt X)^\circ$,  there is a  connected finite \'etale cover $X'\to X$ with Galois group $Q_{X'}$ such that $\rho$ factors through $X'\times Q_{X'}$. Hence the following map is surjective
\[H^1_\cts(\pi,\varinjlim_{X'\to X}G(X')^\circ)\to H^1_\cts(\pi,G(\wt X)^\circ),\]
where  $G(X')^\circ:=\ker(G(X')\xrightarrow{\mathrm{ev}_{x_0}} G(K))$ is endowed with the discrete topology.
\end{Lemma}
\begin{proof}
By \Cref{l:G(wtX)^0-discrete-top}, $\rho$ is continuous from a profinite set to a discrete set.
It follows that it has finite image. Using \Cref{l:pi-action-on-G(wt X)^circ}, this shows that $\rho$ factors through $(G(\wt X)^\circ)^U$ for some open subgroup $U\subseteq \pi$. Shrinking $U$ further, we may assume that it moreover factors through $\pi/U$. This shows that
\[\varinjlim_{X'\to X} H^1(Q_{X'},G(X')^\circ)\to H^1_\cts(\pi,G(\wt X)^\circ).\]
is surjective. By \cite[Proposition 1.2.5]{NeuSchWin}, the left hand side equals $H^1_\cts(\pi,\varinjlim_{X'\to X}G(X')^\circ)$.
\end{proof}

\begin{Proposition}\label{rslt:vanishingcoh}
Let $G$ be any commutative rigid group and $X$ any connected smooth proper rigid space. Then
${H}^1_\cts(\pi,G(\wt X)^\circ)=0$.
\end{Proposition}
\begin{proof}
By \Cref{thm:factorization}, it suffices to prove that
$H^1(\pi,\varinjlim_{X'\to X}  G(X')^\circ)=0$.
By \Cref{t:morph-proper-to-grp}, there is a maximal abeloid subvariety $A\subseteq G$, and we have $G(X')^\circ=A(X')^\circ$. Consider now the short exact sequence of sheaves on $X_{\et}$
\[ 0\to A[N]\to A\xrightarrow{[N]} A\to 0.\]
Taking cohomology over $X'\to X$, this gives a long exact sequence
\[ 0\to A[N](X')\to A(X')\to A(X')\to H^1_\et(X',A[N]).\]
The first term is equal to $A[N](K)$ since $X'$ is connected. Using that $[N]:A(K)\to A(K)$ is surjective, it follows that we obtain a left-exact sequence
\[0\to A(X')^\circ\xrightarrow{[N]} A(X')^\circ\to H^1_\et(X',A[N]).\]
Taking the colimit over $X'\to X$, the last term vanishes since every class in $H^1(X',A[N])$ defines a finite \'etale torsor over $X'$ and is thus killed by some finite \'etale cover of $X$. It follows that
\[ [N]:\varinjlim_{X'\to X}A(X')^\circ\isomarrow \varinjlim_{X'\to X}A(X')^\circ \]
is an isomorphism. In particular, $\varinjlim_{X'\to X}A(X')^\circ$ is a $\Q$-vector space. By
\cite[Proposition 1.6.2.c]{NeuSchWin}, it follows that ${H}^1_{\cts}(\pi,\varinjlim_{X'\to X}A(X')^\circ)=0$, as desired. 
\end{proof}

\Cref{t:H1cts-pi-G-for-commutative-G} now follows from the exact sequence \eqref{eqn:cohomology} and \Cref{rslt:vanishingcoh}.
\end{proof}

\subsection{The case of general $G$}
Building on the commutative case, we can now deduce the case of connected rigid groups $G$: 

\begin{proof}[Proof of \Cref{p:Hom-to-H^1-surj}]
The key idea is to use the exact sequence from \Cref{l:center} 
\[0\rightarrow Z(G)\rightarrow G\xrightarrow{\mathrm{ad}} \mathrm{Aut}(\mathfrak{g}),\]
where $Z(G)$ is the center as in  \Cref{d:center}.

Working in the category of pro-\'etale sheaves on $\Perf_K$,
we denote by $H$ the sheaf theoretic image of $G$ in $\mathrm{Aut}(\mathfrak{g})$. Then we have a short exact sequence of sheaf of groups on $\Perf_{K,\mathrm{pro}\et}$
\[0\rightarrow Z(G)\rightarrow G\rightarrow H\rightarrow 0.\]
Evaluating this over $\wt{X}$ and over $K$, we get a commutative diagram of $\pi$-equivariant maps
\[
\begin{tikzcd}
0\ar[r] &Z(G)(\wt{X})\ar[r] & G(\wt{X})\ar[r] & H(\wt{X})\ar[r] &0\\
0\ar[r]&Z(G)(K)\ar[r] \ar[u]& G(K)\ar[r] \ar[u]& H(K) \ar[u,"\sim"]\ar[r] &0.  
\end{tikzcd}
\]
The map $H(K)\rightarrow H(\wt{X})$ in the right column is an isomorphism: This follows from the fact that $H\subseteq \mathrm{Aut}(\mathfrak g)\cong \mathrm{GL}_n$ for $n=\dim G$, and $\mathrm{GL}_n(\wt X)=\mathrm{GL}_n(K)$.

The rows of this diagram are still exact: the bottom row is exact by surjectivity of $G\rightarrow H$ and the fact that $\Spa(K)$ is strictly totally disconnected. The top row is right exact by surjectivity in the bottom row.
We now take continuous group cohomology of $\pi$ with coefficients in the above diagram. Since $Z(G)(\wt{X})\subseteq G(\wt{X})$ is a commutative normal subgroup, by \cite[Chapitre I, Proposition 43]{Serre-cohom-galoisienne}, we get a commutative diagram of pointed sets
\[
\begin{tikzcd}[column sep = 0.4cm]
0\to H^1_\cts(\pi, Z(G)(\wt{X}))\ar[r] & H^1_\cts(\pi, G(\wt{X}))\ar[r] & H^1_\cts(\pi, H(\wt{X}))\ar[r] &H^2_\cts(\pi,Z(G)(\wt{X}))\\
0\to H^1_\cts(\pi,Z(G)(K))\ar[r] \ar[u]& H^1_\cts(\pi, G(K))\ar[r] \ar[u]& H^1_\cts(\pi, H(K)) \ar[u,"\sim"]\ar[r] &H^2_\cts(\pi,Z(G)(K)) \ar[u].  
\end{tikzcd}
\]

The leftmost horizontal arrows have trivial kernel since $G(K)\to H(K)$ is surjective and thus so is $G(\widetilde X)^\pi\to H(\widetilde{X})^\pi=H(K)^\pi=H(K)$.  

By \Cref{t:H1cts-pi-G-for-commutative-G} the leftmost vertical arrow is surjective. We also have injectivity of the rightmost vertical arrow: Using the sequence (\ref{eq:ses-G(wtX)^0}), its kernel identifies with $H^1_\cts(\pi,Z(G)^\circ)$, which is trivial by \Cref{rslt:vanishingcoh}.

Now the crucial point is that $Z(G)(\wt{X})\subseteq G(\wt{X})$ is even a central subgroup.
By \cite[Chapitre I, Proposition 42]{Serre-cohom-galoisienne}, it follows that the maps
\[ H^1_\cts(\pi, G(K))\to  H^1_\cts(\pi, H(K))\quad \text{and} \quad H^1_\cts(\pi, G(\wt{X}))\to  H^1_\cts(\pi, H(\wt{X}))\]
are in fact torsors under the group $H^1_\cts(\pi, Z(G)(K))$.
We can therefore argue as in the proof of the 5-Lemma to see that the second vertical arrow from the left is surjective as well.
\end{proof}

Combining \Cref{p:Hom-to-H^1-surj} and \Cref{t:H1cts-pi-G-for-commutative-G} completes the proof of \Cref{rslt:vbundles}. \qed

\subsection{A counter-example}
It is not in general true that the functor from \Cref{rslt:vbundles} is fully faithful. For example, this fails if the natural map $\Hom_{\cts}(\pi,G(K))\to H^1_{\cts}(\pi,G(\wt X)) $ (which is surjective by  \Cref{p:Hom-to-H^1-surj}) is not injective. We now give an example where this happens: Let us assume for simplicity that $G$ is commutative.
Then by the long exact sequence of group cohomology for \Cref{eq:ses-G(wtX)^0}, the kernel of this map can be identified with the cokernel of
\[G(X)\to H^0(\pi,G(\wt X)^\circ).\]
Unravelling the explicit description of the $\pi$-action on $G(\wt X)$ in  \eqref{eq:quotient-action-on-G(wtX)^circ}, we see that the second term is the set of $g\in G(\wt X)^\circ$ for which $\gamma^\ast g-g\in G(K)$ for any $\gamma \in \pi$. This can be non-trivial, even in very good cases, as the following example demonstrates:
\begin{Proposition}\label{r:rmk-fully-faihful-new}
Let both $X=A$ and $G=B$ be abelian varieties, considered as rigid analytic spaces. Assume that $\Hom(A,B)=0$, but that there exists an isomorphism $\wt A\isomarrow \wt B$. Then the composition
\[\rho: \pi_1^\et(A,0)\to \wt A\isomarrow \wt B\to B=G\]
defines a non-trivial element in  $\ker\big(\Homc(\pi,G(K))\to H^1_{\cts}(\pi,G(\wt X))\big)$. In particular, the functor of \Cref{rslt:vbundles} is not fully faithful in this case.
\end{Proposition}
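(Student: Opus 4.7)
The plan is to verify three things in turn: that the composition $\rho$ is a well-defined continuous group homomorphism $\pi \to B(K)$; that its image in $H^1_{\cts}(\pi, B(\wt A))$ vanishes, so that it lies in the claimed kernel; and that $\rho$ itself is non-zero, which is where the hypothesis $\Hom(A,B)=0$ enters.

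For the first point, I interpret the given isomorphism $\wt A \isomarrow \wt B$ as one of diamond group objects sending $0_{\wt A}$ to $0_{\wt B}$ (possibly after translating by a constant, which does not affect the resulting class). Then $\varphi := (\wt A \isomarrow \wt B \to B)$ is a homomorphism of v-sheaves of groups. Restricting to $K$-points and precomposing with the canonical inclusion $\pi = TA \hookrightarrow \wt A(K)$, $\gamma \mapsto \gamma\cdot 0_{\wt A}$, yields a continuous group homomorphism $\rho : \pi \to B(K)$.

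For the second point, I would exhibit $\varphi$ as a coboundary trivializing the cocycle $\rho$. Recall from \Cref{ex:abeloid-univ-cover} that the $\pi$-action on $\wt A$ is given by translation via the inclusion $\pi \hookrightarrow \wt A$. Because $\varphi$ is a group homomorphism, for any $\gamma \in \pi$ and any $x \in \wt A$ one has
\[
\varphi(\gamma\cdot x) \,=\, \varphi(\gamma) + \varphi(x) \,=\, \rho(\gamma) + \varphi(x).
\]
Following the conventions of \Cref{ex:geom-cocycle}, this identifies the constant cocycle $(\gamma,x)\mapsto \rho(\gamma)$ with the coboundary of $\varphi$. Equivalently, the normalized cochain $\varphi^\circ := \varphi - \varphi(0_{\wt A}) \in B(\wt A)^\circ$ satisfies $\gamma^\ast\varphi^\circ - \varphi^\circ = \rho(\gamma) \in B(K)$ for every $\gamma \in \pi$, so $\varphi^\circ$ represents a class in $H^0(\pi, B(\wt A)^\circ)$ whose image in the long exact sequence is exactly $[\rho]$; this explicitly realizes $[\rho]$ as the kernel element attached to a class in the cokernel of $B(A) \to H^0(\pi, B(\wt A)^\circ)$ described in the paragraph preceding the proposition.

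The main content is showing $\rho \ne 0$. Suppose for contradiction that $\rho$ is identically zero, i.e., $\varphi$ vanishes on $\pi\subset \wt A(K)$. Since $\varphi$ is a group homomorphism and $\pi$ acts on $\wt A$ by translation, the identity $\varphi(\gamma\cdot x) = \rho(\gamma) + \varphi(x)$ then shows that $\varphi$ is $\pi$-invariant. By \Cref{ex:abeloid-univ-cover}, $A = \wt A / \pi$ as v-sheaves of groups, so $\varphi$ descends to a morphism of rigid group varieties $\bar\varphi\colon A \to B$. The hypothesis $\Hom(A,B)=0$ then forces $\bar\varphi=0$, hence $\varphi=0$. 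But $\varphi$ is the composite of an isomorphism with the canonical projection $\wt B\to B$, which is surjective on $K$-points, so $\varphi$ is nonzero — a contradiction. Combined with the previous paragraph, this produces a nontrivial element of $\ker\bigl(\Homc(\pi,G(K)) \to H^1_\cts(\pi,G(\wt X))\bigr)$ and hence a non-isomorphic pair of continuous representations mapping to the same $G$-torsor, so the functor of \Cref{rslt:vbundles} is not fully faithful.

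The main obstacle I anticipate is bookkeeping rather than deep: ensuring the given isomorphism $\wt A\cong \wt B$ can indeed be arranged to respect group structures (so that $\varphi$ is a group homomorphism), and making the descent $\wt A/\pi = A$ precise in the category where $\varphi$ is defined. Both should be routine given the perfectoid description of $\wt A$ in \Cref{ex:abeloid-univ-cover} and the fact that abeloid group structure can be detected after pullback along the $\pi$-torsor $\wt A\to A$.
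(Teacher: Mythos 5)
Your proof is correct and follows essentially the same route as the paper's: non-vanishing of $\rho$ by descending $\varphi$ along $\wt A\to A=\wt A/\pi$ to a homomorphism killed by the hypothesis $\Hom(A,B)=0$ (versus the surjectivity of $\varphi$), and membership in the kernel by exhibiting $\varphi$ as a $\pi$-invariant element of $G(\wt X)^\circ$ whose coboundary under \eqref{eq:ses-G(wtX)^0} is the constant cocycle $\rho$. The one step you defer as ``routine bookkeeping'' --- that the isomorphism $\wt A\isomarrow\wt B$ can be normalised to respect the group structures, equivalently that pointed morphisms between universal covers of abelian varieties are automatically homomorphisms --- is in fact the only non-formal input of the whole argument: it is a rigidity theorem for $\wt A$ which the paper does not prove but imports from \cite[Theorem~1.9, Proposition~3.8, Corollary~3.10]{heuer-isoclasses}. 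It is true (and morally a consequence of \Cref{l:Rigidity}), but it should be cited rather than dismissed, since both your coboundary computation and your descent argument rest on $\varphi$ being a homomorphism on all of $\wt A$, not just on $\pi$.
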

There are plenty of examples of abelian varieties $A$ and $B$ satisfying the assumptions of \Cref{r:rmk-fully-faihful-new} by the main result of \cite{heuer-isoclasses}: For example, we can take $A$ and $B$ to be any non-isogeneous simple abelian varieties of good reduction whose special fibres are isogeneous.
\begin{proof}
We first note that $\rho=0$ would imply that $\wt A\to \wt B$ sends $\pi_1^\et(A,0)$ to $\pi_1^\et(B,0)$ and hence would induce an isogeny $A\to B$, contradicting the assumptions. Hence $\rho\neq 0$.

On the other hand, by \cite[Theorem~1.9]{heuer-isoclasses}, we have 
\[G(\wt X)^\circ=\Hom(\wt A,B)=\Hom(\wt A,\wt B),\] where the last equality holds by \cite[Proposition 3.8, Corollary~3.10]{heuer-isoclasses}. Observe now that by definition, the $\pi$-action on $G(\wt X)^\circ$ as described in \eqref{eq:quotient-action-on-G(wtX)^circ} is such that $\pi$ acts trivially on homomorphisms. It follows that
\[H^0(\pi,G(\wt X)^\circ)=\Hom(\wt A,\wt B)\neq 0\]
which is non-trivial by assumption. All in all, this shows that $\rho$ comes from $G(\wt X)^\circ$ via the boundary map. Thus  the kernel of $\Hom_{\cts}(\pi,G(K))\to H^1_{\cts}(\pi,G(\wt X))$ does not vanish.
\end{proof}

\begin{Remark}
Regarding the setting of
\Cref{r:rmk-fully-faihful-new}, we note that already $G$-torsors on $X_\an$ are an interesting category when $G$ and $X$ are both abelian varieties:
For abelian varieties $A$ and $B$ over $K$ considered as rigid spaces, the Raynaud uniformization 
\[0 \rightarrow \Lambda \rightarrow E \rightarrow A \rightarrow 0\] associated to $A$  induces a homomorphism $\Hom(\Lambda, B) \rightarrow \text{Ext}^1(A,B) \rightarrow H^1(A,B)$.
If $A$ has totally degenerate reduction, this map has non-trivial image. Hence, in this case there exist analytic $B$-torsors over $A$ which are not algebraic, since 
every Zariski-$B$-torsor  over $A$ is trivial by \cite[\S 4, Lemme 4]{Serre-fibres}. It seems an interesting question if there exists a  classification of all analytic/\'etale torsors in this setting which is analogous to the complex analytic characterization of torus bundles over tori \cite{PalaisStewart}, and how this changes for the v-topology.
\end{Remark}

\section{The $p$-adic Lie algebra exponential for commutative rigid groups}
\subsection{Recollections on the $p$-adic Lie group logarithm}
Let $L$ be any non-archimedean field extension of $\Q_p$. We will later take $L=K$ algebraically closed, but for now we can work more generally.
Recall that in $p$-adic arithmetic, the $p$-adic logarithm defines a left-exact sequence of topological groups
\[ 0\to \mu_{p^\infty}(L)\to 1+\m_L \xrightarrow{\log} L.\]
This can be upgraded to a short exact sequence of rigid group varieties in the \'etale topology \cite[\S7]{dJ_etalefundamentalgroups}\cite[Lemma~2.18]{heuer-Line-Bundle} 
\[ 0\to \mu_{p^\infty}\to \mathbb{D}(1)\to \G_a\to 0,\]
where for any $a \in L$, we denote by $\mathbb{D}(a)$  the open rigid unit disc around $a$. There is a partial splitting of $\log$ given by the $p$-adic exponential
\[\textstyle\exp:p^{\alpha}\m_L\to 1+\m_L,\quad x\mapsto \sum_{n=0}^\infty \frac{x^n}{n!}\]
where $\alpha=\frac{1}{p-1}$ for $p>2$ and $\alpha=2$ for $p=2$. This can again be upgraded to a  homomorphism of rigid groups 
\[ \exp: p^{\alpha}\mathbb{D}(0)\to \mathbb{D}(1).\]

But in contrast to the complex case, the exponential series does not converge on all of $L$. Indeed, the logarithm sequence of rigid groups is never split as any morphism from $\G_a$  to $\mathbb{D}(1)$ is constant. Instead, one can only obtain non-canonical splittings on $L$-points:
\begin{Definition}\label{d:exp}
By an exponential map for $L$ we mean a continuous group homomorphism 
\[\exp:L\to 1+\m_L\] which splits the $p$-adic logarithm $\log:1+\m_L\to L$ and is a continuation of the canonical $p$-adic exponential on the open subgroup $p^{\alpha}\m_L\subseteq L$. 
\end{Definition}
\begin{Lemma}
An exponential map $\exp:L\to 1+\m_L$ exists if $H^1_{\et}(\Spa(L),\mu_{p})=1$. In particular, it exists if $L$ is algebraically closed.
\end{Lemma}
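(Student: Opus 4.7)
The plan is to construct $\exp$ as an abstract group-theoretic splitting of $\log:(1+\m_L) \to L$ that restricts to the canonical exponential $s_0:p^\alpha\m_L \to 1+\m_L$ on the open subgroup $p^\alpha\m_L$; continuity will then be automatic, since $\exp$ will coincide with $s_0$ on an open neighbourhood of $0$. First I would upgrade the hypothesis $H^1_\et(\Spa(L), \mu_p) = 1$ using the short exact sequences of \'etale sheaves $1 \to \mu_p \to \mu_{p^{n+1}} \to \mu_{p^n} \to 1$ and their long exact cohomology sequences: induction on $n$ gives $H^1_\et(\Spa(L), \mu_{p^n}) = 1$ for all $n \geq 1$, and hence $H^1_\et(\Spa(L), \mu_{p^\infty}) = 0$ by passage to the colimit. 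Applied to the \'etale short exact sequence $0 \to \mu_{p^\infty} \to \mathbb{D}(1) \xrightarrow{\log} \G_a \to 0$ displayed above, this produces the short exact sequence of abstract abelian groups $0 \to \mu_{p^\infty}(L) \to 1+\m_L \xrightarrow{\log} L \to 0$. The very same long exact sequences also show that $\mu_{p^{n+1}}(L) \twoheadrightarrow \mu_{p^n}(L)$ is surjective for every $n$, so that $\mu_{p^\infty}(L)$ is $p$-divisible and, being $p$-primary torsion, in fact divisible; equivalently, it is an injective object in the category of abelian groups.

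Since $\mathrm{Ext}^1_\Z(L, \mu_{p^\infty}(L)) = 0$ by injectivity, the above short exact sequence admits some abstract group-theoretic section $s: L \to 1+\m_L$ of $\log$. To arrange that this section restricts to $s_0$ on $p^\alpha\m_L$, I would consider the homomorphism
\[ \varphi := (s|_{p^\alpha\m_L}) \cdot s_0^{-1}: p^\alpha\m_L \longrightarrow \mu_{p^\infty}(L), \]
extend it via the injectivity of $\mu_{p^\infty}(L)$ to a homomorphism $\tilde\varphi: L \to \mu_{p^\infty}(L)$, and replace $s$ by $\exp := s \cdot \tilde\varphi^{-1}$. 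Then $\exp$ is a group homomorphism splitting $\log$ that agrees with the continuous $s_0$ on the open neighbourhood $p^\alpha\m_L$ of $0$, so it is continuous at $0$, and hence continuous everywhere as a homomorphism of topological groups.

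For the final clause, if $L$ is algebraically closed then it is separably closed, so \'etale cohomology of $\Spa(L)$ vanishes in positive degrees and in particular $H^1_\et(\Spa(L),\mu_p)=1$, so the general part applies. The main subtlety I anticipate is the divisibility of $\mu_{p^\infty}(L)$: the hypothesis $H^1_\et(\Spa(L),\mu_p)=1$ does not immediately guarantee $\mu_p \subset L$, which one would want in order to apply Kummer theory in its simplest form. The inductive cohomological argument above bypasses this issue and simultaneously delivers both the surjectivity of $\log$ and the divisibility of $\mu_{p^\infty}(L)$, after which injective-object abstract nonsense finishes the construction.
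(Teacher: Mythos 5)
Your argument is correct, and it reaches the conclusion by a genuinely different (though closely related) route. The paper does not split the full short exact sequence and then correct: instead it directly extends the canonical exponential $\exp\colon p^{\alpha}\m_L\to 1+\m_L$ to all of $L$, using the vanishing of $\mathrm{Ext}^1_{\Z}(L/p^{\alpha}\m_L,\,1+\m_L)$; here the hypothesis $H^1_{\et}(\Spa(L),\mu_p)=1$ enters only once, via the Kummer-type sequence $1\to\mu_p\to 1+\m_L\O^+\to 1+\m_L\O^+\to 1$, to show that $1+\m_L$ is $p$-divisible, which suffices because $L/p^{\alpha}\m_L$ is a divisible $p$-primary torsion group. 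The fact that the extension is a section of $\log$ is then automatic (the difference $\log\circ E-\mathrm{id}$ kills the open subgroup $p^{\alpha}\m_L$ and maps into the torsion-free group $L$), so surjectivity of $\log$ on $L$-points comes for free. You instead establish surjectivity of $\log$ and divisibility of $\mu_{p^\infty}(L)$ separately from iterated Kummer sequences, take an arbitrary abstract splitting, and then correct it by a character $\tilde\varphi\colon L\to\mu_{p^\infty}(L)$ so that it agrees with the canonical exponential near $0$ — a step that is genuinely necessary, since an uncorrected abstract splitting need not be continuous, and you handle it properly using injectivity of $\mu_{p^\infty}(L)$ a second time. The trade-off: your route is slightly longer (you must first produce the short exact sequence on $L$-points before you can split it, and you apply an injectivity/divisibility argument twice), but it isolates the divisibility of the torsion subgroup $\mu_{p^\infty}(L)$ rather than of $1+\m_L$, and it makes the surjectivity of $\log$ — which in the paper is an implicit by-product — an explicit intermediate statement. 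Both proofs use the hypothesis in essentially the same Kummer-theoretic way, and both correctly observe that continuity is automatic once the splitting agrees with the convergent exponential on an open subgroup.
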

\begin{proof}
It suffices to find a group-theoretic splitting of $\log$ extending $\exp$, this will automatically be continuous if it extends $\exp$ on $p^{\alpha}\m_L$. To find this splitting, we consider the long exact sequence
\[ \dots \to \Hom(L,1+\m_L)\to \Hom(p^\alpha\m_L,1+\m_L)\to \mathrm{Ext}^1(L/p^\alpha\m_L,1+\m_L)\]
We claim that the last term vanishes: If $L$ is algebraically closed, this is because $1+\m_L$ is divisible, in particular injective (cf \cite[\S A.2.1]{xu2022parallel}). In general, since $L/p^\alpha\m_L$ is a $\Z_p$-module, it suffices to prove that $1+\m_L$ is $p$-divisible. For this we consider the sequence on $\Spa(L)_{\et}$
\[1\to \mu_p\to 1+\m_L \O^+\xrightarrow{x\mapsto x^p} 1+\m_L \O^+\to 1.\]
Since $H^1_{\et}(\Spa(L),\mu_{p})=1$, this shows that $1+\m_L$ is $p$-divisible.
\end{proof}
An exponential is one of the choices that Faltings makes in \cite{Faltings_SimpsonI} to define his $p$-adic Simpson correspondence on curves. In the process, Faltings shows that the  datum of $\exp$ induces an exponential map for any commutative algebraic group. The goal of this section is to prove a generalisation of this statement for rigid analytic groups, see \Cref{t:exp-for-loc-p-divisible-grp} below.

We now switch back to the algebraically closed field $K$.
From now on, we assume that $G$ is a commutative rigid group over $K$.
\begin{Lemma}\label{l:G[p]-is-fet}
Let $G$ be a quasi-compact commutative rigid group over $K$. Then $G[p^n]$ is a finite \'etale group for each $n\in \mathbb N$. 
\end{Lemma}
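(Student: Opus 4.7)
The plan is to deduce this from the fact that $[p^n]\colon G\to G$ is an étale morphism of rigid spaces, so that $G[p^n]$, being the scheme-theoretic fibre over the identity, is étale over $\Spa K$, and then use quasi-compactness together with $K$ being algebraically closed to upgrade étale to finite étale.

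First, I would argue that $[p^n]\colon G\to G$ is étale. Since $G$ is smooth over $K$ (automatic for rigid groups in characteristic $0$, as noted at the start of \S2), it suffices by the rigid analytic inverse function theorem to check that the induced map on tangent spaces at the identity is an isomorphism. By functoriality of $\Lie$, the homomorphism $[p^n]$ induces multiplication by $p^n$ on $\Lie G$. Since $\Lie G$ is a finite-dimensional $K$-vector space and $K$ has characteristic $0$, this is an isomorphism. Hence $[p^n]$ is étale in a neighbourhood of the unit. Since $[p^n]$ is a group homomorphism and the translation action of $G$ on itself is by isomorphisms, it follows that $[p^n]$ is étale everywhere.

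Next, $G[p^n]=[p^n]^{-1}(e)$ is the pullback of the identity section $\Spa K\to G$ along $[p^n]$, so as a base change of an étale morphism it is étale over $\Spa K$. It is also a closed sub-adic space of $G$ (kernel of a morphism of group objects in a category with finite limits), hence quasi-compact because $G$ is.

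The main thing to verify is therefore that a quasi-compact adic space $Y$ that is étale over $\Spa K$ is finite étale, i.e.\ a disjoint union of finitely many copies of $\Spa K$. By the local structure of étale morphisms of adic spaces, $Y$ may be covered by finitely many affinoid opens of the form $\Spa R_i \to V_i \subseteq \Spa K$ where $V_i$ is a rational open and $\Spa R_i \to V_i$ is finite étale; since $\Spa K$ has no proper rational opens, $V_i = \Spa K$ and each $R_i$ is a finite étale $K$-algebra. As $K$ is algebraically closed, $R_i\cong K^{n_i}$, so each affinoid in the cover is a finite disjoint union of copies of $\Spa K$. Because $G[p^n]$ is separated, the gluing of these clopen pieces along identifications of copies of $\Spa K$ forces $G[p^n]$ itself to be a finite disjoint union of copies of $\Spa K$, which is exactly what it means to be finite étale over $K$. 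The main technical obstacle, modest as it is, lies in this final upgrade from ``étale and quasi-compact'' to ``finite étale'' in the adic setting, but it follows formally from the definitions once one uses that $K$ is algebraically closed.
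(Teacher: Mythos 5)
Your proof is correct and follows essentially the same route as the paper: both first show $[p^n]$ is \'etale because it induces multiplication by $p^n$, hence an isomorphism, on $\Lie G$ (the paper cites \cite[Lemme~1]{Fargues-groupes-analytiques} for this), and then use quasi-compactness to upgrade to finiteness. The only cosmetic difference is in the last step, where the paper deduces finiteness by noting that $G[p^n]$ is Zariski-closed of dimension $0$ in each affinoid of a finite cover of $G$, while you invoke the local structure of \'etale morphisms over $\Spa(K)$; both are valid.
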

\begin{proof}
By \cite[Lemme~1]{Fargues-groupes-analytiques}, any homomorphism $f:G\to H$ of  rigid groups such that the induced map $\Lie(f):\Lie G\to \Lie H$ is bijective is \'etale. It follows that $[p^n]:G\to G$ is \'etale, so $G[p^n]$ is \'etale.  On the other hand, $G[p^n]$ is Zariski-closed in $G$, because it is the pullback of a Zariski-closed subset $1\in G$ along $[p^n]:G\to G$ (see also \cite[Exemple~2]{Fargues-groupes-analytiques}).
It follows that for any affinoid open $U\subseteq G$, the intersection $G[p^n]\cap U$ is affinoid of dimension $0$ in an affinoid, hence finite. As $G$ is quasi-compact, it follows that $G[p^n]$ is finite.
\end{proof}
For any commutative rigid group $G$, we have the topological $p$-torsion subgroup:
\begin{Definition}[{\cite[Proposition~2.14]{heuer-geometric-Simpson-Pic}}]
The topological $p$-torsion subgroup  $\wh{G}\subseteq G$ is the image in v-sheaves of the morphism $\underline{\mathrm{Hom}}(\underline{\Z}_p,G)\xrightarrow{\mathrm{ev}_1} G$.
\end{Definition}
We note that this was written as $G\langle p^\infty \rangle$ in \cite{heuer-geometric-Simpson-Pic}. A priori, $\wh{G}\subseteq G$ is a v-sheaf, but we can also regard it as a rigid group, which we can use  to give a more canonical description of the logarithm map of \Cref{l:exp-isom-on-nbhd-basis}:
\begin{Proposition}[{\cite[\S1.6]{Fargues-groupes-analytiques} and \cite[Proposition~2.14]{heuer-geometric-Simpson-Pic}}]\label{p:Prop2.14}
Let $G$ be a commutative rigid group. 
\begin{enumerate}
\item 
The v-sheaf $\wh{G}$ is representable by an open subgroup $\wh{G}\subseteq G$.
\item\label{l:logG-top-p-torsion} There is a unique homomorphism
\[\log_G: \wh{G}\to \mathfrak g,\]
such that $\Lie(\log_G):\Lie G\to \Lie G$ is the identity. We have $\ker\log_G=G[p^\infty]$.

\end{enumerate}
Assume moreover that there is an admissible formal group $\mathfrak G$ over $\O_K$ with adic generic fibre $G=\mathfrak G_{\eta}^\mathrm{ad}$ as defined in \cite[\S2.2]{ScholzeWeinstein}. Then:
\begin{enumerate}
\setcounter{enumi}{2}
\item The identity component $\widehat{G}^\circ$ is isomorphic to $(\mathfrak G^{\wedge})_{\eta}^\mathrm{ad}$ where $\mathfrak G^{\wedge}$ is the formal completion at the identity.
\item There is a ``connected-\'etale'' short exact sequence of rigid groups
\[1\to \widehat{G}^\circ\to \widehat{G}\to (\mathfrak G[p^\infty]^{\et})^{\mathrm{ad}}_\eta\to 0.\]
\end{enumerate}
\end{Proposition}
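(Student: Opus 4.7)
The plan is to obtain (1) and (2) by bootstrapping from the local Lie-theoretic logarithm of \Cref{l:exp-isom-on-nbhd-basis} using the $p$-divisibility inherent to $\wh G$, and to deduce (3) and (4) from the connected-\'etale structure of the $p$-divisible group $\mathfrak G[p^\infty]$ in the good-reduction case.

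For (1), I would start from \Cref{p:local-structure-rigid-grps}, which provides an open subgroup $G_k\subseteq G$ of good reduction isomorphic as a rigid space to a closed polydisc via $\exp$. Since closed discs are topologically $p$-torsion (send $\gamma\in\Z_p$ to the $\gamma$-power of the generator), one has $G_k\subseteq \wh G$. Conversely, for any continuous $\underline{\Z_p}\to G$ over an affinoid perfectoid $T$, sufficiently high powers of the image of $1$ lie in $G_k$, so the image of $\underline{\Hom}(\underline{\Z_p},G)\xrightarrow{\mathrm{ev}_1} G$ is contained in the saturation $\bigcup_{n\geq 0}[p^n]^{-1}(G_k)$. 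A quick check shows this saturation is an open sub-rigid group of $G$ and coincides with $\wh G$.

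For (2), I would use the local logarithm $\log:G_k\to \mathfrak g_k$ of \Cref{l:exp-isom-on-nbhd-basis} and globalise by declaring $\log_G(g):=\tfrac{1}{p^N}\log([p^N]g)$ for $N$ large enough that $[p^N]g\in G_k$. That this is independent of $N$ uses that $\log$ is a group homomorphism on $G_k$ and that $\mathfrak g$ is uniquely $p$-divisible; that it is a morphism of v-sheaves requires finding $N$ locally on $T$, using that $\wh G$ is covered by the opens $[p^n]^{-1}(G_k)$. Uniqueness of $\log_G$ comes from the Lie condition together with \cite[Theorem~3.4]{heuer-G-Torsor}: two such maps agree on the connected $G_k$, hence on $\wh G$ after applying $[p^N]$ and dividing. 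Injectivity of $\log$ on $G_k$ then forces $\ker\log_G = G[p^\infty]$.

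For (3) and (4), assuming $G=\mathfrak G^{\mathrm{ad}}_\eta$, the formal completion $\mathfrak G^\wedge$ at the identity has adic generic fibre an open polydisc subgroup of $G$, visibly contained in $\wh G$ and connected of full dimension; comparing tangent spaces identifies it with $\wh G^\circ$. The quotient $\wh G/\wh G^\circ$ is then an \'etale topologically $p$-torsion rigid group, which the connected-\'etale sequence for $\mathfrak G[p^\infty]$ over $\Spf\,\O_K$ identifies with the adic generic fibre of the \'etale part $\mathfrak G[p^\infty]^{\et}$, as recorded in \cite[\S2.2]{ScholzeWeinstein}. The main obstacle I expect is verifying that the set-theoretic formula for $\log_G$ in (2) truly produces a morphism of v-sheaves; this requires handling arbitrary affinoid perfectoid test objects uniformly, for which the explicit neighbourhood basis $(G_k)$ from \Cref{p:local-structure-rigid-grps} and its compatibility with $[p]$ are essential.
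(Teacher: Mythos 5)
Your treatment of (1) and (2) is essentially the paper's. The paper cites \cite[Proposition~2.14]{heuer-geometric-Simpson-Pic} for (1) and reproves (2) by exactly your formula: take an open subgroup $G_0$ of good reduction with an injective logarithm from \Cref{l:exp-isom-on-nbhd-basis}, note $x^{p^n}\in G_0$ for $x\in\wh{G}$, and set $\log_G(x)=\tfrac{1}{p^n}\log_0(x^{p^n})$ using unique divisibility of $\mathfrak g$. Your uniqueness argument (agreement on the connected $G_0$ by \cite[Theorem~3.4]{heuer-G-Torsor}, then propagate by $[p^N]$ and torsion-freeness of $\mathfrak g$) and the identification of the kernel are the intended ones.

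The gap is in (3). ``Comparing tangent spaces'' only yields the inclusion $(\mathfrak G^{\wedge})_{\eta}^{\mathrm{ad}}\subseteq \wh{G}^\circ$: the left side is a connected open subgroup through the identity. It does \emph{not} yield the reverse inclusion, because an open connected subgroup of full dimension need not be the whole identity component --- compare the chain $\mathbb D\subsetneq \G_a^+\subsetneq \G_a$, all connected of dimension one. What is actually needed is that $(\mathfrak G^{\wedge})_{\eta}^{\mathrm{ad}}$ is also \emph{closed} in $\wh{G}$, i.e.\ that the quotient is \'etale, and this is where the real work lies. The paper's proof describes both $\wh{G}$ and $(\mathfrak G^{\wedge})_{\eta}^{\mathrm{ad}}$ as sheafifications of explicit functors in $(R,R^+)$ involving $\mathfrak G(R^+)$ and its reduction modulo $\m_K$, and then uses that $R^+/\m_K$ is reduced together with Henselian lifting to identify $\mathfrak G(R^+/\m_K)[p^\infty]$ with $\mathfrak G[p^\infty]^{\et}(R^+)$; this exhibits $(\mathfrak G^{\wedge})_{\eta}^{\mathrm{ad}}$ as the kernel of a natural map $\wh{G}\to (\mathfrak G[p^\infty]^{\et})_{\eta}^{\mathrm{ad}}$ to an \'etale group, which gives (3) and (4) simultaneously. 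Your version of (4) presupposes (3) and then asserts the identification of $\wh{G}/\wh{G}^\circ$ with $(\mathfrak G[p^\infty]^{\et})_{\eta}^{\mathrm{ad}}$ without constructing the comparison map or addressing its surjectivity; the reduction-mod-$\m_K$ plus Henselian-lifting mechanism is the missing ingredient you should supply.
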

\begin{proof}
Parts (1) and (2) are contained in \cite[Proposition~2.14.1 and 2]{heuer-geometric-Simpson-Pic}. We reprove (2) to justify the statement about $\Lie(\log_G)$:
By \Cref{l:exp-isom-on-nbhd-basis} and \cite[Proposition 3.5]{heuer-G-Torsor}, there is an open subgroup $G_0\subseteq G$ for which there exists an injective homomorphism $\log_0:G_0\to \mathfrak g$ with the desired property. For any smooth affinoid algebra $R$ and any $x\in \wh{G}(R)$, one has $x^{p^n}\in G_0(R)$ for some $n\in \mathbb N$. Since $\mathfrak g$  is uniquely divisible, we can define $\log(x)=\frac{1}{p^n}\log_0(x^{p^n})\in \mathfrak g(R)$.

Part (4) can be deduced from the connected-\'etale sequence of the $p$-divisible group $\mathfrak G[p^\infty]$ over $\O_K$ by \cite[Corollaire 13]{Fargues-groupes-analytiques}. 
Alternatively, we can give a \mbox{v-sheaf}-theoretic argument which also shows (3): By the proof of \cite[Proposition 2.14]{heuer-geometric-Simpson-Pic}, $\wh{G}\subseteq G$ is the sub-\mbox{v-sheaf} on $\Perf_K$ given by sheafification of 
\[ (R,R^+)\mapsto \{x \in \mathfrak G(R^+ )\mid \text{$x$ mod }\mathfrak m_K \in \mathfrak  G(R^+/\mathfrak m_K)[p^\infty]\},\]
whereas by \cite[Proposition 2.2.2]{ScholzeWeinstein} and the definition of formal completion, $(\mathfrak G^{\wedge})_{\eta}^\mathrm{ad}$ is the sheafification of
\[ (R,R^+)\mapsto \{x \in \mathfrak G(R^+ )\mid \text{$x\equiv 1$ mod }\mathfrak m_K \text{ in }\mathfrak G(R^+/\mathfrak m_K)\}.\]
Since $(\mathfrak G^{\wedge})_{\eta}^\mathrm{ad}$ is an open disc by \cite[Lemma~3.1.2]{ScholzeWeinstein}, in particular connected, this shows $(\mathfrak G^{\wedge})_{\eta}^\mathrm{ad}\subseteq \widehat{G}^\circ$. On the other hand, since $R^+/\mathfrak m_K$ is reduced, we have \[\mathfrak G(R^+/\mathfrak m_K)[p^\infty]=\mathfrak G[p^\infty]^{\et}(R^+/\mathfrak m_K)=\mathfrak G[p^\infty]^{\et}(R^+)\] by Henselian lifting. This exhibits $(\mathfrak G^{\wedge})_{\eta}^\mathrm{ad}$ as the kernel of a natural map  $\widehat{G}\to (\mathfrak G[p^\infty]^{\et})^{\mathrm{ad}}_{\eta}$ from a rigid group to an \'etale group. Hence $\widehat{G}^\circ\subseteq (\mathfrak G^{\wedge})_{\eta}^\mathrm{ad}$.
\end{proof}

There is also a version of $\wh{G}$ which incorporates torsion coprime to $p$: 
\begin{Definition}[{\cite[Definition~2.5, Proposition~2.14]{heuer-geometric-Simpson-Pic}}]
There is a unique open subgroup $G^{\tt}\subseteq G$ that represents the image of the morphism of v-sheaves
$\underline{\mathrm{Hom}}(\underline{\wh{\Z}},G) \xrightarrow{\mathrm{ev}_1} G$. We call it the topological torsion subgroup of $G$. 
The natural inclusion $\wh G\to G^\tt$ identifies $\wh G$ with an open and closed subgroup of $G^\tt$, and the cokernel is given by the union of all coprime-to-$p$-torsion points of $G$.	
\end{Definition}

\begin{Lemma}\label{l:log-on-Gtt}
There is a left exact sequence of rigid groups, functorial in $G$
\[ 0\to G_{\mathrm{tor}}\to G^\tt\xrightarrow{\log_G} \mathfrak g,\]
such that $\log_G$ induced the identity on tangent spaces. Here $G_{\mathrm{tor}}=\varinjlim_{N\in \N}G[N]$.
\end{Lemma}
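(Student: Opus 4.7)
The plan is to extend the logarithm $\log_G\colon \wh{G}\to \mathfrak{g}$ of \Cref{p:Prop2.14} to the larger open subgroup $G^\tt\supseteq \wh{G}$, exploiting that the quotient $G^\tt/\wh{G}$ consists of prime-to-$p$ torsion and that $\mathfrak{g}$, being a rigid vector group over a $\Q$-algebra, is uniquely divisible. The uniqueness of the extension is then automatic since any homomorphism from a prime-to-$p$ torsion group into a uniquely divisible group vanishes.

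Concretely, given a v-section $g$ of $G^\tt$, the description of the cokernel recalled before \Cref{l:log-on-Gtt} guarantees that v-locally there exists $N\in \N$ with $\gcd(N,p)=1$ and $g^N\in \wh{G}$. I would then set
\[\log_G(g):=\tfrac{1}{N}\log_G(g^N)\in \mathfrak{g}.\]
Independence of the choice of $N$ is immediate: if also $g^M\in \wh{G}$ with $\gcd(M,p)=1$, then $g^{MN}\in \wh{G}$ and both $\tfrac{1}{N}\log_G(g^N)$ and $\tfrac{1}{M}\log_G(g^M)$ coincide with $\tfrac{1}{MN}\log_G(g^{MN})$ because $\log_G$ is already a homomorphism on $\wh{G}$. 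The formula then glues via v-descent to a morphism of v-sheaves of groups $\log_G\colon G^\tt\to \mathfrak{g}$; the homomorphism property uses commutativity of $G$ and reduces, v-locally, to the same statement on $\wh{G}$.

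It remains to verify the three claims. For the tangent map: the inclusion $\wh{G}\subseteq G^\tt$ is open by \Cref{p:Prop2.14}, so both groups share the Lie algebra $\mathfrak{g}$, and since our extension restricts to $\log_G$ on $\wh{G}$, it still induces the identity on tangent spaces. For the kernel: a section $g$ with $\log_G(g)=0$ satisfies $\log_G(g^N)=0$ hence $g^N\in G[p^\infty]$ for some $N$ coprime to $p$, so $g^{Np^k}=1$ for some $k$, giving $g\in G_{\mathrm{tor}}$; conversely any $N$-torsion section lies in $G^\tt$ via the map $\underline{\wh{\Z}}\to \underline{\Z/N}\to G$ and hence in $\ker\log_G$. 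Functoriality in $G$ follows because $\wh{-}$, $(-)^\tt$ and $\log_G|_{\wh{G}}$ are each functorial, and the extension is characterized by the local formula above, which commutes with any homomorphism $G\to H$. The main subtlety I anticipate is ensuring that the construction lives in the category of rigid groups rather than just v-sheaves of groups; this should follow from the fact that $G^\tt$ is itself a rigid group, that $[N]\colon \wh{G}\to \wh{G}$ is étale with kernel a finite étale subgroup (\Cref{l:G[p]-is-fet}), and that locally $\log_G$ factors through the analytic identification of an open neighbourhood of the identity in $\wh{G}$ with an open subspace of $\mathfrak{g}$ supplied by \Cref{l:exp-isom-on-nbhd-basis}.
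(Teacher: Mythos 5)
Your proof is correct and follows essentially the same route as the paper: the paper's own argument is the two-line observation that $G^\tt=\wh{G}\cdot G_{\mathrm{tor}}$ and that one may extend $\log_G$ by zero on $G_{\mathrm{tor}}$ (compatibility on the overlap being guaranteed by $\ker(\log_G|_{\wh{G}})=G[p^\infty]$), whereas you realize the same extension via the equivalent formula $\log_G(g)=\tfrac{1}{N}\log_G(g^N)$ for $N$ prime to $p$, mirroring the divisibility trick already used in the proof of \Cref{p:Prop2.14}(2). Your additional checks (well-definedness, kernel, rigidity of the resulting morphism) are all sound and simply make explicit what the paper leaves implicit.
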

\begin{proof}
This follows from \Cref{p:Prop2.14}(\ref{l:logG-top-p-torsion}): We have $G^\tt = \wh{G}\cdot G_{\mathrm{tor}}$ and  define $\log_G=0$ on $G_{\mathrm{tor}}$.
\end{proof}

\subsection{Criteria for $\log_G$ to be surjective}
The goal of this subsection is to prove the following characterisation for when the logarithm morphism is surjective:
\begin{Definition}\label{def:loc-pdiv}
\begin{enumerate}
\item Following Fargues \cite{Fargues-groupes-analytiques}, we say that $G$ is an analytic $p$-divisible group if $G=\widehat{G}$ and $[p]:G\to G$ is finite surjective.
\item More generally, we say that a general rigid group $G$ is locally $p$-divisible if it contains an open subgroup $U$ that is an analytic $p$-divisible group.
\end{enumerate}
\end{Definition}

\begin{Example}\label{ex:alg-G-loc-p-div}
\begin{enumerate}
\item Let $G$ be a commutative connected algebraic group. Then by Rosenlicht's Theorem, it is an extension of an abelian variety by a product of $\G_a$ and tori. It follows that $[p]:G\to G$ is surjective, since this is true for abelian varieties, $\G_a$ and tori. By \Cref{p:log-surj-if-loc-pdiv} below, it follows  that $G$ is locally $p$-divisible.
\item Any extension of locally $p$-divisible groups is locally $p$-divisible. 
\item The closed unit disc with its additive structure,  $G=\G_a^+$, is not locally $p$-divisible: It is topologically $p$-torsion, so $G=\wh{G}$, but clearly $[p]:G\to G$ is not surjective.
\end{enumerate}
\end{Example}
\begin{Proposition}\label{p:log-surj-if-loc-pdiv}
Let $G$ be a commutative rigid group over $K$. Consider the statements:
\begin{enumerate}
\item $[p]:G\to G$ is surjective finite \'etale,
\item $[p]:G\to G$ is surjective,
\item $[p]:\wh{G}\to \wh{G}$ is surjective,
\item $\wh{G}^\circ$ is an analytic p-divisible group,
\item $G$ is locally $p$-divisible, 
\item there exists an open subgroup $U\subseteq G$ on which $[p]:U\to U$ is surjective,
\item $\log_G:\wh{G}\to \mathfrak g$ is surjective.
\end{enumerate}
Then (1) $\Rightarrow$ (2) $\Rightarrow$ (3) $\Rightarrow$ (4) $\Leftrightarrow$ (5) $\Leftrightarrow$ (6) $\Leftrightarrow$ (7). If $\pi_0(G)$ is finite, we have (1) $\Leftrightarrow$ (2). If $G$ is quasi-compact and connected, we have (4) $\Leftrightarrow$ (1), so in this case all statements are equivalent.
\end{Proposition}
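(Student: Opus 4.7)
The plan is to handle the proposition in three layers: trivial reductions, the main logarithm-based cycle, and the special cases. Several implications are essentially formal: $(1) \Rightarrow (2)$ is obvious; $(4) \Rightarrow (5)$ follows because $\wh{G}^\circ$ is an open subgroup of $G$; $(5) \Rightarrow (6)$ is immediate from the definition of analytic $p$-divisibility; and $(3) \Rightarrow (6)$ holds by taking $U = \wh{G}$. For $(2) \Rightarrow (3)$, the key remark is that topological $p$-torsion lifts under $p$-division: if $x \in \wh{G}$ and $py = x$ with $y \in G$, then $p^{n+1} y = p^n x \to 0$, so $y \in \wh{G}$ as well. This remark will reappear in the main step.

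The main ingredient is $(6) \Rightarrow (7)$. Let $U \subseteq G$ be an open subgroup on which $[p]$ is surjective. By \Cref{l:exp-isom-on-nbhd-basis} we may shrink $U$ to a subgroup $G_k \subseteq U$ with $\log \colon G_k \xrightarrow{\sim} \mathfrak{g}_k$. Given $\xi \in \mathfrak{g}$, pick $n$ such that $p^n \xi \in \mathfrak{g}_k$, set $y := \log^{-1}(p^n \xi) \in G_k \subseteq \wh{G}$, and iterate $[p]$-surjectivity on $U$ to produce $z \in U$ with $p^n z = y$. The topological $p$-torsion remark forces $z \in \wh{G}$, and then $p^n \log(z) = p^n \xi$ gives $\log(z) = \xi$ by torsion-freeness of $\mathfrak{g}$.

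For $(7) \Rightarrow (4)$, the hypothesis upgrades the left-exact sequence from \Cref{p:Prop2.14}.(2) to a short exact sequence
\[0 \to G[p^\infty] \to \wh{G} \xrightarrow{\log} \mathfrak{g} \to 0.\]
Since $G[p^\infty]$ is étale (a filtered colimit of finite étale groups via \Cref{l:G[p]-is-fet} applied to quasi-compact open subgroups of $G$), we have $G[p^\infty] \cap \wh{G}^\circ = \{1\}$, so the restriction $\log \colon \wh{G}^\circ \to \mathfrak{g}$ is injective on geometric points. Combined with $\mathrm{Lie}(\log) = \id$ and the fact that its image is an open subgroup of the connected group $\mathfrak{g}$, this identifies $\wh{G}^\circ \cong \mathfrak{g}$. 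As $\wh{\mathfrak{g}} = \mathfrak{g}$ and $[p]$ is an isomorphism on $\mathfrak{g}$, the group $\wh{G}^\circ$ is analytic $p$-divisible, yielding $(4)$; combined with $(3) \Rightarrow (6)$, this also establishes $(3) \Rightarrow (4)$.

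For the special cases we use that $[p]$ is always étale, since $\mathrm{Lie}([p]) = p \cdot \id$ is an isomorphism in characteristic $0$ (\cite[Lemme~1]{Fargues-groupes-analytiques}). When $\pi_0(G)$ is finite, the implication $(2) \Rightarrow (1)$ follows because étaleness and finite $\pi_0$ combine with \Cref{l:G[p]-is-fet} to make $G[p]$ finite. When $G$ is quasi-compact and connected, $\wh{G} = G$ (an open subgroup of a connected group is the whole group) and $\wh{G}^\circ = G$; then $(4)$ says $G$ itself is analytic $p$-divisible, which combined with étaleness gives $(1)$. The main technical obstacle I anticipate is in the $(7) \Rightarrow (4)$ step: justifying rigorously that the short exact sequence interacts correctly with the connected-component construction in the v-sheaf / rigid analytic setting, so that the passage from surjectivity of $\log_G$ on $\wh{G}$ to an isomorphism $\wh{G}^\circ \cong \mathfrak{g}$ is well-founded.
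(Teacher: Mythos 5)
Your trivial reductions, the implication (2)$\Rightarrow$(3), and the step (6)$\Rightarrow$(7) are correct and essentially the paper's arguments. The proof breaks, however, at (7)$\Rightarrow$(4), which is where the real content lies. The claim that $G[p^\infty]\cap \wh{G}^\circ=\{1\}$ because $G[p^\infty]$ is \'etale is false: a connected rigid group can perfectly well \emph{contain} nontrivial \'etale subgroups (what is true, and what the paper uses, is that a connected group admits no nonconstant homomorphism \emph{to} an \'etale group). The standard counterexample is $G=\G_m$: here $\wh{G}=\wh{G}^\circ=\mathbb{D}(1)$ is the connected open unit disc around $1$ and contains all of $\mu_{p^\infty}=\ker(\log)$. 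Consequently your conclusion $\wh{G}^\circ\cong\mathfrak g$ is wrong, and with it your proofs of (7)$\Rightarrow$(4) and (3)$\Rightarrow$(4). This error also hides the genuinely hard point of (4), namely that $[p]:\wh{G}^\circ\to\wh{G}^\circ$ is \emph{finite}: the paper first proves surjectivity of $[p]$ on $\wh{G}^\circ$ by a Snake Lemma argument (the cokernel is a constant group, hence receives no nontrivial map from the connected group $\wh{G}^\circ$), and then proves finiteness of $\wh{G}^\circ[p]$ by showing $U[p^\infty]\cong(\Q_p/\Z_p)^{\oplus I}$ sits in an extension with quotient $\mathfrak g$ and invoking Fargues' bound $\mathrm{rk}(V[p^\infty])\le\dim V$ for connected analytic $p$-divisible groups. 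Nothing in your argument substitutes for this.

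The special case (4)$\Rightarrow$(1) for quasi-compact connected $G$ is also wrong as written: an open subgroup of a connected \emph{rigid} group need not be the whole group (again $\mathbb{D}(1)\subsetneq\G_m$, or $\wh{A}\subsetneq A$ for an abelian variety $A$), so the assertion $\wh{G}=G$ fails and (4) does not directly say that $G$ itself is analytic $p$-divisible. The paper's proof of this implication is the longest part of the whole argument: it uses L\"utkebohmert's structure theorem to reduce to the good reduction case and then analyses the special fibre $\overline{G}$ to exclude additive factors, deducing surjectivity of $[p]$ on $G$ from surjectivity on $\overline{G}$. Finally, in (2)$\Rightarrow$(1) your appeal to \Cref{l:G[p]-is-fet} requires $G$ to be quasi-compact, which finiteness of $\pi_0(G)$ does not provide; the paper instead routes this through the already-established implication (2)$\Rightarrow$(4).
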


\begin{Remark}
We suspect that quasi-compactness is not necessary for $(4)\Rightarrow (1)$ to hold.
\end{Remark}

\begin{proof}[Proof of \Cref{p:log-surj-if-loc-pdiv}]
(1) $\Rightarrow$ (2) is trivial.

(2) $\Rightarrow$ (3): 
For any point $x\in \wh{G}(C,C^+)$ with values in an algebraically closed non-archimedean field extension $(C,C^+)$ of $K$, there exists some $y\in G(C,C^+)$ such that $[p](y)=x$. But then $[p^n](y)=[p^{n-1}](x)\to 0$, so $y\in \wh{G}(C,C^+)$. 

(3) $\Rightarrow$ (6): Trivial as $\wh{G}\subseteq G$ is open.

(4) $\Rightarrow$ (5): We set $U=\widehat{G}^\circ$ which is by assumption analytic $p$-divisible.

(5) $\Rightarrow$ (6) is trivial.

(6) $\Rightarrow$ (7): Since (2) $\Rightarrow$ (3), we may replace $U$ by $\wh{U}$. By \Cref{l:exp-isom-on-nbhd-basis},
the image $M:=\log(U)$ is then an open subgroup of $\mathfrak g$. If $[p]:U\to U$ is surjective, then so is $[p]:M\to M$. But the only open $p$-divisible subgroup of $\mathfrak g$ is the whole group.

(3) $\Rightarrow$ (4): It is clear that $U:=\widehat{G}^\circ$ satisfies $U=\widehat{U}$. By the Snake Lemma (applied in the category of v-sheaves), the cokernel of $[p]:\widehat{G}^\circ\to \widehat{G}^\circ$ is equal to the constant group given by the cokernel of the map $\widehat{G}[p]\to \pi_0(\widehat{G})[p]$. But any morphism from $\widehat{G}^\circ$ to a constant group vanishes. Thus the map $[p]:U\to U$ is surjective. It is \'etale by \cite[Lemme 1]{Fargues-groupes-analytiques}.

It remains to see that this map is moreover finite.  For this, it suffices by \cite[Lemme~5]{Fargues-groupes-analytiques} to prove that $U[p]$ is finite. More precisely, as it is a $p$-torsion abelian group, $U[p]$ is necessarily isomorphic to $(\Z/p\Z)^{\oplus I}$ for some index set $I$, and we need to see that $I$ is finite. We first note that $[p]$ being surjective implies by choosing  successive preimages of basis elements under $[p]$ that $U[p^n]=(\Z/p^n\Z)^{\oplus I}$, hence
\[ U[p^\infty]=(\Q_p/\Z_p)^{\oplus I}.\]
Since we already know that (3) $\Rightarrow$ (7), it follows that we have a short exact sequence of rigid groups
\[ 0\to (\Q_p/\Z_p)^{\oplus I}\to U\to \mathfrak g\to 0.\]
We claim that $U$ being connected forces $|I|<\infty$. To see this, let $m:=\dim G+1$ and consider the pushout $V$ of $U$ along any surjective morphism $(\Q_p/\Z_p)^{\oplus I}\to (\Q_p/\Z_p)^{m}$, so that we have a commutative diagram of rigid groups
\[
\begin{tikzcd}
0 \arrow[r] & (\Q_p/\Z_p)^{\oplus I}\arrow[r] \arrow[d] & U \arrow[d] \arrow[r] & \mathfrak g \arrow[d,equal] \arrow[r] & 0 \\
0 \arrow[r] & (\Q_p/\Z_p)^{m} \arrow[r]           & V \arrow[r]           & \mathfrak g \arrow[r]           & 0.
\end{tikzcd}\]

Then $V$ is still connected, being the image of a  surjective map $U\to V$. Since it is by definition an extension of analytic $p$-divisible groups, $V$ is clearly itself an analytic $p$-divisible group.
By \cite[Théorème 7.2]{Fargues-groupes-analytiques}, this implies that
$m=\mathrm{rk}(V[p^\infty])\leq \dim V=\dim G$, as we can discard the first summand in loc.\ cit.\ since $V$ is connected. This implies that $|I|\leq  \dim G$. Consequently, $U[p]$ is finite, as we wanted to see.

(7) $\Rightarrow$ (4): For this we consider the morphism of short exact sequences
\[
\begin{tikzcd}
0 \arrow[r] & {\wh{G}^\circ [p^\infty]} \arrow[r]                     & \wh{G}^\circ \arrow[r, "\log"]                     & \mathfrak g  \arrow[r]&0                   \\
0 \arrow[r] & {\wh{G}^\circ [p^\infty]} \arrow[r] \arrow[u, "{[p]}"'] & \wh{G}^\circ \arrow[r, "\log"] \arrow[u, "{[p]}"'] & \mathfrak g \arrow[u, "{[p]}"']\arrow[r]&0.
\end{tikzcd}\]
For each $n$, denote by $Q_n$ the cokernel of $[p]:\wh{G}^\circ [p^n]\to \wh{G}^\circ [p^n]$ (sheaf-theoretic, e.g.\ in v-sheaves). Since  $\wh{G}^\circ [p^n]\to \Spa(K)$ is \'etale, it is a constant rigid group, and hence so is $Q_n$. It follows that the cokernel $Q=\varinjlim Q_n$ in the diagram is a constant rigid group. Hence any homomorphism from the connected group $\wh{G}^\circ$ to $Q$ is trivial. This shows that the middle vertical map is surjective. This proves the result because (3) $\Rightarrow$ (4).

(2) $\Rightarrow$ (1): Assume that $\pi_0(G)$ is finite. Then without loss of generality, we may assume that $G$ is connected. The morphism $[p]$ is \'etale by \cite[Lemme 1]{Fargues-groupes-analytiques}. Since (2) $\Rightarrow $ (4), we know that $G[p]$ is finite. Hence (1) follows from \cite[Lemme 5]{Fargues-groupes-analytiques}.

Assume now that $G$ is quasi-compact and connected. Then:

(4) $\Rightarrow$ (2): We use L\"utkebohmert's structure result \cite[Theorem~I]{Lutkebohmert_structure_of_bounded}:  This says that there is an extension $T\to E\to B$ of a quasi-compact group of good reduction $B$ by a rigid torus $T$ such that $G=E/M$ for some lattice $M\subseteq E$. We can use this to reduce to the case of good reduction: It is clear that $[p]:G\to G$ is surjective if and only if $[p]:B\to B$ is surjective. On the other hand, we have 
\[\wh{G} = \wh{E} \times (M\otimes \Q_p/\Z_p).\]
We deduce that $\wh{G}^\circ=\wh{E}^\circ$,
and this is itself an extension
\[ 0\to \wh{T}\to \wh{E}^\circ\to \wh{B}^\circ\to 0.\]
We deduce that $[p]:\wh{B}^\circ\to \wh{B}^\circ$ is surjective if $[p]:\wh{G}^\circ\to \wh{G}^\circ$ is surjective.

We have thus reduced to the case that $G$ has good reduction.  The reduction $\overline{G}$ is then a commutative connected algebraic group, hence an extension of an abelian variety by a product of tori and additive groups. We need to see that the additive part vanishes, then $[p]:\overline{G}\to \overline{G}$ is finite surjective, so the same is true for $[p]:G\to G$ by \cite[\S6.3.5, Theorem~1]{BGR}.

Assume for a contradiction that  $\overline{G}$ contains a subgroup isomorphic to $\G_a$. By \Cref{p:Prop2.14}(4), $\widehat{G}^\circ$ is the analytic $p$-divisible group associated to the connected part of the $p$-divisible group of $G$ over $\O_K$. Consequently, we have a morphism of short exact sequences
\[\begin{tikzcd}%Thanks Yichuan! https://tikzcd.yichuanshen.de/
1 \arrow[r] & \widehat{G}^\circ(K) \arrow[r]           & G(K) \arrow[r]           & \overline{G}(k) \arrow[r]           & 1 \\
1 \arrow[r] & \widehat{G}^\circ(K) \arrow[r] \arrow[u,"{[p]}"] & G(K) \arrow[u,"{[p]}"] \arrow[r] & \overline{G}(k) \arrow[u,"{[p]}"] \arrow[r] & 1.
\end{tikzcd}\]
If $\overline{G}$ contains a factor of $\G_a$, then the kernel of $[p]:\overline{G}(k)\to \overline{G}(k)$ contains a copy of $k$. As the kernel of the middle map is finite, this means by the Snake Lemma that the cokernel of the first map is non-trivial. This contradicts the assumption that $\widehat{G}^\circ$ is $p$-divisible.
\end{proof}
\subsection{Exponentials for $G$ induced by exponentials for $\G_m$}
We can now prove the main result of this section: A choice of an exponential $\exp:K\to 1+\m_K$ for the group $\G_m$ induces compatible choices of exponentials for all locally $p$-divisible rigid groups $G$.

\begin{Theorem}\label{t:exp-for-loc-p-divisible-grp}
Let $K$ be a complete algebraically closed extension of $\Q_p$ and let $G$ be a locally $p$-divisible commutative rigid group over $K$. Then the choice of an exponential $\exp:K\to 1+\m_K$ for $\G_m$ induces a continuous homomorphism
\[\exp_G:\Lie G\rightarrow \widehat{G}(K)\]
splitting $\log_G$ from \Cref{p:Prop2.14}(\ref{l:logG-top-p-torsion}). The association $G\mapsto \exp_G$ is natural in $G$.
\end{Theorem}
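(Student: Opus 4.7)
The plan is to construct $\exp_G$ by first producing a canonical section of $\log_G$ on a neighborhood of zero via the Lie algebra exponential, then extending to all of $\Lie G$ using the $p$-divisibility of $\widehat{G}^\circ$ together with the chosen $\exp$ for $\G_m$ to select coherent $p^n$-th roots.

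First I would reduce to $\widehat{G}^\circ$: since $\widehat{G}\subseteq G$ is an open subgroup with $\Lie\widehat{G}=\Lie G$, and $\widehat{G}^\circ\subseteq \widehat{G}$ is further open with the same Lie algebra, continuity of any candidate $\exp_G$ forces its image to land in $\widehat{G}^\circ(K)$. By \Cref{p:log-surj-if-loc-pdiv}, local $p$-divisibility is equivalent to $\widehat{G}^\circ$ being an analytic $p$-divisible group, so $\log_G\colon \widehat{G}^\circ(K)\to \Lie G$ is surjective with discrete kernel $\widehat{G}^\circ[p^\infty](K)$, and $[p]$ is finite étale surjective on $\widehat{G}^\circ$.

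Next, \Cref{l:exp-isom-on-nbhd-basis} furnishes the canonical Lie algebra exponential $\exp_{\mathrm{Lie}}\colon\mathfrak{g}_k\isomarrow G_k$ for $k$ large, giving a canonical continuous section of $\log_G$ on the open subgroup $\mathfrak{g}_k(K)\subseteq \Lie G$. To extend to all of $\Lie G$, for any $v\in \Lie G$ pick $n$ with $p^nv\in \mathfrak{g}_k(K)$ and seek $\exp_G(v)\in \widehat{G}^\circ(K)$ with $[p^n]\exp_G(v)=\exp_{\mathrm{Lie}}(p^nv)$. Such preimages exist by $p$-divisibility, but form a torsor under $\widehat{G}^\circ[p^n](K)$; across all $n$ the compatible choices are parametrized by the Tate module $T_p\widehat{G}^\circ$, and one needs to fix these torsors coherently from the single datum of $\exp$ for $\G_m$.

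To carry out the coherent choice naturally, I would appeal to \Cref{p:ado} to choose an open subgroup $G_0\subseteq G$ of good reduction with a closed immersion $\iota\colon G_0\hookrightarrow \GL_n$, inducing an embedding of the commutative Lie algebra $d\iota\colon\Lie G\hookrightarrow \mathfrak{gl}_n(K)$. Applying Jordan decomposition to commuting matrices, the image splits as a semisimple piece (simultaneously diagonalizable inside a maximal torus $T\cong \G_m^n$) plus a nilpotent piece (on which matrix exp converges as a polynomial). On the semisimple part one applies the chosen exp for $\G_m$ componentwise along $T$, and on the nilpotent part the canonical polynomial exp; commutativity lets one multiply to obtain a continuous group homomorphism $\Lie G \to \GL_n(K)$, whose restriction to $\mathfrak{g}_k(K)$ coincides with $\iota\circ \exp_{\mathrm{Lie}}$ by uniqueness of the Lie algebra exponential. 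Transporting along $\iota$, this produces the desired $\exp_G$.

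The main obstacle will be showing that this matrix-theoretic construction has image in $\iota(\widehat{G_0}(K))$ rather than escaping into the ambient $\GL_n(K)$ (which is delicate for non-algebraic $G$ such as formal completions of abeloids), and that it is independent of the embedding $\iota$. For the first, one uses that the image consists of commuting elements obtained as products of diagonal entries in $1+\m_K$ and unipotent matrices, all of which are topologically $p$-torsion, while $p^n$-divisibility keeps the preimages tethered to $G_k\subseteq G_0$. For the second, one checks that any two embeddings $\iota,\iota'$ of $G_0$ into general linear groups induce Lie algebra embeddings through which the same componentwise $\G_m$-exp produces the same element of $\widehat{G_0}(K)$; the special case $G=\G_m$ recovers the given $\exp$ by construction, and naturality in $G$ follows from the functoriality of Jordan decomposition and componentwise exp with respect to morphisms of commutative linear data, yielding the claimed compatibility for all morphisms $\phi\colon G\to G'$.
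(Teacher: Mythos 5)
Your first two steps --- reducing via \Cref{p:log-surj-if-loc-pdiv} to the case where $\wh{G}^\circ$ is an analytic $p$-divisible group, and identifying the problem as a coherent choice of $p^n$-th roots of $\exp_{\mathrm{Lie}}(p^n v)$ --- are correct and match the paper. But the mechanism you propose for making that choice, namely embedding $G_0\hookrightarrow \GL_n$ by \Cref{p:ado}, taking Jordan decomposition in $\mathfrak{gl}_n$ and applying the chosen $\exp$ to eigenvalues, cannot work; the issue you flag as ``the main obstacle'' is fatal, not technical. The embedding $\iota$ exists only on a quasi-compact open subgroup $G_0=G_k$ of good reduction, and by \Cref{l:exp-isom-on-nbhd-basis} the logarithm restricted to $G_k(K)$ is a bijection onto the bounded subgroup $\mathfrak g_k(K)=p^k\mathfrak g^+(K)\subsetneq \Lie G$. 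Hence no section of $\log_G$ can have image inside $\iota(\wh{G_0}(K))\subseteq G_0(K)$: for $v\notin \mathfrak g_k(K)$ the element $\exp_G(v)$ you need lies outside $G_0$, where there is no $\iota$ to transport back along. Concretely, for $G=\wh{G}^\circ$ the formal group of a supersingular elliptic curve (dimension $1$, height $2$, hence locally $p$-divisible), every linear-analytic open subgroup of good reduction is some $G_k\cong \mathbb B(|p^k|)$ with $\log_G(G_k(K))=p^k\O_K$, while a section of $\log_G$ must realise every element of $K$ as a logarithm. Your claim that ``$p^n$-divisibility keeps the preimages tethered to $G_k$'' is the opposite of what happens: the $[p^n]$-preimages of $G_k$ escape every fixed $G_0$, since they involve translates by the torsion $G[p^n]$, which does not lie in $G_0$ at all. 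For the same reason the matrix built from eigenvalue exponentials is just some topologically $p$-torsion $p^n$-th root of $\iota(\exp_{\mathrm{Lie}}(p^n v))$ in $\GL_n(K)$, with no reason to coincide with any of the finitely many roots coming from $G$.

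The paper resolves exactly this point by a global input that your construction never sees, namely the Tate module of $\wh{G}^\circ$: by Fargues' Th\'eor\`eme 3.3 there is a natural Cartesian square of rigid groups comparing $\log\colon \wh{G}^\circ\to\mathfrak g$ with $\log\colon \HOM(\Lambda(1),\G_m)\to \HOM(\Lambda(1),\G_a)$, where $\Lambda=T_p(\wh{G}^\circ)^\vee$. The chosen exponential for $\G_m$ splits the right-hand map on $K$-points, and the Cartesian property transports this to a splitting of $\log_G$, with naturality in $G$ inherited from the naturality of Fargues' diagram. Any repair of your approach would have to replace the Ado embedding, which only captures an infinitesimal neighbourhood of the identity and loses all torsion of $G$, by a structure that records $T_p\wh{G}^\circ$ --- which is essentially what Fargues' theorem supplies.
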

This generalises a result of Faltings, who considered the case of commutative algebraic groups \cite[pp 856]{Faltings_SimpsonI}. The latter are locally $p$-divisible groups by \Cref{ex:alg-G-loc-p-div}.
\begin{proof}
By \Cref{p:log-surj-if-loc-pdiv}, the group $\wh{G}^\circ$ is an analytic $p$-divisible group in the sense of Fargues \cite{Fargues-groupes-analytiques}. By \cite[Th\'eor\`eme~3.3]{Fargues-groupes-analytiques}, there is then a natural Cartesian diagram of rigid groups
\[\begin{tikzcd}
\wh{G}^\circ \arrow[d, "\log"'] \arrow[r] & {\HOM(\Lambda(1),\G_m)} \arrow[d, "\log"']                     \\
\mathfrak g \arrow[r]                   & {\HOM(\Lambda(1),\G_a)}, \arrow[u, "\exp"', dotted, bend right]
\end{tikzcd}\]
functorial in $G$, where $\Lambda:=T_p(\wh{G}^\circ)^\vee$.
With this at hand, we can argue as in \cite[Proposition~A.1.2]{xu2022parallel}:
The chosen exponential induces a section of the right map on $K$-points, thus we obtain a section of the left map on $K$-points by Cartesianness.
\end{proof}

\section{The Corlette-Simpson correspondence for commutative $G$}

\subsection{Recollections on the Hodge--Tate short exact sequence for $\G_a$ and $\G_m$}\label{subs:HT}
Assume that $X$ is a proper, smooth rigid space over $K$. Set $\wtOm= R^1\nu_\ast \CO \cong \Omega^1_X(-1)$. We fix a base point $x_0\in X(K)$. In this situation, we have by \cite[\S3]{Scholze2012Survey} the Hodge--Tate sequence
\begin{equation}\label{eq:HT-seq}
0\to H^1_{\et}(X,\G_a)\to H^1_\mathrm{v}(X,\G_a)\to H^0(X,\wtOm)\to 0
\end{equation}
in which the middle term can be naturally identified as
$H^1_\mathrm{v}(X,\G_a)=\Hom(\pi_1^\et(X,x_0),K)$
via a Cartan-Leray spectral sequence for the universal cover $\wt X\to X$ (c.f. \cite[Proposition 2.8.1, Proposition 4.9]{heuer-Line-Bundle}). As we now recall, in general, the sequence doesn't split canonically. However, a splitting can be functorially induced by an additional choice:

\begin{Definition}\label{d:lift} Let $X$ be a smooth proper rigid space over $K$. By a (flat) $B_{\dR}^+/\xi^2$-lift of $X$ we mean a flat morphism of adic spaces $\mathbb X\to \Spa(B_{\dR}^+/\xi^2)$ with an isomorphism \[\mathbb X\times_{\Spa(B_{\dR}^+/\xi^2)} \Spa(K)\isomarrow X.\]
\end{Definition}
We note that such a datum always exists. There is a canonical choice if $X$ admits a model over a discretely valued subfield $K_0\subseteq K$ with perfect residue field.
\begin{Proposition}[{\cite[Proposition 7.2.5]{Guo_HodgeTate}}]\label{p:HT-split}
Any choice of a $B_{\dR}^+/\xi^2$-lift $\mathbb X$ of $X$ induces a splitting of \eqref{eq:HT-seq}. This splitting is functorial in $(X,\mathbb X)$.
\end{Proposition}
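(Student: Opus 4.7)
The plan is to construct, from the lift $\mathbb X$, a natural splitting
\[s_\mathbb X \colon H^0(X, \wtOm) \longrightarrow H^1_v(X, \G_a)\]
of the Hodge--Tate surjection in \eqref{eq:HT-seq}. The overall strategy is to exploit the lift to upgrade the structure sheaf on $X_v$ to a sheaf of $B_\dR^+/\xi^2$-algebras, and then extract the section from the canonical extension so produced.

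The first step is to construct a natural v-sheaf $\mathcal O_\mathbb X$ of flat $B_\dR^+/\xi^2$-algebras on $X$: for each affinoid perfectoid $U = \Spa(R, R^+) \in X_v$, deformation theory (vanishing of the relevant cotangent complex for perfectoid $\O_K$-algebras) yields an essentially unique flat $B_\dR^+/\xi^2$-lift $R_\mathbb X^+$ of $R^+$ compatible with $\mathbb X$, and these assemble into $\mathcal O_\mathbb X$. This sheaf sits in a tautological short exact sequence of v-sheaves
\[0 \to \O_X(1) \to \mathcal O_\mathbb X \to \O_X \to 0,\]
where $\O_X(1) = \xi\mathcal O_\mathbb X$ inherits its Tate twist from the isomorphism $\xi \cdot B_\dR^+/\xi^2 \cong K(1)$ coming from Fontaine's map $\theta$. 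Functoriality of this construction in $(X, \mathbb X)$ is automatic from the uniqueness of the lifts $R_\mathbb X^+$.

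The next step is to extract the splitting from this sequence. Applying $R\nu_*$ for $\nu\colon X_v \to X_\et$ and using Scholze's identifications $R^i\nu_* \O_X = \wedge^i \wtOm$, one obtains a morphism of long exact sequences relating the Hodge--Tate sequences in the coefficients $\O_X(1)$, $\mathcal O_\mathbb X$, and $\O_X$. A diagram chase, together with the observation that locally on toric affinoid charts of $X$ the lift $\mathbb X$ admits an explicit model via coordinate lifts so that $\mathcal O_\mathbb X$ is locally split, produces the desired section $s_\mathbb X$: one lifts a global form $\omega \in H^0(X, \wtOm)$ to a section of $R^1\nu_* \mathcal O_\mathbb X$ using the split structure, then pushes forward along $\mathcal O_\mathbb X \twoheadrightarrow \O_X$ to land in $H^1_v(X, \G_a)$.

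The main obstacle is verifying that $s_\mathbb X$ is genuinely a section of the Hodge--Tate map, i.e.\ that its composition with $H^1_v(X, \G_a) \to H^0(X, \wtOm)$ is the identity. Since both maps factor through edge maps in the Leray spectral sequence for $\nu$, this reduces to checking a compatibility between the local splittings of $\mathcal O_\mathbb X$ and the local description of Scholze's primitive comparison identification $R^1\nu_*\O = \wtOm$. On toric charts this compatibility is a direct calculation; globally it follows by \v{C}ech descent for the v-topology. Functoriality in $(X, \mathbb X)$ is a consequence of the naturality of all steps.
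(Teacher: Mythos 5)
First, a point of comparison: the paper does not prove this proposition at all — it is imported verbatim from Guo \cite{Guo_HodgeTate}, so there is no internal argument to measure you against, only the proofs in the literature (Guo's, and the closely related constructions in \cite{heuer-paSimpson} and \cite{heuer-Line-Bundle}). Your sketch assembles the right ingredients (the period sheaf, local coordinate lifts, compatibility with $R^1\nu_*\O\cong\wtOm$), but the step where the lift $\mathbb X$ actually does something is not correct as written, and that step is the entire content of the proposition. For an affinoid perfectoid $\Spa(R,R^+)\in X_\mathrm{v}$, the flat $B_{\dR}^+/\xi^2$-lift of $R$ is \emph{unique} (it is $B_{\dR}^+(R)/\xi^2$, by vanishing of the completed cotangent complex), so the sheaf you call $\mathcal O_{\mathbb X}$ is canonically the period sheaf $\mathbb B_2:=\mathbb B_{\dR}^+/\xi^2$ and is independent of $\mathbb X$; your own remark that functoriality is ``automatic from the uniqueness of the lifts'' confirms that no datum of $\mathbb X$ has entered. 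The lift enters through a different object: for $U\to V\subseteq X$ one considers the set of $B_{\dR}^+/\xi^2$-algebra maps $\O_{\mathbb X}(\mathbb V)\to \mathbb B_2(U)$ lifting $\O_X(V)\to\wh{\O}(U)$. By formal smoothness of $\mathbb X$ over $B_{\dR}^+/\xi^2$ these exist locally but are \emph{not} unique: they form a torsor under $\Hom_{\O_X}(\Omega^1_X,\wh{\O}(1))(U)$. It is this torsor under $T_X(1)\otimes\wh{\O}$ on $X_\mathrm{v}$ which, contracted against a global section $\omega\in H^0(X,\wtOm)=H^0(X,\Omega^1_X(-1))$ via the pairing $T_X(1)\otimes\Omega^1_X(-1)\to\O_X$, produces the class $s_{\mathbb X}(\omega)\in H^1_\mathrm{v}(X,\O)$; the verification that $\HT\circ s_{\mathbb X}=\mathrm{id}$ is then the local computation on toric charts you allude to.

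Separately, your final step does not typecheck: pushing a global section of $R^1\nu_*\mathcal O_{\mathbb X}$ forward along $\mathcal O_{\mathbb X}\twoheadrightarrow\O_X$ lands in $H^0(X,R^1\nu_*\O_X)=H^0(X,\wtOm)$, not in $H^1_\mathrm{v}(X,\G_a)$. To reach $H^1_\mathrm{v}(X,\G_a)$ you would have to lift against the edge map $H^1_\mathrm{v}(X,\mathcal O_{\mathbb X})\to H^0(X,R^1\nu_*\mathcal O_{\mathbb X})$, whose surjectivity is obstructed in $H^2_{\et}(X,\nu_*\mathcal O_{\mathbb X})$ — i.e.\ you would be reintroducing exactly the kind of obstruction the lift is supposed to kill. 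The torsor-of-lifts construction above avoids this by producing a \v{C}ech $1$-cocycle valued in $\wh{\O}$ directly, and it also makes the functoriality in $(X,\mathbb X)$ transparent, since a morphism $\mathbb X\to\mathbb Y$ over $B_{\dR}^+/\xi^2$ pulls back local lifts to local lifts.
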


There is an analogous story when we replace $\G_a$ by the group $\G_m$: By the main result of \cite{heuer-Line-Bundle}, there is a short exact sequence
\begin{equation}\label{eq:mult-HT-seq}
0\to H^1_{\et}(X,\G_m)\to H^1_\mathrm{v}(X,\G_m)\to H^0(X,\wtOm)\to 0
\end{equation}
classifying the discrepancy between v-line bundles and \'etale line bundles in terms of the additional datum of a Higgs field. This time, the splitting requires not only a $B_\dR^+/\xi^2$-lift $\mathbb X$, but also the datum of an exponential $\exp$ as in \Cref{d:exp}, providing a continuation of the Lie group exponential for $\G_m$.

\subsection{General commutative $G$}

We now prove an analog of the short exact sequence \eqref{eq:mult-HT-seq} for general commutative rigid groups, thus giving a generalisation of the Hodge--Tate sequence \eqref{eq:HT-seq}  and the multiplicative Hodge--Tate sequence \eqref{eq:mult-HT-seq} to a large class of commutative rigid groups:

\begin{Theorem}\label{thm:HodgeTatesplit}
Let $X$ be a connected smooth proper rigid space with a base point $x_0\in X(K)$. Let $G$ be a commutative, locally $p$-divisible rigid group (see \Cref{def:loc-pdiv}) over $K$. 
\begin{enumerate}
\item The Leray spectral sequence for $\nu:X_\mathrm{v}\to X_{\et}$ applied to the sheaf $G$ induces a short exact sequence of abelian groups, natural in $G$,
\begin{equation}\label{eq:CLforG}
	0 \rightarrow H^1_{\et}(X,G) \rightarrow H^1_\mathrm{v}(X,G) \xrightarrow{\HTlog} H^0(X, \wtOm)\otimes_K \Lie G\to 0.
\end{equation}
\item Choices of a $B_{\dR}^+/\xi^2$-lift of $X$ (\Cref{d:lift}) and an exponential for $K$ (\Cref{d:exp}) induce a natural splitting $s_G$ of $\HTlog$. This is functorial in the following sense:  For any $\theta\in H^0(X,\wtOm^1)\otimes_K\Lie G$, these choices induce  a {\rm v}-$G$-torsor $V_{\theta}$ on $X$ with $\HTlog(V_\theta)=\theta$, and for any homomorphism of commutative locally $p$-divisible rigid groups $f:G\to H$ a natural isomorphism
\[V_{\theta}\times^GH\isomarrow V_{(\Lie f)(\theta)}.\]
\end{enumerate}
\end{Theorem}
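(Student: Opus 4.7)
The plan is to exploit the Leray spectral sequence for $\nu:X_\mathrm{v}\to X_\et$ applied to the representable sheaf $G$. \'Etale descent for rigid groups gives $\nu_*G=G$ on $X_\et$, so the five-term exact sequence reads
\[0\to H^1_\et(X,G)\to H^1_\mathrm{v}(X,G)\xrightarrow{\HTlog} H^0(X,R^1\nu_*G)\to H^2_\et(X,G).\]
Part (1) then reduces to two steps: (a) identifying $R^1\nu_*G\cong\wtOm\otimes_K\Lie G$ on $X_\et$, and (b) showing $\HTlog$ is surjective. I plan to deduce (b) as a direct consequence of the construction of the splitting in part (2): exhibiting $V_\theta$ with $\HTlog(V_\theta)=\theta$ automatically furnishes a preimage.

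For (a), the local $p$-divisibility hypothesis gives, via \Cref{p:log-surj-if-loc-pdiv}, a short exact sequence of v-sheaves
\[0\to G[p^\infty]\to\wh G\xrightarrow{\log_G}\mathfrak g\to 0.\]
Each $G[p^n]$ is finite \'etale by \Cref{l:G[p]-is-fet}, so $R^i\nu_*G[p^n]=0$ for $i\geq 1$ by the standard comparison of \'etale and v-cohomology for torsion coefficients, and this passes to the filtered colimit $G[p^\infty]$. Combined with Scholze's $R^1\nu_*\G_a=\wtOm$, the resulting long exact sequence gives $R^1\nu_*\wh G\isomarrow R^1\nu_*\mathfrak g=\wtOm\otimes_K\Lie G$. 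Finally, since $\wh G\subseteq G$ is open (\Cref{p:Prop2.14}), the quotient $G/\wh G$ is an \'etale group sheaf, hence $R^i\nu_*(G/\wh G)=0$ for $i\geq 1$, and the inclusion $\wh G\hookrightarrow G$ therefore induces $R^1\nu_*\wh G\isomarrow R^1\nu_*G$.

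For (2), given $\theta\in H^0(X,\wtOm)\otimes_K\Lie G$, I will construct $V_\theta$ in three stages: first extend the classical Hodge--Tate splitting arising from $\mathbb X$ (\Cref{p:HT-split}) by $K$-linearity in $\Lie G$ to produce $s_\mathfrak g(\theta)\in H^1_\mathrm{v}(X,\mathfrak g)$; second, lift this class to $H^1_\mathrm{v}(X,\wh G)$; third, push forward along $\wh G\hookrightarrow G$. The main obstacle is the middle step: the sheaf-level identification $R^1\nu_*\wh G\isomarrow R^1\nu_*\mathfrak g$ does not by itself lift global v-cohomology classes, because the exponential $\exp_G:\Lie G\to\wh G(K)$ from \Cref{t:exp-for-loc-p-divisible-grp} only lives on $K$-points. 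To overcome this I plan to work cocycle-theoretically, imitating the $\G_m$-case of earlier work: represent $s_\mathfrak g(\theta)$ by a \v{C}ech cocycle, rescale by $p^n$ so that its values land in a small Lie subalgebra $\mathfrak g_k$ on which the rigid-analytic formal exponential $\exp:\mathfrak g_k\isomarrow\wh G_k$ of \Cref{l:exp-isom-on-nbhd-basis} converges as a morphism of rigid groups, apply it to obtain a v-$\wh G_k$-cocycle, and extract a $p^n$-th root using surjectivity of $[p^n]:\wh G^\circ\to\wh G^\circ$ (\Cref{p:log-surj-if-loc-pdiv}); the resulting ambiguity in $H^1_\mathrm{v}(X,G[p^n])$ is pinned down consistently in $n$ by $\exp_G$, which is exactly where the choice of exponential for $K$ enters through \Cref{t:exp-for-loc-p-divisible-grp}. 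Functoriality in $f:G\to H$ is then immediate from naturality of all ingredients: the Hodge--Tate splitting is $K$-linear hence natural in the Lie-algebra coefficient, and $\exp_G$ is natural in $G$, yielding $V_\theta\times^G H\cong V_{\Lie(f)(\theta)}$.
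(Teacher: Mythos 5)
Your reduction of part (1) to the splitting in part (2) matches the paper's logic, and your dévissage of $R^1\nu_\ast G$ through $G[p^\infty]\to\wh G\to\mathfrak g$ is a reasonable sketch of the cited input (the paper simply invokes \cite[Corollary 4.3]{heuer-Moduli} for the left-exact sequence, so I will not press on the claim that $G/\wh G$ contributes nothing to $R^1\nu_\ast$). The problem is in your construction of $V_\theta$. The step ``extract a $p^n$-th root using surjectivity of $[p^n]:\wh G^\circ\to\wh G^\circ$'' is not justified: surjectivity of $[p^n]$ as a map of v-sheaves only produces roots v-locally, and gluing these local roots into a global class is obstructed. Concretely, dividing the class $[\exp(p^n\cdot s_{\mathfrak g}(\theta))]\in H^1_\mathrm{v}(X,\wh G^\circ)$ by $p^n$ meets a boundary obstruction in $H^2_\mathrm{v}(X,\wh G^\circ[p^n])$, which is a nonzero group in general (already $H^2_{\et}(X,\mu_{p^n})\neq 0$ for a curve), and even granting existence, your remark that the $H^1_\mathrm{v}(X,G[p^n])$-ambiguity is ``pinned down consistently in $n$ by $\exp_G$'' is an assertion, not an argument: $\exp_G$ is a map on $K$-points and does not by itself select a compatible system of cocycle-level roots. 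You also never verify $\HTlog(V_\theta)=\theta$ for the resulting class.

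The paper's route avoids this entirely and is worth contrasting with yours. By the primitive comparison theorem, \emph{every} class in $H^1_\mathrm{v}(X,\mathfrak g)$ is pro-finite-\'etale, i.e.\ comes from $\Hom_{\cts}(\pi_1^{\et}(X,x_0),\Lie G)$ via Cartan--Leray for $\wt X\to X$. One therefore works at the level of the fundamental group throughout: there is a commutative square comparing $\log:\Hom_{\cts}(\pi,G^{\tt}(K))\to\Hom_{\cts}(\pi,\Lie G)$ with $\HTlog:H^1_\mathrm{v}(X,G)\to H^0(X,\wtOm)\otimes_K\Lie G$, where the left vertical map is the pushout of $\wt X\to X$ from \Cref{rslt:vbundles} and the right vertical map is the Hodge--Tate map. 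The lift $\mathbb X$ splits the right-hand map (\Cref{p:HT-split}), and $\exp_G$ from \Cref{t:exp-for-loc-p-divisible-grp} splits the top map on the nose, because there it is genuinely a question about $K$-points: one composes a continuous homomorphism $\pi\to\Lie G$ with the group homomorphism $\exp_G:\Lie G\to\wh G(K)$ and pushes out the torsor $\wt X\to X$. All the divisibility difficulties you are fighting at the cocycle level are absorbed into the construction of $\exp_G$ itself (which rests on Fargues' Cartesian diagram for analytic $p$-divisible groups), and both $\HTlog(V_\theta)=\theta$ and the naturality in $G$ fall out of the commutativity of the square and the naturality of $\exp_G$. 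If you want to keep your cocycle-theoretic construction, you must either prove the vanishing of the $H^2$-obstruction for the specific classes $s_{\mathfrak g}(\theta)$ (which in effect means proving they are pro-finite-\'etale and thus rejoining the paper's argument) or replace the root-extraction step by the $\pi_1$-level splitting.
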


\begin{proof}
By \cite[Corollary 4.3]{heuer-Moduli}, the Leray spectral sequence for  $\nu:X_\mathrm{v}\to X_{\et}$ applied to $G$ gives a left-exact sequence
\[0 \rightarrow H^1_{\et}(X,G) \rightarrow H^1_\mathrm{v}(X,G) \rightarrow H^0(X, \wtOm)\otimes_K \Lie G.\]

In \Cref{rslt:vbundles}, we have seen that every homomorphism $\rho:\pi_1^{\et}(X,x_0) \rightarrow G(K)$ induces a class in $H^1_\mathrm{v}(X,G)$. Since $\pi_1^{\et}(X,x_0)$ is profinite, it follows from \cite[Proposition~2.14]{heuer-geometric-Simpson-Pic} that the image of $\rho$ is contained in $G^{\tt}(K).$ Composition with the logarithm map 
$\log: G^\tt \rightarrow \mathfrak g$ from \Cref{l:log-on-Gtt} induces a commutative diagram

\begin{center}\begin{tikzcd}
	\Hom(\pi_1^{\et}(X,x_0),G^{\tt}(K)) \arrow[r,"\log"] \arrow[d] & \Hom(\pi_1^{\et}(X,x_0), \Lie G) \arrow[d,"\HT"]\\
	H^1_\mathrm{v}(X, G) \arrow[r,"\HTlog"] & H^0(X, \wtOm) \otimes_K \Lie G		\end{tikzcd}\end{center}
	where the right vertical map is induced by applying $-\otimes \Lie G$ to
	\[\HT:\Hom(\pi_1^{\et}(X,x_0), K)\isomarrow H^1_\mathrm{v}(X,\CO) \rightarrow H^0(X, \wtOm).
	\]
	Here the first identification comes from the Cartan-Leray sequence for $\CO$ along $\wt X\rightarrow X$.
	
	Now the choice of a $B_{\dR}^+/\xi^2$-lift of $X$ induces a splitting of the Hodge--Tate sequence (\Cref{p:HT-split}) and hence a splitting of the right vertical map. By \Cref{t:exp-for-loc-p-divisible-grp}, an exponential for $K$  induces an exponential map $\exp_G:\Lie G \rightarrow \widehat{G}(K)$ that splits the top horizontal map. In particular, both maps are split surjective. Thus $\HTlog$ is also split surjective, with a splitting induced by the above choices.
	
	%To see this, we use that $\log:G^\tt\to \Lie G$ is surjective by \Cref{p:log-surj-if-loc-pdiv}. %Second, since $\Lie G$ is a $K$-vector space, any morphism $\pi_1^\et(X,x)\to \Lie G$ factors through the maximal pro-$p$ quotient $T$ of $\pi_1^\et(X,x)$. By \cite[Theorem~1.1]{Scholze-padic-Hodge} and \cite[Lemma~4.11]{heuer-Line-Bundle}, this is a finite free $\Z_p$-module. This combines to show that any morphism $T\to \Lie G$ lifts to $T\to G^\tt$, hence the top map is split surjective. More precisely, a splitting is induced by any continuous splitting
	%\[ \Lie G\to G^\tt(K).\]
	%By \Cref{t:exp-for-loc-p-divisible-grp}, such a choice is induced by the choice of an exponential on $K$.
	The naturality in $G$ is then immediate from the naturality of $\exp_G$ in \Cref{t:exp-for-loc-p-divisible-grp}, and from the fact that the left vertical map is given via \Cref{rslt:vbundles} by the functorial construction of sending $\rho$ to the pushout $V_{\rho}$ of $\rho$ along the $\pi_1^{\et}(X,x_0)$-torsor $\wt X\to X$.
\end{proof}

We can deduce our final main result, the $p$-adic Simpson correspondence for commutative $G$, giving answers to each of Questions \ref{q3}, \ref{q1}, \ref{q2} for commutative $G$:

\begin{Theorem}\label{rslt:commutativecase} 
Let $X$ be a connected smooth proper rigid variety over $K$ with a base point $x_0\in X(K)$, and let $G$ be a commutative locally $p$-divisible rigid group over $K$. 

\begin{enumerate}
\item Choices of an exponential $\exp$ for $K$ and a $B_{\dR}^+/\xi^2$-lift $\mathbb X$ of $X$  induce an equivalence
\begin{align*}
	\Big\{\text{v-$G$-torsors on $X$} \Big\}\isomarrow  \Big\{
	\text{$G$-Higgs bundles on $X$}\Big\}
\end{align*}
of groupoids that is natural in $G$ and in $(X,\mathbb X)$.
\item In case that every morphism $\wt X\to G$ is constant (for example if $G$ is linear analytic), this restricts to an equivalence of groupoids
\begin{align*}
	\Big\{\begin{array}{@{}c@{}l}\text{continuous homomorphisms  }\\
		\pi_1^{\et}(X,x_0) \rightarrow G(K) \end{array}\Big\}\isomarrow  \Big\{\begin{array}{@{}c@{}l}\text{ pro-finite-\'etale}\\
		\text{$G$-Higgs bundles on $X$}\end{array}\Big\}.
\end{align*}
\end{enumerate}
\end{Theorem}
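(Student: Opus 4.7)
The plan is to deduce both statements from the split short exact sequence of \Cref{thm:HodgeTatesplit} together with the structural description of commutative $G$-Higgs bundles in \Cref{l:Higgs-for-comm-G}, which identifies a $G$-Higgs bundle with a pair $(E,\theta)$ consisting of an étale $G$-torsor $E$ and a section $\theta\in H^0(X,\mathfrak g\otimes \wtOm)$, with automorphism sheaf $G$.

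For part (1), I will construct a functor $F\colon \Bun_G(X_\mathrm{v})\to \HiggsG(X)$ sending a v-$G$-torsor $V$ to the pair $(E_V,\theta_V)$ defined as follows: set $\theta_V:=\HTlog(V)\in H^0(X,\wtOm)\otimes_K\Lie G$. By \Cref{thm:HodgeTatesplit}(2), the choices of exponential and lift produce a canonical v-$G$-torsor $V_{\theta_V}:=s_G(\theta_V)$ with $\HTlog(V_{\theta_V})=\theta_V$. The v-$G$-torsor $V\otimes^G V_{\theta_V}^{-1}$ then has $\HTlog=0$, so by exactness of \eqref{eq:CLforG} it is of the form $\nu^{\ast}E_V$ for a unique étale $G$-torsor $E_V$. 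A quasi-inverse is given by $(E,\theta)\mapsto \nu^{\ast}E\otimes^G V_\theta$. That these are mutually quasi-inverse on isomorphism classes is exactly the statement that $s_G$ splits \eqref{eq:CLforG}; on automorphism groups both sides equal $G(X)$, and the tensor product construction tautologically identifies the two $G(X)$-actions. Naturality in $G$ and in $(X,\mathbb X)$ will follow from the corresponding naturality properties of $s_G$ and $\exp_G$ asserted in \Cref{thm:HodgeTatesplit} and \Cref{t:exp-for-loc-p-divisible-grp}.

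For part (2), the key observation is that $V_\theta=s_G(\theta)$ is always pro-finite-étale. Indeed, unwinding the construction of $s_G$ in the proof of \Cref{thm:HodgeTatesplit}, $V_\theta$ arises as the pushout of $\wt X\to X$ along a continuous homomorphism $\pi_1^\et(X,x_0)\to G^{\tt}(K)$ obtained by composing the $\mathbb X$-induced splitting of the Hodge--Tate map for $\G_a$ with the exponential $\exp_G$ of \Cref{t:exp-for-loc-p-divisible-grp}. Consequently $V\cong \nu^{\ast}E_V\otimes^G V_{\theta_V}$ is pro-finite-étale if and only if $E_V$ is, so the equivalence of part~(1) restricts to an equivalence between pro-finite-étale v-$G$-torsors and pro-finite-étale $G$-Higgs bundles. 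Under the hypothesis that every morphism $\wt X\to G$ is constant, \Cref{rslt:vbundles} provides an equivalence between continuous homomorphisms $\pi_1^\et(X,x_0)\to G(K)$ and pro-finite-étale v-$G$-torsors, which when composed with the above restriction yields the equivalence of part~(2).

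The step I expect to be the main obstacle is verifying the equivalence at the level of groupoids rather than merely on isomorphism classes. Concretely, one must check that $\nu^{\ast}\colon \Bun_G(X_\et)\to \Bun_G(X_\mathrm{v})$ is fully faithful on groupoids---which reduces to the identification $G(X_\et)=G(X_\mathrm{v})$ by v-descent for $G$ as a sheaf of groups---and that twisting by the canonical splitting $V_\theta$ preserves and identifies the $G(X)$-action on automorphism groups on both sides. All remaining verifications are formal consequences of \Cref{thm:HodgeTatesplit} and \Cref{rslt:vbundles}.
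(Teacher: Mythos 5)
Your proposal is correct and follows essentially the same route as the paper: twisting a v-$G$-torsor $V$ by $V_{\HTlog(V)}^{-1}$ using the splitting from \Cref{thm:HodgeTatesplit}, reducing the groupoid statement to isomorphism classes via \Cref{l:Higgs-for-comm-G} (both automorphism groups being $G(X)$), and deducing (2) from the pro-finite-\'etale nature of $V_\theta$ together with \Cref{rslt:vbundles}. No substantive differences from the paper's argument.
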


\begin{proof} 
For simplicity, we write $\pi$ for $\pi_1^{\et}(X,x_0)$ below.

For (1), we observe that it suffices to construct a functor from left to right that induces a bijection on isomorphism classes: Note that since $G$ is commutative, the automorphism group of an object on either side is $G(X)=H^0_\mathrm{v}(X,G)= H^0_{\et}(X,G)$ by \Cref{l:Higgs-for-comm-G}.
By \Cref{thm:HodgeTatesplit}.(2), the choices of an exponential for $K$ and a $B_\dR^+/\xi^2$-lift $\mathbb X$ of $X$ induce a section $s$ of the map $\HTlog$ in \eqref{eq:CLforG}, and hence an isomorphism 	
\[H^1_\mathrm{v}(X,G)\isomarrow H^1_{\et}(X,G)\times H^0(X,\wtOm^1)\otimes_K\Lie G,\]
%	By \eqref{eq:CLforG}, the choices of an exponential for $K$ and a $B_\dR^+/\xi^2$-lift $\mathbb X$ of $X$ induce a section 
%	\[s:H^0(X,\wtOm^1_X)\otimes_K\Lie G\to \Homc(\pi,G(K))\]
%   and in particular an isomorphism
%	\[H^1_\mathrm{v}(X,G)\isomarrow H^1_{\et}(X,G)\times H^0(X,\wtOm^1_X)\otimes_K\Lie G.\]
which moreover can be upgraded to a functorial construction:
For any v-$G$-torsor $V$, consider its class in $H^1_\mathrm{v}(X,G)$. Applying $\HTlog$, we obtain $\theta \in H^0(X,\wtOm^1)\otimes_K\Lie G$. \Cref{thm:HodgeTatesplit}.(2) uses the choice of lift and exponential to give us a v-$G$-bundle $V_{\theta}$ on $X$  with $\HTlog(V_\theta)=\theta=\HTlog(V)$ in a functorial way.
Now let $E$ be the v-$G$ bundle $V\otimes^GV_\theta^{-1}$, c.f. \Cref{l:otime^G}. Since $\HTlog$ is additive with respect to contracted products, we see that $\HTlog(E)=0$. Consequently, $E$ is an \'etale $G$-torsor. The desired functor can now be defined by sending $V$ to $(E,\theta)$.
Here we note that since $G$ is commutative, the Higgs field condition is vacuous by \Cref{l:Higgs-for-comm-G}. 

This construction is indeed functorial: Any morphism $V\to V'$ is an isomorphism, hence $\HTlog(V)=\HTlog(V')$ which implies that $V_\theta$ agrees for $V$ and $V'$. So the statement follows from functoriality of $-\otimes^GV_\theta^{-1}$.

The naturality in $G$ follows from naturality in $G$ in \Cref{thm:HodgeTatesplit}(2).

It remains to check the naturality in (1) with respect to maps $X\to Y$ between smooth proper rigid spaces with a lift $\mathfrak f:\mathbb X \rightarrow \mathbb Y$ over $B_{\dR}^+/\xi^2$. But this follows immediately from functoriality of the splitting of the Hodge--Tate sequence for $\mathbb G_a$ induced by $\mathfrak f$, see \cite[Proposition 7.2.5]{Guo_HodgeTate}. 
%Concretely this means given $f: \mathbb X \rightarrow \mathbb X'$ as above, assume $s$, $s'$ are the sections of the Hodge--Tate sequence induced by $\mathbb X$, $\mathbb X'$. Then there exists an endomorphism $\alpha_f$ of $H^0(X,\wtOm^1_X)$ making the following diagram commutative:
%\[\begin{tikzcd}
%   \Homc(\pi, K)\cong H^1_v(X,\CO) & H^0(X,\wtOm^1_X) \ar[l,"s"']\ar[d,"\alpha_f"]\\
%     & H^0(X,\wtOm^1_X) \ar[ul,shift right, "s'"].
%\end{tikzcd}\]
%This induces a natural transform between the functors 
%\[V\mapsto (V\otimes^G V_{s(\theta)}^{-1}, \theta),\]
%\[V\mapsto (V\otimes^G V_{s'(\theta')}^{-1}, \theta'), \]
%where $\theta'=(\alpha_f\otimes id)(\theta)$. This shows the desired functoriality in $\mathbb X$.

For (2), we use \Cref{rslt:vbundles} which says that continuous homomorphisms $\rho: \pi \rightarrow G^{\tt}(K)$ are equivalent to pro-finite-\'etale $G$-bundles on $X_\mathrm{v}$. Note that by definition, the v-$G$-torsor $V_\theta$ is pro-finite-\'etale for every $\theta \in H^0(X,\wtOm^1)\otimes_K\Lie G$. Hence a v-$G$-torsor $V$ on $X$ with image $\theta$ under $\HTlog$ is pro-finite-\'etale if and only if $V\otimes^{G}V_{\theta}^{-1}$ is pro-finite-\'etale. Applying this to the construction in (1), this implies our claim.  
\end{proof}

\hspace{5ex}

\noindent	
{Ben Heuer \hfill Annette Werner \\
Institut f\"ur Mathematik  \hfill Institut f\"ur Mathematik \\
Goethe-Universit\"at Frankfurt \hfill Goethe-Universit\"at Frankfurt\\
Robert-Mayer-Str. 6-8 \hfill Robert-Mayer-Str. 6-8\\
60325 Frankfurt am Main \hfill 60325 Frankfurt am Main\\	
heuer@math.uni-frankfurt.de \hfill werner@math.uni-frankfurt.de
\\[2ex]
\noindent
Mingjia Zhang \\
Princeton University  \\
Department of Mathematics \\
Fine Hall, Washington Road\\
Princeton NJ 08544-1000 USA\\
mz9413@princeton.edu
\end{document}